\newcommand{\Ext}{\operatorname{Ext}}
\newcommand{\FE}{\operatorname{FE}}
\newcommand{\linspan}{\operatorname{span}}
\newcommand{\MCE}{\operatorname{MCE}}
\newcommand{\id}{\operatorname{id}}
\newcommand{\Aut}{\operatorname{Aut}}
\newcommand{\SH}{\operatorname{SH}}
\newcommand{\Sat}{\operatorname{S}}
\newcommand{\Obj}{\operatorname{Obj}}
\newcommand{\Mor}{\operatorname{Mor}}
\newcommand{\dom}{\operatorname{dom}}
\newcommand{\cod}{\operatorname{cod}}
\newtheorem{theorem}{Theorem}[section]
\newtheorem{lemma}[theorem]{Lemma}
\newtheorem{proposition}[theorem]{Proposition}
\newtheorem{corollary}[theorem]{Corollary}
\newtheorem{remark}[theorem]{Remark}
\newtheorem{definition}[theorem]{Definition}
\newtheorem{example}[theorem]{Example} 
\begin{document}

%
%

\begin{titlepage}
\begin{center}
\includegraphics[scale=0.25]{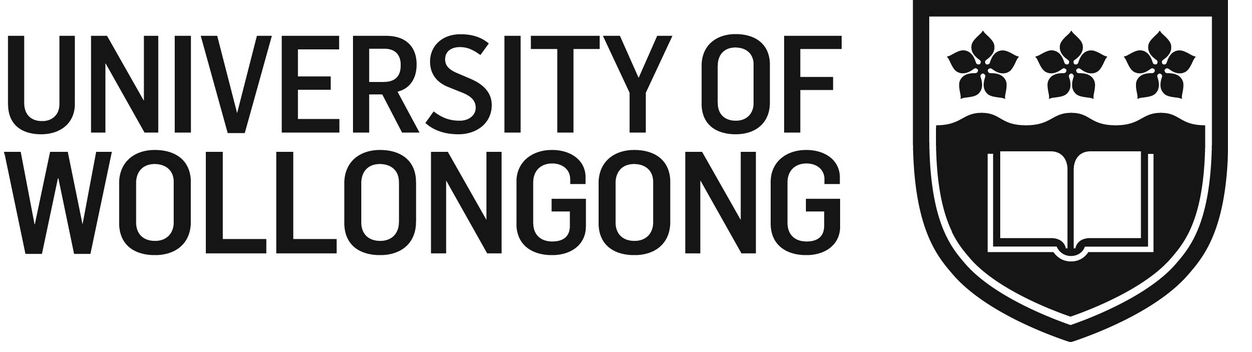}\\[2.5cm]
\end{center}
\begin{center}
{ \huge \textbf{\sc Twisted relative Cuntz-Krieger algebras associated to finitely aligned higher-rank graphs }}\\[1.2cm]
\textmd{By}\\[0.7cm]
{\huge{Benjamin Whitehead}\\[1.2cm]}

Supervisors: Professor Aidan Sims and Doctor Michael Whittaker \\

\large{\sc Bachelor of Mathematics Advanced (Honours)}\\
\textmd{\sc School of Mathematics and Applied Statistics}\\
\textmd{\sc 2012}
\end{center}
\end{titlepage}


%
%

\begin{abstract}
  To each finitely aligned higher-rank graph $\Lambda$ and each $\mathbb{T}$-valued 2-cocycle on $\Lambda$, we associate a family of twisted relative Cuntz-Krieger algebras. We show that each of these algebras carries a gauge action, and prove a gauge-invariant uniqueness theorem. We describe an isomorphism between the fixed point algebras for the gauge actions on the twisted and untwisted relative Cuntz-Krieger algebras. We show that the quotient of a twisted relative Cuntz-Krieger algebra by a gauge-invariant ideal is canonically isomorphic to a twisted relative Cuntz-Krieger algebra associated to a subgraph. We use this to provide a complete graph-theoretic description of the gauge-invariant ideal structure of each twisted relative Cuntz-Krieger algebra.
\end{abstract}

%
%

\newpage %
\pagenumbering{roman}

%
%

\mbox{} %
\vspace*{0.2\textheight} \\ %
\centerline{\bf Acknowledgements} I owe thanks to my supervisors, Professor Aidan Sims and Doctor Michael Whittaker, for their guidance, time and enthusiasm.
\mbox{}

%
%

\tableofcontents %

%
%
%
%

\chapter{Introduction} %
\label{chptr:introduction} \pagenumbering{arabic}

\section{Background}

A directed graph $E=(E^0,E^1,r,s)$ consists of a set $E^0$ of vertices, a set $E^1$ of edges and maps $r,s:E^1\rightarrow E^0$. We visualise the vertices as dots or points and the edges $e\in E^1$ as arrows pointing from $s(e)$, called its source, to $r(e)$, called its range. A $C^*$-algebra is a closed subspace of the bounded linear operators on a Hilbert space that is closed under multiplication and adjoints. We construct a $C^*$-algebra in the set of bounded linear operators on a Hilbert space $\mathcal{H}$ from $E$ as follows. To vertices $v\in E^0$ we associate projections $P_v$ of $\mathcal{H}$ onto mutually orthogonal subspaces $P_v\mathcal{H}$. To every edge $e\in E^1$ we associate a partial isometry $S_e$ that takes $P_{s(e)}\mathcal{H}$ isometrically into $P_{r(e)}\mathcal{H}$ and vanishes on $(1-P_{s(e)})\mathcal{H}$. We require that whenever $r^{-1}(v)$ is nonempty and finite, the subspaces $S_e\mathcal{H}$ with $r(e)=v$ are mutually orthogonal and span $\mathcal{H}_v$. Such a collection $\left\{ P_v ,S_e: v\in E^0, e\in E^1 \right \}$ is called a Cuntz-Krieger $E$-family. By taking the closure of all linear combinations of all finite products of $P_v$ and $S_e$ and their adjoints, we generate a $C^*$-subalgebra of the bounded linear operators on $\mathcal{H}$.

Every Cuntz-Krieger $E$-family generates a $C^*$-algebra. Different Cuntz-Krieger $E$-families can generate nonisomorphic $C^*$-algebras, but there is a unique universal $C^*$-algebra $C^*(E)$ generated by a universal Cuntz-Krieger $E$-family $\left\{ p_v, s_e : v\in E^0, e\in E^1 \right \}$. Loosely speaking, by ``universal" we mean that $C^*(E)$ is biggest $C^*$-algebra generated by a Cuntz-Krieger $E$-family: $C^*(E)$ contains all the complexities of any other $C^*$-algebra generated by a Cuntz-Krieger $E$-family. One can use the universal property to show that there is a group action $\gamma$ of the unit circle $\mathbb{T}\subset \mathbb{C}$ on $C^*(E)$ such that $\gamma_z(s_e)=zs_e$ and $\gamma_z(p_v)=p_v$ for each $e\in E^1$, $v\in E^0$ and $z\in \mathbb{T}$. This action is called the gauge action.

Kumjian, Pask, Raeburn and Renault introduce Cuntz-Krieger algebras of directed graphs, also known as graph algebras, in \cite{KPRR1997} following on from the seminal work of Cuntz and Krieger in \cite{CK1980} and Enomoto and Watatani in \cite{EW1980}. The sustained interest in graph algebras is motivated by the elegant relationships that exist between the connectivity and loop structure of the graph and the structure of the resulting graph algebra. For example, the ideals of $C^*(E)$ that are invariant for the gauge action $\gamma$ can be described in terms of subsets of $E^0$: Bates, Hong, Raeburn and Szyma\'nski in \cite{BHRS2002} describe sets $H$ of vertices, which they call saturated hereditary sets. They show that for each such $H$, the elements $\left\{ p_v : v\in H\right\}$ generate a gauge-invariant ideal $I_H$ of $C^*(E)$. Using a version of an Huef and Raeburn's gauge-invariant uniqueness theorem (\cite{HR1997}, Theorem 2.3), they show that $C^*(E)/I_H$ is canonically isomorphic to the Cuntz-Krieger algebra of what they call the quotient graph $E/H$. They then show that the gauge-invariant ideals of the Cuntz-Krieger algebra are indexed by pairs $(H,B)$, where $H$ is a saturated hereditary set and $B$ is a collection of so-called breaking vertices.

Robertson and Steger \cite{RS1999} introduce a class of higher-rank Cuntz-Krieger algebras arising from actions of $\mathbb{Z}^k$ on buildings. In \cite{KP2000}, Kumjian and Pask introduce higher-rank graphs and their Cuntz-Krieger algebra as a common generalisation of both Robertson and Steger's higher-rank Cuntz-Krieger algebras and graph algebras. Kumjian and Pask focus on Cuntz-Krieger algebras arising from row-finite higher-rank graphs with no sources: these are the higher-rank analogue of graphs in which a nonzero, finite number of edges enters every vertex. Raeburn, Sims and Yeend \cite{RSY2004} extend the analysis of Kumjian and Pask by associating Cuntz-Krieger algebras to finitely aligned higher-rank graphs, a more general class than row-finite higher-rank graphs with no sources.

In \cite{S20061}, Sims defines a class of relative Cuntz-Krieger algebras (\cite{S20061}, Definition 3.2) associated to a finitely aligned higher-rank graph, which includes the usual Cuntz-Krieger algebras as a special case. As in \cite{BHRS2002}, Sims identifies pairs of saturated hereditary sets and satiated (\cite{S20061}, Definition 4.1) sets, and associates to each such pair a gauge-invariant ideal.  Due the complex combinatorial nature of higher-rank graphs, it is no longer clear how to realise the quotient of the Cuntz-Krieger algebra as the Cuntz-Krieger algebra of a quotient graph. Instead, Sims shows that the quotient of the Cuntz-Krieger algebra by a gauge-invariant ideal associated to a saturated hereditary and satiated pair is canonically isomorphic to a relative Cuntz-Krieger algebra of a subgraph (\cite{S20062}, Corollary 5.2). Using this observation, Sims generalises the results of \cite{BHRS2002} for graph algebras to Cuntz-Krieger algebras of finitely aligned higher-rank graphs; that is, he classifies all the gauge-invariant ideals in the Cuntz-Krieger algebra of a higher-rank graph (\cite{S20061}, Theorem  5.5).

In \cite{KPS2012} and \cite{KPS2012b}, Kumjian, Pask and Sims introduce a cohomology theory for higher-rank graphs. They show how to twist the defining relations for a higher-rank graph $C^*$-algebra by a circle valued 2-cocycle (\cite{KPS2012b},  Definition 5.2). They use this to define the twisted Cuntz-Krieger algebra $C^*(\Lambda,c)$ (\cite{KPS2012b}, Proposition 5.3) associated to a higher-rank graph and a $\mathbb{T}$-valued 2-cocycle $c$. The class of twisted Cuntz-Krieger algebras contains many interesting examples, including all of the noncommutative tori (\cite{KPS2012}, Example 7.9) and the quantum spheres of \cite{BHMS2005}. Both \cite{KPS2012} and \cite{KPS2012b}, consider row-finite locally convex (\cite{RSY2003}, Definition 3.9) higher-rank graphs, rather than arbitrary finitely aligned higher-rank graphs. Kumjian, Pask and Sims produce a gauge-invariant uniqueness theorem (\cite{KPS2012b}, Corollary 7.7) using groupoid methods, and provide a criterion for simplicity (\cite{KPS2012b}, Corollary 8.2), but they do not address the broader question of ideal structure.

In this thesis we avoid groupoid methods in preference for a more elementary and direct analysis, similar to that of \cite{RSY2004}. This direct approach leads to sharper and more general results with cleaner arguments. Theorem 5.5 in \cite{S20062} gives a complete description of gauge-invariant structure of the Cuntz-Krieger algebra associated to a finitely aligned higher-rank graph. The aim of this thesis is to extend this result to all twisted relative Cuntz-Krieger algebras: this is a new result even for the untwisted case. This aim is achieved with Theorem~\ref{th:gauageinvariantidealstructure}.

\section{Overview}

This section gives a detailed account of the material contained in this thesis. We highlight definitions and results contained in each chapter, discuss their importance and connect them with the previous work of others.

In Chapter 2 we recall the definition of a $k$-graph (higher-rank graph) and develop the standard notation to deal with them. Each path $\lambda$ in a $k$-graph $\Lambda$ has a source $s(\lambda)$ and a range $r(\lambda)$ in $\Lambda^0$, the set of vertices. Unlike directed graphs, a $k$-graph $\Lambda$ not only consists of edges and vertices but all possible paths in the graph. There is a natural partial multiplication on $\Lambda$ given by concatenation; given paths $\lambda,\mu$ in $\Lambda$ such that $s(\lambda)=r(\mu)$, $\lambda \mu$ is another path in $\Lambda$. Each path $\lambda \in \Lambda$ has degree, or shape, $d(\lambda)\in\mathbb{N}^k$, where $k$ is the $k$ in $k$-graph; degree is a higher-dimensional analogue of length. Each $k$-graph satisfies the factorisation property: if $\lambda$ has degree $m+n$, then there exist unique paths $\mu$ and $\nu $ in $\Lambda$ such that $d(\mu)=m$, $d(\nu)=n$ and $\lambda =\mu\nu$. If $E$ is a directed graph, then the set $E^*$ of all paths in $E$ is a $1$-graph where the degree function $d:E^* \to \mathbb{N}$ measures the length of a path. Note that the factorisation property for $1$-graphs is automatic. So $k$-graphs generalise the notion of a directed graph.

Given a finitely aligned $k$-graph $\Lambda$, each finite collection $E$ of paths with a common range determines a ``gap projection" $Q^E$ in the twisted Toeplitz algebra (see Section~\ref{chptr:Gap_projections}) of $\Lambda$. Each twisted relative Cuntz-Krieger algebra of $\Lambda$ is a quotient of the twisted Toeplitz algebra by an ideal generated by such gap projections; so in each twisted relative Cuntz-Krieger algebra some, but typically not all, of the gap projections are zero. In Chapter~\ref{chptr:Twisted_relative_Cuntz-Krieger_algebras} we construct a spatial representation of the twisted Toeplitz algebra, and use it to characterise exactly which gap projections are nonzero in any given relative Cuntz-Krieger algebra.

In Section \ref{chptr:Toeplitz-Cuntz-Krieger_families}, for a finitely aligned $k$-graph $\Lambda$, we define a $\mathbb{T}$-valued 2-cocycle and the associated twisted Toeplitz algebra. A 2-cocycle is a map $$c:\left\{ (\lambda,\mu)\in \Lambda \times \Lambda : s(\lambda)=r(\mu) \right \}\rightarrow \mathbb{T}$$ that satisfies:
\begin{enumerate}
\item[(1)] $c(\lambda,\mu)c(\lambda \mu,\nu)=c(\mu,\nu)c(\lambda,\mu\nu)$ whenever $s(\lambda)=r(\mu)$ and $s(\mu)=r(\nu)$; and
\item[(2)] $c(\lambda,s(\lambda))=c(r(\lambda),\lambda)=1$ for all $\lambda\in \Lambda$.
\end{enumerate}
The first relation is called the 2-cocycle identity. We write $\underline{Z}^2(\Lambda,\mathbb{T})$ for the set of all $\mathbb{T}$-valued 2-cocycles.
The inspiration for this definition comes from the work of Kumjian, Pask and Sims in \cite{KPS2012} and \cite{KPS2012b} in developing a cohomology theory for $k$-graphs and their definition of a Cuntz-Krieger $(\Lambda,c)$-family (\cite{KPS2012b}, Definition 5.2), where $c$ is a 2-cocycle in the cohomology theory. Kumjian, Pask and Sims show there is a universal $C^*$-algebra $C^*(\Lambda,c)$, called the twisted Cuntz-Krieger algebra, generated by a Cuntz-Krieger $(\Lambda,c)$-family $\left\{ s_\lambda :\lambda \in \Lambda \right \}$. This $C^*$-algebra is said to be `twisted' due to the 2-cocycle appearing in the relation
\begin{equation}s_\lambda s_\mu = c(\lambda,\mu)s_{\lambda \mu}\label{eq:twistedrelationintro}\end{equation}
whenever $s(\lambda)=r(\mu)$. The 2-cocycle identity guarantees that the multiplication in \eqref{eq:twistedrelationintro} is associative.

We wish to extend the analysis of Kumjian, Pask and Sims to finitely aligned $k$-graphs. A Toeplitz-Cuntz-Krieger $(\Lambda,c)$-family is a collection $\left\{ t_\lambda :\lambda \in \Lambda \right \}$ of partial isometries in a $C^*$-algebra satisfying:
\begin{enumerate}
\item[(TCK1)] $\lbrace t_v : v\in \Lambda^0 \rbrace$ is a set of mutually orthogonal projections;
\item[(TCK2)] $t_{\mu}t_{\nu}=c(\mu,\nu) t_{\mu \nu}$ whenever $s(\mu)=r(\nu)$;
\item[(TCK3)] $t_{\lambda}^*t_{\lambda} = t_{s(\lambda)}$ for all $\lambda \in \Lambda$; and
\item[(TCK4)] $ t_\mu t^*_\mu t_\nu t^*_\nu
   = \displaystyle\sum_{\lambda=\mu \mu'=\nu\nu'\atop d(\lambda) \text{ minimal}} t_{\lambda} t^*_{\lambda}$.
 \end{enumerate}
In Definition 2.5 of \cite{RSY2004}, Raeburn, Sims and Yeend define a Cuntz-Krieger $\Lambda$-family for a finitely aligned $k$-graph $\Lambda$. Here we note how our definition of a Toeplitz-Cuntz-Krieger $(\Lambda,c)$-family differs from a Cuntz-Krieger $\Lambda$-family of \cite{RSY2004} and how the 2-cocycle is incorporated. Relation (TCK1) already appears in Definition 2.5 of \cite{RSY2004}. Relation (ii) in Definition 2.5 of \cite{RSY2004} is
$$t_\lambda t_\mu =t_{\lambda \mu}$$
whenever $s(\lambda)=r(\mu)$. Relation (TCK2) incorporates the 2-cocycle into the above relation, as in \eqref{eq:twistedrelationintro}. Relation (iii) in Definition 2.5 of \cite{RSY2004} is
$$t_\lambda^* t_\mu=\sum_{\lambda\alpha =\mu \beta \atop d(\lambda \alpha) \text{ minimal}} t_\alpha t_\beta^*$$
for all $\lambda,\mu \in\Lambda$. It turns out that the equivalent relation in the 2-cocycle setting is
\begin{equation} t_\lambda^* t_\mu=\sum_{\lambda\alpha =\mu \beta \atop d(\lambda \alpha) \text{ minimal}} \overline{c(\lambda,\alpha)}c(\mu,\beta)t_\alpha t_\beta^*\label{eq:RSYTCK3}\end{equation}
for all $\lambda,\mu \in\Lambda$. Since it is unclear why this is the correct relation, we remove it from the definition of a Toeplitz-Cuntz-Krieger $(\Lambda,c)$-family. We replace it by the familiar relations (TCK3), seen in Lemma 2.7 of \cite{RSY2004}, and (TCK4), seen in Remark 2.6 of \cite{RSY2004}. Lemma~\ref{le:equivofTCK} shows that replacing (TCK3) and (TCK4) by \eqref{eq:RSYTCK3} would give us an equivalent definition of a Toeplitz-Cuntz-Krieger $(\Lambda,c)$-family. Relation (iv) in Definition 2.5 of \cite{RSY2004},
\begin{equation} \prod_{\lambda \in E} \left(t_v-t_\lambda t_\lambda^*\right) =0 \text{ for all } E\in \FE(\Lambda),\label{eq:gapprojraeburnintro}\end{equation}
does not appear in the definition of Toeplitz-Cuntz-Krieger $(\Lambda,c)$-family, as in Definition 3.1.1 of \cite{SPhD}. The reason for this is explained below in the overview of Section~\ref{chptr:Relative_Cuntz-Krieger}.

In Section~\ref{chptr:twisted_Toeplitz_algebra} we show that there is a unique universal $C^*$-algebra generated by a Toeplitz-Cuntz-Krieger $(\Lambda,c)$-family $\left\{ s_\mathcal{T}(\lambda):\lambda \in \Lambda \right \}$ (Theorem~\ref{toeplitzalgebra}), we call this $C^*$-algebra the twisted Toeplitz algebra of $\Lambda$ and denoted it $\mathcal{T}C^*(\Lambda,c)$. Universal means that if $\left\{ t_\lambda : \lambda \in \Lambda \right \}$ is any other Toeplitz-Cuntz-Krieger $(\Lambda,c)$-family in a $C^*$-algebra $B$, then there is a homomorphism $\pi: \mathcal{T}C^*(\Lambda,c) \to B$ such that $\pi(s_\mathcal{T}(\lambda))=t_\lambda$ for each $\lambda \in \Lambda$. In Proposition~\ref{toeplitzrepresentation} we show there is a Toeplitz-Cuntz-Krieger $(\Lambda,c)$-family $\left\{ T_\lambda : \lambda \in \Lambda \right \}$ in $\mathcal{B}(\ell^2(\Lambda))$. The universal property of $\mathcal{T}C^*(\Lambda,c)$ determines a homomorphism $\pi_T^\mathcal{T}: \mathcal{T}C^*(\Lambda,c) \rightarrow \mathcal{B}(\ell^2(\Lambda))$ satisfying $\pi_T^\mathcal{T}(s_\mathcal{T}(\lambda))=T_\lambda $ for each $\lambda \in \Lambda$. This representation $\pi_T^\mathcal{T}$ of the twisted Toeplitz algebra on $\mathcal{B}(\ell^2(\Lambda))$ will be an invaluable tool throughout the thesis.

In Section \ref{chptr:Relative_Cuntz-Krieger}, for a vertex $v\in \Lambda^0$ and finite $E\subset v\Lambda:=\left\{ \lambda \in \Lambda : r(\lambda)=v\right \}$, we define the gap projection $$Q(t)^E:=\prod_{\lambda \in E}\left(t_v-t_\lambda t_\lambda^* \right)$$ (Definition~\ref{def:gapprojections}). Relation (TCK4) implies that the range projections $\left\{ t_\lambda t_\lambda^* : \lambda \in \Lambda \right \}$ pairwise commute, so that the gap projection $Q(t)^E$ is well-defined (Lemma~\ref{handylemma}(2)).
Definition~\ref{def:finitexhaustive} defines the collection $\FE(\Lambda)$ of finite exhaustive sets of $\Lambda$, as in Definition 2.4 of \cite{RSY2004}. Each element $E\in \FE(\Lambda)$ is finite and every path has a common range vertex $v$, so that $E\subset v \Lambda$. Similar to Definition 3.2 of \cite{S20061}, given $\mathcal{E}\subset \FE(\Lambda)$ we define a relative Cuntz-Krieger $(\Lambda,c;\mathcal{E})$-family to be a Toeplitz-Cuntz-Krieger $(\Lambda,c)$-family $\left\{t_\lambda:\lambda \in \Lambda \right \}$ that satisfies
\begin{enumerate} \item[(CK)] $Q(t)^E=0$ for all $E\in \mathcal{E}.$\end{enumerate}
Relation (CK) can be thought of as weaker version of \eqref{eq:gapprojraeburnintro}. We call a relative Cuntz-Krieger $(\Lambda,c;\FE(\Lambda))$-family a Cuntz-Krieger $(\Lambda,c)$-family. The map $$1:\left\{(\lambda,\mu)\in \Lambda \times \Lambda : s(\lambda)=r(\mu) \right\} \rightarrow \mathbb{T}$$ given by $1(\lambda,\mu)=1$ for $\lambda ,\mu \in \Lambda$ is a $\mathbb{T}$-valued 2-cocycle. The relative Cuntz-Krieger $(\Lambda,1;\FE(\Lambda))$-families are precisely the Cuntz-Krieger $\Lambda$-families of (\cite{RSY2004}, Definition 2.5). When $\Lambda$ is row-finite and locally convex, these are precisely the Cuntz-Krieger $\Lambda$-families of \cite{KP2000}. This assertion is nontrivial: we refer the reader to Appendix A of \cite{RSY2004} for a detailed explanation of Cuntz-Krieger relations associated to different classes higher-rank graphs and their equivalence.

Given $\mathcal{E}\subset \FE(\Lambda)$, Theorem~\ref{th:relativecuntzkriegeralgebra} shows that there is a unique universal $C^*$-algebra $C^*(\Lambda,c;\mathcal{E})$ generated by a relative Cuntz-Krieger $(\Lambda,c;\mathcal{E})$-family $\left\{s_\mathcal{E}(\lambda):\lambda \in \Lambda \right \}$. By definition, $C^*(\Lambda,c;\mathcal{E})= \mathcal{T}C^*(\Lambda,c) / J_\mathcal{E}$ and $s_\mathcal{E}(\lambda)=s_\mathcal{T}(\lambda)+J_\mathcal{E}$ for each $\lambda \in \Lambda$, where $J_\mathcal{E}$ is the smallest closed ideal containing the collection of gap projections $$\left\{ Q(s_\mathcal{T})^E: E\in \mathcal{E}\right \}.$$ The reason for using relative Cuntz-Krieger $(\Lambda,c;\mathcal{E})$-families will become apparent when we classify the gauge-invariant structure of $C^*(\Lambda,c;\mathcal{E})$.

In Section~\ref{chptr:Gap_projections} we establish two results concerning $C^*(\Lambda,c;\mathcal{E})$. Firstly, we show that the projections associated to vertices are all nonzero; that is, $s_\mathcal{E}(v)\neq 0$ for each $v\in \Lambda^0$. Secondly, we identity the collection $\overline{\mathcal{E}}$ of finite exhaustive sets containing $\mathcal{E}$ and such that for $E\in \FE(\Lambda)$,

\begin{equation}E\in \overline{\mathcal{E}} \text{ if and only if } Q(s_\mathcal{E})^E=0.\label{eq:satiatedgapprojintro}\end{equation}

The collection $\overline{\mathcal{E}}$ is called the satiation of $\mathcal{E}$, and is, by definition, the smallest satiated set containing $\mathcal{E}$ (see Definition 4.1 and Definition 5.1 of \cite{S20061}). Theorem~\ref{th:nonzerogapprojection} establishes \eqref{eq:satiatedgapprojintro}. This result is needed throughout the thesis, including in the proof of the gauge-invariant uniqueness theorem (Theorem~\ref{th:gaugeinvariant}). In \cite{S20061}, Sims outlines a method for constructing the satiation $\overline{\mathcal{E}}$ of $\mathcal{E}$. In doing so, proves the ``only if" direction of \eqref{eq:satiatedgapprojintro} in (\cite{S20061}, Corollary~5.6). In this thesis we adopt the same method (Proposition~\ref{le:satiation1}). The ``if" direction of~\eqref{eq:satiatedgapprojintro} is much more involved. In Definition 4.3 of \cite{S20061}, Sims defines the collection $\partial(\Lambda;\mathcal{E})$ of $\mathcal{E}$-compatible boundary paths; such paths are analogues of infinite paths in a $k$-graph with a range but no source, which satisfy a property relating to $\mathcal{E}$. In \cite{S20061}, Sims proves the ``if" direction of \eqref{eq:satiatedgapprojintro} in Corollary 4.9 of \cite{S20061} using a concrete representation $\left\{ S_\mathcal{E}(\lambda): \lambda \in \Lambda \right \}$ (\cite{S20061}, Definition~4.5) of $C^*(\Lambda;\mathcal{E})$ on $\mathcal{B}(\ell^2(\partial(\Lambda;\mathcal{E})))$. It seems that this representation cannot be extended to general twisted relative Cuntz-Krieger algebras. Each of the $S_\mathcal{E}(\lambda)$ in \cite{S20061} are determined by
\begin{equation}
S_\mathcal{E}(\lambda)\xi_x := \delta_{s(\lambda),r(x)} \xi_{\lambda x } \label{eq:simsrepintro}
\end{equation}
where $x$ denotes an $\mathcal{E}$-compatible boundary path and $\left\{ \xi_x : x\in \partial (\Lambda;\mathcal{E})\right\}$ denotes the canonical basis for $\ell^2(\partial(\Lambda;\mathcal{E}))$. The reader should compare \eqref{eq:simsrepintro} with the representation of $\mathcal{T}C^*(\Lambda,c)$ on $\mathcal{B}(\ell^2(\Lambda))$ given in Definition~\ref{toeplitzrepresentation}. In order for the family $\left\{ S_\mathcal{E}(\lambda):\lambda \in \Lambda \right \}$ to satisfy (TCK2) and be a representation of $C^*(\Lambda,c;\mathcal{E})$, we would need to make sense of a 2-cocycle evaluated at an $\mathcal{E}$-compatible boundary path. There seems to be no canonical way to do this. Instead, to prove \eqref{eq:satiatedgapprojintro} we need a different analysis.

As we do not have a concrete representation of $C^*(\Lambda,c;\mathcal{E})$, we analyse the structure of the ideals $J_\mathcal{E}\subset \mathcal{T}C^*(\Lambda,c)$ in order to understand the quotients $\mathcal{T}C^*(\Lambda,c)/ J_\mathcal{E}=C^*(\Lambda,c;\mathcal{E})$. By analysing these ideals, we distinguish between those gap projections associated to $\overline{\mathcal{E}}$ and those associated to $\FE(\Lambda) \setminus \overline{\mathcal{E}}$. Using this, we show the ``if" direction of \eqref{eq:satiatedgapprojintro} in Theorem~\ref{th:nonzerogapprojection}.

Chapter~\ref{chptr:Analysis_of_the_core} contains an analysis of the fixed point algebra of the gauge-action, called the core of $C^*(\Lambda,c;\mathcal{E})$ and denoted $C^*(\Lambda,c;\mathcal{E})^\gamma$. The gauge-action is a strongly continuous group action $\gamma : \mathbb{T}^k \rightarrow \Aut(C^*(\Lambda,c;\mathcal{E}))$ determined by \begin{equation}\gamma_z(s_\mathcal{E}(\lambda))=z^{d(\lambda)}s_\mathcal{E}(\lambda)\label{eq:thegaugeactionintro},\end{equation} where $z^{d(\lambda)}:=\prod_{i=1}^k z_i^{d(\lambda)_i}$. To see there is an action, given $z\in \mathbb{T}^k$ note that
$$\left\{z^{d(\lambda)}s_\mathcal{E}(\lambda):\lambda \in \Lambda \right\}$$
is a relative Cuntz-Krieger $(\Lambda,c)$-family. Thus the universal property implies that there is a homomorphism $\gamma_z: C^*(\Lambda,c;\mathcal{E}) \rightarrow C^*(\Lambda,c;\mathcal{E})$ satisfying \eqref{eq:thegaugeactionintro}. One checks that $\gamma_{\overline z}$ is an inverse for $\gamma_z$, so that each $\gamma_z$ is an automorphism.

 The main result of Chapter~\ref{chptr:Analysis_of_the_core} is the gauge-invariant uniqueness theorem (Theorem~\ref{th:gaugeinvariant}). If $\left\{ t_\lambda : \lambda \in \Lambda \right\}$ be a relative Cuntz-Krieger $(\Lambda,c;\mathcal{E})$-family, this theorem characterises when the homomorphism $\pi_t^\mathcal{E}: C^*(\Lambda,c;\mathcal{E}) \rightarrow C^*\left( \left\{ t_\lambda : \lambda \in \Lambda \right \} \right )$ given by the universal property is injective.

Section~\ref{chptr:groups_actions} outlines standard results in the literature concerning strongly continuous group actions and faithful conditional expectations. Section~\ref{chptr:gauge_action} constructs the gauge action determined by equation \eqref{eq:thegaugeactionintro}. We prove several standard results concerning the gauge action. For example, in Lemma~\ref{le:stronglycontinuous} we show the gauge-action is strongly continuous and in  Lemma~\ref{le:corestructure} we prove that
$$C^*(\Lambda,c;\mathcal{E})^\gamma =\overline{\linspan}\left\{ s_\mathcal{E}(\lambda)s_\mathcal{E}(\mu)^* : \lambda,\mu \in \Lambda, d(\lambda)=d(\mu) \right \}.$$

Due to this characterisation of the core, it has become standard in the context of graph algebras to study the subalgebra \begin{equation}\overline{\linspan} \left\{ t_\lambda t_\mu^* : \lambda,\mu \in \Lambda, d(\lambda)=d(\mu) \right \}\label{eq:coreAFintro}\end{equation} of $C^*\left\{ \left( t_\lambda : \lambda \in \Lambda \right \} \right)$. Given another relative Cuntz-Krieger $(\Lambda,c;\mathcal{E})$-family $\left\{ t_\lambda ' : \lambda \in \Lambda \right \}$, we want to establish conditions under which
\begin{equation}\overline{\linspan} \left\{ t_\lambda t_\mu^* : \lambda ,\mu\in \Lambda, d(\lambda)=d(\mu) \right \} \cong \overline{\linspan}  \left\{ t_\lambda' t_\mu^{\prime*}: \lambda,\mu \in \Lambda, d(\lambda)=d(\mu) \right \}.\label{eq:gencoreintro}\end{equation} The idea is to show that \eqref{eq:coreAFintro} is an increasing union of finite-dimensional subalgebras of the form $\overline{\linspan} \left\{ t_\lambda t_\mu^* : \lambda,\mu \in E,d(\lambda)=d(\mu) \right \}$ where $E\subset \Lambda$ is finite. We do this in Sections~\ref{chptr:orthogonalising_range}~and~\ref{chptr:matrix_units}. Almost all the content of these sections is taken from the work of Raeburn and Sims in \cite{RS2005} and Raeburn, Sims and Yeend in \cite{RSY2004}. Most of the proofs are changed only by the presence of the 2-cocycle. In Section~\ref{chpt:isoofthecore}, we give conditions under which \eqref{eq:gencoreintro} holds; even when $\left\{ t_\lambda ' : \lambda \in \Lambda \right \}$ is a relative Cuntz-Krieger $(\Lambda,b;\mathcal{E})$-family, for a 2-cocycle $b\in \underline{Z}^2(\Lambda,\mathbb{T})$, possibly with $b\neq c$.

Section~\ref{chptr:gaugeinvariantuniqueness} gives a statement of the gauge-invariant uniqueness theorem. This theorem says that if $\pi_t^\mathcal{E}|_{C^*(\Lambda,c;\mathcal{E}) ^\gamma}$ is injective and $C^*\left(\left\{ t_\lambda : \lambda \in \Lambda \right \} \right)$ carries a group action $\beta : \mathbb{T}^k \rightarrow \Aut\left(C^*\left(\left\{ t_\lambda : \lambda \in \Lambda \right \} \right)\right)$ that behaves similar to the gauge-action, then $\pi_t^\mathcal{E}$ is injective. This theorem is the main tool used to classify the gauge-invariant ideals in $C^*(\Lambda,c;\mathcal{E})$.

Chapter~\ref{chptr:Gauge-invariant_ideals} concerns the gauge-invariant ideal structure of $C^*(\Lambda,c;\mathcal{E})$. Section~\ref{chptr:idealstohereditarysets} starts with the definition of a hereditary and relatively saturated set $H$ of vertices. The definition of hereditary is standard, see Definition 3.1 in \cite{S20062}. The notion of a relatively saturated set, rather than the standard saturated set, is needed as we want to classify the gauge-invariant ideals of all twisted relative Cuntz-Krieger algebras, not just the twisted Cuntz-Krieger algebras. In Section~\ref{chptr:idealstohereditarysets} given an ideal $I\subset C^*(\Lambda,c;\mathcal{E})$, we find a relatively saturated and hereditary set $H_I$ and a satiated set $\mathcal{B}_I$. In Section~\ref{chptr:Quotients}, given a relatively saturated and hereditary subset $H$ and a satiated set $\mathcal{B}$, we construct a gauge-invariant ideal $I_{H,\mathcal{B}}$. We show that the quotient $C^*(\Lambda,c;\mathcal{E})/I_{H,\mathcal{B}}$ is canonically isomorphic to a twisted relative Cuntz-Krieger algebra of a subgraph of $\Lambda$. In Section~\ref{chapt:result} we show that every gauge-invariant ideal has the form $I_{H,\mathcal{B}}$; that is, we give a complete graph-theoretic description of the gauge-invariant ideals in $C^*(\Lambda,c;\mathcal{E})$.

\section{Main results}

The first major result, as was mentioned in \eqref{eq:satiatedgapprojintro}, is Theorem~\ref{th:nonzerogapprojection}: for $E\in \FE(\Lambda)$ we have
$$E\in \overline{\mathcal{E}} \text{ if and only if } Q(s_\mathcal{E})^E =0.$$

Let $b,c\in \underline{Z}^2(\Lambda,\mathbb{T})$. If $\left\{ t_\lambda : \lambda \in \Lambda \right \}$ is a relative Cuntz-Krieger $(\Lambda,b;\mathcal{E})$-family and if $\left\{ t_\lambda' : \Lambda \in \Lambda  \right \}$ is a relative Cuntz-Krieger $(\Lambda,c;\mathcal{E})$-family. Then Theorem~\ref{th:coreiso} gives us conditions under which
$$\overline{\linspan} \left\{ t_\lambda t_\mu^* : \lambda ,\mu\in \Lambda, d(\lambda)=d(\mu) \right \}  \cong \overline{\linspan}  \left\{ t_\lambda' t_\mu^{\prime*} : \lambda,\mu \in \Lambda, d(\lambda)=d(\mu) \right \} .$$
This Theorem is crucial in the proof of the gauge-invariant uniqueness theorem, although in the proof we only need the situation where $b$ are $c$ are identical. The case where $b\neq c$ has found application in computing the $K$-theory of twisted Cuntz-Krieger algebras associated to $k$-graphs \cite{KPS2012c}.

Theorem~\ref{th:gaugeinvariant} is a version of an Huef and Raeburn's gauge-invariant uniqueness theorem for twisted relative Cuntz-Krieger algebras. It is an essential tool in giving a graph-theoretic description of the gauge-invariant ideals of twisted relative Cuntz-Krieger algebras. The proof of this theorem for relative algebras is more difficult than the non-relative case (see \cite{S20061}), and even more so in the twisted relative case. The main technical result needed is Theorem~\ref{th:nonzerogapprojection} discussed above.

In Chapter~\ref{chptr:Gauge-invariant_ideals} we present our main result, a complete graph-theoretic description of the gauge-invariant ideal structure of each twisted relative Cuntz-Krieger algebra. This result extends the corresponding result of (\cite{S20062}, Theorem 5.5) for the Cuntz-Krieger algebra in two ways: here we consider all relative Cuntz-Krieger algebras, which includes the Cuntz-Krieger algebra; and we also incorporate the 2-cocycle.

\chapter{Higher-rank graphs} %
\label{chptr:Higher-rank graphs}

In this chapter we recall the definition of a $k$-graph (higher-rank graph). We establish the standard notation for dealing with $k$-graphs.

\section{Basic category theory}

Higher-rank graphs are defined using the language of categories. The following two definitions are taken from \cite{Mac1971}.
\begin{definition}
A \emph{small category} $\mathcal{C}$ is a sextuple $(\Obj(\mathcal{C}),\Mor(\mathcal{C}),\dom,\cod,\id,\circ)$ consisting of:
\begin{itemize}
\item[(1)] sets $\Obj(\mathcal{C})$ and $\Mor(\mathcal{C})$, called the \emph{object} and \emph{morphism} sets;
\item[(2)] the \emph{domain} and \emph{codomain} functions $\dom,\cod: \Mor(\mathcal{C}) \rightarrow \Obj(\mathcal{C})$;
\item[(3)] the \emph{identity} function $\id: \Obj(\mathcal{C}) \rightarrow \Mor(\mathcal{C})$; and
\item[(4)] the \emph{composition} function $\circ: \Mor(\mathcal{C}) \times_{\Obj(\mathcal{C})} \Mor(\mathcal{C}) \rightarrow \Mor(\mathcal{C})$, where
$\Mor(\mathcal{C}) \times_{\Obj(\mathcal{C})} \Mor(\mathcal{C})=\left\{(f,g)\in \Mor(\mathcal{C})^2: \dom(f)=\cod(g) \right \}$ is the set of \emph{composable pairs} in $\Mor(\mathcal{C})$;
\end{itemize}
 and satisfying
\begin{itemize}
\item[(1)] $\dom\left( \id(a)\right)=a=\cod\left( \id(a)\right)$ for all $a\in \Obj(\mathcal{C})$;
\item[(2)] $\dom(f \circ g)=\dom(g)$ and $\cod(f \circ g)=\cod(f)$ for all $(f,g)\in  \Mor(\mathcal{C}) \times_{\Obj(\mathcal{C})} \Mor(\mathcal{C})$;
\item[(3)] the \emph{unit law}, $\id\left( \cod(f)\right) \circ f=f=f \circ \id\left( \dom(f) \right)$ for all $f\in \Mor(\mathcal{C})$; and
\item[(4)] \emph{associativity}, $\left( f \circ g \right) \circ h = f \circ \left( g \circ h \right)$.
\end{itemize}
We say the category $\mathcal{C}$ is countable if $\Mor(\mathcal{C})$ is countable.
\end{definition}

Since the unit law implies that $\dom$ and $\cod$ are surjective, if $\mathcal{C}$ is countable, then $\Obj(\mathcal{C})$ is also countable.

\begin{definition}
A \emph{covariant functor} $T$ from a category $\mathcal{C}$ to a category $\mathcal{B}$ is a pair of functions (both denoted $T$); an \emph{object function} $T: \Obj(\mathcal{C}) \rightarrow \Obj(\mathcal{B})$, and a \emph{morphism function} $T: \Mor(\mathcal{C}) \rightarrow \Mor( \mathcal{B})$ satisfying:
\begin{itemize}
\item[(1)] $\dom\left( T(f) \right) =T \left( \dom(f) \right)$ and $\cod\left(T(f)\right)=T\left(\cod(f)\right)$ for all $f\in \Mor(\mathcal{C})$;
\item[(2)] $T\left( \id(a) \right) = \id\left( T(a)\right)$ for all $a\in \Obj(\mathcal{C})$; and
\item[(3)] $T(f) \circ T(g)=T(f \circ g)$ for all $(f,g) \in \Mor(\mathcal{C}) \times_{\Obj(\mathcal{C})} \Mor(\mathcal{C})$.
\end{itemize}
\end{definition}

We regard the morphisms of a category as arrows connecting their domain object to their codomain object. Composition of morphisms then becomes concatenation of arrows. Informally, a covariant functor is a map between the arrows of one category to the arrows of another category that preserves connectivity. There is also a notion of a \emph{contravariant functor}. Here we will only be dealing with covariant functors, and so we will just refer to them as \emph{functors}.

Throughout this thesis $\mathbb{N}$ denotes the set $\left\{ 0,1,2,3,4, \cdots\right \}$. We will write $\mathbb{N}\setminus \left\{ 0 \right \}$ for the usual set of natural numbers. If $A$ and $B$ are sets, we write $A\subset B$ if each element of $A$ is a member of $B$; we write $A\subsetneq B$ if $A$ is strictly contained in $B$.

\begin{example}
A monoid $(S,e,\cdot)$ can be regarded as the morphisms of a category $\mathcal{C}$ with a single object $a_0$; that is $\Obj(\mathcal{C})=\left\{a_0\right\}$, $\Mor(\mathcal{C})=S$, $\dom(s)=\cod(s)=a_0$ for all $s\in S$, $\id(a_0)=e$, and $s\circ t =s\cdot t$ for all $s,t\in S$. For us, the most important category arising from a monoid is $\mathbb{N}^k$ under addition for $k\in \mathbb{N}\setminus \left\{ 0 \right \}$.
\end{example}

\section{Definitions and notation}

In this section we recall the definition of a $k$-graph (higher-rank graph) and establish the standard notation for dealing with them.

We regard $\mathbb{N}^k$ as a monoid under addition, with additive identity $0$. We define a partial order $\leq$ by $n\leq m$ if and only if $m-n\in \mathbb{N}^k$. We write $n<m$ if $m-n\in \mathbb{N}^k \setminus \left\{ 0 \right \}$. Fix $m,n\in \mathbb{N}^k$. We write $n_i$ for the $i^{\text{th}}$ coordinate of $n$. We write $m\vee n$ for the coordinate maximum of $m$ and $n$ and $m \wedge n$ for there coordinate minimum; that is, $(m\vee n )_i=\max\left\{ m_i,n_i \right \}$ and $(m\wedge n)_i=\min\left\{ m_i, n_i \right \}$.

\begin{definition}[\cite{KP2000}, Definition 1.1]\label{def:KPgraphs}
A $k$\emph{-graph} (or higher-rank graph) is a pair $(\Lambda,d)$ that consists of a countable category $\Lambda$ and a functor $d:\Lambda \rightarrow \mathbb{N}^k$, which satisfies the \emph{factorisation property}: for every $\lambda \in \Mor\left(\Lambda\right)$ and $n,m\in \mathbb{N}^k$ with $d(\lambda)=m+n$, there are unique elements $\mu,\nu \in \Mor\left(\Lambda\right)$ such that $\lambda = \mu \nu$ and $d(\mu)=m$, $d(\nu)=n$.
\end{definition}

Suppose $\lambda \in \Mor(\Lambda)$ and $d(\lambda)=l$. If $0 \leq n \leq m \leq l$, then two applications of the factorisation property ensure there are unique morphisms $\mu, \nu, \tau \in \Mor(\Lambda)$ such that $\lambda = \mu \nu \tau$ and $d(\mu)=n$, $d(\nu)=m-n$, $d(\tau)=l-m$. We write $\lambda(0,n)$ for $\mu$, $\lambda(n,m)$ for $\nu$ and $\lambda(m,l)$ for $\tau$.

Let $(\Lambda,d)$ be a $k$-graph. We claim that \begin{equation}\left\{ \lambda \in \Mor(\Lambda):d(\lambda)=0 \right\} = \left\{ \id(v):v\in \Obj(\Lambda)\right\}\label{eq:morandobj}.\end{equation} To see this, recall that $d$ is a functor and $\mathbb{N}^k$ has just one object $a_o$. Thus, for each $v\in \Obj(\Lambda)$ we have $d(\id(v))=\id(d(v))=\id(a_o)=0$. This shows the $``\supset"$ containment in \eqref{eq:morandobj}. Fix $\lambda \in \Mor(\Lambda)$ with $d(\lambda)=0$. Write $v$ for $\cod(\lambda)$. As $0+0=0$, the factorisation property implies that there exist unique morphisms $\mu,\nu\in \Mor(\Lambda)$ such that $d(\mu)=d(\nu)=0$ and $\lambda =\mu \nu$. Both $\mu=\id(v)$, $\nu =\lambda$ and $\mu=\lambda$, $\nu=\id(\dom(\lambda))$ provide such factorisations. The uniqueness of factorisations implies that $\lambda =\id(v)$, showing the $``\subset "$ containment in \eqref{eq:morandobj}.

Due to \eqref{eq:morandobj}, we regard $\Obj(\Lambda)$ as a subset of $\Mor(\Lambda)$ and we regard $\Lambda$ as consisting entirely of its morphisms; that is, we write $\lambda \in \Lambda$ for $\lambda \in \Mor(\Lambda).$

For $n\in \mathbb{N}^k$, we write
$$\Lambda^n= \left\{ \lambda \in \Lambda : d(\lambda)=n\right\}.$$
In particular, $\Lambda^0=\left\{ \lambda \in \Lambda : d(\lambda)=0\right \}$, which we regard as the collection of vertices via \eqref{eq:morandobj}. We write $s: \Lambda \rightarrow \Lambda^0$ for the domain function $\dom$, and $r:\Lambda \rightarrow \Lambda^0$ for the codomain function $\cod$. Given $\lambda \in \Lambda$, we call $s(\lambda)$ the source of $\lambda$ and $r(\lambda)$ the range of $\lambda$. For $\lambda \in \Lambda$ and $E\subset \Lambda$, we write $\lambda E$ for $\left\{ \lambda \mu : \mu\in E, s(\lambda)=r(\mu)\right\}$ and $E\lambda$ for $\left\{ \mu \lambda: \mu \in E, s(\mu)=r(\lambda)\right\}$. We generally reserve lower-case Greek letters $(\lambda,\mu,\nu, \sigma,\tau, \zeta,\eta,...)$ for paths in $\Lambda \setminus \Lambda^0$. Notable exceptions include: $\delta$, which we use for the Kronecker delta; $\xi$, which we use for basis elements of Hilbert spaces; and $\gamma$, which we use for the gauge-action (see Section~\ref{chptr:gauge_action}). Lower-case English letters $(u,v,w,...)$ typically denote elements of $\Lambda^0$.

\begin{definition}
Let $(\Lambda,d)$ be a $k$-graph. For $\mu,\nu \in \Lambda$ we define
$$\Lambda^{\text{min}}(\mu,\nu):= \left \{ (\alpha,\beta)\in \Lambda \times \Lambda : \mu\alpha=\nu \beta \text{ and } d(\mu\alpha)=d(\mu)\vee d(\nu) \right \}.$$
A $k$-graph $(\Lambda,d)$ is said to be \emph{finitely aligned} if the set $\Lambda^{\textrm{min}}(\mu,\nu)$ is finite for each $\mu,\nu \in \Lambda$.
\end{definition}

\begin{definition}\label{def:MCEEXT}
Let $(\Lambda,d)$ be a $k$-graph. For $\mu,\nu \in \Lambda$, define the set of \emph{minimal common extensions} of $\mu$ and $\nu$, denoted $\MCE_\Lambda(\mu,\nu)$, to be collection of $\lambda\in \Lambda$ such that $\lambda= \mu \mu' = \nu \nu'$ for some $\mu',\nu'\in \Lambda$ with $d(\lambda)=d(\mu)\vee( \nu)$. If the $k$-graph $\Lambda$ is clear from context, we write $\MCE$ for $\MCE_\Lambda$. Fix $v\in \Lambda^0$. For $E\subset v \Lambda$ and $\mu\in v\Lambda$ we define $$\Ext(\mu;E):=\left\{ \alpha : (\alpha,\beta)\in \Lambda^{\text{min}}(\mu,\lambda) \text{ for some } \lambda \in E \right \}.$$
\end{definition}

\begin{proposition}\label{prop:MCE}
Let $(\Lambda,d)$ be a finitely aligned $k$-graph and suppose $\mu,\nu \in \Lambda$. The map $$\phi:\Lambda^{\text{min}}(\mu,\nu) \rightarrow \MCE(\mu,\nu)$$ given by $\phi(\mu',\nu') = \mu \mu'$ is a bijection. In particular, $\MCE(\mu,\nu)$ is finite for all $\mu,\nu \in \Lambda$.
\end{proposition}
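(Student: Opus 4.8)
The plan is to verify that $\phi$ is well-defined, then establish surjectivity and injectivity separately, with the factorisation property doing all the real work; finiteness then drops out immediately from the hypothesis that $\Lambda$ is finitely aligned.

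First I would check that $\phi$ actually lands in $\MCE(\mu,\nu)$. If $(\mu',\nu')\in \Lambda^{\min}(\mu,\nu)$, then by definition $\mu\mu'=\nu\nu'$ and $d(\mu\mu')=d(\mu)\vee d(\nu)$, so $\lambda:=\mu\mu'$ is a common extension of $\mu$ and $\nu$ of degree $d(\mu)\vee d(\nu)$, i.e.\ $\lambda\in \MCE(\mu,\nu)$. Surjectivity is then essentially the reverse reading of the two definitions: given $\lambda\in \MCE(\mu,\nu)$, choose $\mu',\nu'$ with $\lambda=\mu\mu'=\nu\nu'$ and $d(\lambda)=d(\mu)\vee d(\nu)$; this pair satisfies exactly the two conditions defining $\Lambda^{\min}(\mu,\nu)$, and $\phi(\mu',\nu')=\mu\mu'=\lambda$.

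The crux is injectivity, and here I would extract a left-cancellation statement from the factorisation property. Suppose $\phi(\mu_1',\nu_1')=\phi(\mu_2',\nu_2')$, so that $\mu\mu_1'=\mu\mu_2'=:\lambda$. Applying the functor $d$ gives $d(\mu_1')=d(\lambda)-d(\mu)=d(\mu_2')$. Since $d(\lambda)=d(\mu)+d(\mu_1')$, the factorisation property produces a \emph{unique} pair $\sigma,\tau$ with $\lambda=\sigma\tau$, $d(\sigma)=d(\mu)$ and $d(\tau)=d(\mu_1')$; both $(\mu,\mu_1')$ and $(\mu,\mu_2')$ are such pairs, so uniqueness forces $\mu_1'=\mu_2'$. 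Running the identical argument on the other factorisation $\lambda=\nu\nu_1'=\nu\nu_2'$, using $d(\nu_1')=d(\lambda)-d(\nu)=d(\nu_2')$, gives $\nu_1'=\nu_2'$, whence $(\mu_1',\nu_1')=(\mu_2',\nu_2')$.

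Finally, since $\phi$ is a bijection and $\Lambda^{\min}(\mu,\nu)$ is finite by the assumption that $\Lambda$ is finitely aligned, $\MCE(\mu,\nu)$ is finite. I do not anticipate any serious obstacle: the only real subtlety is recognising that left cancellation is not a separate axiom but a consequence of the uniqueness clause in the factorisation property, and that is precisely the point to treat carefully rather than to assume.
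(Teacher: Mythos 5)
Your proof is correct and follows essentially the same route as the paper: surjectivity by reading off a preimage from the definition of $\MCE(\mu,\nu)$ (the paper writes it explicitly as $(\lambda(d(\mu),N),\lambda(d(\nu),N))$), injectivity via the uniqueness clause of the factorisation property applied to both $\mu\mu'=\mu\mu''$ and $\nu\nu'=\nu\nu''$, and finiteness from finite alignment. Your explicit observation that left cancellation is a consequence of the factorisation property, not a separate axiom, is exactly the point the paper's terser argument relies on.
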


\begin{proof}
Fix $\lambda \in \MCE(\mu,\nu)$. Let $N=d(\mu) \vee d(\nu)$. Then $(\lambda(d(\mu),N),\lambda(d(\nu),N)) \in \Lambda^{\text{min}}(\mu,\nu)$, and $\phi\left(\lambda(d(\mu),N),\lambda(d(\nu),N)\right)=\mu \lambda(d(\mu),N)=\lambda$. So $\phi$ is surjective. To see that it is injective suppose $\phi(\mu', \nu') = \phi(\mu'' , \nu'')$. The factorisation property implies that $\mu' = \mu''$. Now as $\nu \nu' = \mu \mu' = \mu \mu'' = \nu \nu''$, another application of the factorisation property shows that $(\mu',\nu')=(\mu'',\nu'')$. So $\phi$ is injective and hence a bijection. Since $\Lambda$ is finitely aligned, $|\MCE(\mu,\nu)|=|\Lambda^{\text{min}}(\mu,\nu)|<\infty$ for all $\mu,\nu \in \Lambda$.
\end{proof}

\section{Graph morphisms}

For each $m\in \left(\mathbb{N}\cup \left\{ \infty \right \}\right)^k$, we define a $k$-graph $\Omega_{k,m}$. For a $k$-graph $(\Lambda,d)$, we define graph morphisms $x: \Lambda \rightarrow \Omega_{k,m}$, which we regard as potentially-infinite paths in $\Lambda$. We use such paths for our analysis of the ideal structure of $\mathcal{T}C^*(\Lambda,c)$ in Section~\ref{chptr:Gap_projections}.
\begin{example}[\cite{KP2000}, Examples 1.7(ii)]
Let $k\in \mathbb{N} \setminus \left \{ 0 \right \}$ and $m\in \left( \mathbb{N} \cup \left \{ \infty \right\} \right)^k$. Define
$$\Obj(\Omega_{k,m}):= \left \{ n\in \mathbb{N}^k: n_i \leq m_i \text{ for all } i\right \},$$
$$\Mor(\Omega_{k,m}):=\left\{ (n_1,n_2)\in \Obj(\Omega_{k,m})\times \Obj(\Omega_{k,m}): n_1\leq n_2 \right \},$$
$$\cod((n_1,n_2)):=n_1 \quad \text{and} \quad \dom((n_1,n_2)):=n_2.$$
For $n_1\leq n_2\leq n_3\in \Obj(\Omega_{k,m})$, we define $\id(n_1):=(n_1,n_1)$ and $(n_1,n_2)\circ (n_2,n_3)=(n_1,n_3)$. Then $\Omega_{k,m}$ is a countable category. Define $d((n_1,n_2)):=n_2-n_1$. The pair $(\Omega_{k,m},d)$ is a $k$-graph.
\end{example}

\begin{definition}\label{def:graphmorphisms}
Let $(\Lambda_1,d_1)$ and $(\Lambda_2,d_2)$ be a $k$-graphs. A \emph{graph morphism} is a functor $x: \Lambda_1\rightarrow \Lambda_2$ such that $d_2\circ x = d_1$. We define $$\Lambda^*=\left\{ x: \Omega_{k,m} \rightarrow \Lambda : m\in (\mathbb{N}\cup \left\{ \infty\right \})^k,\text{ } x \text{ is a graph morphism} \right \}.$$
For $x\in \Lambda^*$, we write $r(x)$ for $x(0)$ and $d(x)$ for $m$.
\end{definition}

For $m\in \mathbb{N}^k$, there is a bijection between $\Lambda^m$ and the set of graph morphisms $x:\Omega_{k,m}\rightarrow \Lambda^m$: if $d(\lambda)=m$, set $x_\lambda(p,q)=\lambda(p,q)$ (see the paragraph following Definition~\ref{def:KPgraphs}). The uniqueness clause in factorisation property implies that $x_\lambda(p,q)$ well-defined. If $m$ contains some infinite coordinates, then graph morphisms $x:\Omega_{k,m}\rightarrow \Lambda^m$ are thought of as partially infinite paths in $\Lambda$.

\chapter{Twisted relative Cuntz-Krieger algebras} %
\label{chptr:Twisted_relative_Cuntz-Krieger_algebras}

For each finitely aligned $k$-graph $\Lambda$ and each $\mathbb{T}$-valued 2-cocycle $c$ on $\Lambda$, we investigate collections of partial isometries indexed by $\Lambda$ called Toeplitz-Cuntz-Krieger $(\Lambda,c)$-families. We show that there is a universal $C^*$-algebra $\mathcal{T}C^*(\Lambda,c)$, called the twisted Toeplitz algebra, generated by a Toeplitz-Cuntz-Krieger $(\Lambda,c)$-family $\left\{ s_\mathcal{T}(\lambda):\lambda \in \Lambda \right \}$. For each finite exhaustive set $E\subset v \Lambda$, where $v\in \Lambda^0$, we define the associated gap projection $Q(s_\mathcal{T})^E$ in $\mathcal{T}C^*(\Lambda,c)$. For a collection $\mathcal{E}$ of finite exhaustive sets, we investigate the associated twisted relative Cuntz-Krieger algebra $C^*(\Lambda,c;\mathcal{E})$. By definition, $C^*(\Lambda,c;\mathcal{E})$ is the quotient of $\mathcal{T}C^*(\Lambda,c)$ by the smallest closed ideal $J_\mathcal{E}$ containing all the gap projections associated to elements of $\mathcal{E}$. We define relative Cuntz-Krieger $(\Lambda,c;\mathcal{E})$-families, which are Toeplitz-Cuntz-Krieger $(\Lambda,c)$-families satisfying an additional relation, and show that $C^*(\Lambda,c;\mathcal{E})$ is universal for such a family $\left\{ s_\mathcal{E}(\lambda):\lambda \in \Lambda \right \}$. The gap projections associated to elements of $\mathcal{E}$ are, by definition of $J_\mathcal{E}$, zero in $C^*(\Lambda,c;\mathcal{E})$. Using a concrete representation of $\mathcal{T}C^*(\Lambda,c)$ on $\ell^2(\Lambda)$, we determine for which $\mathcal{E}$ the converse is true; that is, for which $\mathcal{E}$ we have if $Q(s_\mathcal{E})^E \in J_\mathcal{E}$ then $E\in \mathcal{E}$.

\section{Toeplitz-Cuntz-Krieger $(\Lambda,c)$-families}
\label{chptr:Toeplitz-Cuntz-Krieger_families}

We recall the notion of a $\mathbb{T}$-valued 2-cocycle on a $k$-graph $\Lambda$. We identity collections of partial isometries indexed by $\Lambda$ called Toeplitz-Cuntz-Krieger $(\Lambda,c)$-families, and study their basic properties. We construct a Toeplitz-Cuntz-Krieger $(\Lambda,c)$-family in $\mathcal{B}(\ell^2(\Lambda))$.

\begin{definition}
Let $(\Lambda,d)$ be a $k$-graph. A $\mathbb{T}$-valued \emph{2-cocycle} on $\Lambda$ is a map $$c: \left\{ (\lambda,\mu)\in\Lambda \times \Lambda : s(\lambda)=r(\mu) \right\}\rightarrow \mathbb{T}$$ that satisfies:
\begin{enumerate}
\item[(C1)] $c(\lambda, \mu)c(\lambda \mu, \nu)= c(\mu, \nu)c( \lambda, \mu \nu)$ whenever $s(\lambda)=r(\mu)$ and $s(\mu)=r(\nu)$; and
\item[(C2)] $c(\lambda,s(\lambda))=c(r(\lambda),\lambda)=1$ for all $\lambda \in \Lambda$.
\end{enumerate}
We call $(C1)$ the \emph{2-cocycle identity}. We write $\underline{Z}^2(\Lambda,\mathbb{T})$ for the set of all $\mathbb{T}$-valued 2-cocycles on $\Lambda$.
\end{definition}

\begin{definition}\label{def:toelambdacfamily}
Let $(\Lambda,d)$ be a finitely aligned $k$-graph and let $c\in\underline{Z}^2(\Lambda,\mathbb{T})$. A subset $\lbrace t_\lambda : \lambda \in \Lambda \rbrace$ of a $C^*$-algebra is a \emph{Toeplitz-Cuntz-Krieger} $(\Lambda,c)$-family if:
\begin{enumerate}
\item[(TCK1)] $\lbrace t_v : v\in \Lambda^0 \rbrace$ is a set of mutually orthogonal projections;
\item[(TCK2)] $t_{\mu}t_{\nu}=c(\mu,\nu) t_{\mu \nu}$ whenever $s(\mu)=r(\nu)$;
\item[(TCK3)] $t_{\lambda}^*t_{\lambda} = t_{s(\lambda)}$ for all $\lambda \in \Lambda$; and
\item[(TCK4)] $ t_\mu t^*_\mu t_\nu t^*_\nu
   = \displaystyle\sum_{\lambda \in \textrm{MCE}(\mu,\nu)} t_{\lambda} t^*_{\lambda}$ for all $\mu,\nu \in \Lambda$.
 \end{enumerate}

\end{definition}

The following lemma lists some direct consequences of Definition~\ref{def:toelambdacfamily}.
\begin{lemma}\label{handylemma}
Let $(\Lambda,d)$ be a finitely aligned $k$-graph and let $c\in\underline{Z}^2(\Lambda,\mathbb{T})$. Suppose that $\lbrace t_\lambda : \lambda \in \Lambda \rbrace$ is a Toeplitz-Cuntz-Krieger $(\Lambda,c)$-family. Then:
\begin{enumerate}

\item[(1)] $t_{\lambda}^* t_{\mu}=\displaystyle\sum_{(\alpha,\beta)\in \Lambda^{\textrm{min}}(\lambda,\mu)}\overline{c(\lambda,\alpha)}c(\mu,\beta) t_{\alpha} t_{\beta}^*$ for all $\lambda , \mu \in \Lambda$;

\item[(2)] the range projections $\lbrace t_\lambda t^*_\lambda : \lambda \in \Lambda \rbrace$ pairwise commute;

\item[(3)] if $s(\mu)\neq s(\nu)$ then $t_\mu t_\nu^*=0$;

\item[(4)] if $s(\mu)=s(\nu)$ and $s(\zeta)=s(\eta)$ then $$t_\mu t_\nu^* t_\eta t_\zeta^* =\sum_{(\nu',\eta')\in \Lambda^{\text{min}}(\nu,\eta) } c(\eta,\eta')c(\mu,\nu')\overline{c(\nu,\nu')}\overline{c(\zeta,\eta')} t_{\mu \nu'}t_ {\zeta \eta'}^* ;$$

\item[(5)] $C^*(\lbrace t_\lambda : \lambda \in \Lambda \rbrace)=\overline{\linspan}\lbrace t_\mu t^*_\nu : \mu,\nu \in  \Lambda, s(\mu)=s(\nu)  \rbrace;$

\item[(6)] $\lbrace t_\lambda t_\lambda^* : d(\lambda)=n \rbrace$ is a set of mutually orthogonal projections for each $n\in \mathbb{N}^k$; and

\item[(7)] For $\lambda \in \Lambda$, $t_\lambda \neq 0$ if and only if $t_{s(\lambda)}\neq 0$.
\end{enumerate}
\end{lemma}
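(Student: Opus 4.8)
The plan is to derive all seven statements from the four defining relations, using throughout the elementary facts that each $t_\lambda$ is a partial isometry with range projection $t_\lambda t_\lambda^*$ and source projection $t_{s(\lambda)}=t_\lambda^* t_\lambda$, together with the normalisations $t_\mu t_{s(\mu)}=t_\mu$ and $t_{r(\mu)} t_\mu = t_\mu$ (the latter from (TCK2) and (C2), since $c(r(\mu),\mu)=1$). The technical heart is part (1), from which (4) and (5) will follow; parts (2), (3), (6) and (7) are short and independent.

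For part (1), I would first observe that $t_\lambda^* = t_\lambda^*\,(t_\lambda t_\lambda^*)$ and $t_\mu = (t_\mu t_\mu^*)\,t_\mu$, so that
$$ t_\lambda^* t_\mu = t_\lambda^*\,\bigl(t_\lambda t_\lambda^* t_\mu t_\mu^*\bigr)\, t_\mu. $$
Applying (TCK4) to the middle factor and then the bijection $\Lambda^{\min}(\lambda,\mu)\to\MCE(\lambda,\mu)$ of Proposition~\ref{prop:MCE}, I would write each summand as $t_\lambda^* t_{\lambda\alpha} t_{\mu\beta}^* t_\mu$, where $\sigma=\lambda\alpha=\mu\beta$ corresponds to $(\alpha,\beta)\in\Lambda^{\min}(\lambda,\mu)$. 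Rewriting $t_{\lambda\alpha}=\overline{c(\lambda,\alpha)}\,t_\lambda t_\alpha$ and $t_{\mu\beta}^*=c(\mu,\beta)\,t_\beta^* t_\mu^*$ via (TCK2), collapsing $t_\lambda^* t_\lambda = t_{s(\lambda)}$ and $t_\mu^* t_\mu = t_{s(\mu)}$ via (TCK3), and absorbing these source projections through $t_{s(\lambda)} t_\alpha = t_\alpha$ and $t_\beta^* t_{s(\mu)} = t_\beta^*$ (valid since $r(\alpha)=s(\lambda)$, $r(\beta)=s(\mu)$ and $c(r(\cdot),\cdot)=1$), yields exactly the coefficient $\overline{c(\lambda,\alpha)}\,c(\mu,\beta)$ on $t_\alpha t_\beta^*$. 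The main obstacle is purely bookkeeping: keeping the cocycle factors and their conjugates straight through these manipulations. No input beyond (C2) is needed to discharge the boundary terms.

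Part (2) is immediate from (TCK4) together with the symmetry $\MCE(\mu,\nu)=\MCE(\nu,\mu)$, which makes the right-hand side of (TCK4) symmetric in $\mu,\nu$. For (3) I would factor $t_\mu t_\nu^* = t_\mu t_{s(\mu)} t_{s(\nu)} t_\nu^*$ and invoke the mutual orthogonality of the vertex projections in (TCK1). Part (6) follows by noting that for $d(\lambda)=d(\mu)=n$ any $\sigma\in\MCE(\lambda,\mu)$ satisfies $d(\sigma)=n\vee n=n$, forcing $\sigma=\lambda=\mu$; so for $\lambda\neq\mu$ the set $\MCE(\lambda,\mu)$ is empty and (TCK4) gives orthogonality of the two range projections. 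Part (7) is the $C^*$-identity $\|t_\lambda\|^2=\|t_\lambda^* t_\lambda\|=\|t_{s(\lambda)}\|$ combined with (TCK3).

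Finally, (4) follows by substituting the expansion of $t_\nu^* t_\eta$ from part (1) into $t_\mu\,(t_\nu^* t_\eta)\,t_\zeta^*$ and applying (TCK2) to each term $t_\mu t_{\nu'}$ and $t_{\eta'}^* t_\zeta^*$, again tracking the cocycle factors; the products $\mu\nu'$ and $\zeta\eta'$ are defined precisely because $s(\mu)=s(\nu)$, $s(\zeta)=s(\eta)$ and $(\nu',\eta')\in\Lambda^{\min}(\nu,\eta)$. For (5), part (4) shows that $\linspan\{t_\mu t_\nu^*:s(\mu)=s(\nu)\}$ is closed under multiplication (the sources of $\mu\nu'$ and $\zeta\eta'$ agree since $s(\nu')=s(\eta')$), it is evidently self-adjoint, and it contains each generator via $t_\lambda=t_\lambda t_{s(\lambda)}^*$; taking closures identifies it with $C^*(\{t_\lambda:\lambda\in\Lambda\})$.
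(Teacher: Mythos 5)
Your proposal is correct and follows essentially the same route as the paper: part (1) via the partial-isometry factorisation $t_\lambda^*t_\mu = t_\lambda^*(t_\lambda t_\lambda^* t_\mu t_\mu^*)t_\mu$, (TCK4), Proposition~\ref{prop:MCE}, and then (TCK2), (TCK3), (C2) to track the cocycle factors; (2), (3), (6), (7) by the same short arguments; and (4), (5) by substituting (1) and checking closure under products and adjoints. The only cosmetic difference is that you absorb the source projections via $t_{s(\lambda)}t_\alpha = t_\alpha$ directly, where the paper writes out $c(s(\lambda),\alpha)t_{s(\lambda)\alpha}$ before invoking (C2) — the same computation.
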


\begin{proof}
For $(1)$, fix $\lambda,\mu \in \Lambda$. We calculate
\begin{align*}
t^*_\lambda t_\mu &= t^*_\lambda t_\lambda t^*_\lambda t_\mu t^*_\mu t_\mu \qquad \text{since } t_\lambda \text{ and } t_\mu \text{ are partial isometries}\\
&= t^*_\lambda \left( \sum_{\sigma\in \MCE(\lambda,\mu)} t_\sigma t^*_\sigma \right) t_\mu \qquad \text{ by (TCK4)}\\
&= t^*_\lambda \left( \sum_{(\alpha,\beta)\in \Lambda^{\text{min}}(\lambda,\mu)} t_{\lambda \alpha} t^*_{\mu \beta}\right) t_\mu\qquad \text{ by Proposition~\ref{prop:MCE}}\\
&=t^*_\lambda \left( \sum_{(\alpha,\beta)\in \Lambda^{\text{min}}(\lambda,\mu)} \overline{c(\lambda,\alpha)}c(\mu,\beta) t_{\lambda} t_{\alpha} t_{\beta}^*  t^*_\mu   \right) t_\mu \qquad \text{by (C1)}\\
&=  \sum_{(\alpha,\beta)\in \Lambda^{\text{min}}(\lambda,\mu)} \overline{c(\lambda,\alpha)}c(\mu,\beta) t_{s(\lambda)} t_{\alpha} t_{\beta}^*  t_{s(\mu)}    \qquad \text{by (TCK3)}\\
&=  \sum_{(\alpha,\beta)\in \Lambda^{\text{min}}(\lambda,\mu)} \overline{c(\lambda,\alpha)}c(\mu,\beta) c(s(\lambda),\alpha) \overline{c(s(\mu),\beta)} t_{s(\lambda)\alpha} t_{s(\mu) \beta}^*  \qquad  \text{by (TCK2)}  \\
&=  \sum_{(\alpha,\beta)\in \Lambda^{\text{min}}(\lambda,\mu)} \overline{c(\lambda,\alpha)}c(\mu,\beta)  t_{\alpha} t_{\beta}^*  \qquad\text{by (C2).}    \\
\end{align*}
Since $\MCE(\lambda,\mu)=\MCE(\mu,\lambda)$ for all $\lambda , \mu \in \Lambda$, statement $(2)$ is a consequence of (TCK4). For (3), fix $\mu,\nu \in \Lambda$ such that $s(\mu)\neq s(\nu)$. It follows from (TCK1) that $t_\lambda t_\mu^*=t_\lambda t_\lambda^*  t_\lambda t_\mu^*  t_\mu t_\mu^*=t_\lambda t_{s(\lambda)} t_{s(\mu)} t_\mu^*=0$, giving (3).

For $(4)$, fix $\mu ,\nu,\eta,\zeta \in \Lambda$ such that $s(\mu)=s(\nu)$ and $s(\zeta)=s(\eta)$. Then
\begin{align*}
t_\mu t_\nu^* t_\eta t_\zeta^* &= t_\mu \left(  \sum_{(\nu',\eta' )\in \Lambda^{\text{min}}(\nu,\eta)} \overline{c(\nu,\nu')}c(\eta,\eta')t_{\nu'}t^*_{\eta'} \right)  t_\zeta^*\qquad \text{by part (1)}\\
&=  \sum_{(\nu',\eta' )\in \Lambda^{\text{min}}(\nu,\eta)} \overline{c(\nu,\nu')}c(\eta,\eta')t_\mu t_{\nu'} ( t_\zeta t_{\eta'})^*\\
&=  \sum_{(\nu',\eta' )\in \Lambda^{\text{min}}(\nu,\eta)}c(\mu,\nu')c(\eta,\eta')\overline{c(\zeta,\eta')c(\nu,\nu')} t_{\mu\nu'} t_{\zeta \eta'}^* \qquad \text{by (C1).}\\
\end{align*}

We now show (5). We have $$X:=\linspan\lbrace t_\mu t^*_\nu : \mu,\nu \in \Lambda, s(\mu)= s(\nu) \rbrace \subset C^*(\lbrace t_\lambda : \lambda \in \Lambda \rbrace),$$ and so it suffices to prove that $\overline{X}$ is a $C^*$-algebra. For this we need only prove that $X$ is closed under multiplication and taking adjoints. Since $(t_\mu t_\nu^*)^*=t_\nu t_\mu^*$ for each $\mu,\nu \in \Lambda$, the subspace $X$ is closed under adjoints. Part (4) implies that $X$ is closed under multiplication.

Fix $\lambda \in \Lambda$. We have $(t_\lambda t_\lambda^*)^2=t_\lambda t_{s(\lambda)} t_\lambda^*=c(\lambda,s(\lambda))t_\lambda t_\lambda^*=t_\lambda t_\lambda^*=(t_\lambda t_\lambda^*)^*$, so $t_\lambda t_\lambda^*$ is a projection. So for (6), it suffices to show that for each $n\in \mathbb{N} ^k$ the projections $t_\lambda t_\lambda^*$, $d(\lambda)=n$ are mutually orthogonal. Fix $n\in \mathbb{N} ^k$. Suppose that $d(\mu)=d(\nu)=n$. We show that $\MCE(\mu,\nu)=\left \{ \mu \right \}$ if $\mu=\nu$ and $\MCE(\mu,\nu)=\emptyset$ otherwise. To see this suppose $\lambda \in \MCE(\mu,\nu)$. There exist $\mu',\nu'\in \Lambda$ such that $\lambda = \mu \mu'= \nu \nu'$. Since $\lambda \in \MCE(\mu,\nu)$ we have $d(\lambda)=d(\mu) \vee d(\nu)=n$. The factorisation property therefore implies that $\mu'=\nu'=s(\mu)$, so $\lambda=\mu=\nu$ as claimed. Now (TCK4) gives $t_\mu t_\mu^* t_\nu t_\nu^*=\delta_{\mu,\nu}t_\mu t_\mu^*$. For (7), the $C^*$-identity implies that for $\lambda \in \Lambda$ $$\Vert t_{s(\lambda)} \Vert=\Vert t_\lambda^* t_\lambda \Vert =\Vert t_\lambda \Vert^2.$$
So for $\lambda \in \Lambda$, $t_\lambda \neq 0$ if and only if $t_{s(\lambda)}\neq0$.
\end{proof}

\begin{remark}\label{re:handyre} If $\left\{ t_\lambda : \lambda \in \Lambda \right \}$ is a Toeplitz-Cuntz-Krieger $(\Lambda,c)$-family, then $t_\lambda t_\mu^* \neq 0 $ if and only if $s(\lambda)=s(\mu)$ and $t_{s(\mu)}\neq 0$. To see this we first need a calculation. Suppose that $s(\lambda)=s(\mu)$, two applications of the $C^*$-identity gives
\begin{equation}\Vert t_\lambda t_\mu^* \Vert ^2=\Vert t_\mu t_{s(\lambda)}t_\mu^* \Vert=\Vert t_\mu^* \Vert^2=\Vert t_\mu \Vert^2.\label{eq:twocstarsaps}\end{equation}
Suppose $t_\lambda t_\mu^* \neq 0$. Lemma~\ref{handylemma}(3) implies that $s(\lambda)=s(\mu)$. Equation~\eqref{eq:twocstarsaps} implies that $t_\mu \neq 0$, and so $t_{s(\mu)}\neq 0$ by Lemma~\ref{handylemma}(7). Suppose $s(\lambda)=s(\mu)$ and $t_{s(\mu)} \neq 0$. Lemma~\ref{handylemma}(7) implies that $t_\mu \neq 0$. Equation~\ref{eq:twocstarsaps} implies that $t_\lambda t_\mu^*\neq 0$.

\end{remark}

The following Lemma shows that replacing (TCK3) and (TCK4) in Definition~\ref{def:toelambdacfamily} by the relation in Lemma~\ref{handylemma}(1) would result in an equivalent definition of a Toeplitz-Cuntz-Krieger $(\Lambda,c)$-family.

\begin{lemma}\label{le:equivofTCK}
Let $(\Lambda,d)$ be a finitely aligned $k$-graph and let $c\in \underline{Z}^2(\Lambda,\mathbb{T})$. Suppose that $\left \{ t_\lambda : \lambda \in \Lambda \right \}$ is a collection of partial isometries in a $C^*$-algebra satisfying (TCK1) and (TCK2). Then $\left \{ t_\lambda : \lambda \in \Lambda \right \}$ is a Toeplitz-Cuntz-Krieger $(\Lambda,c)$-family if and only if \begin{equation}t_{\lambda}^* t_{\mu}=\sum_{(\alpha,\beta)\in \Lambda^{\textrm{min}}(\lambda,\mu)}\overline{c(\lambda,\alpha)}c(\mu,\beta) t_{\alpha} t_{\beta}^*\label{eq:TCK3.1}\end{equation} for all $\lambda , \mu \in \Lambda$.
\end{lemma}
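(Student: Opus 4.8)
The plan is to prove the two implications separately, keeping (TCK1) and (TCK2) as standing hypotheses throughout. The forward implication is immediate: if $\{t_\lambda\}$ is a Toeplitz-Cuntz-Krieger $(\Lambda,c)$-family it satisfies (TCK3) and (TCK4), and then \eqref{eq:TCK3.1} is precisely the content of Lemma~\ref{handylemma}(1) (the only place the partial-isometry hypothesis is actually used). So all the work lies in the converse: assuming (TCK1), (TCK2) and \eqref{eq:TCK3.1}, I must recover (TCK3) and (TCK4).

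For (TCK3) I would specialise \eqref{eq:TCK3.1} to $\mu=\lambda$. A quick check shows $\Lambda^{\text{min}}(\lambda,\lambda)=\{(s(\lambda),s(\lambda))\}$: any $(\alpha,\beta)$ in it satisfies $d(\lambda\alpha)=d(\lambda)\vee d(\lambda)=d(\lambda)$, which forces $\alpha$ and $\beta$ to be the vertex $s(\lambda)$, and then $\lambda\alpha=\lambda\beta$ holds automatically. Hence \eqref{eq:TCK3.1} collapses to $t_\lambda^* t_\lambda=\overline{c(\lambda,s(\lambda))}c(\lambda,s(\lambda))\,t_{s(\lambda)}t_{s(\lambda)}^*$. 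Since $c(\lambda,s(\lambda))=1$ by (C2) and $t_{s(\lambda)}$ is a projection by (TCK1), this is exactly $t_\lambda^* t_\lambda=t_{s(\lambda)}$, which is (TCK3).

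For (TCK4) I would start from $t_\mu t_\mu^* t_\nu t_\nu^*=t_\mu(t_\mu^* t_\nu)t_\nu^*$ and substitute \eqref{eq:TCK3.1} (with $\lambda$ replaced by $\mu$) for the inner factor, producing a sum over $(\alpha,\beta)\in\Lambda^{\text{min}}(\mu,\nu)$ of $\overline{c(\mu,\alpha)}c(\nu,\beta)\,t_\mu t_\alpha t_\beta^* t_\nu^*$. Applying (TCK2) to recombine $t_\mu t_\alpha=c(\mu,\alpha)t_{\mu\alpha}$ and $t_\beta^* t_\nu^*=\overline{c(\nu,\beta)}\,t_{\nu\beta}^*$, the four cocycle scalars multiply to $|c(\mu,\alpha)|^2|c(\nu,\beta)|^2=1$ because $c$ is $\mathbb{T}$-valued; and since $\mu\alpha=\nu\beta$ for every $(\alpha,\beta)\in\Lambda^{\text{min}}(\mu,\nu)$, each term reduces to $t_{\mu\alpha}t_{\mu\alpha}^*$. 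I would then reindex via Proposition~\ref{prop:MCE}, whose bijection $\phi(\alpha,\beta)=\mu\alpha$ identifies $\Lambda^{\text{min}}(\mu,\nu)$ with $\MCE(\mu,\nu)$, turning the sum into $\sum_{\lambda\in\MCE(\mu,\nu)}t_\lambda t_\lambda^*$, which is (TCK4).

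The individual computations are short, so there is no single deep obstacle; the points needing care are the cancellation of the four cocycle factors, which works precisely because the cocycle is unimodular (and not because of the 2-cocycle identity (C1)), and the correct bookkeeping of the reindexing through Proposition~\ref{prop:MCE}. I would be sure to record the identity $\Lambda^{\text{min}}(\lambda,\lambda)=\{(s(\lambda),s(\lambda))\}$ explicitly, since it is the crux of recovering (TCK3).
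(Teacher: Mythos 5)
Your proof is correct and follows essentially the same route as the paper's: the forward direction via Lemma~\ref{handylemma}(1), (TCK3) by specialising \eqref{eq:TCK3.1} to $\mu=\lambda$ and using (C2) together with (TCK1), and (TCK4) by substituting \eqref{eq:TCK3.1} into $t_\mu t_\mu^* t_\nu t_\nu^*$, recombining with (TCK2) so the unimodular cocycle factors cancel, and reindexing through the bijection of Proposition~\ref{prop:MCE}. The only cosmetic difference is that you record $\Lambda^{\text{min}}(\lambda,\lambda)=\{(s(\lambda),s(\lambda))\}$ explicitly, which the paper uses implicitly.
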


\begin{proof}
Suppose that $\left \{ t_\lambda : \lambda \in \Lambda \right \}$ is a Toeplitz-Cuntz-Krieger $(\Lambda,c)$-family. Relation \eqref{eq:TCK3.1} follows from Lemma~\ref{handylemma}(1). Suppose that \eqref{eq:TCK3.1} holds. We show (TCK3). Fix $\lambda \in \Lambda$. Equation~\eqref{eq:TCK3.1} implies that
\begin{align*}
t_\lambda^*t_\lambda &=\sum_{(\alpha,\beta)\in \Lambda^\text{min}(\lambda,\lambda)}\overline{c(\lambda,\alpha)}c(\lambda,\beta)t_\alpha t_\beta\\
&=\overline{c(\lambda,s(\lambda))}c(\lambda,s(\lambda))t_{s(\lambda)} t_{s(\lambda)}^*\\
&=t_{s(\lambda)} \qquad \text{by (TCK1)}.
\end{align*}
So $\left \{ t_\lambda : \lambda \in \Lambda \right \}$ satisfies (TCK3). We now show (TCK4). Fix $\lambda,\mu \in \Lambda.$ Then
\begin{align*}
t_\lambda t_\lambda^* t_\mu t_\mu^*&=t_\lambda \left( \sum_{(\alpha,\beta)\in \Lambda^{\textrm{min}}(\lambda,\mu)}\overline{c(\lambda,\alpha)}c(\mu,\beta) t_{\alpha} t_{\beta}^*\right) t_\mu^* \qquad \text{by } \eqref{eq:TCK3.1}\\
&= \sum_{(\alpha,\beta)\in \Lambda^{\textrm{min}}(\lambda,\mu)}\overline{c(\lambda,\alpha)}c(\lambda,\alpha)c(\mu,\beta) \overline{c(\mu,\beta)}t_{\lambda\alpha} t_{\mu\beta}^* \qquad \text{by (TCK2)}\\
&=\sum_{\sigma\in \MCE(\lambda,\mu)} t_\sigma t_\sigma^* \qquad \text{by Proposition \ref{prop:MCE}}.
\end{align*}
So $\left \{ t_\lambda : \lambda \in \Lambda \right \}$ satisfies (TCK4), and hence is a Toeplitz-Cuntz-Krieger $(\Lambda,c)$-family.
\end{proof}

To construct a Toeplitz-Cuntz-Krieger $(\Lambda,c)$-family in $\mathcal{B}(\ell^2(\Lambda))$, we need a technical lemma.

\begin{lemma}\label{le: adjointextention}
Let $\mathcal{H}$ be a separable Hilbert space with basis $\lbrace \xi_\alpha : \alpha \in I \rbrace$. Suppose $T\in \mathcal{B}(\mathcal{H})$ and that $S: \linspan \lbrace \xi_\alpha : \alpha \in I \rbrace \rightarrow\linspan \lbrace \xi_\alpha : \alpha \in I \rbrace $ is linear and satisfies $\langle  \xi_\alpha \mid S \xi_\beta \rangle = \langle T \xi_\alpha \mid \xi_\beta \rangle$ for all $\alpha,\beta \in I$. Then $S$ is bounded with $\Vert S \Vert \leq \Vert T \Vert $, and the unique extension of $S$ to $\mathcal{H}$ is $T^*$, the adjoint of $T$.
\end{lemma}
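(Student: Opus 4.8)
The plan is to show that $S$ and $T^*$ agree on the basis vectors $\xi_\beta$, after which everything follows. First I would invoke boundedness of $T$ to guarantee that the adjoint $T^* \in \mathcal{B}(\mathcal{H})$ exists, is bounded with $\Vert T^* \Vert = \Vert T \Vert$, and satisfies the defining identity $\langle Tx \mid y \rangle = \langle x \mid T^* y \rangle$ for all $x,y\in\mathcal{H}$ (this form holds regardless of which slot the inner product is linear in). Applying this with $x=\xi_\alpha$, $y=\xi_\beta$ and combining with the hypothesis yields
$$\langle \xi_\alpha \mid S\xi_\beta \rangle = \langle T\xi_\alpha \mid \xi_\beta \rangle = \langle \xi_\alpha \mid T^* \xi_\beta \rangle \qquad \text{for all } \alpha,\beta \in I.$$

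Next I would fix $\beta \in I$ and rewrite this as $\langle \xi_\alpha \mid S\xi_\beta - T^*\xi_\beta \rangle = 0$ for every $\alpha \in I$. Thus the vector $S\xi_\beta - T^*\xi_\beta$ is orthogonal to each basis vector, hence to their linear span; since $\linspan\{\xi_\alpha : \alpha \in I\}$ is dense in $\mathcal{H}$, this vector is orthogonal to all of $\mathcal{H}$ and therefore is $0$. Hence $S\xi_\beta = T^*\xi_\beta$ for every $\beta$, and since $S$ and $T^*$ are both linear, they agree on all of $\linspan\{\xi_\alpha : \alpha \in I\}$; that is, $S = T^*|_{\linspan\{\xi_\alpha : \alpha \in I\}}$.

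Boundedness is then immediate: for any $\xi \in \linspan\{\xi_\alpha : \alpha \in I\}$ we have $\Vert S\xi \Vert = \Vert T^*\xi \Vert \le \Vert T^* \Vert \Vert \xi \Vert = \Vert T \Vert \Vert \xi \Vert$, so $S$ is bounded with $\Vert S \Vert \le \Vert T \Vert$. Finally, because $\linspan\{\xi_\alpha : \alpha \in I\}$ is dense and $S$ is bounded, $S$ admits a unique bounded extension to $\mathcal{H}$; but $T^*$ is itself a bounded operator on all of $\mathcal{H}$ restricting to $S$ on this dense subspace, so by uniqueness of the extension that extension must be $T^*$.

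The argument is entirely routine, so there is no genuine obstacle; the only points requiring a little care are the adjoint bookkeeping (ensuring the identity $\langle Tx \mid y\rangle = \langle x \mid T^* y\rangle$ is used in the correct direction for the paper's inner-product convention) and the observation that it is density/totality of $\{\xi_\alpha\}$—not any computation—that upgrades ``orthogonal to every $\xi_\alpha$'' to ``equal to zero''. I would state the adjoint identity explicitly so the chain of equalities above is unambiguous.
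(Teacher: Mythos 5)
Your proof is correct, but it takes a genuinely different route from the paper's. You invoke the existence of the bounded adjoint $T^*$ at the outset and reduce the lemma to showing $S\xi_\beta = T^*\xi_\beta$ for each $\beta$, which you obtain from the observation that $S\xi_\beta - T^*\xi_\beta$ is orthogonal to every basis vector and hence, by density of $\linspan\lbrace \xi_\alpha : \alpha \in I\rbrace$, is zero; the norm bound and the identification of the unique extension then come for free from $\Vert T^*\Vert = \Vert T\Vert$. The paper never establishes equality on basis vectors directly: it instead proves the weak identity $\langle k \mid Sh\rangle = \langle Tk \mid h\rangle$ for all $h$ in the span and all $k\in\mathcal{H}$, by expanding $k$ through the reconstruction formula and using linearity and continuity of $T$; boundedness of $S$ then follows via Cauchy--Schwarz, $S$ is extended by continuity, and only at the end is the extension identified with $T^*$ by the uniqueness of adjoints. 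Your argument is shorter and avoids the series manipulation entirely, at the cost of leaning on the standard existence theorem for adjoints from the start (the paper also uses it, but only through the uniqueness statement in its final line); the paper's computation essentially re-derives, inside the proof, the portion of that theorem it needs. Both arguments hinge on the same two ingredients---density of the span and the hypothesis relating $S$ and $T$ on basis vectors---so the difference is one of organisation rather than substance, but yours is the more economical of the two.
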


\begin{proof}
Fix $h\in \linspan \left\{ \xi_\alpha : \alpha \in I \right\}$ and $k\in \mathcal{H}$. There is finite $F\subset I$ such that $h=\sum_{\alpha \in F} h_\alpha \xi_\alpha$. We calculate
\begin{align}
\nonumber\langle k \mid  Sh\rangle &= \left \langle \sum_{\beta \in I} \langle k \mid \xi_\beta \rangle \xi_\beta \Big| \text{ }Sh \right \rangle \qquad \text{ by the reconstruction formula}\\
&= \sum_{\beta \in I} \langle k \mid \xi_\beta \rangle \langle  \xi_\beta\mid Sh  \rangle \nonumber \\
&= \sum_{\beta \in I}  \langle k \mid \xi_\beta \rangle \left \langle \xi_\beta \Big | \text{ } S\left( \sum_{\alpha \in F}h_\alpha \xi_\alpha \right)  \right \rangle \nonumber\\
&= \sum_{\beta \in I}\sum_{\alpha \in F}  \langle k \mid \xi_\beta \rangle h_\alpha \langle \xi_\beta  \mid S \xi_\alpha \rangle \qquad \text{ as } S \text{ is linear}\nonumber\\
&= \sum_{\beta \in I}\sum_{\alpha \in F}   \langle k \mid \xi_\beta \rangle h_\alpha \langle T\xi_\beta   \mid \xi_\alpha  \rangle\qquad \text{ by hypothesis }\nonumber\\
&= \langle Tk\mid  h \rangle \qquad \text{since } T \text{ is linear and continuous. } \label{eq:hilbert1}
\end{align}
Hence
\begin{align*}
\Vert Sh \Vert &= \sup_{\Vert k \Vert =1} \langle Sh \mid k \rangle\\
&= \sup_{\Vert k \Vert =1} \langle h \mid Tk \rangle \qquad \text{ by } \eqref{eq:hilbert1}\\
&\leq\sup_{ \Vert k \Vert =1} \Vert Tk \Vert \Vert h \Vert \qquad \text{ by the Cauchy-Schwarz inequality}\\
&\leq \Vert T \Vert \Vert h \Vert.
\end{align*}
So $S$ is bounded with $\Vert S \Vert \leq \Vert T \Vert $. Denote the unique extension of $S$ to $\mathcal{H}$ by $\overline S$. Now \eqref{eq:hilbert1} gives
\begin{equation} \langle k \mid \overline S h \rangle = \langle T k \mid h \rangle  \text{ for all }h \in \linspan \lbrace \xi_\alpha : \alpha \in I \rbrace \text{ and } k\in \mathcal{H}. \label{eq:adjoint1} \end{equation}
For fixed $k\in \mathcal{H}$, the maps $h \mapsto \langle k \mid \overline S h \rangle$ and $h \mapsto \langle  Tk \mid h \rangle$ on $\mathcal{H}$ are continuous, and by \eqref{eq:adjoint1} agree on a dense subset of $\mathcal{H}$. Therefore they agree on all of $\mathcal{H}$; that is, $ \langle h \mid \overline S k \rangle = \langle T h \mid k \rangle  \text{ for all } h,k\in \mathcal{H}$. By the uniqueness of adjoints we have $\overline S = T^*$.
\end{proof}

\begin{proposition}\label{toeplitzrepresentation}
Let $(\Lambda,d)$ be a finitely aligned $k$-graph and let $c\in\underline{Z}^2(\Lambda,\mathbb{T})$. For $\lambda \in \Lambda$, let $\xi_\lambda: \Lambda \rightarrow \mathbb{C}$ be the point mass function for $\lambda$. Consider the Hilbert space $\ell^2(\Lambda) =\overline{\linspan}\left\{ \xi_\alpha : \alpha \in \Lambda \right \}$ with the usual inner product. For each $\lambda \in \Lambda$, there is a unique bounded linear operator $T_\lambda$ on $\ell^2(\Lambda)$ such that
\begin{equation} T_\lambda \xi_\alpha =\begin{cases} c(\lambda,\alpha) \xi_{\lambda \alpha} &\text{ if } s(\lambda)=r(\alpha) \\ 0 &\text{ otherwise} \end{cases} \label{eq: toeplitzrepresentation} \end{equation}
for all $\alpha \in \Lambda$. The set $\lbrace T_\lambda : \lambda \in \Lambda \rbrace$ is a Toeplitz-Cuntz-Krieger $(\Lambda,c)$-family in $\mathcal{B}\left( \ell^2(\Lambda)\right)$.
\end{proposition}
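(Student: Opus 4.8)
The plan is to first recognise each $T_\lambda$ as a partial isometry, which simultaneously gives existence, uniqueness and boundedness, and then to verify the relations (TCK1)--(TCK4) in turn, with essentially all the difficulty living in (TCK4). First I would fix $\lambda\in\Lambda$ and note that the concatenation map $\alpha\mapsto\lambda\alpha$, defined on $\{\alpha\in\Lambda:r(\alpha)=s(\lambda)\}$, is injective by the uniqueness clause of the factorisation property, while $|c(\lambda,\alpha)|=1$ because $c$ is $\mathbb{T}$-valued. Hence \eqref{eq: toeplitzrepresentation} carries the orthonormal family $\{\xi_\alpha:r(\alpha)=s(\lambda)\}$ onto the orthonormal family $\{c(\lambda,\alpha)\xi_{\lambda\alpha}\}$ and annihilates the remaining basis vectors. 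Defining $T_\lambda$ on $\linspan\{\xi_\alpha:\alpha\in\Lambda\}$ by linearity therefore produces a partial isometry with initial space $\overline{\linspan}\{\xi_\alpha:r(\alpha)=s(\lambda)\}$; in particular $\|T_\lambda\|\le 1$, so $T_\lambda$ extends uniquely to a bounded operator on $\ell^2(\Lambda)$, and uniqueness is automatic since bounded operators agreeing on the dense set of basis vectors coincide.

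Next I would compute the adjoint via Lemma~\ref{le: adjointextention}: since $\langle T_\lambda\xi_\alpha\mid\xi_\beta\rangle$ is nonzero only when $\beta=\lambda\alpha$, the operator $S$ defined by $S\xi_\beta=\overline{c(\lambda,\beta')}\xi_{\beta'}$ when $\beta=\lambda\beta'$ for some $\beta'$ (and $S\xi_\beta=0$ otherwise) satisfies the hypothesis of the lemma, so $T_\lambda^*=S$. Equivalently, $T_\lambda^*T_\lambda$ is the projection onto the initial space and $T_\lambda T_\lambda^*$ is the projection onto the final space $\overline{\linspan}\{\xi_\beta:\beta\in\lambda\Lambda\}$; I will lean on these two descriptions repeatedly.

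With this in hand the first three relations are short. For (TCK1), relation (C2) reduces \eqref{eq: toeplitzrepresentation} to $T_v\xi_\alpha=\delta_{v,r(\alpha)}\xi_\alpha$, so each $T_v$ is the projection onto $\overline{\linspan}\{\xi_\alpha:r(\alpha)=v\}$, and these are mutually orthogonal because the sets $r^{-1}(v)$ are pairwise disjoint. For (TCK2), a direct computation on basis vectors gives $T_\mu T_\nu\xi_\alpha=c(\nu,\alpha)c(\mu,\nu\alpha)\xi_{\mu\nu\alpha}$ while $c(\mu,\nu)T_{\mu\nu}\xi_\alpha=c(\mu,\nu)c(\mu\nu,\alpha)\xi_{\mu\nu\alpha}$ (both vanishing unless $s(\nu)=r(\alpha)$), and these agree precisely because the 2-cocycle identity (C1) yields $c(\mu,\nu)c(\mu\nu,\alpha)=c(\nu,\alpha)c(\mu,\nu\alpha)$. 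For (TCK3), the identification of $T_\lambda$ as a partial isometry with initial space $\overline{\linspan}\{\xi_\alpha:r(\alpha)=s(\lambda)\}$ forces $T_\lambda^*T_\lambda$ to be the projection onto that space, which is exactly $T_{s(\lambda)}$.

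The main work, and the step I expect to be the chief obstacle, is (TCK4), where the $\mathbb{T}$-valued coefficients drop out and the content becomes purely combinatorial. Using $T_\mu T_\mu^*$ = projection onto $\overline{\linspan}\{\xi_\beta:\beta\in\mu\Lambda\}$, I would identify $T_\mu T_\mu^*T_\nu T_\nu^*$ with the projection onto $\overline{\linspan}\{\xi_\beta:\beta\in\mu\Lambda\cap\nu\Lambda\}$. The crux is that every common extension $\beta$ of $\mu$ and $\nu$ factors uniquely through a minimal one: since $d(\beta)\ge d(\mu)\vee d(\nu)$, the path $\lambda:=\beta(0,d(\mu)\vee d(\nu))$ lies in $\MCE(\mu,\nu)$ and $\beta\in\lambda\Lambda$, while conversely $\lambda\Lambda\subset\mu\Lambda\cap\nu\Lambda$ for each such $\lambda$; moreover distinct $\lambda,\lambda'\in\MCE(\mu,\nu)$ share degree $d(\mu)\vee d(\nu)$, so the factorisation property gives $\lambda\Lambda\cap\lambda'\Lambda=\emptyset$ and hence pairwise orthogonal projections $T_\lambda T_\lambda^*$. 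Thus $\{\xi_\beta:\beta\in\mu\Lambda\cap\nu\Lambda\}$ is the orthogonal disjoint union over $\lambda\in\MCE(\mu,\nu)$ of the families $\{\xi_\beta:\beta\in\lambda\Lambda\}$, and summing the associated projections — a \emph{finite} sum, since $\MCE(\mu,\nu)$ is finite by Proposition~\ref{prop:MCE} as $\Lambda$ is finitely aligned — yields $T_\mu T_\mu^*T_\nu T_\nu^*=\sum_{\lambda\in\MCE(\mu,\nu)}T_\lambda T_\lambda^*$, establishing (TCK4) and completing the proof.
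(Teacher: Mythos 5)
Your proposal is correct and follows essentially the same route as the paper's proof: boundedness and uniqueness via the partial-isometry structure of each $T_\lambda$, the adjoint via Lemma~\ref{le: adjointextention}, relations (TCK1)--(TCK3) by direct verification using (C1) and (C2), and (TCK4) resting on the fact that each common extension of $\mu$ and $\nu$ factors through a unique element of $\MCE(\mu,\nu)$, with finite alignment guaranteeing the sum is finite. The only cosmetic difference is that you phrase (TCK4) in terms of ranges of diagonal projections, whereas the paper carries out the equivalent computation with inner products against basis vectors.
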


\begin{proof}
To see that \eqref{eq: toeplitzrepresentation} defines a bounded linear operator  $T_\lambda$ on $\linspan\left\{ \xi_\alpha : \alpha \in \Lambda \right \}$ for each $\lambda \in \Lambda$, we will show that $T_\lambda$ is a partial isometry and hence $\Vert T_\lambda \Vert =1 $. Fix $\lambda,\alpha,\beta \in \Lambda$. Then
\begin{align*}
\langle T_\lambda \xi_\alpha \mid T_\lambda \xi_\beta \rangle  &= \begin{cases} \langle c(\lambda,\alpha) \xi_{\lambda \alpha} \mid c(\lambda,\beta)\xi_{\lambda \beta} \rangle &\text{ if } s(\lambda)=r(\alpha)=r(\beta) \\ 0 &\text{ otherwise} \end{cases}\\
&= \begin{cases}  c(\lambda,\alpha)\overline{c(\lambda,\beta)}\langle \xi_{\lambda \alpha} \mid \xi_{\lambda \beta} \rangle &\text{ if } s(\lambda)=r(\alpha)=r(\beta) \\ 0 &\text{ otherwise} \end{cases}\\
&= \begin{cases}  c(\lambda,\alpha)\overline{c(\lambda,\beta)} &\text{ if } s(\lambda)=r(\alpha)=r(\beta) \text{ and } \alpha=\beta \\ 0 &\text{ otherwise} \end{cases}\\
&= \begin{cases}  c(\lambda,\alpha)\overline{c(\lambda,\beta)}\langle \xi_\alpha \mid  \xi_\beta \rangle&\text{ if } s(\lambda)=r(\alpha)=r(\beta) \\ 0 &\text{ otherwise.} \end{cases}\\
\end{align*}
So if $s(\lambda)=r(\alpha)$ we have $\Vert T_\lambda \xi_\alpha \Vert^2 = \langle T_\lambda \xi_\alpha \mid T_\lambda \xi_\alpha \rangle = c(\lambda,\alpha)\overline{c(\lambda,\alpha)}\langle \xi_\alpha \mid  \xi_\alpha \rangle=\Vert \xi_\alpha \Vert^2=1$. So if $h=\sum_{i=1}^{n} a_i \xi_{\alpha_i} \in \linspan \lbrace \xi_\alpha : s(\lambda)=r(\alpha) \rbrace$ then,
\begin{align*}
\Vert T_\lambda h \Vert^2 &= \sum_{i,j=1}^na_i \overline{a_j} \langle T_\lambda \xi_{\alpha_i} \mid T_\lambda \xi_{\alpha_j} \rangle\\
&= \sum_{i,j=1}^n c(\lambda,\alpha_i)\overline{c(\lambda,\alpha_j)}a_i \overline{a_j}\langle \xi_{\lambda \alpha_i} \mid  \xi_{\lambda \alpha_j} \rangle\\
&= \sum_{i=1}^n c(\lambda,\alpha_i)\overline{c(\lambda,\alpha_i)}a_i \overline{a_i} \qquad\text{by the factorisation property}\\
&=\sum_{i=1}^n a_i \overline{a_i}\\
&=\sum_{i,j=1}^n a_i \overline{a_j}\langle \xi_{\alpha_i} \mid  \xi_{\alpha_j} \rangle\\
&=\langle h \mid h\rangle \\
&= \Vert h \Vert^2.
\end{align*}
If $h=\sum_{i=1}^{n} a_i \xi_{\alpha_i} \in \linspan \lbrace \xi_\alpha : s(\lambda)=r(\alpha) \rbrace^\perp=\linspan \lbrace \xi_\alpha : s(\lambda)\neq r(\alpha) \rbrace$ then $T_\lambda h =0$. So $T_\lambda$ is a partial isometry and is bounded with $\Vert T_\lambda \Vert \leq 1$. The calculation $\Vert T_\lambda \xi_{s(\lambda)}\Vert = \Vert \xi_{\lambda} \Vert =1$ shows that $\Vert T_\lambda \Vert =1$. By continuity, $T_\lambda$ extends uniquely to a partial isometry in $\mathcal{B}\left( \ell^2(\Lambda)\right)$.

We now will show that for each $\lambda \in \Lambda$ the unique linear extension $S_{\lambda} : \linspan\left\{ \xi_\alpha : \alpha \in \Lambda \right \} \rightarrow \linspan\left\{ \xi_\alpha : \alpha \in \Lambda \right \}$ of
\begin{equation}
\xi_{\alpha}\mapsto \begin{cases} \overline{c(\lambda,\alpha')} \xi_{\alpha'} &\text{ if } \alpha=\lambda \alpha' \\ 0 &\text{ otherwise} \end{cases} \nonumber
\end{equation}
extends by continuity to the adjoint of $T_\lambda$. By Lemma~\ref{le: adjointextention} it suffices to show that $\langle T_\lambda \xi_\alpha\mid \xi_\beta \rangle = \langle \xi_\alpha \mid S_\lambda \xi_\beta \rangle$ for all $\alpha, \beta \in \Lambda$. That is,
$$\langle T_\lambda \xi_\alpha \mid  \xi_\beta \rangle= \begin{cases} c(\lambda,\beta') \langle \xi_{\alpha} \mid \xi_{\beta'} \rangle &\text{ if } \beta = \lambda \beta'\\ 0 &\text{ else.} \end{cases}$$
 Fix $\lambda, \alpha,\beta \in \Lambda$ and we calculate,
\begin{align*}
\langle T_\lambda \xi_{\alpha}  \mid \xi_{\beta} \rangle &= \sum_{x \in \Lambda}(T_{\lambda}\xi_{\alpha})(x)\overline{\xi_{\beta}(x)}\\
&=\begin{cases}\sum_{x\in \Lambda} c(\lambda, \alpha) \xi_{\lambda \alpha}(x) \overline{\xi_{\beta}(x)} &\text{ if } r(\alpha)=s(\lambda) \\ 0 &\text{ otherwise}  \end{cases} \\
&= \begin{cases}  c(\lambda, \alpha) &\text{ if }  \beta = \lambda \alpha \\ 0 &\text{ otherwise} \end{cases} \\
&= \begin{cases}  c(\lambda, \beta') &\text{ if } \beta'=\alpha \text{ and } \beta = \lambda \beta' \\ 0 &\text{ otherwise} \end{cases} \\
&= \begin{cases}  c(\lambda, \beta')\langle \xi_\alpha \mid \xi_{\beta'} \rangle  &\text{ if }  \beta = \lambda \beta' \\ 0 &\text{ otherwise.} \end{cases} \\
\end{align*}
Now we will establish the relations (TCK1)-(TCK4) for $ \lbrace T_\lambda: \lambda \in \Lambda \rbrace$. By linearity and continuity it suffices to check these relations on basis vectors. For (TCK1) we show that $T_{s(\lambda)}^2 = T_{s(\lambda)} = T_{s(\lambda)}^*$ and $T_{s(\mu)}T_{s(\nu)}=0$ for each $\lambda,\mu,\nu \in \Lambda$ with $s(\mu) \neq s(\nu)$, so that $\lbrace T_{s(\lambda)} :\lambda \in \Lambda \rbrace$ is a set of mutually orthogonal projections.  Fix $\lambda, \alpha \in \Lambda$. Since $c(r(\alpha),\alpha)=1$ for all $\alpha \in \Lambda$,
\begin{align*}
T^2_{s(\lambda)} \xi_\alpha &= \begin{cases} c(s(\lambda),\alpha)T_{s(\lambda)}\xi_{\alpha} &\text{if } s(\lambda)=r(\alpha) \\ 0 &\text{otherwise} \end{cases}\\
&=\begin{cases} c(r(\alpha)),\alpha)^2 \xi_{\alpha} &\text{if } s(\lambda)=r(\alpha) \\ 0 &\text{otherwise} \end{cases}\\
&=\begin{cases} \xi_{\alpha} &\text{if } s(\lambda)=r(\alpha) \\ 0 &\text{otherwise} \end{cases}\\
&=T_{s(\lambda)}\xi_\alpha
\end{align*}
and
\begin{align*}
T^*_{s(\lambda)} \xi_\alpha &= \begin{cases} \overline{c(s(\lambda),\alpha)}\xi_{\alpha} &\text{if } s(\lambda)=r(\alpha) \\ 0 &\text{otherwise} \end{cases}\\
\begin{cases} \overline{c(r(\alpha),\alpha)}\xi_{\alpha} &\text{if } s(\lambda)=r(\alpha) \\ 0 &\text{otherwise} \end{cases}\\
&=T_{s(\lambda)}\xi_\alpha.
\end{align*}
So $\lbrace T_{s(\lambda)} : \lambda \in \Lambda \rbrace$ is a set of projections. To see that they are mutually orthogonal fix $\mu,\nu,\alpha \in \Lambda$ with $s(\mu) \neq s(\nu)$. Then
\begin{align*}
T_{s(\mu)}T_{s(\nu)} \xi_\alpha &=\begin{cases} T_{s(\mu)}\xi_{\alpha} &\text{if } s(\nu)=r(\alpha) \\ 0 &\text{otherwise} \end{cases}\\&=\begin{cases} \xi_{\alpha} &\text{if } s(\mu)=s(\nu)=r(\alpha) \\ 0 &\text{otherwise} \end{cases}\\&=0.
\end{align*}

This establishes (TCK1). For (TCK2) we must show that $T_\mu T_\nu = c(\mu,\nu)T_{\mu\nu}$ for all $\mu,\nu \in \Lambda$ with $s(\mu)=s(\nu)$. Fix $\mu ,\nu\in \Lambda$ with $s(\mu)=r(\nu)$. Then
\begin{align*}
T_\mu T_\nu \xi_\alpha &= \begin{cases} c(\nu, \alpha) T_\mu \xi_{\nu \alpha} &\text{ if } \nu \alpha \in \Lambda \\ 0 &\text{ otherwise} \end{cases}\\
&= \begin{cases}  c(\nu, \alpha)c(\mu,\nu\alpha) \xi_{ \mu \nu \alpha} &\text{ if } \nu \alpha \in \Lambda \\ 0 &\text{ otherwise} \end{cases}\\
&= \begin{cases}  c(\mu, \nu)c(\mu\nu,\alpha) \xi_{ \mu \nu \alpha} &\text{ if } \nu \alpha \in \Lambda \\ 0 &\text{ otherwise} \end{cases}\\
&=c(\mu,\nu) T_{\mu \nu} \xi_\alpha.
\end{align*}
This establishes (TCK2). For (TCK3) we must show $T_\lambda ^* T_\lambda = T_{s(\lambda)}$ for all $\lambda \in \Lambda$. Fix $\lambda \in \Lambda$. Using (C1), we have
\begin{align*}
T^*_\lambda T_\lambda \xi_\alpha &= \begin{cases} c(\lambda,\alpha)T^*_\lambda \xi_{\lambda \alpha}  &\text{ if }  s(\lambda)=r(\alpha) \\ 0  &\text{ otherwise} \end{cases}\\
&=\begin{cases} c(\lambda,\alpha)\overline{c(\lambda,\alpha)} \xi_{\alpha}  &\text{ if }  s(\lambda)=r(\alpha) \\ 0  &\text{ otherwise} \end{cases}\\
&=\begin{cases}  c(s(\lambda),\alpha)\xi_{\alpha}  &\text{ if }  s(\lambda)=r(\alpha) \\ 0  &\text{ otherwise} \end{cases}\\
&=T_{s(\lambda)}\xi_\alpha.
\end{align*}
This establishes (TCK3). For (TCK4) we must show that $T_\mu T_\mu^* T_\nu T_\nu^* = \displaystyle\sum_{\lambda \in \MCE (\mu,\nu)} T_\lambda T_\lambda^*$ for all $\mu,\nu \in \Lambda$. It will be helpful to consider the following calculation. Fix $\lambda, \alpha \in \Lambda$. Then
\begin{align}
T_\lambda T^*_\lambda \xi_\alpha &= \begin{cases} \overline{c(\lambda,\alpha')}T_\lambda \xi_{\alpha'}  &\text{ if } \alpha = \lambda \alpha' \\ 0  &\text{ otherwise} \end{cases}\label{TTstar}\\
&= \begin{cases} c(\lambda,\alpha')\overline{c(\lambda,\alpha')} \xi_{\lambda \alpha'}  &\text{ if } \alpha = \lambda \alpha' \\ 0  &\text{ otherwise} \end{cases}\nonumber\\
&= \begin{cases} \xi_{\alpha}  &\text{ if } \alpha = \lambda \alpha' \\ 0  &\text{ else.} \end{cases}\nonumber
\end{align}

We will now show that for any $\mu,\nu \in \Lambda$, $\langle T_\mu T_\mu^* T_\nu T_\nu^* \xi_\alpha \mid \xi_\beta \rangle=\langle \displaystyle\sum_{\sigma\in \MCE(\mu,\nu)} T_\sigma T_\sigma^* \xi_\alpha \mid \xi_\beta \rangle$ for all $\alpha,\beta \in \Lambda$ and hence $T_\mu T_\mu^* T_\nu T_\nu^*=\displaystyle\sum_{\sigma\in \MCE(\mu,\nu)} T_\lambda T_\lambda^*$. Fix $\mu,\nu, \alpha, \beta \in \Lambda$. Using \eqref{TTstar} we have,
\begin{align*}
\langle T_\mu T_\mu^* T_\nu T_\nu^* \xi_\alpha \mid \xi_\beta \rangle &=\langle T_\nu T_\nu^* \xi_\alpha \mid  T_\mu T_\mu^* \xi_\beta \rangle \\
&=\begin{cases}  \langle \xi_\alpha \mid \xi_\beta \rangle &\text{if } \beta = \mu \beta' \text{ and } \alpha = \nu \alpha' \\ 0 &\text{otherwise} \end{cases}\\
&=\begin{cases}  1 &\text{if } \beta = \mu \beta'=\alpha = \nu \alpha' \\ 0 &\text{otherwise} \end{cases}\\
&=\begin{cases}  1 &\text{if } \beta = \alpha = \sigma \alpha'' \text{ for some } \sigma\in \MCE(\mu,\nu) \\ 0 &\text{otherwise} \end{cases}\\
&=\begin{cases} \langle \xi_\alpha \mid \xi_\beta \rangle &\text{if } \alpha = \sigma \alpha'' \text{ for some } \sigma\in \MCE(\mu,\nu) \\ 0 &\text{else.} \end{cases}\\
\end{align*}
If $\alpha = \sigma \alpha'$ for some $\sigma \in \MCE(\mu,\nu)$ then the factorisation property ensures that there is exactly one such $\sigma$. So
\begin{align*}
\langle T_\mu T_\mu^* T_\nu T_\nu^* \xi_\alpha \mid \xi_\beta \rangle&=\begin{cases} \langle \xi_\alpha \mid \xi_\beta \rangle &\text{if } \alpha = \sigma \alpha'' \text{ for some } \sigma\in \MCE(\mu,\nu) \\ 0 &\text{otherwise} \end{cases}\\
&=\left \langle \sum_{\sigma\in \MCE(\mu,\nu)} T_\sigma T_\sigma^* \xi_\alpha \mid\xi_\beta \right \rangle,
\end{align*}
establishing (TCK4).

\end{proof}

\section{The twisted Toeplitz algebra}
We show there is a unique universal $C^*$-algebra $\mathcal{T}C^*(\Lambda,c)$, called the twisted Toeplitz algebra, generated by a Toeplitz-Cuntz-Krieger $(\Lambda,c)$-family $\left\{ s_\mathcal{T}(\lambda):\lambda \in \Lambda \right \}$ in Theorem~\ref{toeplitzalgebra}. First we need some technical results.

\label{chptr:twisted_Toeplitz_algebra}
\begin{lemma}\label{le:injectivemap}
Let $(\Lambda,d)$ be a finitely aligned $k$-graph, let $c\in\underline{Z}^2(\Lambda,\mathbb{T})$ and let $\lbrace T_\lambda : \lambda \in \Lambda \rbrace \subset \mathcal{B}(\ell^2(\Lambda))$ be the Toeplitz-Cuntz-Krieger $(\Lambda,c)$-family of Proposition~\ref{toeplitzrepresentation}. Then $\lbrace T_\mu T_\nu^* : \mu , \nu \in \Lambda, s(\mu)=s(\nu) \rbrace$ are linearly independent.
\end{lemma}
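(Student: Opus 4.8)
The plan is to prove linear independence by the standard matrix-coefficient argument: assuming a finite linear combination $\sum_{i=1}^n a_i T_{\mu_i}T_{\nu_i}^*$ vanishes, with the pairs $(\mu_i,\nu_i)$ distinct and satisfying $s(\mu_i)=s(\nu_i)$, I would pair it against suitable basis vectors of $\ell^2(\Lambda)$ so as to isolate one coefficient at a time and force each to be zero.

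First I would record how each $T_\mu T_\nu^*$ acts on a basis vector, using the formulas for $T_\nu^*$ and $T_\mu$ established in the proof of Proposition~\ref{toeplitzrepresentation}. One computes
$$T_\mu T_\nu^* \xi_\alpha = \overline{c(\nu,\alpha')}\, c(\mu,\alpha')\, \xi_{\mu\alpha'} \quad \text{when } \alpha = \nu\alpha' \text{ and } s(\mu)=r(\alpha'),$$
and $T_\mu T_\nu^* \xi_\alpha = 0$ otherwise (note $r(\alpha')=s(\nu)$ is automatic, so nonvanishing requires $s(\mu)=s(\nu)$, consistent with Lemma~\ref{handylemma}(3)). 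Consequently the matrix coefficient $\langle T_\mu T_\nu^* \xi_\alpha \mid \xi_\beta\rangle$ is nonzero exactly when there is $\alpha'$ with $\alpha = \nu\alpha'$ and $\beta = \mu\alpha'$. In particular, taking $\alpha=\nu$ and $\beta=\mu$ forces $\alpha'=s(\nu)$, and invoking (C2) yields $\langle T_\mu T_\nu^* \xi_\nu \mid \xi_\mu\rangle = 1$.

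Next comes the isolation step. After discarding any terms with zero coefficient, I would choose an index $j$ for which $d(\nu_j)$ is \emph{minimal}, with respect to the partial order $\leq$ on $\mathbb{N}^k$, among $\{d(\nu_i)\}$, and then pair the vanishing sum against $\xi_{\nu_j}$ and $\xi_{\mu_j}$. The $i=j$ term contributes $1$ by the computation above. For $i\neq j$, a nonzero contribution would require $\nu_j = \nu_i\alpha'$ and $\mu_j = \mu_i\alpha'$ for some $\alpha'$; the first equality forces $\nu_i$ to be an initial segment of $\nu_j$, so $d(\nu_i)\leq d(\nu_j)$. Minimality of $d(\nu_j)$ rules out $d(\nu_i) < d(\nu_j)$, leaving $d(\nu_i)=d(\nu_j)$, whence the factorisation property gives $d(\alpha')=0$, so $\alpha'=s(\nu_j)$ is a vertex and $(\mu_i,\nu_i)=(\mu_j,\nu_j)$, contradicting distinctness. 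Thus only the $j$-th term survives, giving $a_j=0$ and contradicting the reduction to nonzero coefficients; hence all $a_i$ vanish.

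The computation of the action of $T_\mu T_\nu^*$ on basis vectors is routine given Proposition~\ref{toeplitzrepresentation}, and the only genuinely delicate point is the bookkeeping that guarantees exactly one surviving term. The main obstacle is ruling out \emph{overlap} between distinct pairs $(\mu_i,\nu_i)$ when paired against $\xi_{\nu_j},\xi_{\mu_j}$; the minimal-degree choice of $j$, together with the factorisation property, is precisely what resolves this, since it converts the relation $\nu_j = \nu_i\alpha'$ into an equality of pairs.
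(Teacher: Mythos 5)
Your proof is correct and is essentially the paper's argument: the paper likewise evaluates the vanishing combination on a basis vector $\xi_{\nu_0}$, where $(\mu_0,\nu_0)$ is chosen so that $\nu_0$ is not a proper extension of any other second coordinate (the same condition your minimal-degree choice guarantees), and uses the factorisation property to rule out overlaps. The only cosmetic differences are that the paper proceeds by induction on the size of the indexing set, killing all terms with second coordinate $\nu_0$ at once via linear independence of the $\xi_\mu$, whereas you isolate a single coefficient by also pairing against $\xi_{\mu_j}$ and conclude by contradiction.
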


\begin{proof}
Let $F$ be a finite subset of $\lbrace (\mu,\nu) : \mu , \nu \in \Lambda \text{ and } s(\mu)=s(\nu) \rbrace$. Let $\lbrace \xi_\lambda : \lambda \in \Lambda \rbrace$ be the usual basis of $\ell^2(\Lambda)$. We show that $\lbrace T_\mu T_\nu^* : (\mu,\nu)\in F \rbrace$ are linearly independent. The proof is by induction on $|F|$. Suppose that $\sum_{(\mu,\nu)\in F} a_{\mu,\nu} T_\mu T_\nu^* h = 0 $ for all $h\in \ell^2(\Lambda)$. If $|F|=1$, let $(\mu_0,\nu_0)$ be the unique pair such that $(\mu_0,\nu_0)\in F$. Then
\begin{align*}
0&=a_{\mu_0,\nu_0}T_{\mu_0} T_{\nu_0} ^* \xi_{\nu_0}\\
&=a_{\mu_0,\nu_0}\overline{c(\nu_0,s(\nu_0))}T_{\mu_0}\xi_{s(\nu_0)}\\
&=a_{\mu_0,\nu_0}c(\mu_0,s(\nu_0))\xi_{\mu_0}\\
&=a_{\mu_0,\nu_0}\xi_{\mu_0}.
\end{align*}
Hence $a_{\mu_0,\nu_0}=0$. As an inductive hypothesis suppose that $\lbrace T_\mu T_\nu^* : (\mu,\nu)\in F \rbrace$ is linearly independent whenever $|F|\leq l$. Suppose $|F|=l+1$. Fix $(\mu_0,\nu_0)\in F$ such that $\nu_0 \neq \sigma \nu_0'$ for all $(\rho,\sigma)\in F \setminus \lbrace \mu_0 , \nu_0 \rbrace$. There is such a pair $(\mu_0,\nu_0)$ since $F$ is finite.
Then
\begin{align*}
0&=\sum_{(\mu,\nu)\in F} a_{\mu,\nu} T_\mu T_\nu^* \xi_{\nu_0} \\
&= \sum_{(\mu,\nu_0)\in F} a_{\mu,\nu_0} \overline{c(\nu_0,s(\nu_0))}T_\mu \xi_{s(\nu_0)} \\
&=\sum_{(\mu,\nu_0)\in F} a_{\mu,\nu_0} c(\mu,s(\nu_0)) \xi_{\mu}\\
&=\sum_{(\mu,\nu_0)\in F} a_{\mu,\nu_0}\xi_{\mu}.
\end{align*}
As the $\xi_\lambda$ are linearly independent with have $a_{\mu,\nu_0}=0$ for each $(\mu,\nu_0)\in F$. Let $$G:=\lbrace (\mu,\nu)\in F:\nu \neq \nu_0 \rbrace.$$ By the inductive hypothesis $\lbrace T_\mu T_\nu^* : (\mu,\nu )\in G\rbrace$ is linearly independent. So
\begin{align*}
0&=\sum_{(\mu,\nu)\in F} a_{\mu,\nu} T_\mu T_\nu^*  \\
&=\sum_{(\mu,\nu)\in G} a_{\mu,\nu} T_\mu T_\nu^* \\
\end{align*}
implies that $a_{\mu,\nu}=0$ for all $(\mu,\nu)\in G$. Thus $a_{\mu,\nu}=0$ for all $(\mu,\nu)\in F$.
\end{proof}

\begin{proposition}\label{constructionofuni}
Let $(\Lambda,d)$ be a finitely aligned $k$-graph and let $c\in\underline{Z}^2(\Lambda,\mathbb{T})$. Let $\Lambda *_s \Lambda = \lbrace (\lambda,\mu)\in \Lambda \times \Lambda : s(\lambda)=s(\mu) \rbrace$ and let $A_0$ be the complex vector space $C_c(\Lambda *_s \Lambda)=\lbrace f :\Lambda *_s \Lambda\rightarrow \mathbb{C}: f \text{ has finite support} \rbrace$ under pointwise addition and scalar multiplication. For each $(\mu,\nu)\in A_0$, let $\xi_{(\mu,\nu)}$ be the point mass function for $(\mu,\nu)$. There is a unique multiplication on $A_0$ satisfying
\begin{equation} \xi_{(\mu,\nu)} \xi_{(\eta,\zeta)} =\sum_{(\nu',\eta')\in \Lambda^{\text{min}}(\nu,\eta) } c(\eta,\eta')c(\mu,\nu')\overline{c(\nu,\nu')}\overline{c(\zeta,\eta')} \xi_{(\mu \nu', \zeta \eta')}\label{eq:multiplicationxi}\end{equation}
for all $(\mu,\nu),(\eta,\zeta)\in \Lambda *_s \Lambda$. There is an involution $* : A_0 \rightarrow A_0$ given by $a^*(\mu,\nu)=\overline{a(\nu,\mu)}$. Under these operations $A_0$ is a *-algebra.
\end{proposition}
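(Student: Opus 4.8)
The plan is to avoid a direct verification of associativity---which would require an intricate manipulation of cocycle factors over iterated minimal common extensions---by transporting all of the algebraic structure through the faithful representation on $\ell^2(\Lambda)$.

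First I would settle existence and uniqueness of the multiplication. Since $\{\xi_{(\mu,\nu)} : (\mu,\nu)\in \Lambda *_s \Lambda\}$ is a vector-space basis for $A_0$, any bilinear product is determined by its values on these basis elements, which gives uniqueness at once. For existence, I observe that the right-hand side of \eqref{eq:multiplicationxi} is a finite sum: by Proposition~\ref{prop:MCE}, finite alignment makes $\Lambda^{\text{min}}(\nu,\eta)$ finite, and each pair $(\nu',\eta')$ in it satisfies $\nu\nu'=\eta\eta'$ and hence $s(\nu')=s(\eta')$, so that $s(\mu\nu')=s(\nu')=s(\eta')=s(\zeta\eta')$ and $\xi_{(\mu\nu',\zeta\eta')}\in A_0$. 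Thus \eqref{eq:multiplicationxi} defines a product of basis elements inside $A_0$, and extending bilinearly produces a well-defined multiplication; the distributive laws and scalar compatibility are automatic from this bilinear extension.

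The key step is to introduce the linear map $\Phi : A_0 \to \mathcal{B}(\ell^2(\Lambda))$ determined by $\Phi(\xi_{(\mu,\nu)}) = T_\mu T_\nu^*$, where $\{T_\lambda : \lambda \in \Lambda\}$ is the Toeplitz-Cuntz-Krieger family of Proposition~\ref{toeplitzrepresentation}. Lemma~\ref{le:injectivemap} says precisely that the operators $T_\mu T_\nu^*$ are linearly independent, so $\Phi$ is injective. Comparing \eqref{eq:multiplicationxi} with Lemma~\ref{handylemma}(4), the defining formula for the product is exactly the expansion of $T_\mu T_\nu^* T_\eta T_\zeta^*$; thus $\Phi(\xi_{(\mu,\nu)}\xi_{(\eta,\zeta)}) = \Phi(\xi_{(\mu,\nu)})\Phi(\xi_{(\eta,\zeta)})$ on basis elements, and by bilinearity $\Phi$ is multiplicative on all of $A_0$. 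Associativity now transfers for free: for $a,b,e\in A_0$ we have $\Phi((ab)e) = (\Phi(a)\Phi(b))\Phi(e) = \Phi(a)(\Phi(b)\Phi(e)) = \Phi(a(be))$ since composition in $\mathcal{B}(\ell^2(\Lambda))$ is associative, and injectivity of $\Phi$ yields $(ab)e = a(be)$.

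Finally I would treat the involution. The formula $a^*(\mu,\nu) = \overline{a(\nu,\mu)}$ clearly sends finitely-supported functions to finitely-supported functions and is conjugate-linear with $(a^*)^* = a$; on basis elements it reads $\xi_{(\mu,\nu)}^* = \xi_{(\nu,\mu)}$. Since $\Phi(\xi_{(\mu,\nu)}^*) = T_\nu T_\mu^* = (T_\mu T_\nu^*)^* = \Phi(\xi_{(\mu,\nu)})^*$, conjugate-linearity gives $\Phi(a^*) = \Phi(a)^*$ for all $a$. The anti-multiplicativity $(ab)^* = b^* a^*$ then again descends through $\Phi$, via $\Phi((ab)^*) = \Phi(ab)^* = (\Phi(a)\Phi(b))^* = \Phi(b)^*\Phi(a)^* = \Phi(b^* a^*)$ together with injectivity. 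The only genuine difficulty is associativity, and the device above reduces it to associativity of operator composition; everything else is bookkeeping around the basis and the fact that $\mathcal{B}(\ell^2(\Lambda))$ is already a $*$-algebra.
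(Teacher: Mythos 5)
Your proposal is correct and takes essentially the same route as the paper: the paper's proof likewise defines the linear map $\phi(\xi_{(\mu,\nu)})=T_\mu T_\nu^*$ into $\mathcal{B}(\ell^2(\Lambda))$, uses Lemma~\ref{le:injectivemap} for injectivity and Lemma~\ref{handylemma}(4) for multiplicativity on basis elements, and then transfers associativity from operator composition exactly as you do. The only cosmetic difference is that the paper verifies $(ab)^*=b^*a^*$ by a direct reindexing computation over $\Lambda^{\text{min}}$, whereas you also push that identity through the injective map $\Phi$; both are valid.
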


\begin{proof}
We check:
\begin{enumerate}
\item[(1)] that $*$ determines a self-inverse conjugate-linear map;
\item[(2)] that the multiplication uniquely extends bilinearly to all of $A_0$;
\item[(3)] that this multiplication is associative and commutes with scalar multiplication; and
\item[(4)] that $(ab)^*=b^*a^*$ for all $a,b\in A_0$.
\end{enumerate}

For (1), fix $a,b\in A_0$, $\omega \in \mathbb{C}$ and $(\mu,\nu)\in \Lambda *_s \Lambda $. We calculate.
\begin{align*}
(a+\omega b)^*(\mu,\nu)&=\overline{(a+\omega b )(\nu,\mu)}\\
&=\overline{a(\nu,\mu)+(\omega b )(\nu,\mu)}\\
&=\overline{a(\nu,\mu)+\omega b(\nu,\mu)}\\
&=\overline{a(\nu,\mu)}+\bar \omega \overline{ b(\nu,\mu)}\\
&=a^*(\mu,\nu)+\bar \omega b^*(\mu,\nu)\\
&=(a^*+\bar \omega b^*)(\mu,\nu)\\
\end{align*}
So $*$ is conjugate linear. The map $*$ is self-inverse since complex conjugation is, establishing (1). The $\xi_{\mu,\nu}$ are linearly independent and span $A_0$, so each $a\in A_0$ has the unique expression
$$a=\sum_{(\eta,\zeta)\in \Lambda *_s \Lambda} a_{\eta,\zeta} \xi_{(\eta,\zeta)}.$$ For (2), the formula
\begin{align}
(ab)(\mu,\nu)&=\left( \left( \sum_{(\eta,\zeta)\in \Lambda *_s \Lambda}a_{\eta,\zeta}\xi_{(\eta,\zeta)}\right)\left( \sum_{(\rho,\sigma)\in \Lambda *_s \Lambda}b_{\rho,\sigma}\xi_{(\rho,\sigma)}\right) \right)(\mu,\nu)\nonumber\\
&= \sum_{(\eta,\zeta),(\rho,\sigma)\in \Lambda *_s \Lambda}a_{\eta,\zeta}b_{\rho,\sigma}(\xi_{(\eta,\zeta)}\xi_{(\rho,\sigma)} )(\mu,\nu)\nonumber\\
&= \sum_{(\eta,\zeta),(\rho,\sigma)\in \Lambda *_s \Lambda}\sum_{(\zeta',\rho')\in \Lambda^{\textrm{min}}(\zeta,\rho)}c(\rho,\rho')c(\eta,\zeta') \overline{c(\zeta,\zeta')c(\sigma,\rho')}a_{\eta,\zeta}b_{\rho,\sigma}\xi_{(\eta \zeta',\sigma \rho')} (\mu,\nu)\nonumber\\
&= \sum_{(\eta,\zeta),(\rho,\sigma)\in \Lambda *_s \Lambda}\sum_{(\zeta',\rho')\in \Lambda^{\textrm{min}}(\zeta,\rho)\atop \mu=\eta\zeta',\nu=\sigma \rho'}c(\rho,\rho')c(\eta,\zeta') \overline{c(\zeta,\zeta')c(\sigma,\rho')}a(\eta,\zeta)b(\rho,\sigma)  \label{eq:multiplication}
\end{align}
extends multiplication to all of $C_c(\Lambda *_s \Lambda)$. Multiplication is bilinear as the vector space operations are pointwise, establishing (2).

For (3), it suffices to check $\xi_{(\mu,\nu)}(\xi_{(\eta,\zeta)}\xi_{(\rho,\sigma)})=(\xi_{(\mu,\nu)}\xi_{(\eta,\zeta)})\xi_{(\rho,\sigma)}$ and that $\xi_{(\mu,\nu)}(z\xi_{(\eta,\zeta)})=z(\xi_{(\mu,\nu)}\xi_{(\eta,\zeta)})$ for all $(\mu,\nu),(\eta,\zeta),(\rho,\sigma)\in \Lambda *_s \Lambda $ and $z\in \mathbb{C}$, the latter follows from \eqref{eq:multiplicationxi}.

Fix $(\mu,\nu),(\eta,\zeta),(\rho,\sigma)\in \Lambda *_s \Lambda$. To show associativity, we use our Toeplitz-Cuntz-Krieger $(\Lambda,c)$-family $\left\{ T_\lambda : \lambda \in \Lambda \right \} $ in $\mathcal{B}\left( \ell^2(\Lambda)\right)$ and Lemma~\ref{le:injectivemap}. Define $\phi : C_c(\Lambda *_s \Lambda) \rightarrow \mathcal{B}\left( \ell^2(\Lambda)\right)$ by $\phi \left( \displaystyle\sum_{(\mu,\nu)\in \Lambda *_s \Lambda} a_{\mu,\nu}\xi_{(\mu,\nu)}\right) = \displaystyle\sum_{(\mu,\nu)\in \Lambda *_s \Lambda} a_{\mu,\nu} T_\mu T_\nu^*$. This is well-defined since the $\xi_{(\mu,\nu)}$ are linearly independent. The map $\phi$ is linear by definition of the vector space operations in $C_c(\Lambda *_s \Lambda)$ and $\mathcal{B}\left( \ell^2(\Lambda)\right)$. We claim that $\phi$ also preserves multiplication. As $\phi$ is linear and multiplication is bilinear, it suffices to show that $\phi \left( \xi_{(\mu,\nu)}\xi_{(\eta,\zeta)}\right)=\phi \left( \xi_{(\mu,\nu)}\right) \phi \left( \xi_{(\eta,\zeta)}\right)$. We have
\begin{align*}
\phi \left( \xi_{(\mu,\nu)}\xi_{(\eta,\zeta)}\right)&=  \phi\left( \sum_{(\nu',\eta')\in \Lambda^{\text{min}}(\nu,\eta) } c(\eta,\eta')c(\mu,\nu')\overline{c(\nu,\nu')}\overline{c(\zeta,\eta')} \xi_{(\mu \nu', \zeta \eta')}\right) \\
&=\sum_{(\nu',\eta')\in \Lambda^{\text{min}}(\nu,\eta) } c(\eta,\eta')c(\mu,\nu')\overline{c(\nu,\nu')}\overline{c(\zeta,\eta')} T_{\mu \nu'} T_{\zeta \eta'}^*\\
&= T_\mu T_\nu^* T_\eta T_\zeta^* \qquad\text{by Lemma } \ref{handylemma}(4)\\
 &=\phi \left( \xi_{(\mu,\nu)}\right) \phi \left( \xi_{(\eta,\zeta)}\right).
\end{align*}
We also claim that $\phi$ is injective. Fix $ \displaystyle\sum_{(\mu,\nu)\in \Lambda *_s \Lambda} a_{\mu,\nu}\xi_{(\mu,\nu)}  \in C_c(\Lambda *_s \Lambda)$. Then
\begin{align*}
\phi\left(  \sum_{(\mu,\nu)\in \Lambda *_s \Lambda} a_{\mu,\nu}\xi_{(\mu,\nu)}\right) = 0 \Longrightarrow  \sum_{(\mu,\nu)\in \Lambda *_s \Lambda} a_{\mu,\nu}T_\mu T_\nu^*=0.
\end{align*}
Lemma~\ref{le:injectivemap} implies that $a_{\mu,\nu}=0$ for all $(\mu,\nu)\in \Lambda *_s \Lambda$. Hence $\sum_{(\mu,\nu)\in \Lambda *_s \Lambda} a_{\mu,\nu}\xi_{(\mu,\nu)}= 0$. We can now use the map $\phi$ to show that the multiplication in $C_c(\Lambda *_s \Lambda)$ is associative. As multiplication in $\mathcal{B}\left( \ell^2(\Lambda)\right)$ is associative, we have
\begin{align*}
\phi \left( \xi_{(\mu,\nu)}\left(\xi_{(\eta,\zeta)}\xi_{(\rho,\sigma)}\right) \right) &=\phi \left( \xi_{(\mu,\nu)}\right) \phi \left(\xi_{(\eta,\zeta)}\xi_{(\rho,\sigma)} \right) \\
&=T_\mu T_\nu^* \left( \left( T_\eta T_\zeta^*\right) \left( T_\rho T_\sigma^*\right) \right)\\
&=\left(\left( T_\mu T_\nu^* \right)\left( T_\eta T_\zeta^*\right) \right) T_\rho T_\sigma^*\\
&=\phi \left( \xi_{(\mu,\nu)}\xi_{(\eta,\zeta)} \right) \phi \left( \xi_{(\rho,\sigma)}\right)\\
&=\phi \left(\left(  \xi_{(\mu,\nu)}\xi_{(\eta,\zeta)} \right) \xi_{(\rho,\sigma)}\right).
\end{align*}
Since $\phi$ is injective we have $\left(  \xi_{(\mu,\nu)}\xi_{(\eta,\zeta)} \right) \xi_{(\rho,\sigma)}=\xi_{(\mu,\nu)}\left(\xi_{(\eta,\zeta)}\xi_{(\rho,\sigma)}\right)$, establishing (3).
For (4), it suffices to check that $\left(\xi_{(\mu,\nu)}\xi_{(\eta,\zeta)}\right)^*=\xi_{(\eta,\zeta)}^* \xi_{(\mu,\nu)}^*$ for all $(\mu,\nu),(\eta,\zeta) \in \Lambda *_s \Lambda$. Fix $(\mu,\nu),(\eta,\zeta) \in \Lambda *_s \Lambda$. We have
\begin{flalign*}
&&\left(\xi_{(\mu,\nu)}\xi_{(\eta,\zeta)}\right)^*&=\left( \sum_{(\nu',\eta')\in \Lambda^{\text{min}}(\nu,\eta) } c(\eta,\eta')c(\mu,\nu')\overline{c(\nu,\nu')}\overline{c(\zeta,\eta')} \xi_{(\mu \nu', \zeta \eta')}\right)^*&\\
&&&= \sum_{(\eta',\nu')\in \Lambda^{\text{min}}(\eta,\nu) } c(\nu,\nu')c(\zeta,\eta')\overline{c(\eta,\eta')}\overline{c(\mu,\nu')} \xi_{(\zeta \eta', \mu \nu')}&\\
&&&=\xi_{(\zeta,\eta)}\xi_{(\nu,\mu)}&\\
&&&=\xi_{(\eta,\zeta)}^* \xi_{(\mu,\nu)}^*.&\qedhere
\end{flalign*}
\end{proof}


We can now construct the twisted Toeplitz algebra $\mathcal{T}C^*(\Lambda,c)$.

\begin{theorem} \label{toeplitzalgebra}
Let $(\Lambda,d)$ be a finitely aligned $k$-graph and let $c\in\underline{Z}^2(\Lambda,\mathbb{T})$. There exists a $C^*$-algebra $\mathcal{T}C^*(\Lambda,c)$ generated by a Toeplitz-Cuntz-Krieger $(\Lambda,c)$-family $\lbrace s_\mathcal{T}(\lambda) : \lambda \in \Lambda \rbrace$ that is universal in the sense that for each Toeplitz-Cuntz-Krieger $(\Lambda,c)$-family $\lbrace t_\lambda : \lambda \in \Lambda \rbrace$ in a $C^*$-algebra $B$, there exists a homomorphism $\pi^{\mathcal{T}}_t: \mathcal{T}C^*(\Lambda,c) \rightarrow B$ such that $\pi^{\mathcal{T}}_t(s_{\mathcal{T}}(\lambda))=t_\lambda$ for all $\lambda \in \Lambda$. The $C^*$-algebra $\mathcal{T}C^*(\Lambda,c)$ is unique in the sense that if $B$ is a $C^*$-algebra generated by a Toeplitz-Cuntz-Krieger $(\Lambda,c)$-family $\lbrace t_\lambda : \lambda \in \Lambda \rbrace$ satisfying the universal property, then there is an isomorphism of $\mathcal{T}C^*(\Lambda,c)$ onto $ B$ such that $s_\mathcal{T}(\lambda)\mapsto t_\lambda$ for all $\lambda \in \Lambda$. We call $\mathcal{T} C^*(\Lambda,c)$ the twisted Toeplitz algebra of $\Lambda$.
\end{theorem}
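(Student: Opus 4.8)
The plan is to realise $\mathcal{T}C^*(\Lambda,c)$ as the completion of the $*$-algebra $A_0 = C_c(\Lambda *_s \Lambda)$ constructed in Proposition~\ref{constructionofuni}, in which the element $\xi_{(\mu,\nu)}$ is destined to become $s_\mathcal{T}(\mu)s_\mathcal{T}(\nu)^*$. First I would equip $A_0$ with the seminorm
\[ \|a\| := \sup_{t}\, \|\pi_t(a)\|, \]
where the supremum is over all Toeplitz-Cuntz-Krieger $(\Lambda,c)$-families $\{t_\lambda\}$ and $\pi_t : A_0 \to C^*(\{t_\lambda\})$ is the linear map sending $\xi_{(\mu,\nu)} \mapsto t_\mu t_\nu^*$. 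Exactly the computation used for $\phi$ in Proposition~\ref{constructionofuni} --- but now applied to an arbitrary family via Lemma~\ref{handylemma}(4) and $(t_\mu t_\nu^*)^* = t_\nu t_\mu^*$ --- shows each $\pi_t$ is a $*$-homomorphism, so each $a \mapsto \|\pi_t(a)\|$ is a $C^*$-seminorm on $A_0$. The crucial finiteness estimate is that every $t_\lambda$ is a partial isometry: relations (TCK1) and (TCK3) give $\|t_\lambda\|^2 = \|t_{s(\lambda)}\| \le 1$, so writing $a = \sum_{(\mu,\nu)\in F} a_{\mu,\nu}\xi_{(\mu,\nu)}$ with $F$ finite yields $\|\pi_t(a)\| \le \sum_{(\mu,\nu) \in F} |a_{\mu,\nu}|$ uniformly in $t$. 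Hence $\|a\| < \infty$ and $\|\cdot\|$ is a genuine $C^*$-seminorm.

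Next I would upgrade $\|\cdot\|$ from a seminorm to a norm. The Toeplitz-Cuntz-Krieger family $\{T_\lambda\}$ of Proposition~\ref{toeplitzrepresentation} is one of the families in the supremum, and the associated homomorphism $\pi_T$ is exactly the map $\phi$ of Proposition~\ref{constructionofuni}, which is injective by Lemma~\ref{le:injectivemap}. Therefore $\|a\| \ge \|\phi(a)\| > 0$ whenever $a \neq 0$, so $\|\cdot\|$ is a norm. I would then define $\mathcal{T}C^*(\Lambda,c)$ to be the completion of $A_0$ in this norm. Submultiplicativity $\|ab\| \le \|a\|\|b\|$, the isometry $\|a^*\| = \|a\|$, and the $C^*$-identity all hold on $A_0$ because they hold for each $\pi_t$, and these relations persist under completion, so $\mathcal{T}C^*(\Lambda,c)$ is a $C^*$-algebra.

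I would then set $s_\mathcal{T}(\lambda) := \xi_{(\lambda,s(\lambda))}$. To see $\{s_\mathcal{T}(\lambda)\}$ is a Toeplitz-Cuntz-Krieger $(\Lambda,c)$-family, I would verify (TCK1)--(TCK4) as identities in $A_0$; the cleanest route is to apply the injective $*$-homomorphism $\phi$, under which each $s_\mathcal{T}(\lambda)$ maps to $T_\lambda$ (note $T_\lambda T_{s(\lambda)}^* = T_\lambda$), so each relation becomes the corresponding relation for $\{T_\lambda\}$ already verified in Proposition~\ref{toeplitzrepresentation}, and injectivity of $\phi$ transports it back to $A_0$. The same device gives $s_\mathcal{T}(\mu)s_\mathcal{T}(\nu)^* = \xi_{(\mu,\nu)}$, so the linear span of products of generators is exactly $A_0$, which is dense; hence $\{s_\mathcal{T}(\lambda)\}$ generates $\mathcal{T}C^*(\Lambda,c)$. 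For the universal property, given any Toeplitz-Cuntz-Krieger $(\Lambda,c)$-family $\{t_\lambda\}$ in a $C^*$-algebra $B$, the map $\pi_t$ above is a $*$-homomorphism on $A_0$ satisfying $\|\pi_t(a)\| \le \|a\|$ by the very definition of the norm, so it extends by continuity to a homomorphism $\pi_t^{\mathcal{T}} : \mathcal{T}C^*(\Lambda,c) \to B$ with $\pi_t^{\mathcal{T}}(s_\mathcal{T}(\lambda)) = t_\lambda t_{s(\lambda)}^* = t_\lambda$.

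Uniqueness is the standard back-and-forth argument: if $B$ is generated by a family $\{t_\lambda\}$ with the same universal property, the two universal properties furnish homomorphisms $\pi_t^{\mathcal{T}} : \mathcal{T}C^*(\Lambda,c) \to B$ and $\psi : B \to \mathcal{T}C^*(\Lambda,c)$ exchanging $s_\mathcal{T}(\lambda) \leftrightarrow t_\lambda$; each composite fixes a generating set and hence is the identity, so the two maps are mutually inverse isomorphisms. The main obstacle I anticipate is establishing that the universal seminorm is simultaneously finite and nondegenerate: finiteness rests on the partial-isometry bound $\|t_\lambda\| \le 1$ coming from (TCK1) and (TCK3), while nondegeneracy rests on having the concrete faithful model $\phi$ from Proposition~\ref{toeplitzrepresentation} and Lemma~\ref{le:injectivemap}. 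A minor point requiring care is that the supremum defining $\|\cdot\|$ ostensibly ranges over a proper class of families; this is handled in the usual way by restricting to representations on Hilbert spaces of cardinality at most that of $\Lambda$, which compute the same supremum.
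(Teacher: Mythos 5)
Your proposal is correct and follows the same overall blueprint as the paper's proof: both equip $A_0=C_c(\Lambda *_s \Lambda)$ with the universal seminorm $N(a)=\sup_t\Vert\pi_t(a)\Vert$, bound it by $\sum_{(\mu,\nu)\in F}|a_{\mu,\nu}|$ using the partial-isometry estimate, realise $\mathcal{T}C^*(\Lambda,c)$ as a completion, set $s_\mathcal{T}(\lambda)$ equal to (the class of) $\xi_{(\lambda,s(\lambda))}$, get the universal property from the inequality $\Vert\pi_t(a)\Vert\leq N(a)$, and prove uniqueness by the standard back-and-forth argument. You diverge in two worthwhile ways. First, the paper completes the quotient $A_0/\ker N$, whereas you observe that $\ker N\subset\ker\phi=\left\{0\right\}$ by Lemma~\ref{le:injectivemap}, since the concrete family $\left\{T_\lambda:\lambda\in\Lambda\right\}$ of Proposition~\ref{toeplitzrepresentation} is one of the families in the supremum; so $N$ is already a norm and the quotient step is vacuous. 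This is a genuine simplification: the paper's ideal $I$ is in fact the zero ideal, though the paper never notes this. Second, the paper verifies (TCK1)--(TCK4) for $\left\{s_\mathcal{T}(\lambda)\right\}$ by direct computation with the multiplication formula \eqref{eq:multiplicationxi}, whereas you transport each relation through the injective map $\phi$ back from the already-verified family $\left\{T_\lambda\right\}$; for this to be legitimate you should record explicitly that $\phi$ is a $*$-homomorphism, i.e.\ that it also preserves adjoints, which is immediate from $\phi(\xi_{(\mu,\nu)}^*)=\phi(\xi_{(\nu,\mu)})=T_\nu T_\mu^*=\phi(\xi_{(\mu,\nu)})^*$ but is not stated in Proposition~\ref{constructionofuni}. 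Your closing caveat about the supremum ranging over a proper class is a point the paper passes over in silence; your cardinality restriction (or an appeal to separation, since the values $\Vert\pi_t(a)\Vert$ lie in a bounded interval) settles it. Both arguments are sound; yours is marginally cleaner, and the paper's is marginally more self-contained in that it never needs the faithfulness of the $\ell^2(\Lambda)$ representation to build the algebra, only to prove statements about it later.
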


\begin{proof}
For this proof, we abbreviate Toeplitz-Cuntz-Krieger by TCK and write $t$ is a TCK $(\Lambda,c)$-family for $\left\{ t_\lambda : \lambda \in \Lambda \right \}$ is a TCK $(\Lambda,c)$-family.

First we show that each TCK $(\Lambda,c)$-family induces a homomorphism of $C_c(\Lambda *_s \Lambda)$. Fix a TCK $(\Lambda,c)$-family t. Define a map $\pi_t^0 :C_c(\Lambda *_s \Lambda) \rightarrow C^*(\lbrace t_\lambda :\lambda \in \Lambda \rbrace)$ by
$$ \pi_t^0(a)=\sum_{(\mu,\nu)\in \Lambda *_s \Lambda} a(\mu,\nu) t_\mu t_\nu^*.$$
 We claim $\pi_t^0$ is a homomorphism. One easily checks that $\pi_t^0$ is linear. Fix $a,b\in C_c(\Lambda *_s \Lambda)$. There are finite $F,G\subset \Lambda *_s \Lambda$ such that $a=\sum_{(\mu,\nu )\in F} a_{\mu,\nu} \xi_{\mu,\nu} $ and $b=\sum_{(\eta,\zeta )\in G} b_{\eta,\zeta} \xi_{\eta,\zeta} $. Then
\begin{align*}
\pi_t^0(ab)&=\pi_t^0 \left( \sum_{(\mu,\nu)\in F \atop (\eta,\zeta)\in G}\sum_{(\nu',\eta')\in \Lambda^{\textrm{min}}(\nu,\eta)}c(\eta,\eta')c(\mu,\nu') \overline{c(\nu,\nu')c(\zeta,\eta')}a_{\mu,\nu}b_{\eta,\zeta}\xi_{(\mu \nu',\zeta \eta')} \right)\qquad \text{ by \eqref{eq:multiplication}}\\
&=\sum_{(\mu,\nu)\in F \atop (\eta,\zeta)\in G}\sum_{(\nu',\eta')\in \Lambda^{\textrm{min}}(\nu,\eta)}c(\eta,\eta')c(\mu,\nu') \overline{c(\nu,\nu')c(\zeta,\eta')}a_{\mu,\nu}b_{\eta,\zeta}t_{\mu \nu'}t_{\zeta \eta'}^*\\
&=\sum_{(\mu,\nu)\in F \atop (\eta,\zeta)\in G}a_{\mu,\nu}b_{\eta,\zeta}t_{\mu} t_{\nu}^* t_{\eta} t_{\zeta}^* \qquad \text{by Lemma }~\ref{handylemma}(4)\\
&=\left( \sum_{(\mu,\nu)\in F } a(\mu,\nu) t_\mu t_\nu^* \right) \left( \sum_{(\eta,\zeta)\in G} b(\eta,\zeta) t_\eta t_\zeta^* \right)\\
&=\pi_t^0(a) \pi_t^0(b),
\end{align*}
and
\begin{align*}
\pi_t(a^*)&=\pi_t\left(\sum_{(\mu,\nu )\in F} \overline{a_{\mu,\nu}} \xi_{\nu,\mu}\right)\\
&=\sum_{(\mu,\nu )\in F} \overline{a_{\mu,\nu}} t_{\nu}t_{\mu}^*\\
&=\left( \sum_{(\mu,\nu )\in F} a_{\mu,\nu} t_{\mu}t_{\nu}^* \right)^*\\
&=\pi_t(a)^*.
\end{align*}
So $\pi_t^0$ is a homomorphism.

For each $a\in C_c(\Lambda *_s \Lambda)$, we claim that $$\lbrace \Vert \pi_t^0(a) \Vert : t \text{ is a TCK }(\Lambda,c)\text{-family}\rbrace$$ is bounded. Fix $a\in C_c(\Lambda *_s \Lambda)$ and a TCK $(\Lambda,c)$-family $t$. There is finite $F\subset \Lambda *_s \Lambda$ such that $a=\sum_{(\mu,\nu )\in F} a_{\mu,\nu} \xi_{\mu,\nu}$. For each $\mu,\nu \in \Lambda$, we have
\begin{align*}
 \Vert \pi_t^0(\xi_{\mu,\nu}) \Vert^2 &=\Vert \pi_t^0(\xi_{\mu,\nu} )^* \pi_t^0(\xi_{\mu,\nu})\Vert \\
&=\Vert \pi_t^0(\xi_{\nu,\mu} ) \pi_t^0(\xi_{\mu,\nu})\Vert \\
&= \Vert t_\nu t_\mu^* t_\mu t_\nu^*\Vert \\
&= \Vert c(\nu,s(\mu))c(\mu, s(\mu) \overline{ c(\nu,s(\mu))c(\mu,s(\mu))}  t_{\nu s(\mu)} t_{\nu s(\mu)}^* \Vert \qquad \text{by Lemma }~\ref{handylemma}(4)\\
&= \Vert t_\nu^* t_\nu \Vert\\
&=\Vert t_{s(\nu)} \Vert.
\end{align*}
As $t_{s(\nu)}$ is a projection it has norm $0$ or $1$, and the same is true of $\pi_t^0(\xi_{\mu,\nu}) $. We have
\begin{align*}
\Vert \pi_{t}^0(a) \Vert &\leq \sum_{(\mu,\nu) \in F } |a(\mu,\nu)| \Vert \pi_t^0 ( \xi_{\mu,\nu}) \Vert  \\
&\leq \sum_{(\mu,\nu) \in F } |a(\mu,\nu)|,
\end{align*}
which establishes our claim.

For each $a\in C_c(\Lambda *_s \Lambda)$, define $$N(a):=\sup \lbrace \Vert \pi_t^0(a) \Vert : t \text{ is a TCK } (\Lambda,c)\text{-family}\rbrace.$$ The function $N$ is a *-algebra seminorm on $C_c(\Lambda *_s \Lambda)$ since it is defined as the supremum of a collection of seminorms.
For each TCK $(\Lambda,c)$-family $t$, the kernel of $\pi_{t}^0$ is a two-sided *-ideal in $C_c(\Lambda *_s\Lambda)$. So $$I:=\ker(N) = \bigcap \lbrace \ker \pi_t^0 :  t\text{ is a TCK } (\Lambda,c)\text{-family} \rbrace $$ is also a two-sided *-ideal in $C_c(\Lambda *_s \Lambda)$. Let $A:=C_c(\Lambda *_s \Lambda) / I$. Define $\Vert \cdot \Vert : A \rightarrow \mathbb{R}$ by $\Vert a+I\Vert = N(a)$; this is well defined. To see this suppose that $b\in I$. Then
\begin{align*}
N(a+b)&=\sup\left\{ \Vert \pi_t^0(a+b)\Vert:  t \text{ is a TCK } (\Lambda,c)\text{-family}\right\} \\
&= \sup \left\{\Vert \pi_t^0(a)+ \pi_t^0(b) \Vert: t \text{ is a TCK } (\Lambda,c)\text{-family}\right\}\\
&= \sup\left\{\Vert \pi_t^0(a)\Vert:  t \text{ is a TCK } (\Lambda,c)\text{-family} \right\}.
\end{align*}
The calculations
\begin{align*}
\Vert (a+I)^*(a+I)\Vert &= \Vert a^* a + I\Vert\\
&= N(a^* a)\\
&= \sup\left\{\Vert \pi_t^0(a^*a)\Vert: t \text{ is a TCK } (\Lambda,c)\text{-family} \right \} \\
&=\sup \left\{ \Vert \pi_t^0(a) \Vert^2: t \text{ is a TCK } (\Lambda,c)\text{-family} \right \} \\
&= N(a)^2\\
&=\Vert a+I\Vert^2,
\end{align*}
and
\begin{align*}
\Vert (a+&I)(b+I)\Vert = \Vert ab + I\Vert\\
&= N(a b)\\
&= \sup\left\{ \Vert \pi_t^0(ab) \Vert:t \text{ is a TCK } (\Lambda,c)\text{-family} \right \} \\
&\leq \sup \left\{ \Vert \pi_t^0(a) \Vert : t \text{ is a TCK } (\Lambda,c)\text{-family} \right \}  \sup\left\{ \Vert \pi_t^0(b) \Vert: t \text{ is a TCK } (\Lambda,c)\text{-family} \right \} \\
&= N(a)N(b)\\
&=\Vert a+I\Vert \Vert b+I\Vert
\end{align*}
show that $\Vert \cdot \Vert $ is a Banach *-algebra norm and satisfies the $C^*$-identity. Denote the completion of $A$ by $\mathcal{T}C^*(\Lambda,c)$, which is a $C^*$-algebra by construction.

For $\lambda \in \Lambda$, define $ s_\mathcal{T}(\lambda)  :=\xi_{\lambda,s(\lambda)}+I\in \mathcal{T}C^*(\Lambda,c)$. We claim that $\lbrace s_\mathcal{T}(\lambda)  : \lambda \in \Lambda \rbrace$ is a TCK $(\Lambda,c)$-family that generates $\mathcal{T}C^*(\Lambda,c)$.  If $v\in \Lambda^0$, then $$s_\mathcal{T}(v)^2=\xi_{v,v} \xi_{v,v} + I= \xi_{v,v}+ I=s_\mathcal{T}(v)=\xi_{v,v}^*+ I=s_\mathcal{T}(v)^*$$ If $v,w\in \Lambda^0$ with $v\neq w$ then $s_\mathcal{T}(v) s_\mathcal{T}(w)=\xi_{v,v} \xi_{w,w}+ I=0$. This gives (TCK1). If $\mu,\nu \in \Lambda$ with $s(\mu)=r(\nu)$, then

\begin{align*}
s_\mathcal{T}(\mu) s_\mathcal{T}(\nu) &= \xi_{\mu,s(\mu)} \xi_{\nu,s(\nu)} + I\\
&= \sum_{(s(\mu)',\nu')\in \Lambda^{\text{min}}(s(\mu),\nu)} c(\nu,\nu')c(\mu,s(\mu)')\overline{c(s(\mu),s(\mu)')c(s(\nu),\nu')}\xi_{(\mu s(\mu)', s(\nu) \nu')}+ I\\
&= c(\nu,s(\nu))c(\mu,\nu)\overline{c(s(\mu),\nu)c(s(\nu),s(\nu))}\xi_{(\mu \nu, s(\nu) s(\nu))}+ I\\
&= c(\mu,\nu)\xi_{(\mu \nu,s(\mu\nu))}+ I\\
&=c(\mu,\nu) s_\mathcal{T}(\mu \nu),
\end{align*}
establishing (TCK2). For $\lambda \in \Lambda$, we have
\begin{align*}
s_\mathcal{T}(\lambda)^* s_\mathcal{T}(\lambda) &= \xi_{(\lambda, s(\lambda))}^* \xi_{\lambda, s(\lambda)}+ I\\
&= \xi_{(s(\lambda), \lambda)} \xi_{\lambda, s(\lambda)}+ I\\
&= \xi_{(s(\lambda),s(s(\lambda)))}+ I\\
&=s_\mathcal{T}({s(\lambda)}).
\end{align*}
This gives (TCK3). For (TCK4), fix $\mu,\nu \in \Lambda$. We have

\begin{align*}
s_\mathcal{T}(\mu) s^*_\mathcal{T}(\mu) s_\mathcal{T}(\nu) s^*_\mathcal{T}(\nu)&=\xi_{(\mu,s(\mu))} \xi_{(\mu,s(\mu))}^* \xi_{(\nu,s(\nu))} \xi_{(\nu,s(\nu))}^*\\
&=\xi_{(\mu,s(\mu))} \xi_{(s(\mu),\mu)} \xi_{(\nu,s(\nu))} \xi_{(s(\nu),\nu)}\\
&=\xi_{(\mu,\mu)}\xi_{(\nu,\nu)} \qquad \text{ by } \eqref{eq:multiplicationxi}\\
&=\sum_{(\mu',\nu')\in \Lambda^{\text{min}}(\mu,\nu)} c(\nu,\nu') c(\mu,\mu')\overline{c(\mu,\mu')c(\nu,\nu')}\xi_{(\mu \mu',\nu \nu')}\qquad \text{ by } \eqref{eq:multiplicationxi}\\
&=\sum_{\sigma\in \MCE(\mu,\nu)} \xi_{(\sigma,\sigma)}\\
&=\sum_{\sigma\in \MCE(\mu,\nu)} \xi_{(\sigma,s(\sigma))}\xi_{(\sigma,s(\sigma))}^*\\
&=\sum_{\sigma \in \textrm{MCE}(\mu,\nu)} s_\mathcal{T}({\sigma}) s^*_\mathcal{T}({\sigma}).
\end{align*}

This gives (TCK4). If $\mu,\nu \in \Lambda$ with $s(\mu)=s(\nu)$, then as $$s_\mathcal{T}(\mu) s_\mathcal{T}(\nu)^* = \xi_{(\mu, s(\mu))} \xi_{(\nu, s(\nu))}^* +I = \xi_{(\mu,\nu)} + I,$$ the TCK $(\Lambda,c)$-family $\lbrace s_\mathcal{T}(\lambda) : \lambda \in \Lambda \rbrace$ generates $\mathcal{T}C^*(\Lambda,c)$.
We must now check that $\mathcal{T}C^*(\Lambda,c)$ and $\lbrace s_\mathcal{T}(\lambda) : \lambda \in \Lambda \rbrace$ have the universal property. Fix a TCK $(\Lambda,c)$-family $t$. For $a\in C_c(\Lambda *_s \Lambda)$, we have
\begin{equation}\label{normequ}
\Vert \pi_t^0(a) \Vert \leq \sup\left\{\Vert \pi_{t'}^0(a) \Vert : t \text{ is a TCK } (\Lambda,c)\text{-family} \right \} = N(a).
\end{equation}
Hence if $b\in I$ then $ \pi_t^0(b)=0 $, and so $I \subset \ker(\pi_t^0)$. Hence there is a well-defined linear map $\pi_t:A \rightarrow C^*(\lbrace t_\lambda :\lambda \in \Lambda \rbrace)$ given by $\pi_t(a+I)=\pi_t^0(a)$. This $\pi_t$ is a homomorphism by definition of the operations in the quotient $A$ and because $\pi_t^0$ is homomorphism. Equation~\eqref{normequ} implies that for $a+I \in A$, we have $\Vert \pi_t(a+I)\Vert = \Vert \pi_t^0(a) \Vert \leq N(a) = \Vert a + I \Vert$. So the homomorphism $\pi_t$ is continuous, and hence extends to the desired homomorphism $\pi_t : \mathcal{T}C^*(\Lambda,c) \rightarrow C^*(\lbrace t_\lambda :\lambda \in \Lambda \rbrace)$.

For the final statement, suppose $B$ is a $C^*$-algebra generated by  TCK $(\Lambda,c)$-family $t$ satisfying the universal property. As $\mathcal{T}C^*(\Lambda,c)$ is universal there is a homomorphism $\pi_{t}^\mathcal{T} : \mathcal{T}C^*(\Lambda,c) \to B$ such that $\pi_t\left( s_\mathcal{T}(\lambda \right))=t_\lambda$ for each $\lambda \in \Lambda$. As $B$ is universal there is a homomorphism $\psi : B \rightarrow \mathcal{T}C^*(\Lambda,c)$ such that $\psi(t_\lambda) = s_\mathcal{T}(\lambda)$ for each $\lambda \in \Lambda$. Since $\pi_\mathcal{T} \circ \psi$ and $  \psi \circ \pi_\mathcal{T}$ are the identity maps on the generators of $B$ and $\mathcal{T}C^*(\Lambda,c)$ respectively, $\pi_\mathcal{T}$ and $\psi$ are mutually inverse and hence isomorphisms.
\end{proof}

\section{Relative Cuntz-Krieger $(\Lambda,c;\mathcal{E})$-families}
This section introduces the twisted relative Cuntz-Krieger algebras associated to finitely aligned $k$-graph $\Lambda$. For each $c\in \underline{Z}^2(\Lambda,\mathbb{T})$ and each collection $\mathcal{E}$ of finite exhaustive sets, the twisted relative Cuntz-Krieger algebra $C^*(\Lambda,c;\mathcal{E})$ is the universal $C^*$-algebra generated by a Toeplitz-Cuntz-Krieger $(\Lambda,c)$-family $\left\{ s_\mathcal{E}(\lambda):\lambda \in \Lambda \right \}$ satisfying $$\prod_{\lambda \in E} \left(s_\mathcal{E}(r(E))-s_\mathcal{E}(\lambda)s_\mathcal{E}(\lambda)^* \right)=0$$ whenever $E\in \mathcal{E}$. When $\mathcal{E}=\emptyset$, the associated twisted relative Cuntz-Krieger algebra $C^*(\Lambda,c;\emptyset)$ is the Toeplitz algebra $\mathcal{T}C^*(\Lambda,c)$ studied in the last section. When $1\in \underline{Z}^2(\Lambda,\mathbb{T})$ denotes the identity 2-cocycle, we obtain the relative Cuntz-Krieger algebra $C^*(\Lambda;\mathcal{E})$ studied in \cite{S20061}. When $\mathcal{E}$ is the collection of all finite exhaustive sets $\FE(\Lambda)$ \cite{RSY2004}, we obtain the twisted Cuntz-Krieger algebra $C^*(\Lambda,c):=C^*(\Lambda,c;\FE(\Lambda))$. The twisted relative Cuntz-Krieger algebras $C^*(\Lambda,c;\mathcal{E})$ interpolate between the Toeplitz algebra $\mathcal{T}C^*(\Lambda,c)$, and the Cuntz-Krieger algebra $C^*(\Lambda,c;\FE(\Lambda))$. More precisely, for each $\mathcal{E}\subset \FE(\Lambda)$ there is an ideal $J_\mathcal{E}\subset \mathcal{T}C^*(\Lambda,c)$ such that $$C^*(\Lambda,c;\mathcal{E})= \mathcal{T}C^*(\Lambda,c)/J_\mathcal{E},$$
and, in particular, there is $J_{\FE(\lambda)}$ such that
$$C^*(\Lambda,c)=\mathcal{T}C^*(\Lambda,c)/J_{\FE(\Lambda)}.$$

\label{chptr:Relative_Cuntz-Krieger}
\begin{definition}[\cite{RSY2004}, Definition 2.4]\label{def:finitexhaustive}
Let $(\Lambda,d)$ be a $k$-graph and let $v\in \Lambda^0$. A subset $E\subset v\Lambda$ is said to be \emph{exhaustive} if for every $\mu\in v\Lambda$ there exists a $\lambda\in E$ such that $\Lambda^{\text{min}}(\lambda,\mu) \neq \emptyset$. Define $\FE(\Lambda):=\lbrace E\subset v\Lambda : E \text{ is finite and exhaustive and } v\notin E \rbrace$.
\end{definition}

\begin{definition}\label{def:gapprojections}
Let $(\Lambda,d)$ be a finitely aligned $k$-graph and let $c\in\underline{Z}^2(\Lambda,\mathbb{T})$. Let $\lbrace t_\lambda : \lambda \in \Lambda \rbrace$ be a Toeplitz-Cuntz-Krieger $(\Lambda,c)$-family. Suppose that $v\in \Lambda^0$ and that $E\subset v\Lambda$ is nonempty and finite. We write $r(E)$ for the unique $v\in \Lambda^0$ such that $E\subset v\Lambda$. We call the product
$$Q(t)^E:=\prod_{\lambda \in E} (t_{r(E)} - t_\lambda t_\lambda^*)$$
the gap projection of $t$ associated to $E$.
\end{definition}

\begin{remark} The product $$\prod_{\lambda \in E} (t_{r(E)} - t_\lambda t_\lambda^*)$$ is well-defined as the projections $\left\{ t_\lambda t_\lambda^* : \lambda \in \Lambda\right\}$ pairwise commute. Hence for $v=r(E)$ and $\lambda,\mu \in E$,
\begin{align*}
(t_v-t_\lambda t_\lambda^*)(t_v-t_\mu t_\mu^*)&=t_v^2-t_vt_\mu t_\mu^*-t_\lambda t_\lambda^* + t_\lambda t_\lambda^* t_\mu t_\mu^*\\
&=t_v^2-t_v t_v^*t_\mu t_\mu^*-t_\lambda t_\lambda^*t_v t_v^* + t_\lambda t_\lambda^* t_\mu t_\mu^*\\
&=t_v^2-t_\mu t_\mu^*t_v t_v^*-t_v t_v^*t_\lambda t_\lambda^* +t_\mu t_\mu^* t_\lambda t_\lambda^* \qquad\text{by Lemma~\ref{handylemma}(2)}\\
&=t_v^2 -t_v t_\lambda t_\lambda^*-t_\mu t_\mu^*t_v +t_\mu t_\mu^* t_\lambda t_\lambda^* \\
&=(t_v-t_\mu t_\mu^*)(t_v-t_\lambda t_\lambda^*).
\end{align*}
An induction argument over $|E|$ therefore shows that $\prod_{\lambda \in E} (t_{r(E)} - t_\lambda t_\lambda^*)$ is independent of the order of multiplication.
\end{remark}
The following definition is (\cite{S20061}, Definition 3.2) but with the 2-cocycle incorporated.
\begin{definition}
Let $(\Lambda,d)$ be a finitely aligned $k$-graph, let $c\in \underline{Z}^2(\Lambda,\mathbb{T})$ and let $\mathcal{E} \subset \FE(\Lambda)$. A \emph{relative Cuntz-Krieger} $(\Lambda,c;\mathcal{E})$\emph{-family} is a Toeplitz-Cuntz-Krieger $(\Lambda,c)$-family $\lbrace t_\lambda : \lambda \in \Lambda \rbrace$ that satisfies
\begin{equation} Q(t)^E=0 \textrm{ for all } E \in \mathcal{E}.\tag{CK}\end{equation} \nonumber
When $\mathcal{E}=FE(\Lambda)$, we call the resulting relative Cuntz-Krieger $(\Lambda,c;\mathcal{E})$-family a \emph{Cuntz-Krieger} $(\Lambda,c)$\emph{-family}.
\end{definition}

Note that every Toeplitz-Cuntz-Krieger $(\Lambda,c)$-family is a relative Cuntz-Krieger $(\Lambda,c;\emptyset)$-family and every relative Cuntz-Krieger $(\Lambda,c)$-family is a Toeplitz-Cuntz-Krieger $(\Lambda,c)$-family.

\begin{definition}
Let $(\Lambda,d)$ be a finitely aligned $k$-graph, let $c\in \underline{Z}^2(\Lambda,\mathbb{T})$ and let $\mathcal{E}\subset \FE(\Lambda)$. Define $J_\mathcal{E}\subset \mathcal{T}C^*(\Lambda,c)$ to be the intersection of all closed ideals containing
$$\left \{ Q(s_\mathcal{T})^E : E \in \mathcal{E} \right \}.$$
\end{definition}

\begin{lemma}\label{le:ideal}
Let $A,B$ be $C^*$-algebras and let $X\subset A$. Let $I_X$ denote the intersection of all closed ideals containing $X$. Then $I_X$ is the smallest ideal containing $X$ and $$I_X=\overline{\linspan}\lbrace  bxc : b,c\in A , x \in X \rbrace .$$ We call $I_X$ the \emph{ideal generated by} $X$. Suppose $\pi : A \rightarrow B$ is a homomorphism such that $\pi(x)=0$ for all $x\in X$. Then $\pi(a)=0$ for all $a\in I_X$. There is a unique homomorphism $\tilde \pi : A / I_X \rightarrow B$ satisfying $\tilde \pi ( a+ I_X) = \pi(a)$ for all $a\in A$.
\end{lemma}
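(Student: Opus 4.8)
The plan is to split the statement into four assertions and dispatch them in order, leaning on two standard facts about $C^*$-algebras: that an arbitrary intersection of closed two-sided ideals is again a closed two-sided ideal, and that every $C^*$-algebra has an approximate identity. Throughout I read ``ideal'' as ``closed two-sided ideal'', as is standard in this setting.

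First I would check that $I_X$ is the smallest closed ideal containing $X$. Every ideal in the defining family contains $X$, so the intersection $I_X$ contains $X$, and since an intersection of closed ideals is a closed ideal, $I_X$ is one. By construction $I_X$ is contained in every closed ideal containing $X$, which is exactly minimality. Next I would identify $I_X$ with $J:=\overline{\linspan}\lbrace bxc : b,c\in A,\ x\in X\rbrace$. The inclusion $J\subset I_X$ is immediate: $I_X$ is a closed ideal containing $X$, hence contains each product $bxc$ and therefore the closed linear span $J$. For the reverse inclusion it suffices, by minimality, to show that $J$ is a closed ideal containing $X$. That $J$ is a closed subspace stable under left and right multiplication by $A$ follows from $a(bxc)=(ab)xc$ and $(bxc)a=bx(ca)$, both again of the spanning form, together with linearity and continuity of multiplication. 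The one step needing care is $X\subset J$, and this is where I would invoke an approximate identity $(e_\lambda)$ for $A$: since $e_\lambda x e_\mu\to x$ in norm for each $x\in X$, every $x$ lies in the closure $J$. This gives $I_X\subset J$ and hence $I_X=J$.

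Finally I would treat the homomorphism assertions. The kernel of a $*$-homomorphism of $C^*$-algebras is a closed two-sided ideal, and $X\subset\ker\pi$ by hypothesis, so minimality of $I_X$ yields $I_X\subset\ker\pi$; thus $\pi$ vanishes on all of $I_X$. This inclusion is precisely what makes the rule $\tilde\pi(a+I_X):=\pi(a)$ well defined, for if $a+I_X=a'+I_X$ then $a-a'\in I_X\subset\ker\pi$, so $\pi(a)=\pi(a')$. The map $\tilde\pi$ is a homomorphism because the algebraic and $*$-operations on the quotient $A/I_X$ are inherited from $A$ and $\pi$ respects them, and uniqueness is forced since any map satisfying $\tilde\pi(a+I_X)=\pi(a)$ is prescribed on every coset.

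The only genuine subtlety in the whole argument is the approximate-identity step establishing $X\subset J$ in the non-unital case; in the unital case one simply writes $x=1\cdot x\cdot 1$, and everything else reduces to routine bookkeeping with the quotient.
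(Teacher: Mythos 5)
Your proof is correct and follows essentially the same route as the paper's: minimality of the intersection of closed ideals, identification of $I_X$ with $\overline{\linspan}\lbrace bxc : b,c\in A, x\in X\rbrace$ via an approximate identity, and well-definedness of $\tilde\pi$ from the vanishing of $\pi$ on $I_X$. The only cosmetic difference is in that last step: you note that $\ker\pi$ is a closed ideal containing $X$ and invoke minimality, whereas the paper computes $\pi(bxc)=\pi(b)\pi(x)\pi(c)=0$ on spanning elements and extends by linearity and continuity; both are one-line arguments resting on the same facts.
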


\begin{proof}
It is routine to check that an arbitrary intersection of closed ideals containing $X$ is a closed ideal containing $X$. The ideal $I_X$ is the intersection of all closed ideals containing $X$ and hence the smallest closed ideal containing $X$. The set $I:=\overline{\linspan}\lbrace  bxc : b,c\in A , x \in X \rbrace $ is a closed ideal. Let $(e_\lambda)_{\lambda \in \Lambda}$ be an approximate identity for $A$. Then $e_\lambda x e_\lambda \rightarrow  x$ for each $x\in X$. So $I$ contains $X$. Since every closed ideal that contains $X$ must contain $I$, we have $I \subset I_X$. Since $I_X$ is the smallest closed ideal containing $X$, we have $I=I_X$. Suppose that $b,c\in A$ and that $ x\in X$. We have
\begin{align*}
\pi(bxc)=\pi(b)\pi(x)\pi(c)=0.
\end{align*}
So $\pi(a)=0$ for all $a\in \linspan\lbrace  bxc : b,c\in A , x \in X \rbrace $. By continuity, we have $\pi(a)=0$ for all $a\in I_X$.

To see that the map $\tilde \pi$ is well-defined, suppose that $a+I_X=b+I_X$. Then $a=b+n$ for some $n\in I_X$, and so $\pi(a)=\pi(b+n)=\pi(b)$. Since a closed ideal in a $C^*$-algebra is automatically self-adjoint, there is a well-defined involution $*: A/I_X \to A/I_X$ satisfying $(a+I_X)^*=a^*+I_X$ for all $a\in A$. That $\tilde \pi$ is a homomorphism then follows from the definitions of the operations in the quotient $A/I_X$ and that $\pi$ is a homomorphism.
\end{proof}

\begin{theorem}\label{th:relativecuntzkriegeralgebra}
Let $(\Lambda,d)$ be a finitely aligned $k$-graph, let $c\in\underline{Z}^2(\Lambda,\mathbb{T})$ and let $\mathcal{E}\subset \FE(\Lambda)$. There exists a $C^*$-algebra $C^*(\Lambda,c;\mathcal{E})$ generated by a relative Cuntz-Krieger $(\Lambda,c;\mathcal{E})$-family $\lbrace s_{\mathcal{E}}(\lambda) : \lambda \in \Lambda \rbrace$ that is universal in the sense that if $\lbrace t_\lambda : \lambda \in \Lambda \rbrace$ is a relative Cuntz-Krieger $(\Lambda,c;\mathcal{E})$-family in a $C^*$-algebra $B$, then there exists a homomorphism $\pi^{\mathcal{E}}_t: C^*(\Lambda,c ; \mathcal{E}) \rightarrow B$ such that $\pi^{\mathcal{E}}_t(s_{\mathcal{E}}(\lambda))=t_\lambda$ for all $\lambda \in \Lambda$. The $C^*$-algebra $C^*(\Lambda,c;\mathcal{E})$ is unique in the sense that, if $B$ is a $C^*$-algebra generated by a relative Cuntz-Krieger $(\Lambda,c;\mathcal{E})$-family $\lbrace t_\lambda : \lambda \in \Lambda \rbrace$ satisfying the universal property, there is an isomorphism of $C^*(\Lambda,c;\mathcal{E})$ onto $ B$ such that $s_\mathcal{E}(\lambda)\mapsto t_\lambda$ for all $\lambda \in \Lambda$. We call $C^*(\Lambda,c;\mathcal{E}) $ the twisted relative Cuntz-Krieger algebra of $\Lambda$.
\end{theorem}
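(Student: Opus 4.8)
The plan is to realize $C^*(\Lambda,c;\mathcal{E})$ as the quotient $\mathcal{T}C^*(\Lambda,c)/J_\mathcal{E}$, with generators the images of the $s_\mathcal{T}(\lambda)$ under the quotient map, and then deduce everything from the universal property of the Toeplitz algebra (Theorem~\ref{toeplitzalgebra}) together with Lemma~\ref{le:ideal}. Concretely, I would write $q\colon \mathcal{T}C^*(\Lambda,c)\to \mathcal{T}C^*(\Lambda,c)/J_\mathcal{E}$ for the quotient map, set $C^*(\Lambda,c;\mathcal{E}):=\mathcal{T}C^*(\Lambda,c)/J_\mathcal{E}$, and define $s_\mathcal{E}(\lambda):=q(s_\mathcal{T}(\lambda))=s_\mathcal{T}(\lambda)+J_\mathcal{E}$ for each $\lambda\in\Lambda$.

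First I would check that $\lbrace s_\mathcal{E}(\lambda):\lambda\in\Lambda\rbrace$ is a relative Cuntz-Krieger $(\Lambda,c;\mathcal{E})$-family. Since each of (TCK1)--(TCK4) is an identity among $*$-polynomials in the generators and $q$ is a $*$-homomorphism, the family $\lbrace s_\mathcal{E}(\lambda)\rbrace$ inherits all four relations from $\lbrace s_\mathcal{T}(\lambda)\rbrace$ and so is a Toeplitz-Cuntz-Krieger $(\Lambda,c)$-family. The key observation is that a gap projection is itself such a $*$-polynomial, so $q(Q(s_\mathcal{T})^E)=Q(s_\mathcal{E})^E$ for every finite nonempty $E$. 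For $E\in\mathcal{E}$ we have $Q(s_\mathcal{T})^E\in J_\mathcal{E}$ by definition of $J_\mathcal{E}$, whence $Q(s_\mathcal{E})^E=q(Q(s_\mathcal{T})^E)=0$; this is exactly (CK). Generation is immediate: $\lbrace s_\mathcal{T}(\lambda)\rbrace$ generates $\mathcal{T}C^*(\Lambda,c)$ and $q$ is surjective, so $\lbrace s_\mathcal{E}(\lambda)\rbrace$ generates the quotient.

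Next I would verify the universal property. Given a relative Cuntz-Krieger $(\Lambda,c;\mathcal{E})$-family $\lbrace t_\lambda\rbrace$ in a $C^*$-algebra $B$, it is in particular a Toeplitz-Cuntz-Krieger $(\Lambda,c)$-family, so Theorem~\ref{toeplitzalgebra} supplies a homomorphism $\pi_t^\mathcal{T}\colon\mathcal{T}C^*(\Lambda,c)\to B$ with $\pi_t^\mathcal{T}(s_\mathcal{T}(\lambda))=t_\lambda$. As above, $\pi_t^\mathcal{T}(Q(s_\mathcal{T})^E)=Q(t)^E$, which vanishes for each $E\in\mathcal{E}$ because $\lbrace t_\lambda\rbrace$ satisfies (CK). Thus $\pi_t^\mathcal{T}$ kills every generator of $J_\mathcal{E}$, so Lemma~\ref{le:ideal} gives $J_\mathcal{E}\subset\ker\pi_t^\mathcal{T}$ and a unique homomorphism $\pi_t^\mathcal{E}\colon C^*(\Lambda,c;\mathcal{E})\to B$ with $\pi_t^\mathcal{E}(a+J_\mathcal{E})=\pi_t^\mathcal{T}(a)$; evaluating on generators gives $\pi_t^\mathcal{E}(s_\mathcal{E}(\lambda))=t_\lambda$, as required.

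Finally, uniqueness of the algebra follows by the standard two-sided argument, exactly as in the last paragraph of the proof of Theorem~\ref{toeplitzalgebra}: if $B$ is generated by a relative Cuntz-Krieger $(\Lambda,c;\mathcal{E})$-family satisfying the universal property, then applying each universal property to the other family produces homomorphisms that agree with the identity on the respective generating sets, hence are mutually inverse isomorphisms carrying $s_\mathcal{E}(\lambda)\mapsto t_\lambda$. I do not expect a genuine obstacle here: the construction of $\mathcal{T}C^*(\Lambda,c)$ and Lemma~\ref{le:ideal} have already done the analytic work, and the only point needing care is the elementary but essential observation that homomorphisms carry gap projections to gap projections, which is precisely what lets (CK) be both imposed in the quotient and detected in any target family.
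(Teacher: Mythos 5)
Your proposal is correct and follows essentially the same route as the paper: define $C^*(\Lambda,c;\mathcal{E}):=\mathcal{T}C^*(\Lambda,c)/J_\mathcal{E}$, observe that the quotient map carries the Toeplitz--Cuntz--Krieger relations and gap projections to the quotient so that (CK) holds, and obtain the universal property by factoring $\pi_t^{\mathcal{T}}$ through $J_\mathcal{E}$ via Lemma~\ref{le:ideal}, with uniqueness by the standard two-sided argument. Your explicit remark that homomorphisms carry gap projections to gap projections is exactly the point the paper uses implicitly, so there is nothing to correct.
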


\begin{proof}
Let $C^*(\Lambda,c; \mathcal{E}):=\mathcal{T}C^*(\Lambda,c)/J_{\mathcal{E}}$ and let $s_\mathcal{E}(\lambda):=s_\mathcal{T}(\lambda) + J_\mathcal{E}$. To prove that $\lbrace s_\mathcal{E}(\lambda) : \lambda \in \Lambda \rbrace$ is a relative Cuntz-Krieger $(\Lambda,c;\mathcal{E})$-family, we only need to show that $Q(s_\mathcal{E})^E=0$ for each $E\in \mathcal{E}$. If $E\in \mathcal{E}$, then $Q(s_\mathcal{T})^E\in J_\mathcal{E}$ by definition, so $Q(s_\mathcal{E})^E=Q(s_\mathcal{T})^E+J_\mathcal{E}=0+J_\mathcal{E}$.

We will now construct the homomorphism $\pi_t^\mathcal{E}$. Fix a relative Cuntz-Krieger $(\Lambda,c ; \mathcal{E})$-family $\lbrace t_\lambda : \lambda \in \Lambda \rbrace$ in a $C^*$ algebra $B$. Let $\pi_t^\mathcal{T}: \mathcal{T}C^*(\Lambda,c) \rightarrow B$ be the homomorphism from Theorem~\ref{toeplitzalgebra}. If $E\in \mathcal{E}$, then
\begin{equation} \pi_t^{\mathcal{T}} \left( Q(s_\mathcal{T})^E \right) =Q(t)^E=0. \end{equation} \nonumber
Lemma~\ref{le:ideal} implies that $\pi_t^\mathcal{T}(a)=0$ for all $a\in J_\mathcal{E}$, and that there is a homomorphism $\pi_t^\mathcal{E} : \mathcal{T}C^*(\Lambda,c)/J_{\mathcal{E}} \rightarrow B$ satisfying $\pi_t^\mathcal{E}(a+J_\mathcal{E})=\pi_t^\mathcal{T}(a)$ for all $a\in \mathcal{T}C^*(\Lambda,c)$. Fix $\lambda \in \Lambda$. Then $\pi_t^\mathcal{E}(s_\mathcal{E}(\lambda))=\pi_t^\mathcal{E}(s_\mathcal{T}(\lambda)+J_\mathcal{E})=\pi_t^\mathcal{T}\left(s_\mathcal{T}(\lambda)\right)=t_\lambda$. The proof that $C^*(\Lambda,c;\mathcal{E})$ is unique is identical to the proof that $\mathcal{T}C^*(\Lambda,c;\mathcal{E})$ is unique in Theorem~\ref{toeplitzalgebra}.
\end{proof}

\section{Gap projections and boundary paths}
For a collection $\mathcal{E}\subset  \FE(\Lambda)$, we recall the definition of the satiation $\overline{\mathcal{E}}$ of $\mathcal{E}$ from \cite{S20061} in Definition~\ref{de:sat}. We show that for each collection $\mathcal{E}\subset \FE(\Lambda)$, the gap projection $Q(s_\mathcal{E})^E$ in $C^*(\Lambda,c;\mathcal{E})$ associated to a finite exhaustive set $E$ is zero if and only if $E\in \overline{\mathcal{E}}$.

\label{chptr:Gap_projections}
For $v\in \Lambda^0$, we wish to know precisely which finite subsets $E$ of $v\Lambda$ satisfy
\begin{equation}Q(s_\mathcal{E})^E\neq 0.\label{eq:gapprojnonzero1}\end{equation}
 The following remark says that if $v\in E$ then $Q(s_\mathcal{E})^E=0$. If $E\subset v\Lambda \setminus \left\{ v \right \}$ is not exhaustive, Lemma~\ref{le:nonzerogapfornonextsets} implies that $Q(s_\mathcal{E})^E\neq 0$. So in order to find out precisely which finite subsets $E$ of $v\Lambda$ satisfy~\eqref{eq:gapprojnonzero1}, we only need to consider the case when $E\subset v \Lambda \setminus \left\{ v \right \}$ is exhaustive; that is, $E\in \FE(\Lambda)$.

\begin{remark} Suppose that $F\subset v \Lambda $ is finite and that $v\in F$. Then since $t_v t_v^*=t_v$, we have
$$Q(t)^F=\prod_{\mu\in F} \left( t_v -t_\mu t_\mu^* \right) =(t_v-t_v t_v^*)\prod_{\mu\in F\setminus \lbrace v \rbrace} \left( t_v -t_\mu t_\mu^* \right) =0$$
for all Toeplitz-Cuntz-Krieger $(\Lambda, c)$-families $\lbrace t_\lambda : \lambda \in \Lambda \rbrace$.
\end{remark}

\begin{lemma}\label{le:nonzerogapfornonextsets}
Let $(\Lambda,d)$ be a finitely aligned $k$-graph and let $c\in \underline{Z}^2(\Lambda,\mathbb{T})$. Let $v\in \Lambda^0$ and suppose that $\lbrace t_\lambda: \lambda \in \Lambda \rbrace$ is a Toeplitz-Cuntz-Krieger $(\Lambda,c)$-family. Suppose that $F\subset v\Lambda\setminus \lbrace v \rbrace$ is finite and is not exhaustive. Then
$$Q(t)^F=\prod_{\mu \in F} \left( t_v - t_\mu t_\mu^* \right) \neq 0 $$
whenever $t_w$ is nonzero for all $w\in \Lambda^0$.
\end{lemma}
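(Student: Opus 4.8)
The plan is to exhibit a single element of the family that the gap projection fixes and that is known to be nonzero. Because $F$ is not exhaustive, Definition~\ref{def:finitexhaustive} supplies a path $\mu \in v\Lambda$ with $\Lambda^{\text{min}}(\lambda,\mu)=\emptyset$ for every $\lambda \in F$. I would then argue that $Q(t)^F t_\mu = t_\mu$, so that the nonvanishing of $t_\mu$ forces $Q(t)^F \neq 0$.

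First I would compute the action of each factor $(t_v - t_\lambda t_\lambda^*)$ on $t_\mu$. Since $r(\mu)=v$ and $c(v,\mu)=c(r(\mu),\mu)=1$ by (C2), relation (TCK2) gives $t_v t_\mu = c(v,\mu) t_{v\mu} = t_\mu$. For the other term, Lemma~\ref{handylemma}(1) expresses $t_\lambda^* t_\mu$ as a sum indexed by $\Lambda^{\text{min}}(\lambda,\mu)$, which is empty for each $\lambda \in F$; hence $t_\lambda^* t_\mu = 0$ and so $t_\lambda t_\lambda^* t_\mu = 0$. Combining these, $(t_v - t_\lambda t_\lambda^*) t_\mu = t_\mu$ for every $\lambda \in F$.

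Next I would peel off the factors of $Q(t)^F$ one at a time from the right. Writing $Q(t)^F$ as the (order-independent, by the Remark following Definition~\ref{def:gapprojections}) product of the factors $(t_v - t_\lambda t_\lambda^*)$, the rightmost factor applied to $t_\mu$ returns $t_\mu$, and iterating gives $Q(t)^F t_\mu = t_\mu$. Finally, since $s(\mu)\in \Lambda^0$ and $t_w \neq 0$ for all $w \in \Lambda^0$ by hypothesis, Lemma~\ref{handylemma}(7) yields $t_\mu \neq 0$; therefore $Q(t)^F t_\mu = t_\mu \neq 0$, and in particular $Q(t)^F \neq 0$.

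There is essentially no serious obstacle here: the whole argument rests on the observation that non-exhaustiveness produces a path $\mu$ whose range-projection interactions with every $\lambda \in F$ vanish, so $t_\mu$ slips through the gap projection untouched. The only points demanding care are the bookkeeping with the 2-cocycle---using (C2) to ensure $c(v,\mu)=1$ so that $t_v$ acts as a left identity on $t_\mu$---and the appeal to Lemma~\ref{handylemma}(1) to see that the empty index set forces $t_\lambda^* t_\mu = 0$.
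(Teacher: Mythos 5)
Your proof is correct and follows essentially the same route as the paper's: both extract from non-exhaustiveness a witness path $\mu \in v\Lambda$ with $\Lambda^{\text{min}}(\lambda,\mu)=\emptyset$ for all $\lambda\in F$, and show that $Q(t)^F$ acts as the identity on an associated nonzero element (you use $t_\mu$ itself via Lemma~\ref{handylemma}(1) and (7), while the paper uses the range projection $t_\mu t_\mu^*$ via (TCK4) and Remark~\ref{re:handyre}). The difference is purely cosmetic.
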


\begin{proof}
Since $F$ is not exhaustive there exists $\lambda \in r(F) \Lambda$ such that for all $\mu \in F$ we have $\MCE(\mu,\lambda)=\emptyset$. By (TCK4) we have $t_\mu t_\mu^* t_\lambda t_\lambda^* =0$ for all $\mu\in F$. As $t_\lambda t_\lambda^*$ is a projection $t_\lambda t_\lambda^*=\left(t_\lambda t_\lambda^*\right)^{|F|}$. Since $t_\lambda t_\lambda^*$ commutes with $(t_v-t_\mu t_\mu^*)$ for each $\mu \in F$, we have
\begin{align*}
\left[ \prod_{\mu\in F} \left( t_v - t_\mu t_\mu^*\right) \right] t_\lambda t_\lambda^* &= \prod_{\mu \in F} \left(t_v t_\lambda t_\lambda^* - t_\mu t_\mu^* t_\lambda t_\lambda^* \right) \\
&= \left( t_v t_\lambda t_\lambda^*\right)^{|F|}\\
&=\left( c(v,\lambda) t_\lambda t_\lambda^*\right)^{|F|}\\
&=t_\lambda t_\lambda^*.
\end{align*}
Since $t_{s(\lambda)}\neq 0$, Remark~\ref{re:handyre} implies that $t_\lambda t_\lambda^*\neq0$, and so $\prod_{\mu\in F} \left( t_v - t_\mu t_\mu^*\right) \neq 0$.
\end{proof}

The next lemma shows that in the twisted Toeplitz algebra the gap projections associated to $F\subset v \Lambda \setminus \lbrace v \rbrace$ are all nonzero.
\begin{lemma}\label{le:allgapprojectionsarenonzero}
Let $(\Lambda,d)$ be a finitely aligned $k$-graph and let $c\in \underline{Z}^2(\Lambda,\mathbb{T})$. Let $\lbrace T_\lambda : \lambda \in \Lambda \rbrace$ be the Toeplitz-Cuntz-Krieger $(\Lambda,c)$-family of Proposition~\ref{toeplitzrepresentation}. For all $\lambda \in \Lambda$ and all finite $F\subset s(\lambda) \Lambda \setminus \lbrace s(\lambda) \rbrace$, we have
$$\prod_{\mu \in F} \left( T_\lambda T_\lambda^* - T_{\lambda \mu}T_{\lambda \mu}^* \right) \neq 0. $$
The same is true for the family $\left \{ s_\mathcal{T}(\lambda) : \lambda \in \Lambda \right \}$.
\end{lemma}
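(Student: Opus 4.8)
The plan is to test the product against the single basis vector $\xi_\lambda$ and show that every factor fixes it. Recall from Equation~\eqref{TTstar} in the proof of Proposition~\ref{toeplitzrepresentation} that for any $\eta\in\Lambda$ the operator $T_\eta T_\eta^*$ acts on basis vectors by $T_\eta T_\eta^* \xi_\alpha = \xi_\alpha$ when $\alpha = \eta\alpha'$ for some $\alpha'\in\Lambda$, and $T_\eta T_\eta^* \xi_\alpha = 0$ otherwise; in other words, $T_\eta T_\eta^*$ is the projection onto $\overline{\linspan}\lbrace \xi_\alpha : \eta \text{ is an initial segment of } \alpha \rbrace$. I would apply this with $\eta=\lambda$ and with $\eta=\lambda\mu$.

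First I would compute $T_\lambda T_\lambda^* \xi_\lambda$. Since $\lambda = \lambda\, s(\lambda)$, the path $\lambda$ has $\lambda$ as an initial segment, so $T_\lambda T_\lambda^* \xi_\lambda = \xi_\lambda$. Next I would compute $T_{\lambda\mu}T_{\lambda\mu}^* \xi_\lambda$ for $\mu\in F$. The hard part, such as it is, is the verification that $\lambda$ does \emph{not} have $\lambda\mu$ as an initial segment: if $\lambda = \lambda\mu\alpha''$ for some $\alpha''\in\Lambda$, then cancelling $\lambda$ via the factorisation property forces $s(\lambda) = \mu\alpha''$, and comparing degrees gives $d(\mu)+d(\alpha'')=0$, so $d(\mu)=0$ and hence $\mu = r(\mu) = s(\lambda)$. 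This contradicts the hypothesis $\mu\in s(\lambda)\Lambda\setminus\lbrace s(\lambda)\rbrace$. Therefore $T_{\lambda\mu}T_{\lambda\mu}^* \xi_\lambda = 0$, and consequently $(T_\lambda T_\lambda^* - T_{\lambda\mu}T_{\lambda\mu}^*)\xi_\lambda = \xi_\lambda$ for every $\mu\in F$.

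Since each of the finitely many factors fixes $\xi_\lambda$, so does their product; the product is well-defined and order-independent because the range projections commute by Lemma~\ref{handylemma}(2). This yields $\prod_{\mu\in F}(T_\lambda T_\lambda^* - T_{\lambda\mu}T_{\lambda\mu}^*)\,\xi_\lambda = \xi_\lambda \neq 0$, establishing the claim for the family $\lbrace T_\lambda : \lambda\in\Lambda\rbrace$.

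For the family $\lbrace s_\mathcal{T}(\lambda) : \lambda\in\Lambda\rbrace$ I would pull this back through the canonical representation. Theorem~\ref{toeplitzalgebra} furnishes a homomorphism $\pi_T^\mathcal{T}\colon \mathcal{T}C^*(\Lambda,c)\to\mathcal{B}(\ell^2(\Lambda))$ with $\pi_T^\mathcal{T}(s_\mathcal{T}(\lambda)) = T_\lambda$. Since $\pi_T^\mathcal{T}$ is a $*$-homomorphism, it carries $\prod_{\mu\in F}(s_\mathcal{T}(\lambda)s_\mathcal{T}(\lambda)^* - s_\mathcal{T}(\lambda\mu)s_\mathcal{T}(\lambda\mu)^*)$ onto $\prod_{\mu\in F}(T_\lambda T_\lambda^* - T_{\lambda\mu}T_{\lambda\mu}^*)$, which is nonzero by the first part. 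As a homomorphism sends $0$ to $0$, the preimage cannot be $0$, so the corresponding gap projection in $\mathcal{T}C^*(\Lambda,c)$ is nonzero as well. The only genuine content in the whole argument is the choice of $\xi_\lambda$ as the test vector and the degree count showing no $\mu\in F$ can be a vertex; everything else is immediate from \eqref{TTstar}.
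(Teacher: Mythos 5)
Your proof is correct and follows essentially the same route as the paper's: both test the product against the basis vector $\xi_\lambda$, show every factor $T_\lambda T_\lambda^* - T_{\lambda\mu}T_{\lambda\mu}^*$ fixes it, and then transfer the conclusion to $\{s_\mathcal{T}(\lambda)\}$ via the homomorphism $\pi_T^\mathcal{T}$ from Theorem~\ref{toeplitzalgebra}. The only differences are cosmetic: the paper packages the factor-by-factor argument as an induction on $|F|$, while you argue directly and, helpfully, spell out the degree count showing $\lambda\mu$ is never an initial segment of $\lambda$ — a step the paper leaves implicit.
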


\begin{proof}
Fix $\lambda \in \Lambda$ and finite $F\subset s(\lambda) \Lambda \setminus \lbrace s(\lambda) \rbrace$. We prove by induction on $|F|$ that $$\prod_{\mu \in F} \left( T_\lambda T_\lambda^* - T_{\lambda \mu} T_{\lambda \mu}^* \right) \xi_\lambda =\xi_\lambda.$$ If $|F|=1$, say $F=\left\{\mu \right\}$, then
\begin{align*}
\left( T_\lambda T_\lambda^* - T_{\lambda \mu} T_{\lambda \mu}^* \right) \xi_\lambda = \overline{c(\lambda, s(\lambda))} T_\lambda \xi_{s(\lambda)}=c(\lambda,s(\lambda))\xi_\lambda =\xi_\lambda.
\end{align*}
Suppose $\prod_{\mu \in F} \left( T_\lambda T_\lambda^* - T_{\lambda \mu} T_{\lambda \mu}^* \right) \xi_\lambda =\xi_\lambda$ whenever $|F|=k$. Then for $|F|=k+1$ and any $\mu_0\in F$, we have
\begin{align*}
\left[\prod_{\mu \in F} \left( T_\lambda T_\lambda^* - T_{\lambda \mu} T_{\lambda \mu}^* \right)\right] \xi_\lambda &= \left[\prod_{\mu \in F\setminus \left \{ \mu_0 \right\}} \left( T_\lambda T_\lambda^* - T_{\lambda \mu} T_{\lambda \mu}^* \right) \right] \left( T_\lambda T_\lambda^* - T_{\lambda \mu_0} T_{\lambda \mu_0}^* \right)\xi_\lambda \\
&=\left[\prod_{\mu \in F\setminus \left \{ \mu_0 \right\}} \left( T_\lambda T_\lambda^* - T_{\lambda \mu} T_{\lambda \mu}^* \right)\right]\xi_\lambda\\
&=\xi_\lambda.
\end{align*}
Hence by induction $\prod_{\mu \in F} \left( T_\lambda T_\lambda^* - T_{\lambda \mu} T_{\lambda \mu}^* \right)\xi_\lambda=\xi_\lambda$ for all finite  $F\subset s(\lambda) \Lambda \setminus \lbrace s(\lambda) \rbrace$. In particular $\prod_{\mu \in F} \left( T_\lambda T_\lambda^* - T_{\lambda \mu} T_{\lambda \mu}^* \right)\neq 0$. By Theorem~\ref{toeplitzalgebra} there is a unique homomorphism $\pi^\mathcal{T}_t : \mathcal{T}C^*(\Lambda,c) \rightarrow \mathcal{B}\left( \ell^2(\Lambda)\right)$ satisfying $\pi^\mathcal{T}_t(s_\mathcal{T}(\mu))=T_\mu$ for all $\mu \in \Lambda$. Hence $\prod_{\mu \in F}\left( s_\mathcal{T}(\lambda)s_\mathcal{T}(\lambda)^*-s_\mathcal{T}(\lambda \mu)s_\mathcal{T}(\lambda \mu)^* \right)\neq 0.$
\end{proof}

The next few Lemmas involve technical calculations concerning the algebraic manipulation of gap projections. Recall from Definition~\ref{def:MCEEXT}, that for $E\subset v \Lambda$ and $\mu \in v \Lambda$, we may form the collection $\Ext(\mu;E)\subset s(\mu)\Lambda$.
\begin{lemma}\label{le:EtoExtE}
Let $(\Lambda,d)$ be a finitely aligned $k$-graph, let $c\in \underline{Z}^2(\Lambda,\mathbb{T})$ and let $\lbrace t_\lambda : \lambda \in \Lambda \rbrace$ be a Toeplitz-Cuntz-Krieger $(\Lambda,c)$-family. Let $\mathcal{E}\subset \FE (\Lambda)$. Then for $\mu \in r(E)\Lambda$, we have $$Q(t)^E t_\mu= t_\mu Q(t)^{\Ext(\mu;E)}. $$
\end{lemma}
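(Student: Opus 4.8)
The plan is to reduce the claim to the case of a single factor $(t_v - t_\lambda t_\lambda^*)$ and then assemble the general statement by induction on $|E|$. Write $v=r(E)$ and observe first that, since $r(\mu)=v$, relations (TCK2) and (C2) give $t_v t_\mu = c(v,\mu)t_{v\mu}=t_\mu$. So the whole question is how each range projection $t_\lambda t_\lambda^*$ (for $\lambda\in E$) interacts with $t_\mu$.

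The crux is the single-factor identity
$$(t_v - t_\lambda t_\lambda^*)\,t_\mu = t_\mu\,Q(t)^{\Ext(\mu;\{\lambda\})}. \qquad (\star)$$
To prove $(\star)$ I would compute $t_\lambda t_\lambda^* t_\mu = t_\lambda t_\lambda^* t_\mu t_\mu^* t_\mu$ and apply (TCK4), using $\MCE(\lambda,\mu)=\MCE(\mu,\lambda)$ and Proposition~\ref{prop:MCE} to write each $\sigma\in\MCE(\lambda,\mu)$ as $\mu\alpha$ with $\alpha\in\Ext(\mu;\{\lambda\})$. Then (TCK2) gives $t_{\mu\alpha}t_{\mu\alpha}^* = t_\mu t_\alpha t_\alpha^* t_\mu^*$ (the cocycle factors cancel), and (TCK3) yields $t_\lambda t_\lambda^* t_\mu = t_\mu\big(\sum_{\alpha\in\Ext(\mu;\{\lambda\})} t_\alpha t_\alpha^*\big)$, so that
$$(t_v - t_\lambda t_\lambda^*)\,t_\mu = t_\mu\Big(t_{s(\mu)} - \sum_{\alpha\in\Ext(\mu;\{\lambda\})} t_\alpha t_\alpha^*\Big).$$
The key observation, and the step that collapses the sum into the desired product, is that every $\alpha\in\Ext(\mu;\{\lambda\})$ has the same degree $(d(\mu)\vee d(\lambda))-d(\mu)$; hence by Lemma~\ref{handylemma}(6) the projections $\{t_\alpha t_\alpha^*:\alpha\in\Ext(\mu;\{\lambda\})\}$ are mutually orthogonal and each dominated by $t_{s(\mu)}$. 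For such projections $\prod_\alpha(t_{s(\mu)}-t_\alpha t_\alpha^*) = t_{s(\mu)}-\sum_\alpha t_\alpha t_\alpha^*$, which is exactly $Q(t)^{\Ext(\mu;\{\lambda\})}$, establishing $(\star)$. (If $\Ext(\mu;\{\lambda\})=\emptyset$ then $t_\lambda t_\lambda^* t_\mu=0$ and $(\star)$ reduces to $t_v t_\mu=t_\mu$, reading the empty gap projection as $t_{s(\mu)}$.)

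For general $E$, I would induct on $|E|$: writing $E=E'\sqcup\{\lambda_0\}$, apply $(\star)$ to pull $t_\mu$ past the $\lambda_0$-factor and then the inductive hypothesis to pull it past $Q(t)^{E'}$, obtaining $Q(t)^E t_\mu = t_\mu\,Q(t)^{\Ext(\mu;E')}Q(t)^{\Ext(\mu;\{\lambda_0\})}$. Since all factors $t_{s(\mu)}-t_\alpha t_\alpha^*$ (for $\alpha\in s(\mu)\Lambda$) are commuting projections by Lemma~\ref{handylemma}(2), the product of two gap projections indexed by subsets of $s(\mu)\Lambda$ equals the gap projection indexed by their union, duplicate factors collapsing by idempotence. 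As $\Ext(\mu;E)=\Ext(\mu;E')\cup\Ext(\mu;\{\lambda_0\})$, this gives $Q(t)^{\Ext(\mu;E')}Q(t)^{\Ext(\mu;\{\lambda_0\})}=Q(t)^{\Ext(\mu;E)}$ and closes the induction.

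The main obstacle is the single-factor identity $(\star)$, and within it the recognition that the minimal common extensions of $\mu$ and $\lambda$ all differ from $\mu$ by a tail of one fixed degree; this uniform-degree fact is what converts the additive expression produced by (TCK4) into a multiplicative gap projection. The remaining work, namely the induction and the union-of-index-sets identity, is routine once the commutativity and idempotence of the factors are in hand.
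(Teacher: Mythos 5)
Your proof is correct and rests on the same computational core as the paper's: inserting $t_\mu t_\mu^*$, applying (TCK4) together with Proposition~\ref{prop:MCE}, cancelling the cocycle factors via (TCK2), and using the uniform degree of the paths in $\Ext(\mu;\{\lambda\})$ together with Lemma~\ref{handylemma}(6) to convert the resulting sum into a gap projection. The only difference is organisational --- you prove the single-factor identity $(\star)$ and then induct on $|E|$, whereas the paper carries out the same manipulations on the full product in one pass; your explicit remark that duplicated factors in $\Ext(\mu;E')\cap\Ext(\mu;\{\lambda_0\})$ collapse by idempotence makes precise a point the paper's notation leaves implicit.
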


\begin{proof}
Suppose that $\mu\in r(E)\Lambda$. Since $t_\mu$ is a partial isometry,
\begin{align*}Q(t)^E t_\mu&=\left(\prod_{\lambda \in E} \left( t_{r(E)} - t_\lambda t_\lambda^* \right)\right) t_\mu t_\mu^* t_\mu\\
&=\left( \prod_{\lambda \in E} \left( t_{r(E)}t_\mu t_\mu^* - t_\lambda t_\lambda^*t_\mu t_\mu^* \right) \right)t_\mu.
\end{align*}
Applying (TCK2) and (TCK4) gives
\begin{align*}
Q(t)^E t_\mu&=\left(\prod_{\lambda \in E} \left( c(r(\mu),\mu)t_{r(\mu)\mu} t_\mu^* - \sum_{(\alpha,\beta)\in \Lambda^{\text{min}}(\lambda,\mu)} t_{\mu \beta} t_{\mu \beta}^* \right)\right) t_\mu\\
&=\left(\prod_{\lambda \in E} \left( t_{\mu s(\mu)} t_\mu^* - \sum_{(\alpha,\beta)\in \Lambda^{\text{min}}(\lambda,\mu)}c(\mu,\beta)\overline{c(\mu,\beta)} t_\mu t_\beta t_\beta^*t_\mu^* \right) \right) t_\mu\\
&=\left(\prod_{\lambda \in E} \left(\overline{c(\mu,s(\mu)} t_{\mu}t_{s(\mu)} t_\mu^* - \sum_{(\alpha,\beta)\in \Lambda^{\text{min}}(\lambda,\mu)}t_\mu t_\beta t_\beta^*t_\mu^* \right) \right)t_\mu.
\end{align*}
Factorising gives
\begin{align*}
Q(t)^E t_\mu&=\left(\prod_{\lambda \in E} \left(  t_{\mu}\left(t_{s(\mu)}  - \sum_{(\alpha,\beta)\in \Lambda^{\text{min}}(\lambda,\mu)} t_\beta t_\beta^*\right) t_\mu^* \right) \right) t_\mu.
\end{align*}
For each $\beta\in s(\mu)\Lambda$ such that there is $\alpha\in s(\lambda)\Lambda$ with $(\alpha,\beta)\in \Lambda^{\text{min}}(\lambda,\mu)$ we have $d(\beta)=d(\lambda) \vee d(\mu)-d(\lambda)$. Lemma~\ref{handylemma}(6) implies that $\lbrace t_\beta t_\beta^*: (\alpha,\beta)\in \Lambda^{\text{min}}(\lambda,\mu) \rbrace$ is a set of mutually orthogonal projections. Hence
\begin{align*}
Q(t)^E t_\mu&= \left( \prod_{\lambda \in E} \left(  t_{\mu}\left(\prod_{\beta\in s(\mu)\Lambda \text{ s.t }  \atop \exists \alpha\in s(\lambda)\Lambda \text{ s.t } (\alpha,\beta)\in \Lambda^{\text{min}}(\lambda,\mu)}\left(t_{s(\mu)}  - t_\beta t_\beta^* \right) \right) t_\mu^* \right) \right) t_\mu \label{eq:idealstructure1}.
\end{align*}
Using the definition of $\Ext(\mu;E)$ and (TCK3) we have
\begin{flalign*}
&&Q(t)^E t_\mu&= t_{\mu}\left(\prod_{\beta \in \Ext(\mu;E)}\left( \left(t_{s(\mu)}  - t_\beta t_\beta^* \right)  t_{s(\mu)} \right) \right) \nonumber&\\
&&&= t_{\mu}\left(\prod_{\beta \in \Ext(\mu;E)} \left(t_{s(\mu)} t_{s(\mu)}   - t_\beta t_\beta^*  t_{s(\mu)} \right)  \right) \nonumber&\\
&&&= t_{\mu}\left(\prod_{\beta \in \Ext(\mu;E)} \left(t_{s(\mu)}^2   - \overline{c(s(\mu),\beta)}t_\beta t_{s(\mu)\beta}^*  \right)  \right) \nonumber&\\
&&&= t_{\mu}\left(\prod_{\beta \in \Ext(\mu;E)} \left(t_{r(\Ext(\mu;E))}   - t_\beta t_\beta^*  \right)  \right) .&\qedhere\nonumber
\end{flalign*}
\end{proof}

\begin{lemma}\label{le:neededfors4}
Let $(\Lambda,d)$ be a finitely aligned $k$-graph and let $c\in \underline{Z}^2(\Lambda,\mathbb{T})$. Let $\left \{ t_\lambda : \lambda \in \Lambda \right \}$ be a Toeplitz-Cuntz-Krieger $(\Lambda,c)$-family. Suppose that $v\in \Lambda^0$, that $\lambda \in v \Lambda$ and that $E\subset s(\lambda)\Lambda $ is finite. Then
$$t_v -t_\lambda t_\lambda^*=t_v \prod_{\nu \in E}\left( t_v-t_{\lambda \nu}t_{\lambda \nu}^* \right)-  t_\lambda Q(t)^E t_\lambda^*.$$
\end{lemma}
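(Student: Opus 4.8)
The plan is to introduce the shorthand $p := t_\lambda t_\lambda^*$ and, for each $\nu \in E$, $q_\nu := t_{\lambda\nu}t_{\lambda\nu}^*$, and to prove the identity in two stages: first compute $t_\lambda Q(t)^E t_\lambda^*$ explicitly as a product built from $p$ and the $q_\nu$, and then deduce the stated formula from a purely algebraic identity among commuting projections. Throughout I will use that $r(\nu)=s(\lambda)$ for every $\nu\in E$ (since $E\subset s(\lambda)\Lambda$), so that $r(E)=s(\lambda)$ and $Q(t)^E=\prod_{\nu\in E}(t_{s(\lambda)}-t_\nu t_\nu^*)$.

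For the first stage I would begin with the single-factor computation. Using (TCK2) together with (C2) one gets $t_{\lambda\nu}t_{\lambda\nu}^*=t_\lambda t_\nu t_\nu^* t_\lambda^*$ (the cocycle scalars $c(\lambda,\nu)$ and $\overline{c(\lambda,\nu)}$ cancel) and $t_\lambda t_{s(\lambda)}t_\lambda^*=t_\lambda t_\lambda^*$, whence $t_\lambda(t_{s(\lambda)}-t_\nu t_\nu^*)t_\lambda^*=p-q_\nu$. To promote this from one factor to the whole product, I would note that each factor $F_\nu:=t_{s(\lambda)}-t_\nu t_\nu^*$ satisfies $t_{s(\lambda)}F_\nu=F_\nu=F_\nu t_{s(\lambda)}$ (because $t_\nu t_\nu^*\le t_{r(\nu)}=t_{s(\lambda)}$ by (C2)), so that inserting $t_\lambda^* t_\lambda=t_{s(\lambda)}$ (by (TCK3)) between consecutive factors is harmless: $(t_\lambda F_\nu t_\lambda^*)(t_\lambda F_{\nu'}t_\lambda^*)=t_\lambda F_\nu t_{s(\lambda)}F_{\nu'}t_\lambda^*=t_\lambda F_\nu F_{\nu'}t_\lambda^*$. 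Iterating this over $E$ yields $t_\lambda Q(t)^E t_\lambda^*=\prod_{\nu\in E}(p-q_\nu)$.

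For the second stage I observe that $p$ and the $q_\nu$ are commuting projections with $q_\nu\le p\le t_v$: commutativity is Lemma~\ref{handylemma}(2), $p=t_\lambda t_\lambda^*\le t_{r(\lambda)}=t_v$, and $q_\nu=t_\lambda(t_\nu t_\nu^*)t_\lambda^*\le t_\lambda t_{s(\lambda)}t_\lambda^*=p$. I would then establish, by induction on $|E|$, the elementary identity $\prod_{\nu\in E}(t_v-q_\nu)-\prod_{\nu\in E}(p-q_\nu)=t_v-p$; the inductive step uses only the relations $p q_\nu=q_\nu$ and $t_v p = p$, $t_v q_\nu=q_\nu$ among these commuting projections, together with the fact that the partial products lie below $p$ and $t_v$. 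Since $\prod_{\nu\in E}(t_v-q_\nu)$ is a projection lying below $t_v$, we have $t_v\prod_{\nu\in E}(t_v-q_\nu)=\prod_{\nu\in E}(t_v-q_\nu)$, so the right-hand side of the lemma equals $\prod_{\nu\in E}(t_v-q_\nu)-\prod_{\nu\in E}(p-q_\nu)=t_v-p=t_v-t_\lambda t_\lambda^*$, which is the left-hand side.

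All of the computations are routine once the framework is in place; the only point that needs genuine care is the first stage, where one must justify that the map $x\mapsto t_\lambda x t_\lambda^*$ is multiplicative on factors lying in the corner determined by $t_{s(\lambda)}$ before distributing it across the product $Q(t)^E$. The cocycle bookkeeping is mild throughout, since every scalar $c(\lambda,\nu)$ that appears is immediately cancelled by its conjugate.
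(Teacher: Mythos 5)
Your proof is correct and rests on essentially the same two ingredients as the paper's: your first stage (conjugation by $t_\lambda$ carries $Q(t)^E$ to $\prod_{\nu\in E}(t_\lambda t_\lambda^* - t_{\lambda\nu}t_{\lambda\nu}^*)$, the cocycle scalars cancelling, via $t_\lambda^* t_\lambda = t_{s(\lambda)}$) is precisely the paper's closing computation run in reverse, and your second-stage induction is an equivalent repackaging of the paper's opening absorption identity $(t_v - t_\lambda t_\lambda^*)\prod_{\nu\in E}(t_v - t_{\lambda\nu}t_{\lambda\nu}^*) = t_v - t_\lambda t_\lambda^*$, which the paper then expands and recognizes. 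One small point to tidy: under the empty-product convention of Remark~\ref{re:proj1}, both of your product formulas fail for $E=\emptyset$ (they would read $t_\lambda t_\lambda^* = 1$ and $0 = t_v - t_\lambda t_\lambda^*$), so that degenerate case, where the lemma is immediate, should be excluded from the induction and handled directly.
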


\begin{proof}
As $t_{\lambda \mu}t_{\lambda \mu}^* \leq t_\lambda t_\lambda^*$ for all $\mu \in s(\lambda)\Lambda$, we have
$$(t_v-t_\lambda t_\lambda^*)(t_v-t_{\lambda \nu }t_{\lambda \nu}^*)=t_v - t_\lambda t_{\lambda}^*$$
for all $\nu \in E$. The range projections pairwise commute by Lemma~\ref{handylemma}(2). Since $E$ is finite, it follows that
$$(t_v -t_\lambda t_\lambda^*) \prod_{\nu \in E}\left(t_v-t_{\lambda \nu}t_{\lambda \nu}^* \right)=t_v-t_\lambda t_\lambda^*.$$
We have
\begin{align*}
t_v-t_\lambda t_\lambda^*&=\left( t_v - t_\lambda t_\lambda^* \right)\prod_{\nu \in E}\left(t_v-t_{\lambda \nu}t_{\lambda \nu}^* \right)\\
&=t_v \prod_{\nu \in E}\left(t_v-t_{\lambda \nu}t_{\lambda \nu}^* \right)-t_\lambda t_\lambda^* \prod_{\nu \in E}\left(t_v-t_{\lambda \nu}t_{\lambda \nu}^* \right)\\
&=t_v \prod_{\nu \in E}\left(t_v-t_{\lambda \nu}t_{\lambda \nu}^* \right)-t_\lambda t_\lambda^* \prod_{\nu \in E}\left(t_v-c(\lambda,\nu)\overline{c(\lambda,\nu)}t_{\lambda} t_{\nu}t_{\nu}^*t_\lambda^* \right) \qquad \text{by (TCK2)}.
\end{align*}
Since $t_\lambda t_\lambda^*$ is a projection, we have
\begin{align*}
t_v-t_\lambda t_\lambda^*&=t_v \prod_{\nu \in E}\left(t_v-t_{\lambda \nu}t_{\lambda \nu}^* \right)- \prod_{\nu \in E}\left(t_\lambda t_\lambda^*t_v- t_\lambda t_\lambda^*t_{\lambda} t_{\nu}t_{\nu}^*t_\lambda^* \right)\\
&=t_v \prod_{\nu \in E}\left( t_v-t_{\lambda \nu}t_{\lambda \nu}^* \right)- \prod_{\nu \in E}\left( t_\lambda \left(t_{s(\lambda)}-  t_{\nu}t_{\nu}^*\right) t_\lambda^* \right)\\
&=t_v \prod_{\nu \in E}\left( t_v-t_{\lambda \nu}t_{\lambda \nu}^* \right)-  t_\lambda \left[ \prod_{\nu \in E}\left(t_{s(\lambda)}-  t_{\nu}t_{\nu}^*\right) \right]t_\lambda^*,
\end{align*}
since $t_\lambda^*t_\lambda=t_{s(\lambda)}$
\end{proof}

\begin{lemma}\label{le:gapprojectioninsatiatedsets}
Let $(\Lambda,d)$ be a finitely aligned $k$-graph, let $c\in \underline{Z}^2(\Lambda,\mathbb{T})$ and let $\mathcal{E}\subset\FE(\Lambda)$. Let $\left \{ t_\lambda : \lambda \in \Lambda \right\}$ be a Toeplitz-Cuntz-Krieger $(\Lambda,c)$-family. Then:
\begin{enumerate}
\item[(1)] if $G\in \mathcal{E}$ and $E\subset r(G)\Lambda \setminus \Lambda^0$ is finite with $G\subset E$, then $Q(t)^E=Q(t)^G Q(t)^{E\setminus G}$;
\item[(2)] if $G\in \mathcal{E}$ with $r(G)=v$ and if $\mu \in v \Lambda \setminus G \Lambda$, then $Q(t)^{\Ext(\mu;G)}=t_\mu^* Q(t)^G t_\mu$;
\item[(3)] if $G\in \mathcal{E}$, $0<n_\lambda \leq d(\lambda)$ for each $\lambda \in G$ and $E:=\left \{ \lambda(0,n_\lambda) :\lambda \in G \right \}$, then $Q(t)^E \leq Q(t)^G$.
\end{enumerate}
\end{lemma}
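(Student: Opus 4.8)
All three parts rest on Lemma~\ref{handylemma}(2), that the range projections $\{t_\lambda t_\lambda^*\}$ pairwise commute, so that each gap projection is a product of mutually commuting projections dominated by $t_v$, together with the elementary fact that for commuting projections $P,R$ with $R\le P\le t_v$ one has $(t_v-P)(t_v-R)=t_v-P$, i.e.\ $(t_v-P)\le(t_v-R)$. I would state this elementary observation once at the start and reuse it.

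For (1), the plan is to note that $r(E)=r(G)=r(E\setminus G)=v$, since $G\subset E\subset v\Lambda$, so all three gap projections are products of the same commuting factors $(t_v-t_\lambda t_\lambda^*)$ indexed by subsets of $v\Lambda$. Because these factors commute, the product over $E$ splits as the product over $G$ times the product over $E\setminus G$, giving $Q(t)^E=Q(t)^G Q(t)^{E\setminus G}$ immediately. The degenerate case $E=G$ is absorbed by reading the empty product as $t_v$ and using $(t_v-t_\lambda t_\lambda^*)t_v=t_v-t_\lambda t_\lambda^*$.

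For (2), the substantive work is already carried out in Lemma~\ref{le:EtoExtE}, which gives $Q(t)^G t_\mu=t_\mu Q(t)^{\Ext(\mu;G)}$. I would multiply on the left by $t_\mu^*$ and invoke (TCK3), $t_\mu^* t_\mu=t_{s(\mu)}$, to get $t_\mu^* Q(t)^G t_\mu=t_{s(\mu)}Q(t)^{\Ext(\mu;G)}$. Since every $\beta\in\Ext(\mu;G)$ lies in $s(\mu)\Lambda$, the vertex projection $t_{s(\mu)}$ is a left identity for each factor $(t_{s(\mu)}-t_\beta t_\beta^*)$---using $t_{s(\mu)}t_\beta=c(s(\mu),\beta)t_\beta=t_\beta$ by (C2)---so $t_{s(\mu)}Q(t)^{\Ext(\mu;G)}=Q(t)^{\Ext(\mu;G)}$, which is the claim. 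I would also point out that the hypotheses are precisely what make $Q(t)^{\Ext(\mu;G)}$ a bona fide gap projection: finite alignment with $G$ finite makes $\Ext(\mu;G)$ finite; exhaustiveness of $G$ forces $\Ext(\mu;G)\neq\emptyset$; and $\mu\notin G\Lambda$ guarantees $s(\mu)\notin\Ext(\mu;G)$.

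For (3), set $P_\lambda:=t_{\lambda(0,n_\lambda)}t_{\lambda(0,n_\lambda)}^*$ and $R_\lambda:=t_\lambda t_\lambda^*$. Writing $\lambda=\lambda(0,n_\lambda)\lambda(n_\lambda,d(\lambda))$ and applying (TCK2), I would show $R_\lambda\le P_\lambda$ (a truncation has a larger range projection), whence $(t_v-P_\lambda)\le(t_v-R_\lambda)$ by the elementary observation. Since all the $P_\lambda$ and $R_\lambda$ commute and are idempotent---so coincidences among the truncations $\lambda(0,n_\lambda)$ are harmless and $Q(t)^E=\prod_{\lambda\in G}(t_v-P_\lambda)$---multiplying the factor-wise inequalities gives $Q(t)^E Q(t)^G=\prod_{\lambda\in G}(t_v-P_\lambda)(t_v-R_\lambda)=\prod_{\lambda\in G}(t_v-P_\lambda)=Q(t)^E$, i.e.\ $Q(t)^E\le Q(t)^G$. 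The only genuine friction is bookkeeping: in (2) correctly identifying $r(\Ext(\mu;G))=s(\mu)$ and checking the hypotheses yield a valid gap projection, and in (3) accounting for repeated truncations via idempotency; neither is a real obstacle once the commutation relation of Lemma~\ref{handylemma}(2) and the identity of Lemma~\ref{le:EtoExtE} are in hand.
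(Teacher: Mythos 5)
Your proposal is correct and follows essentially the same route as the paper: part (1) by commutation of range projections (Lemma~\ref{handylemma}(2)), part (2) by multiplying the identity of Lemma~\ref{le:EtoExtE} on the left by $t_\mu^*$ and absorbing $t_{s(\mu)}$, and part (3) by the factor-wise inequality $t_v-t_{\lambda(0,n_\lambda)}t_{\lambda(0,n_\lambda)}^*\leq t_v-t_\lambda t_\lambda^*$. Your extra bookkeeping (empty-product conventions, idempotency for repeated truncations, and checking that $\Ext(\mu;G)$ is a legitimate gap projection) only fills in details the paper leaves implicit.
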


\begin{proof}
For (1), suppose that $G\in \mathcal{E}$ and that $E\subset r(G)\Lambda \setminus \Lambda^0$ is finite with $G\subset E$. The range projections $\left\{t_\lambda t_\lambda^*:\lambda \in \Lambda \right\}$ pairwise commute by Lemma~\ref{handylemma}(2). It follows that $Q(t)^E=Q(t)^G Q(t)^{E\setminus G}$.

For (2), suppose that $G\in \mathcal{E}$ satisfies $r(G)=v$ and that $\mu \in v \Lambda \setminus G \Lambda$. Lemma~\ref{le:EtoExtE} implies that  $t_\mu Q(t)^{\Ext(\mu;G)}= Q(t)^G t_\mu$. Multiplying both sides on the left by $t_\mu^*$ gives $$Q(t)^{\Ext(\mu;G)}=t_{s(\mu)}Q(t)^{\Ext(\mu;G)}=t_\mu^* t_\mu Q(t)^{\Ext(\mu;G)}=t_\mu^* Q(t)^G t_\mu.$$\\
For (3), suppose that $G\in \mathcal{E}$ and that $0<n_\lambda \leq d(\lambda)$ for each $\lambda \in G$. Set $E:=\left \{ \lambda(0,n_\lambda) :\lambda \in G \right \}$. Since
$$t_{r(E)}-t_{\lambda(0,n_\lambda)}t_{\lambda(0,n_\lambda)}^*\leq t_{r(G)}-t_\lambda t_\lambda^*$$
for all $\lambda \in G$. It follows that
$Q(t)^E\leq Q(t)^G.$
\end{proof}

Given a collection $\mathcal{E} \subset \FE(\Lambda)$, we recall the definition of $\overline{\mathcal{E}}$, the satiation of $\mathcal{E}$, from \cite{S20061}.

\begin{definition}[\cite{S20061}, Definition 4.1]\label{de:sat}
Let $(\Lambda,d)$ be a finitely aligned $k$-graph. We say that a subset $\mathcal{E}$ of $\FE(\Lambda)$ is \emph{satiated} if it satisfies:
\begin{enumerate}
\item[(S1)] if $G\in \mathcal{E}$ and $E\subset r(G)\Lambda \setminus \Lambda^0$ is finite with $G\subset E$ then $E\in \mathcal{E}$;
\item[(S2)]if $G\in \mathcal{E}$ with $r(G)=v$ and if $\mu \in v\Lambda \setminus G\Lambda$ then $\Ext(\mu;G)\in \mathcal{E}$;
\item[(S3)]if $G\in \mathcal{E}$ and $0< n_\lambda \leq d(\lambda)$ for each $\lambda \in G$, then $\lbrace \lambda(0,n_\lambda) : \lambda \in G \rbrace \in \mathcal{E}$; and
\item[(S4)]if $G\in \mathcal{E},G'\subset G$ and $G'_\lambda \in s(\lambda) \mathcal{E}$ for each $\lambda \in G'$, then $\left( (G\setminus G') \cup \left( \bigcup_{\lambda \in G'} \lambda G_\lambda'\right) \right) \in \mathcal{E}$.
\end{enumerate}
The smallest collection of subsets of $\Lambda$ which contains $\mathcal{E}$ and satisfies (S1)-(S4) is denoted $\overline{\mathcal{E}}$ and called the \emph{satiation} of $\mathcal{E}$.
\end{definition}

Let $\Lambda$ be a finitely aligned $k$-graph and $\mathcal{E}\subset \FE(\Lambda)$. Definition~5.2 of \cite{S20061} defines maps $\Sigma_{1}, \Sigma_{2}, \Sigma_{3} , \Sigma_{4}: \FE(\Lambda) \to \FE(\Lambda)$ given by
\begin{align*}\Sigma_{1}(\mathcal{E})&:= \left \{ F\subset \Lambda \setminus \Lambda^0: F \text{ is finite, and there exists } E\in \mathcal{E} \text{ with } E \subset F \right \},\\
\Sigma_{2}(\mathcal{E})&:=\left\{ \Ext(\mu;E): E\in \mathcal{E}, \mu \in r(E)\Lambda \setminus E \Lambda \right \},\\
\Sigma_{3}(\mathcal{E})&:=\left\{ \left\{ \lambda(0,n_\lambda) : \lambda \in E \right \} : E \in \mathcal{E}, 0<n_\lambda \leq d(\lambda) \text{ for all } \lambda \in E \right \}, \text{ and }\\
\Sigma_4(\mathcal{E})&:=\left\{ (E \setminus F) \cup\left( \displaystyle\bigcup_{\lambda \in F} \lambda F_\lambda \right): E\in \mathcal{E}, F\subset E, F_\lambda \in s(\lambda)\mathcal{E} \text{ for all } \lambda \in F \right \}.
\end{align*}

Note that Lemma~5.3 of \cite{S20061} implies that $\mathcal{E} \subset \Sigma_i(\mathcal{E})$ for each $i=1,2,3,4.$

The following summarises some properties of $\overline{\mathcal{E}}$.

\begin{lemma}\label{le:satiation1}
Let $(\Lambda,d)$ be a finitely aligned $k$-graph, let $c\in \underline{Z}^2(\Lambda,\mathbb{T})$ and let $\mathcal{E}\subset \text{FE}(\Lambda)$. Let $\lbrace t_\lambda : \lambda \in \Lambda \rbrace$ be a relative Cuntz-Krieger $(\Lambda,c;\mathcal{E})$-family. Then:
\begin{enumerate}
\item[(1)] the satiation $\overline{\mathcal{E}}$ of $\mathcal{E}$ is a subset of $\FE(\Lambda)$;
\item[(2)] if $E\in \overline{\mathcal{E}}$ then $Q(t)^E=0$; and
\item[(3)] the ideals $J_\mathcal{E}$ and $J_\mathcal{\overline{E}}$ of $\mathcal{T}C^*(\Lambda,c)$ are equal.
\end{enumerate}
Moreover, $C^*(\Lambda,c;\mathcal{E})=C^*(\Lambda,c;\overline{\mathcal{E}})$.
\end{lemma}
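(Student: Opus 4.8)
The plan is to prove the three enumerated claims in turn and then read off the ``moreover'' clause as an immediate consequence of~(3). For~(1), I would show that $\FE(\Lambda)$ is itself a satiated set; that is, that $\FE(\Lambda)$ is closed under each of the four operations (S1)--(S4). This is a purely combinatorial statement about the $k$-graph $\Lambda$ that does not involve the cocycle $c$ or any $C^*$-algebra, and it is exactly the content established in \cite{S20061} (it amounts to checking $\Sigma_i(\FE(\Lambda))\subset\FE(\Lambda)$ for $i=1,2,3,4$, where closure under $\Sigma_2$ is the assertion that $\Ext(\mu;G)$ is finite and exhaustive when $\mu\notin G\Lambda$). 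Since $\overline{\mathcal{E}}$ is by definition the smallest satiated collection containing $\mathcal{E}$, and $\FE(\Lambda)$ is a satiated collection containing $\mathcal{E}$, minimality gives $\overline{\mathcal{E}}\subset\FE(\Lambda)$.

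For~(2), I would fix a relative Cuntz-Krieger $(\Lambda,c;\mathcal{E})$-family $\{t_\lambda:\lambda\in\Lambda\}$ and set
$$\mathcal{F}:=\{E\in\FE(\Lambda):Q(t)^E=0\}.$$
The (CK) relation says precisely that $\mathcal{E}\subset\mathcal{F}$, so it suffices to show that $\mathcal{F}$ is satiated; minimality of $\overline{\mathcal{E}}$ then forces $\overline{\mathcal{E}}\subset\mathcal{F}$, which is the assertion. Each output set lands in $\FE(\Lambda)$ by~(1), so I only need to check that the gap projection vanishes in each case. Conditions (S1), (S2) and (S3) follow directly from the three identities of Lemma~\ref{le:gapprojectioninsatiatedsets}: if $Q(t)^G=0$ then $Q(t)^E=Q(t)^GQ(t)^{E\setminus G}=0$ for (S1); $Q(t)^{\Ext(\mu;G)}=t_\mu^* Q(t)^G t_\mu=0$ for (S2); and $0\le Q(t)^{\{\lambda(0,n_\lambda)\}}\le Q(t)^G=0$ for (S3) (these identities hold for any finite exhaustive $G$, their proofs using only that the range projections commute).

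The real work is (S4). Given $G\in\mathcal{F}$, $G'\subset G$ and $G'_\lambda\in s(\lambda)\mathcal{F}$ for each $\lambda\in G'$, write $H:=(G\setminus G')\cup\bigcup_{\lambda\in G'}\lambda G'_\lambda$ and $v:=r(G)$. For each $\lambda\in G'$ the hypothesis $Q(t)^{G'_\lambda}=0$ turns Lemma~\ref{le:neededfors4} into
$$t_v-t_\lambda t_\lambda^*=t_v\prod_{\nu\in G'_\lambda}\bigl(t_v-t_{\lambda\nu}t_{\lambda\nu}^*\bigr).$$
Substituting this for the factor indexed by each $\lambda\in G'$ in $Q(t)^G=\prod_{\lambda\in G}(t_v-t_\lambda t_\lambda^*)$, and using that the range projections pairwise commute and are idempotent (so repeated factors and the stray $t_v$'s are harmless), collapses the product to $\prod_{\mu\in H}(t_v-t_\mu t_\mu^*)=Q(t)^H$. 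Hence $Q(t)^H=Q(t)^G=0$, so $H\in\mathcal{F}$ and $\mathcal{F}$ is satiated. I expect this bookkeeping --- matching the doubly-indexed product $\prod_{\lambda\in G'}\prod_{\nu\in G'_\lambda}$ against the single product over $H$ --- to be the main obstacle, together with the need to invoke Lemma~\ref{le:neededfors4} in exactly the cocycle-free form above.

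Finally, for~(3), the containment $J_\mathcal{E}\subset J_{\overline{\mathcal{E}}}$ is immediate since passing from $\mathcal{E}$ to $\overline{\mathcal{E}}$ only enlarges the generating set. For the reverse, I apply~(2) to the canonical family $\{s_\mathcal{E}(\lambda)=s_\mathcal{T}(\lambda)+J_\mathcal{E}\}$, which is a relative Cuntz-Krieger $(\Lambda,c;\mathcal{E})$-family by Theorem~\ref{th:relativecuntzkriegeralgebra}: for every $E\in\overline{\mathcal{E}}$ we get $Q(s_\mathcal{E})^E=0$, i.e.\ $Q(s_\mathcal{T})^E\in J_\mathcal{E}$. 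Thus $J_\mathcal{E}$ contains all the generators of $J_{\overline{\mathcal{E}}}$, giving $J_{\overline{\mathcal{E}}}\subset J_\mathcal{E}$ and hence equality. The moreover clause is then automatic: by definition $C^*(\Lambda,c;\mathcal{E})=\mathcal{T}C^*(\Lambda,c)/J_\mathcal{E}$ and $C^*(\Lambda,c;\overline{\mathcal{E}})=\mathcal{T}C^*(\Lambda,c)/J_{\overline{\mathcal{E}}}$ (the latter makes sense because $\overline{\mathcal{E}}\subset\FE(\Lambda)$ by~(1)), and these quotients coincide since the ideals coincide.
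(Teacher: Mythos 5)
Your proof is correct, and its technical core coincides with the paper's: part (3) and the ``moreover'' clause are argued exactly as in the paper, and the preservation of vanishing gap projections under (S1)--(S3) via Lemma~\ref{le:gapprojectioninsatiatedsets} and under (S4) via Lemma~\ref{le:neededfors4} (including the collapsing of the doubly-indexed product, where commutativity and idempotency of the range projections absorb repetitions and the stray $t_v$'s) is the same computation the paper performs. Where you genuinely differ is in how you access $\overline{\mathcal{E}}$: the paper invokes the iterative description $\overline{\mathcal{E}}=\bigcup_{n=1}^{\infty}\Sigma^n(\mathcal{E})$ (Proposition~5.5 of \cite{S20061}) and proves (1) and (2) by induction on $n$, whereas you use only the characterisation of $\overline{\mathcal{E}}$ as the smallest collection containing $\mathcal{E}$ and closed under (S1)--(S4), applying minimality twice: once against $\FE(\Lambda)$ itself (closed under the four operations by Lemma~5.3 of \cite{S20061}) to get (1), and once against $\mathcal{F}=\{E\in\FE(\Lambda):Q(t)^E=0\}$ to get (2). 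Your route is leaner in its dependence on \cite{S20061} --- it needs the closure statement but not the iterative construction --- and it makes transparent that (2) is really a fixed-point statement; the paper's induction instead reuses machinery it quotes anyway. One small point of care: when you say each output set ``lands in $\FE(\Lambda)$ by~(1)'', what you actually need is the closure fact $\Sigma_i(\mathcal{F})\subset\FE(\Lambda)$ for subsets $\mathcal{F}\subset\FE(\Lambda)$ underlying your proof of (1), not statement (1) itself; since you cite exactly that fact there, this is a wording issue rather than a gap.
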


\begin{proof}
Part (1): Let $\Sigma=\Sigma_4\circ \Sigma_3\circ \Sigma_2\circ \Sigma_1$. Fix $\mathcal{F}\subset \FE(\Lambda)$. Lemma 5.3 of \cite{S20061} implies that $\Sigma(\mathcal{F})\subset \FE(\Lambda)$. Write $\Sigma^n(\mathcal{F})$ for $$\overbrace{\Sigma \circ \Sigma \circ \dots \Sigma}^{\text{n terms}} (\mathcal{F}).$$
We have $\bigcup_{n=1}^\infty \Sigma^n(\mathcal{F})\subset \FE(\Lambda)$. Proposition 5.5 of \cite{S20061} implies that $\overline{\mathcal{F}}=\bigcup_{n=1}^\infty \Sigma^n(\mathcal{F})\subset \FE(\Lambda)$. This gives part (1).

Part (2): Since $\overline{\mathcal{E}}=\bigcup_{n=1}^\infty \Sigma^n(\mathcal{E})$ and since $\mathcal{F}\subset \Sigma_i(\mathcal{F})$ for all $\mathcal{F}\subset \FE(\Lambda)$ and $i=1,2,3,4$, by induction it suffices to show that if $Q(t)^G=0$ for all $G\in \mathcal{F}$, where $\mathcal{F}\subset \FE(\Lambda),$ then $Q(t)^E=0$ for all $E\in \Sigma_i(\mathcal{F})$ for each $i=1,2,3,4.$

Fix $\mathcal{F}\subset \FE(\Lambda)$ such that $Q(t)^G=0$ for all $F\in \mathcal{F}$. We consider the cases $i=1,2,3,4$ separately. Suppose that $i=1$. Fix $E\in \Sigma_i(\mathcal{F})$. Then $G\subset E$ for some $G\in \mathcal{F}$. Lemma~\ref{le:gapprojectioninsatiatedsets} implies that $Q(t)^E=Q(t)^G Q(t)^{E\setminus G}=0$. Suppose that $i=2$. Fix $E\in \Sigma_i(\mathcal{F})$. Then $E=\Ext(\mu;G)$ for some $G\in \mathcal{F}$ and $\mu \in r(G)\Lambda \setminus G\Lambda$. Lemma~\ref{le:gapprojectioninsatiatedsets} implies that $Q(t)^E=t_\mu^* Q(t)^G t_\mu=0$. Suppose that $i=3$. Fix $E\in \Sigma_i(\mathcal{F})$. Then $E=\left \{ \lambda(0,n_\lambda): \lambda \in G\right\}$ for some $G\in \mathcal{F}$ where $0<n_\lambda \leq d(\lambda)$ for each $\lambda \in \Lambda$. Lemma~\ref{le:gapprojectioninsatiatedsets} implies that $Q(t)^E=Q(t)^E Q(t)^G=0$. Suppose that $i=4$. Fix $E\in \Sigma_i(\mathcal{F})$. Then $$E=\left( (G\setminus G') \cup \left( \bigcup_{\lambda \in G'} \lambda G_\lambda'\right) \right)$$ for some $G \in \mathcal{F}$, $G'\subset G$ and $G_\lambda'\in s(\lambda)\mathcal{F}$ for each $\lambda \in G'$. Lemma~\ref{le:neededfors4} implies that for each $\lambda \in G'$, we have
$$t_{r(G)}-t_\lambda t_\lambda^*=t_{r(G)}-t_\lambda t_\lambda^*+t_\lambda Q(t)^{G_\lambda'} t_\lambda^*=\prod_{\mu \in G_\lambda '} \left( t_{r(G)}-t_{\lambda \mu}t_{\lambda \mu}^* \right).$$ Hence
\begin{align*}
Q(t)^E&=Q(t)^{G \setminus G'}\prod_{\lambda \in G'} \left[ \prod_{\mu \in G_\lambda'}\left( t_{r(G)}-t_{\lambda \mu}t_{\lambda \mu}^* \right) \right]\\
&=Q(t)^{G \setminus G'}\prod_{\lambda \in G'} \left( t_{r(G)}-t_\lambda t_\lambda^* \right)\\
&=Q(t)^G\\
&=0.
\end{align*}
For part (3), since $J_\mathcal{E}\subset J_{\overline{\mathcal{E}}}$ it suffices to show the reverse containment. Suppose $E\in \overline{\mathcal{E}}$. By part (2), we have $Q(s_\mathcal{E})^E=0$. Hence $Q(s_\mathcal{T})^E\in J_{\mathcal{E}}$. So $J_{\mathcal{E}}$ contains all the generators of $J_{\overline{\mathcal{E}}}$, hence $J_{\mathcal{E}}=J_{\overline{\mathcal{E}}}$.
\end{proof}

The main goal for the rest of this chapter is to show that the converse of Lemma~\ref{le:satiation1}(2) holds in the $C^*$-algebra $C^*(\Lambda,c;\mathcal{E})$; that is, for $E\in \FE(\Lambda)$, $Q(s_\mathcal{E})^E=0$ implies that $E\in \overline{\mathcal{E}}$. This is achieved with Theorem~\ref{th:nonzerogapprojection}. First we need several technical lemmas on boundary paths and the structure of the ideals $J_\mathcal{E}$.

\begin{lemma}\label{le:idealstructure1}
Let $(\Lambda,d)$ be a finitely aligned $k$-graph, let $c\in \underline{Z}^2(\Lambda,\mathbb{T})$ and let $\lbrace t_\lambda : \lambda \in \Lambda \rbrace$ be a Toeplitz-Cuntz-Krieger $(\Lambda,c)$-family. Let $\mathcal{E}\subset \FE (\Lambda)$. Suppose $E\in \overline{\mathcal{E}}$. Then:
\begin{enumerate}\renewcommand{\theenumi}{\alph{enumi}}
\item[(1)] if $\mu \in E \Lambda \cup \left( \Lambda \setminus r(E)\Lambda \right)$ then
\begin{equation} Q(t)^E t_\mu = 0; \text{ and}
\end{equation}
\item[(2)] if $\mu \in  r(E)\Lambda \setminus E\Lambda $ then
\begin{equation} Q(t)^E t_\mu= t_\mu Q(t)^{\Ext(\mu;E)}. \end{equation}

\end{enumerate}

\end{lemma}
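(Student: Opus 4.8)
The plan is to handle the two parts separately. Part (2) is almost immediate from Lemma~\ref{le:EtoExtE}, which already proves $Q(t)^E t_\mu = t_\mu Q(t)^{\Ext(\mu;E)}$ for \emph{every} $\mu\in r(E)\Lambda$; since $\mu\in r(E)\Lambda\setminus E\Lambda\subset r(E)\Lambda$, that lemma applies verbatim and nothing further is needed. Part (1) I would split into the two subcases $\mu\in E\Lambda$ and $\mu\in\Lambda\setminus r(E)\Lambda$, using only the (TCK) relations together with the pairwise commutativity of the range projections from Lemma~\ref{handylemma}(2). It is worth noting up front that the hypothesis $E\in\overline{\mathcal{E}}$ is not actually used here: each identity holds for any finite $E\subset r(E)\Lambda$.

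For the subcase $\mu\in E\Lambda$, write $\mu=\lambda\nu$ with $\lambda\in E$. Because the factors of $Q(t)^E$ pairwise commute, I would isolate the factor indexed by $\lambda$ and write $Q(t)^E = R\,(t_{r(E)}-t_\lambda t_\lambda^*)$, where $R$ is the product over $E\setminus\{\lambda\}$. It then suffices to show $(t_{r(E)}-t_\lambda t_\lambda^*)t_\mu=0$. Using (TCK2) to write $t_\mu=\overline{c(\lambda,\nu)}\,t_\lambda t_\nu$, then (TCK3) to collapse $t_\lambda^* t_\lambda=t_{s(\lambda)}$, and (C2) to absorb the source vertex, a short computation gives $t_\lambda t_\lambda^* t_\mu=t_\mu$; likewise $t_{r(E)}t_\mu=t_\mu$ since $r(\mu)=r(E)$. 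Hence the difference is zero and $Q(t)^E t_\mu=0$. (Equivalently one could apply Lemma~\ref{le:EtoExtE} and observe that $(s(\mu),\nu)\in\Lambda^{\min}(\mu,\lambda)$ forces the vertex $s(\mu)$ into $\Ext(\mu;E)$, which makes $Q(t)^{\Ext(\mu;E)}=0$; but the direct computation is cleaner.)

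For the remaining subcase $\mu\in\Lambda\setminus r(E)\Lambda$, I would first record that each factor satisfies $(t_{r(E)}-t_\lambda t_\lambda^*)t_{r(E)}=t_{r(E)}-t_\lambda t_\lambda^*$, since $t_\lambda t_\lambda^*=t_{r(E)}t_\lambda t_\lambda^*$; consequently $Q(t)^E=Q(t)^E t_{r(E)}$. Then $Q(t)^E t_\mu=Q(t)^E t_{r(E)}t_\mu=0$, because $t_{r(E)}t_\mu=t_{r(E)}t_{r(\mu)}t_\mu$ and $r(E)\neq r(\mu)$ forces $t_{r(E)}t_{r(\mu)}=0$ by the orthogonality in (TCK1).

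There is no genuine obstacle in this lemma: its substance is entirely contained in Lemma~\ref{le:EtoExtE} and the basic (TCK) manipulations. The only point requiring care is the bookkeeping of the 2-cocycle phases in the $\mu\in E\Lambda$ computation, where the factors $c(\lambda,\nu)$ and $\overline{c(\lambda,\nu)}$ must cancel exactly so that both $t_{r(E)}t_\mu$ and $t_\lambda t_\lambda^* t_\mu$ reduce precisely to $t_\mu$.
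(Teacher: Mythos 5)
Your proof is correct and in substance follows the paper's own argument, so only one subcase deserves comment. Part (2) and the subcase $\mu\in\Lambda\setminus r(E)\Lambda$ are handled exactly as in the paper: the former is a direct citation of Lemma~\ref{le:EtoExtE}, and the latter is the same (TCK1)-orthogonality computation (the paper inserts vertex projections factor by factor, whereas you factor a single $t_{r(E)}$ out of $Q(t)^E$ on the right; these amount to the same thing). The only genuine divergence is the subcase $\mu\in E\Lambda$: the paper writes $\mu=\lambda\alpha$ with $\lambda\in E$, observes that $(\alpha,s(\mu))\in\Lambda^{\text{min}}(\lambda,\mu)$ forces $s(\mu)\in\Ext(\mu;E)$, and then applies Lemma~\ref{le:EtoExtE}, so that $Q(t)^{\Ext(\mu;E)}$ contains the vanishing factor $t_{s(\mu)}-t_{s(\mu)}t_{s(\mu)}^*$; you instead isolate the factor $\left(t_{r(E)}-t_\lambda t_\lambda^*\right)$ of $Q(t)^E$ via Lemma~\ref{handylemma}(2) and annihilate it against $t_\mu$ by a direct (TCK2)/(TCK3)/(C2) computation --- which is precisely the alternative route you mention parenthetically, in reverse. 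Both are rigorous; yours trades one invocation of Lemma~\ref{le:EtoExtE} for a small amount of cocycle bookkeeping, which you carry out correctly ($t_\lambda t_\lambda^* t_\mu=\overline{c(\lambda,\nu)}c(\lambda,\nu)t_\mu=t_\mu$ and $t_{r(E)}t_\mu=c(r(\mu),\mu)t_\mu=t_\mu$). Your side observation that the hypothesis $E\in\overline{\mathcal{E}}$ is never needed is also accurate: the paper's proof likewise uses nothing beyond $E$ being a finite subset of $r(E)\Lambda$, and the hypothesis is carried in the statement only because that is the context in which the lemma is applied (Lemma~\ref{le:idealstructure2}).
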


\begin{proof}
Suppose that $\mu \in E\Lambda$. There exists $\lambda\in E$ and $ \alpha  \in \Lambda$ such that $\mu = \lambda \alpha$. Then $(\alpha,s(\mu))\in \Lambda^{\text{min}}(\lambda,\mu)$ and hence $s(\mu)\in \Ext(\mu;E)$. As $E \Lambda \subset r(E) \Lambda$, Lemma~\ref{le:EtoExtE} implies that $$\left(\prod_{\lambda \in E} \left( t_{r(E)} - t_\lambda t_\lambda^* \right)\right) t_\mu= t_{\mu}\left(\prod_{\beta \in \Ext(\mu;E)} \left(t_{s(\mu)}   - t_\beta t_\beta^*  \right)  \right) =0,$$ since $t_{s(\mu)}-t_{s(\mu)} t_{s(\mu)}^*=0$. Suppose $\mu \in \Lambda \setminus r(E) \Lambda$. Then as $r(\mu)\neq r(E)$, using (TCK1) we have,
\begin{align}
 \left(\prod_{\lambda \in E} \left(t_{r(E)} -t_\lambda t_\lambda^* \right)\right)t_\mu&=\left(\prod_{\lambda \in E} \left(t_{r(E)} -c(r(E),\lambda)t_\lambda t_\lambda^*t_{r(E)} \right)\right)c(r(\mu),\mu)t_{r(\mu)}t_\mu \ \label{eq:gap1}\\
&=\left(\prod_{\lambda \in E} \left(t_{r(E)}t_{r(\mu)} -t_\lambda t_\lambda^*t_{r(E)}t_{r(\mu)} \right)\right) t_\mu \nonumber\\
&=0. \nonumber
\end{align}
Hence if $\mu \in E \Lambda \cup \left( \Lambda \setminus r(E)\Lambda \right)$ then $Q(t)^E t_\mu=0$, establishing part (1) of the Lemma.
Part (2) is Lemma~\ref{le:EtoExtE}.
\end{proof}

We can now write down spanning elements for the ideals $J_\mathcal{E}$.

\begin{lemma}\label{le:idealstructure2}
Let $(\Lambda,d)$ be a finitely aligned $k$-graph and let $c\in \underline{Z}^2(\Lambda,\mathbb{T})$. Let $\mathcal{E}\subset \FE (\Lambda)$. Then
$$J_\mathcal{E}=\overline{\linspan}\lbrace s_\mathcal{T}(\mu)  Q(s_\mathcal{T})^E s_\mathcal{T}(\nu)^* : E\in \overline{\mathcal{E}}, \mu,\nu \in \Lambda \text{ and } s(\mu)=s(\nu) \rbrace.$$
\end{lemma}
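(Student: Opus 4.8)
The plan is to prove the two containments separately, writing $K$ for the closed linear span on the right-hand side. The containment $K \subset J_\mathcal{E}$ is immediate: by Lemma~\ref{le:satiation1}(3) we have $J_\mathcal{E} = J_{\overline{\mathcal{E}}}$, and for $E \in \overline{\mathcal{E}}$ the element $Q(s_\mathcal{T})^E$ is a generator of $J_{\overline{\mathcal{E}}} = J_\mathcal{E}$; since $J_\mathcal{E}$ is a closed ideal, each spanning element $s_\mathcal{T}(\mu) Q(s_\mathcal{T})^E s_\mathcal{T}(\nu)^*$ lies in $J_\mathcal{E}$, and hence so does all of $K$.

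For the reverse containment I would show that $K$ is a closed two-sided ideal containing $\{Q(s_\mathcal{T})^E : E \in \overline{\mathcal{E}}\}$; then minimality of $J_{\overline{\mathcal{E}}} = J_\mathcal{E}$ forces $J_\mathcal{E} \subset K$. That $K$ contains these generators follows by taking $\mu = \nu = r(E)$, since $Q(s_\mathcal{T})^E \leq s_\mathcal{T}(r(E))$ gives $s_\mathcal{T}(r(E)) Q(s_\mathcal{T})^E s_\mathcal{T}(r(E))^* = Q(s_\mathcal{T})^E$. The set $K$ is a closed subspace by construction, and it is self-adjoint because $(s_\mathcal{T}(\mu) Q(s_\mathcal{T})^E s_\mathcal{T}(\nu)^*)^* = s_\mathcal{T}(\nu) Q(s_\mathcal{T})^E s_\mathcal{T}(\mu)^*$ is again a spanning element (each gap projection being self-adjoint). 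Since a self-adjoint closed left ideal is automatically two-sided (from $ka = (a^* k^*)^*$), it suffices to verify that $K$ is closed under left multiplication.

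By Lemma~\ref{handylemma}(5) the algebra $\mathcal{T}C^*(\Lambda,c)$ is the closed span of elements $s_\mathcal{T}(\eta) s_\mathcal{T}(\zeta)^*$ with $s(\eta) = s(\zeta)$, so by linearity and continuity it is enough to check that $s_\mathcal{T}(\eta) s_\mathcal{T}(\zeta)^* \cdot s_\mathcal{T}(\mu) Q(s_\mathcal{T})^E s_\mathcal{T}(\nu)^*$ lies in $K$ for each spanning element, where we may assume $s(\mu) = s(\nu) = r(E)$ (otherwise the product vanishes by (TCK1)). The computation I would carry out expands $s_\mathcal{T}(\zeta)^* s_\mathcal{T}(\mu)$ via Lemma~\ref{handylemma}(1) into a finite sum over $(\alpha,\beta) \in \Lambda^{\text{min}}(\zeta,\mu)$ of scalar multiples of $s_\mathcal{T}(\alpha) s_\mathcal{T}(\beta)^*$. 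For each such $\beta$ one has $r(\beta) = s(\mu) = r(E)$, so Lemma~\ref{le:idealstructure1} applies to $Q(s_\mathcal{T})^E s_\mathcal{T}(\beta)$: the term vanishes when $\beta \in E\Lambda$, and otherwise $\beta \in r(E)\Lambda \setminus E\Lambda$ and, taking adjoints in Lemma~\ref{le:idealstructure1}(2), $s_\mathcal{T}(\beta)^* Q(s_\mathcal{T})^E = Q(s_\mathcal{T})^{\Ext(\beta;E)} s_\mathcal{T}(\beta)^*$. Absorbing the remaining products $s_\mathcal{T}(\eta) s_\mathcal{T}(\alpha)$ and $s_\mathcal{T}(\nu) s_\mathcal{T}(\beta)$ into single generators via (TCK2), each surviving term becomes a scalar multiple of $s_\mathcal{T}(\eta\alpha) Q(s_\mathcal{T})^{\Ext(\beta;E)} s_\mathcal{T}(\nu\beta)^*$.

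The crux, and the step I expect to be the main obstacle, is confirming that these reduced terms genuinely lie in $K$, which rests on two checks. First, because $\overline{\mathcal{E}}$ is satiated and $\beta \in r(E)\Lambda \setminus E\Lambda$, property (S2) of Definition~\ref{de:sat} guarantees $\Ext(\beta;E) \in \overline{\mathcal{E}}$, so the index set for the gap projections is preserved under the reduction. Second, one must verify the source-matching constraint: from $\zeta\alpha = \mu\beta$ one gets $s(\alpha) = s(\beta) = r(\Ext(\beta;E))$, whence $s(\eta\alpha) = s(\alpha) = s(\beta) = s(\nu\beta)$, so $s_\mathcal{T}(\eta\alpha) Q(s_\mathcal{T})^{\Ext(\beta;E)} s_\mathcal{T}(\nu\beta)^*$ is a bona fide spanning element of $K$. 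As the sum is finite, the whole product lies in $K$; self-adjointness then yields closure under right multiplication, so $K$ is a closed ideal and the proof concludes by minimality of $J_\mathcal{E}$.
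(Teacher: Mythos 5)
Your proof is correct and follows essentially the same route as the paper's: both sides show that the right-hand span is a closed two-sided ideal containing the gap projections $Q(s_\mathcal{T})^E$, $E\in\overline{\mathcal{E}}$, by expanding $s_\mathcal{T}(\zeta)^*s_\mathcal{T}(\mu)$ via Lemma~\ref{handylemma}(1), moving the gap projection past the partial isometries with Lemma~\ref{le:idealstructure1}, invoking property (S2) of satiation to keep the index sets in $\overline{\mathcal{E}}$, and then concluding by minimality of $J_\mathcal{E}=J_{\overline{\mathcal{E}}}$. The only cosmetic difference is that you obtain closure under right multiplication from self-adjointness plus closure under left multiplication, whereas the paper takes adjoints of its left-multiplication identity; this is the same argument in different packaging.
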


\begin{proof}
Let $$I:=\linspan\lbrace s_\mathcal{T}(\mu) Q(s_\mathcal{T})^E s_\mathcal{T}(\nu)^* : E\in \overline{\mathcal{E}}, \mu,\nu \in \Lambda \text{ and } s(\mu)=s(\nu) \rbrace.$$
Fix $E\in \mathcal{E}$ and $\mu,\nu,\alpha,\beta \in \Lambda$ such that $s(\mu)=s(\nu)$ and $s(\alpha)=s(\beta)$. We calculate,
\begin{align}
s_\mathcal{T}(\alpha)s_\mathcal{T}(&\beta)^* s_\mathcal{T}(\mu) Q(s_\mathcal{T})^{E} s_\mathcal{T}(\nu)^*\label{eq:idealstructure2} \\
&= s_\mathcal{T}(\alpha)\left(\sum_{(\eta,\zeta)\in \Lambda^{\text{min}}(\beta,\mu)} \overline{c(\beta,\eta)}c(\mu,\zeta)s_\mathcal{T}(\eta) s_\mathcal{T}(\zeta)^* \right) Q(s_\mathcal{T})^{E} s_\mathcal{T}(\nu)^*\nonumber\\
&\hskip8.5cm \text{ by Lemma~\ref{handylemma}(1)} \nonumber \\
&=\sum_{(\eta,\zeta)\in \Lambda^{\text{min}}(\beta,\mu)} \overline{c(\beta,\eta)}c(\mu,\zeta)c(\alpha,\eta)s_\mathcal{T}(\alpha \eta) s_\mathcal{T}(\zeta)^* Q(s_\mathcal{T})^{E} s_\mathcal{T}(\nu)^* \nonumber\\
&= \sum_{(\eta,\zeta)\in \Lambda^{\text{min}}(\beta,\mu)} \overline{c(\beta,\eta)}c(\mu,\zeta)c(\alpha,\eta)s_\mathcal{T}(\alpha \eta) \left( Q(s_\mathcal{T})^{E} s_\mathcal{T}(\zeta) \right)^* s_\mathcal{T}(\nu)^* \nonumber \\
&=  \sum_{(\eta,\zeta)\in \Lambda^{\text{min}}(\beta,\mu):\atop \zeta \in r(E)\Lambda \setminus E \Lambda} \overline{c(\beta,\eta)}c(\mu,\zeta)c(\alpha,\eta)s_\mathcal{T}(\alpha \eta) \left( s_\mathcal{T}(\zeta)  Q(s_\mathcal{T})^{\Ext(\zeta;E)}\right)^*  s_\mathcal{T}(\nu)^*\nonumber\\
&\hskip8.5cm \text{ by Lemma~\ref{le:idealstructure1}} \nonumber \\
&=\sum_{(\eta,\zeta)\in \Lambda^{\text{min}}(\beta,\mu):\atop \zeta \in r(E)\Lambda \setminus E \Lambda} \overline{c(\beta,\eta)c(\nu ,\zeta)}c(\mu,\zeta)c(\alpha,\eta)s_\mathcal{T}(\alpha \eta)  Q(s_\mathcal{T})^{\Ext(\zeta;E)}  s_\mathcal{T}(\nu \zeta )^*.\nonumber
\end{align}
Since $\overline{\mathcal{E}}$ is satiated, each $s_\mathcal{T}(\alpha \eta)  Q(s_\mathcal{T})^{\Ext(\zeta;E)}  s_\mathcal{T}(\nu \zeta )^*\in I$, and hence $$s_\mathcal{T}(\alpha)s_\mathcal{T}(\beta)^* s_\mathcal{T}(\mu) Q(s_\mathcal{T})^{E} s_\mathcal{T}(\nu)^*\in I.$$
 Suppose that $a\in \linspan \lbrace s_\mathcal{T}(\alpha) s_\mathcal{T}(\beta)^* : s(\alpha)=s(\beta) \rbrace$ and $b\in I$. Since multiplication is bilinear, equation \eqref{eq:idealstructure2} implies that $ab\in I$. Taking adjoints of both sides of \eqref{eq:idealstructure2} shows that $ba\in I$. We show that $\overline{I}$ is an ideal. Fix $a\in \mathcal{T}C^*(\Lambda,c)$ and $b\in \overline{I}$. By Lemma~\ref{handylemma}(5) there is a sequence $a_n\in \linspan \lbrace s_\mathcal{T}(\alpha) s_\mathcal{T}(\beta)^* : s(\alpha)=s(\beta) \rbrace$ such that $a_n \to a$ as $n \to \infty$. Similarly, there is a sequence $b_n\in I$ such that $b_n \to b$ as $n \to \infty$. As multiplication in a $C^*$-algebra is jointly continuous we have $a_n b_n \to ab$ and $b_n a_n \to ba$ as $n \to \infty$. The above argument shows that $a_n b_n, b_n a_n \in I$ for each $n$ and hence $ab,ba \in \overline {I}$. Hence $\overline {I}$ is a closed ideal in $\mathcal{T}C^*(\Lambda,c)$. Since $I \subset J_\mathcal{E}$ and $J_\mathcal{E}$ is closed, $\overline{I}\subset J_\mathcal{E}$. The set $I$ contains all the gap projections associated to $\mathcal{E}$.  As $J_\mathcal{E}$ is the smallest closed ideal that contains all the gap projections associated to $\mathcal{E}$, we have $\overline I = J_\mathcal{E}$.
\end{proof}

For a $k$-graph $\Lambda$, recall from Definition~\ref{def:graphmorphisms} the collection $\Lambda^*$ of graph morphisms $x: \Lambda \rightarrow \Omega_{k,m}$.

\begin{definition}[\cite{S20061}, Definition 4.3]
Let $(\Lambda,d)$ be a finitely aligned $k$-graph, and let $\mathcal{E}$ be a subset of $\FE(\Lambda)$. We say that $x\in \Lambda^*$ is an $\mathcal{E}$-\emph{relative boundary path} of $\Lambda$ if for every $n\in \mathbb{N}^k$ such that $n\leq d(x)$, and every $E\in\overline{\mathcal{E}}$ such that $r(E)=x(n)$, there exists $\lambda \in E$ such that $x(n,n+d(\lambda))=\lambda$. We denote the collection of all $\mathcal{E}$-relative boundary paths of $\Lambda$ by $\partial(\Lambda;\mathcal{E})$.
\end{definition}

Let $\left\{ T_\lambda : \lambda \in \Lambda \right \}$ be the Toeplitz-Cuntz-Krieger $(\Lambda,c)$-family of Proposition~\ref{toeplitzrepresentation}. The next few technical lemmas use $\mathcal{E}$-relative boundary paths to characterise when, for $E\in \FE(\Lambda)$, the gap projection $Q(T)^E$ belongs to the $C^*$-algebra $\pi_T^\mathcal{T}(J_{\overline{\mathcal{E}}})\subset \mathcal{B}\left( \ell^2(\Lambda)\right)$. This charactisation enables us to prove Theorem~\ref{th:nonzerogapprojection}, the main result of this section.

\begin{lemma}\label{le:gapprojection1}
Let $(\Lambda,d)$ be a finitely aligned $k$-graph, let $c\in \underline{Z}^2(\Lambda,\mathbb{T})$ and let $\mathcal{E}\subset \FE(\Lambda)$. Let $\lbrace T_\lambda : \lambda \in \Lambda \rbrace$ be the Toeplitz-Cuntz-Krieger $(\Lambda,c)$-family of Proposition~\ref{toeplitzrepresentation}. Suppose that $x\in \partial(\Lambda;\mathcal{E})$ and that $\mu,\nu \in \Lambda$ satisfy $s(\mu)=s(\nu)$. If $E\in \overline{\mathcal{E}}$ satisfies $r(E)=s(\mu)$, then there exists $p\in \mathbb{N}^k$ with $p\leq d(x)$ such that $T_\mu Q(T)^E T_\nu^* \xi_{x(0,q)}=0$ for all $q$ such that $p\leq q \leq d(x)$.
\end{lemma}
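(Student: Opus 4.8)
The plan is to track how the operator $T_\mu Q(T)^E T_\nu^*$ acts on the single basis vector $\xi_{x(0,q)}$ and to exploit the boundary-path condition to force a cancellation once $q$ is large enough. First I would record the two formulas from Proposition~\ref{toeplitzrepresentation}: that $T_\nu^*\xi_\alpha = \overline{c(\nu,\alpha')}\,\xi_{\alpha'}$ when $\alpha = \nu\alpha'$ and $0$ otherwise, and, from \eqref{TTstar}, that $T_\lambda T_\lambda^*\xi_\alpha = \xi_\alpha$ exactly when $\lambda$ is a prefix of $\alpha$. Combining the latter with $T_{r(E)}\xi_\alpha = \xi_\alpha$ for $r(\alpha)=r(E)$ (using (C2)) yields the key fact that, for any $\alpha$ with $r(\alpha)=r(E)$, the gap projection satisfies $Q(T)^E\xi_\alpha = 0$ whenever some $\lambda\in E$ is a prefix of $\alpha$. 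Here I would use that the factors of $Q(T)^E$ commute (Lemma~\ref{handylemma}(2)) to let the factor $(T_{r(E)}-T_\lambda T_\lambda^*)$ act first on $\xi_\alpha$, where it gives $\xi_\alpha-\xi_\alpha = 0$.

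Next I would dispose of the degenerate case. If $\nu$ is not an initial segment of $x$---that is, either $d(\nu)\not\le d(x)$, or $d(\nu)\le d(x)$ but $x(0,d(\nu))\ne\nu$---then $T_\nu^*\xi_{x(0,q)}=0$ for every $q\le d(x)$, since a nonzero value would require $\nu$ to be a prefix of $x(0,q)$, which in turn forces $q\ge d(\nu)$ and $x(0,d(\nu))=\nu$. In this situation any choice of $p$, say $p=0$, works.

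The main case is $d(\nu)\le d(x)$ with $x(0,d(\nu))=\nu$. Writing $n=d(\nu)$, we have $x(n)=s(\nu)=r(E)$, so the hypotheses $x\in\partial(\Lambda;\mathcal{E})$ and $E\in\overline{\mathcal{E}}$ with $r(E)=x(n)$ furnish some $\lambda\in E$ with $x(n,n+d(\lambda))=\lambda$; in particular $n+d(\lambda)\le d(x)$. I would then set $p:=n+d(\lambda)=d(\nu)+d(\lambda)\le d(x)$. For any $q$ with $p\le q\le d(x)$ we have $q\ge n$, so $T_\nu^*\xi_{x(0,q)}=\overline{c(\nu,\beta)}\,\xi_\beta$ with $\beta:=x(n,q)$ and $r(\beta)=x(n)=r(E)$. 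Since $q\ge p=n+d(\lambda)$, the path $\lambda=x(n,n+d(\lambda))$ is a prefix of $\beta$, so the key fact gives $Q(T)^E\xi_\beta=0$, whence $T_\mu Q(T)^E T_\nu^*\xi_{x(0,q)}=0$.

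I expect the main obstacle to be essentially bookkeeping: verifying that $p\le d(x)$ and that $\beta=x(n,q)$ genuinely has $\lambda$ as a prefix rely on the precise reading of the boundary-path definition and of the segment notation $x(n,m)$, rather than on any hard estimate. The one conceptual point to get right is that the boundary-path condition is exactly what guarantees the existence of a $\lambda\in E$ along which $x$ continues past $x(n)$, and this is the sole source of the cancellation.
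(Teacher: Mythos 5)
Your proposal is correct and follows essentially the same route as the paper's proof: the same case split on whether $\nu$ is an initial segment of $x$, the same use of the boundary-path condition to produce $\lambda\in E$ with $x(n,n+d(\lambda))=\lambda$, and the same cancellation $(T_{r(E)}-T_\lambda T_\lambda^*)\xi_{x(n,q)}=0$ (you apply it to $\xi_{x(n,q)}$ after computing $T_\nu^*\xi_{x(0,q)}$, while the paper inserts that factor next to $T_\nu^*$ using that it is a commuting factor of $Q(T)^E$). The only cosmetic difference is that you handle the two degenerate subcases with the single choice $p=0$, where the paper uses $p=0$ and $p=d(\nu)$ respectively.
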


\begin{proof}

We consider two cases: $d(x)\ngeq d(\nu)$ and then $d(x)\geq d(\nu)$. Suppose first that $d(x)\ngeq d(\nu)$. Fix $q\leq d(x)$. Then $q \ngeq d(\nu)$, so $x(0,q)$ does not have the form $\nu \nu'$. Hence $T^*_\nu \xi_{x(0,q)}=0$. So $p=0$ does the job.

Suppose that $d(x) \geq d(\nu)$. First suppose that $x(0,d(\nu))\neq \nu$. Fix $d(\nu) \leq q \leq d(x)$. The factorisation property implies that $x(0,q)$ does not have the form $\nu \nu'$. So $p=d(\nu)$ does the job.

Now suppose that $d(x) \geq d(\nu)$ and $x(0,d(\nu))=\nu$. Set $n=d(\nu)$. As $x$ is an $\mathcal{E}$-relative boundary path and $r(E)=s(\nu)=x(n)$, there exists $\lambda_0\in E$ such that $x(n,n+d(\lambda_0))=\lambda_0$. Put $p=n+d(\lambda_0)$ and fix $q$ such that $p\leq q \leq d(x).$ Then
\begin{align*}
\left( T_{r(E)}-T_{\lambda_0}T_{\lambda_0}^* \right) T_\nu^* \xi_{x(0,q)} &=\overline{c(\nu,x(n,q))}\left( T_{r(E)}-T_{\lambda_0}T_{\lambda_0}^* \right)\xi_{x(n,q)}\\
&=\overline{c(\nu,x(n,q))}\left( T_{r(E)}\xi_{x(n,q)}-T_{\lambda_0}T_{\lambda_0}^* \xi_{\lambda_0 x(p,q)}\right)\\
&=\overline{c(\nu,x(n,q))}\left(\xi_{x(n,q)}- \xi_{ x(n,q)}\right)  \qquad \text{ by } \eqref{TTstar}\\
&=0.
\end{align*}
Hence
\begin{flalign*}
&&T_\mu Q(T)^E T_\nu^* \xi_{x(0,q)}&=T_\mu Q(T)^E \left( T_{r(E)}-T_{\lambda_0}T_{\lambda_0}^* \right)  T_\nu^* \xi_{x(0,q)}&\\
&&&=0.&\qedhere
\end{flalign*}
\end{proof}

\begin{lemma}\label{le:gapprojection2}
Let $(\Lambda,d)$ be a finitely aligned $k$-graph, let $c\in \underline{Z}^2(\Lambda,\mathbb{T})$ and let $\mathcal{E}\subset \FE(\Lambda)$. Let $\lbrace T_\lambda : \lambda \in \Lambda \rbrace$ be the Toeplitz-Cuntz-Krieger $(\Lambda,c)$-family of Proposition~\ref{toeplitzrepresentation}. For each $a\in \linspan \left \{ T_\mu Q(T)^E T_\nu^* : s(\mu)=s(\nu) \text{ and } E\in \overline{\mathcal{E}}\right \}$ there exists $p\in \mathbb{N}^k$ with $p\leq d(x)$ such that $a\xi_{x(0,q)}=0$ whenever $p\leq q \leq d(x)$.
\end{lemma}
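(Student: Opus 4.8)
The plan is to bootstrap from Lemma~\ref{le:gapprojection1}, which already establishes exactly this annihilation property for a single spanning element $T_\mu Q(T)^E T_\nu^*$, and then to combine the resulting exponents over a finite linear combination by passing to a coordinatewise maximum. Throughout I take $x\in \partial(\Lambda;\mathcal{E})$ to be the fixed boundary path carried over from Lemma~\ref{le:gapprojection1}.

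First I would write a general element of the linear span as a finite sum $a=\sum_{i=1}^{n} \omega_i\, T_{\mu_i} Q(T)^{E_i} T_{\nu_i}^*$ with $\omega_i\in\mathbb{C}$, $s(\mu_i)=s(\nu_i)$ and $E_i\in\overline{\mathcal{E}}$. For each index $i$ I would produce a $p_i\in\mathbb{N}^k$ with $p_i\leq d(x)$ such that the $i$th term annihilates $\xi_{x(0,q)}$ for every $q$ with $p_i\leq q\leq d(x)$. There are two cases. If $r(E_i)=s(\mu_i)$, then $\mu_i,\nu_i,E_i$ satisfy precisely the hypotheses of Lemma~\ref{le:gapprojection1}, which supplies such a $p_i$. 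Otherwise $r(E_i)\neq s(\mu_i)$; in this case the operator $T_{\mu_i}Q(T)^{E_i}$ is already zero, since $Q(T)^{E_i}$ is supported on the basis vectors $\xi_\gamma$ with $r(\gamma)=r(E_i)$ while $T_{\mu_i}$ annihilates all such vectors when $s(\mu_i)\neq r(E_i)$ by \eqref{eq: toeplitzrepresentation}. Thus the whole term is the zero operator and $p_i=0$ trivially works.

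Next I would set $p:=p_1\vee p_2\vee\cdots\vee p_n$, the coordinatewise maximum. Since each $p_i\leq d(x)$, the vector $d(x)$ is a coordinatewise upper bound for the finite family $\{p_i\}$, and hence $p\leq d(x)$. Finally, fixing any $q$ with $p\leq q\leq d(x)$, for each $i$ we have $q\geq p\geq p_i$, so $p_i\leq q\leq d(x)$ and the $i$th term kills $\xi_{x(0,q)}$; summing over $i$ gives $a\xi_{x(0,q)}=0$, as required.

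The argument is essentially routine, being little more than a reduction to Lemma~\ref{le:gapprojection1}. The only points that need care are the observation that the coordinatewise maximum of finitely many exponents each bounded by $d(x)$ is again bounded by $d(x)$ (so that the conclusion is stated for a legitimate index $p\leq d(x)$), together with the handling of spanning elements with $r(E_i)\neq s(\mu_i)$: these fall outside the hypotheses of Lemma~\ref{le:gapprojection1}, but are zero operators and so cause no difficulty.
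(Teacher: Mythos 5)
Your proof is correct and follows essentially the same route as the paper: write $a$ as a finite linear combination of spanning terms, apply Lemma~\ref{le:gapprojection1} to each term to get exponents $p_i$, and take their coordinatewise maximum. You are in fact slightly more careful than the paper, whose proof applies Lemma~\ref{le:gapprojection1} to every term without noting that terms with $r(E_i)\neq s(\mu_i)$ fall outside its hypotheses; your observation that such terms are the zero operator (since $Q(T)^{E_i}$ has range spanned by basis vectors $\xi_\gamma$ with $r(\gamma)=r(E_i)$, all of which $T_{\mu_i}$ annihilates) closes that small gap.
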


\begin{proof}
Fix $a\in   \linspan \left\{ T_\mu Q(T)^E T_\nu^*  : s(\mu)=s(\nu) \text{ and } E\in \overline{\mathcal{E}}\right \}$, say, $a=\sum_{n=1}^N T_{\mu_n} Q(T)^{E_n} T_{\nu_n}^*$ where $\mu_n,\nu_n \in \Lambda$, $s(\mu_n)=s(\nu_n)$ and $E_n \in \overline{\mathcal{E}}$ for each $n$. For each $n$,  Lemma~\ref{le:gapprojection1} implies that there exists $p_n\in \mathbb{N}^k$ such that $T_{\mu_n} Q(T)^{E_n} T_{\nu_n}\xi_{x(0,q)}=0$ for all $p_n\leq q \leq d(x)$. Put $p=\max \lbrace p_1,...,p_n \rbrace$. Then $a\xi_{x(0,q)}=0$ for all $p\leq q \leq d(x)$.
\end{proof}

\begin{lemma}\label{le:boundarypath1}
Let $(\Lambda,d)$ be a finitely aligned $k$-graph, $c\in \underline{Z}^2(\Lambda,\mathbb{T})$ and $\mathcal{E}\subset \FE(\Lambda)$. Let $\lbrace T_\lambda : \lambda \in \Lambda \rbrace$ be the Toeplitz-Cuntz-Krieger $(\Lambda,c)$-family of Proposition~\ref{toeplitzrepresentation}. For all $x\in \partial(\Lambda;\mathcal{E})$, if $a\in \pi_T^\mathcal{T} (J_{\overline{\mathcal{E}}})$ then $\lim_{q\to d(x)} \Vert a \xi_{x(0,q)} \Vert =0$.
\end{lemma}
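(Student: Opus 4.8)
The plan is to reduce the statement to the uniform vanishing already established in Lemma~\ref{le:gapprojection2} by means of a density-plus-$\varepsilon$ argument. First I would use Lemma~\ref{le:satiation1}(3) to replace $J_{\overline{\mathcal{E}}}$ by $J_{\mathcal{E}}$, and then invoke the explicit description of this ideal given in Lemma~\ref{le:idealstructure2}, namely
$$J_{\mathcal{E}}=\overline{\linspan}\left\{ s_\mathcal{T}(\mu) Q(s_\mathcal{T})^E s_\mathcal{T}(\nu)^* : E\in \overline{\mathcal{E}},\ \mu,\nu \in \Lambda,\ s(\mu)=s(\nu) \right\}.$$
Since $\pi_T^\mathcal{T}$ is a $*$-homomorphism, hence contractive, it maps this closed span into the closure of the image of the underlying linear span. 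Using $\pi_T^\mathcal{T}(s_\mathcal{T}(\lambda))=T_\lambda$ and the fact that $\pi_T^\mathcal{T}$ carries $Q(s_\mathcal{T})^E$ to $Q(T)^E$, every $a\in \pi_T^\mathcal{T}(J_{\overline{\mathcal{E}}})$ therefore lies in
$$\overline{\linspan}\left\{ T_\mu Q(T)^E T_\nu^* : E\in \overline{\mathcal{E}},\ \mu,\nu \in \Lambda,\ s(\mu)=s(\nu) \right\}.$$

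Now fix $x\in \partial(\Lambda;\mathcal{E})$ and $\varepsilon>0$. I would choose an element $b$ of the relevant linear span with $\Vert a-b\Vert<\varepsilon$. Applying Lemma~\ref{le:gapprojection2} to $b$ produces a $p\in \mathbb{N}^k$ with $p\le d(x)$ such that $b\,\xi_{x(0,q)}=0$ for all $q$ with $p\le q\le d(x)$. The point masses $\xi_{x(0,q)}$ are unit vectors in $\ell^2(\Lambda)$, so for every such $q$,
$$\Vert a\,\xi_{x(0,q)}\Vert=\Vert (a-b)\xi_{x(0,q)}+b\,\xi_{x(0,q)}\Vert=\Vert (a-b)\xi_{x(0,q)}\Vert\le \Vert a-b\Vert<\varepsilon.$$
As $\varepsilon>0$ was arbitrary and $p\le d(x)$, this is precisely the assertion that $\lim_{q\to d(x)}\Vert a\,\xi_{x(0,q)}\Vert=0$, where the limit is taken over the directed set $\{q\in\mathbb{N}^k : q\le d(x)\}$.

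The only genuinely delicate point is the interpretation of the limit when $d(x)$ has infinite coordinates: here $q\to d(x)$ must be read as a net limit over $\{q:q\le d(x)\}$, and one must confirm that the $p$ supplied by Lemma~\ref{le:gapprojection2} satisfies $p\le d(x)$, so that the index set $\{q:p\le q\le d(x)\}$ is nonempty and cofinal. Both are guaranteed by the hypotheses of that lemma. I expect no serious obstacle beyond this bookkeeping: the substantive content—that each spanning operator $T_\mu Q(T)^E T_\nu^*$ eventually annihilates the vectors $\xi_{x(0,q)}$ along an $\mathcal{E}$-relative boundary path—has already been carried out in Lemmas~\ref{le:gapprojection1} and~\ref{le:gapprojection2}, and the present statement merely upgrades this from the dense subspace to its closure via continuity of the norm.
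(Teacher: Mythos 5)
Your proof is correct and follows essentially the same route as the paper: both reduce to the spanning description of $J_{\overline{\mathcal{E}}}$ from Lemma~\ref{le:idealstructure2}, apply Lemma~\ref{le:gapprojection2} on the dense linear span, and finish with an $\varepsilon$-approximation using the fact that the $\xi_{x(0,q)}$ are unit vectors. The only cosmetic difference is that you approximate $a$ downstairs in $\mathcal{B}(\ell^2(\Lambda))$ by an element of $\linspan\{T_\mu Q(T)^E T_\nu^*\}$, whereas the paper approximates a preimage $b\in J_{\overline{\mathcal{E}}}$ upstairs and then pushes the estimate through the contractive homomorphism $\pi_T^\mathcal{T}$; the two are interchangeable.
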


\begin{proof}
Fix $\epsilon>0$ and $a\in \pi_T^\mathcal{T}(J_{\overline{\mathcal{E}}})$. There exists $b\in J_{\overline{\mathcal{E}}}$ such that $a= \pi_T^\mathcal{T} (b)$. By Lemma~\ref{le:idealstructure2}, there exists $$c\in \linspan \lbrace s_\mathcal{T}(\mu)  Q(s_\mathcal{T})^E s_\mathcal{T}(\nu)^* : E\in \overline{\mathcal{E}}, \mu,\nu \in \Lambda \text{ and } s(\mu)=s(\nu) \rbrace$$ such that $\Vert b-c\Vert < \epsilon$. Since $\pi_T^\mathcal{T}$ is a homomorphism $$\pi_T^\mathcal{T} (c)\in \linspan \left\{ T_\mu Q(T)^E T_\nu^* : E\in \overline{\mathcal{E}}, \mu,\nu \in \Lambda \text{ and } s(\mu)=s(\nu) \right\}.$$ By Lemma~\ref{le:gapprojection2} there exists $p\in \mathbb{N}^k$ such that whenever $p\leq q \leq d(x)$ we have $ \pi_T^\mathcal{T} (c)\xi_{x(0,q)}=0$. Then whenever $p\leq q \leq d(x)$ we have
\begin{align*}
\left \Vert a\xi_{x(0,q)} \right \Vert &= \left \Vert a\xi_{x(0,q)} - \pi_T^\mathcal{T} (c)\xi_{x(0,q)} \right \Vert\\
 &= \left \Vert \pi_T^\mathcal{T} (b)\xi_{x(0,q)} - \pi_T^\mathcal{T} (c)\xi_{x(0,q)} \right \Vert \\
&=\left \Vert \left(\pi_T^\mathcal{T} (b)- \pi_T^\mathcal{T} (c)\right)\xi_{x(0,q)}  \right \Vert \\
&\leq \left \Vert \pi_T^\mathcal{T} (b)- \pi_T^\mathcal{T} (c)  \right \Vert \qquad \text{since } \left \Vert \xi_{x(0,q)}  \right \Vert=1\\
&\leq \left \Vert\pi_T^\mathcal{T} (b-c)  \right \Vert.
\end{align*}
Since homomorphisms between $C^*$-algebras are norm decreasing, we have $$\left \Vert a\xi_{x(0,q)} \right \Vert \leq \left \Vert\pi_T^\mathcal{T} (b-c)  \right \Vert \leq \left \Vert b-c \right \Vert < \epsilon.$$ Hence $\lim_{q\to d(x)} \left \Vert a \xi_{x(0,q)}  \right \Vert =0.$
\end{proof}

Recall from Lemma~4.7 of \cite{S20061} that if $(\Lambda,d)$ is a finitely aligned $k$-graph, $\mathcal{E} \subset \FE(\Lambda)$, and $v\in \Lambda^0$, then for each $F\in \FE(\Lambda) \setminus \overline{\mathcal{E}}$, both $r(F) \partial (\Lambda,\mathcal{E}) \setminus F\partial (\Lambda; \mathcal{E})$ and $v\partial(\Lambda;\mathcal{E})$ are nonempty.

\begin{lemma}\label{le:boundarypath3}
Let $(\Lambda,d)$ be a finitely aligned $k$-graph, let $c\in \underline{Z}^2(\Lambda,\mathbb{T})$ and let $\mathcal{E}\subset \FE(\Lambda)$. Let $\lbrace T_\lambda : \lambda \in \Lambda \rbrace$ be the Toeplitz-Cuntz-Krieger $(\Lambda,c)$-family of Proposition~\ref{toeplitzrepresentation}. If $F\in \FE(\Lambda)\setminus \overline{\mathcal{E}}$, there exists $x\in \partial(\Lambda;\mathcal{E})$ such that $ \Vert Q(T)^F \xi_{x(0,q)} \Vert =1$ for all $q\leq d(x)$.
\end{lemma}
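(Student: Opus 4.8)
The plan is to first determine exactly how the gap projection $Q(T)^F$ acts on the basis vectors $\xi_\alpha$ of $\ell^2(\Lambda)$, and then to produce the required path $x$ by invoking the existence result recalled immediately before the statement. Recall from \eqref{TTstar} that $T_\lambda T_\lambda^*\xi_\alpha=\xi_\alpha$ when $\alpha\in\lambda\Lambda$ and $T_\lambda T_\lambda^*\xi_\alpha=0$ otherwise, while $T_{r(F)}\xi_\alpha=\xi_\alpha$ when $r(\alpha)=r(F)$ and $T_{r(F)}\xi_\alpha=0$ otherwise. Since the range projections commute (Lemma~\ref{handylemma}(2)) and $F$ is nonempty, I would run through three cases for a fixed $\alpha$: if $r(\alpha)\neq r(F)$, then every factor $T_{r(F)}-T_\lambda T_\lambda^*$ annihilates $\xi_\alpha$; if $r(\alpha)=r(F)$ but $\alpha\in F\Lambda$, say $\alpha\in\lambda\Lambda$ with $\lambda\in F$, then the factor indexed by that $\lambda$ sends $\xi_\alpha$ to $\xi_\alpha-\xi_\alpha=0$; and if $r(\alpha)=r(F)$ and $\alpha\notin F\Lambda$, then every factor fixes $\xi_\alpha$. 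This yields the clean formula
$$Q(T)^F\xi_\alpha = \begin{cases} \xi_\alpha & \text{if } r(\alpha)=r(F) \text{ and } \alpha\notin F\Lambda,\\ 0 & \text{otherwise,}\end{cases}$$
so that $\Vert Q(T)^F\xi_\alpha\Vert=1$ precisely when $r(\alpha)=r(F)$ and $\alpha\notin F\Lambda$.

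For the second step, since $F\in\FE(\Lambda)\setminus\overline{\mathcal{E}}$, the result recalled from Lemma~4.7 of \cite{S20061} guarantees that $r(F)\partial(\Lambda;\mathcal{E})\setminus F\partial(\Lambda;\mathcal{E})$ is nonempty; I would fix any $x$ in this set. Then $r(x)=r(F)$, and hence $r(x(0,q))=x(0)=r(F)$ for every $q\leq d(x)$. It remains to check that $x(0,q)\notin F\Lambda$ for all $q\leq d(x)$: if instead $x(0,q)=\lambda\mu$ for some $\lambda\in F$ and $\mu\in\Lambda$, then $d(\lambda)\leq q\leq d(x)$, and the factorisation property forces $x(0,d(\lambda))=\lambda$, so $x=\lambda\,x(d(\lambda),d(x))\in F\partial(\Lambda;\mathcal{E})$, contradicting the choice of $x$. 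Combining these two observations with the formula above gives $Q(T)^F\xi_{x(0,q)}=\xi_{x(0,q)}$, whence $\Vert Q(T)^F\xi_{x(0,q)}\Vert=1$ for all $q\leq d(x)$, as required.

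The computation in the first step is routine once \eqref{TTstar} is in hand, and the genuine content — the existence of a boundary path avoiding every initial extension of $F$ — is exactly what the cited Lemma~4.7 supplies. Thus the only point requiring care is the translation in the second step between the statement that $x\notin F\partial(\Lambda;\mathcal{E})$ (i.e.\ $x$ does not begin with any element of $F$) and the statement that none of its initial segments $x(0,q)$ lies in $F\Lambda$. This equivalence is precisely where the factorisation property enters, and it is the main, albeit mild, obstacle.
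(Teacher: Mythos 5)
Your proof is correct and follows essentially the same route as the paper: both invoke Lemma~4.7 of \cite{S20061} to obtain $x\in r(F)\partial(\Lambda;\mathcal{E})\setminus F\partial(\Lambda;\mathcal{E})$ and then use the factorisation property to check that no initial segment $x(0,q)$ lies in $F\Lambda$, so that every factor $T_{r(F)}-T_\lambda T_\lambda^*$ fixes $\xi_{x(0,q)}$. Your preliminary three-case formula for $Q(T)^F$ on arbitrary basis vectors is just a slightly more systematic packaging of the computation the paper performs directly on the vectors $\xi_{x(0,q)}$.
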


\begin{proof}
By (\cite{S20061}, Lemma 4.7), there exists $x\in r(F) \partial ( \Lambda ; \mathcal{E}) \setminus F \partial (\Lambda ; \mathcal{E})$. Fix $q \leq d(x)$ and $\lambda \in F$. We have $x(0,d(\lambda)) \neq \lambda$. By the factorisation property $x(0,q) \neq \lambda \lambda ' $ for any $q \leq d(x)$. Hence
\begin{align*}
\left( T_{r(F)}-T_\lambda T_\lambda^* \right) \xi_{x(0,q)} &= \xi_{x(0,q)}.
\end{align*}
Hence $ \left \Vert Q(T)^F \xi_{x(0,q)} \right \Vert =1$.
\end{proof}

\begin{theorem}\label{th:nonzerogapprojection}
Let $(\Lambda,d)$ be a finitely aligned $k$-graph and let $c\in \underline{Z}^2(\Lambda,\mathbb{T})$. Let $\mathcal{E} \subset \FE(\Lambda)$. We have
\begin{enumerate}
\item[(1)] $s_\mathcal{E}(v)\neq 0$ for all $v\in \Lambda^0$; and
\item[(2)] for $G\in \FE(\Lambda)$, $Q(s_\mathcal{E})^G= 0$ if and only if $G\in \overline{\mathcal{E}}$.
\end{enumerate}
\end{theorem}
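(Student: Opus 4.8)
The plan is to prove both statements by transporting the relevant membership questions in $J_\mathcal{E}$ up to the spatial representation $\pi_T^\mathcal{T}$ on $\ell^2(\Lambda)$ and exploiting a sharp dichotomy between the boundary-path behaviour of ideal elements and of the projections in question. The two key inputs are Lemma~\ref{le:satiation1}(3), which gives $J_\mathcal{E}=J_{\overline{\mathcal{E}}}$, and Lemma~\ref{le:boundarypath1}, which says that for every $x\in\partial(\Lambda;\mathcal{E})$ and every $a\in\pi_T^\mathcal{T}(J_{\overline{\mathcal{E}}})$ we have $\lim_{q\to d(x)}\Vert a\xi_{x(0,q)}\Vert=0$. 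Since $s_\mathcal{E}(v)=0$ exactly when $s_\mathcal{T}(v)\in J_\mathcal{E}$, and $Q(s_\mathcal{E})^G=0$ exactly when $Q(s_\mathcal{T})^G\in J_\mathcal{E}$, both assertions reduce to showing that certain operators, after applying $\pi_T^\mathcal{T}$, do \emph{not} asymptotically annihilate a suitable boundary-path vector, and hence cannot lie in $\pi_T^\mathcal{T}(J_{\overline{\mathcal{E}}})$.

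For part (1) I would argue by contradiction: suppose $s_\mathcal{E}(v)=0$, so that $s_\mathcal{T}(v)\in J_\mathcal{E}=J_{\overline{\mathcal{E}}}$ and therefore $T_v=\pi_T^\mathcal{T}(s_\mathcal{T}(v))\in\pi_T^\mathcal{T}(J_{\overline{\mathcal{E}}})$. By Lemma~4.7 of \cite{S20061} the set $v\partial(\Lambda;\mathcal{E})$ is nonempty, so fix $x\in v\partial(\Lambda;\mathcal{E})$. For every $q\leq d(x)$ the path $x(0,q)$ has range $v$, so the defining formula for $T_v$ in Proposition~\ref{toeplitzrepresentation} together with (C2) gives $T_v\xi_{x(0,q)}=c(v,x(0,q))\xi_{x(0,q)}=\xi_{x(0,q)}$, whence $\Vert T_v\xi_{x(0,q)}\Vert=1$ for all $q\leq d(x)$. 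This forces $\lim_{q\to d(x)}\Vert T_v\xi_{x(0,q)}\Vert=1\neq 0$, contradicting Lemma~\ref{le:boundarypath1}. Hence $s_\mathcal{T}(v)\notin J_{\overline{\mathcal{E}}}$ and $s_\mathcal{E}(v)\neq 0$.

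For part (2), the forward implication $G\in\overline{\mathcal{E}}\Rightarrow Q(s_\mathcal{E})^G=0$ is immediate from Lemma~\ref{le:satiation1}(2), since $\{s_\mathcal{E}(\lambda):\lambda\in\Lambda\}$ is a relative Cuntz-Krieger $(\Lambda,c;\mathcal{E})$-family. For the converse I would prove the contrapositive: assume $G\in\FE(\Lambda)\setminus\overline{\mathcal{E}}$ and show $Q(s_\mathcal{E})^G\neq 0$. If instead $Q(s_\mathcal{E})^G=0$, i.e. $Q(s_\mathcal{T})^G\in J_\mathcal{E}=J_{\overline{\mathcal{E}}}$, then $Q(T)^G=\pi_T^\mathcal{T}(Q(s_\mathcal{T})^G)\in\pi_T^\mathcal{T}(J_{\overline{\mathcal{E}}})$. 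By Lemma~\ref{le:boundarypath3} there is $x\in\partial(\Lambda;\mathcal{E})$ with $\Vert Q(T)^G\xi_{x(0,q)}\Vert=1$ for all $q\leq d(x)$, so $\lim_{q\to d(x)}\Vert Q(T)^G\xi_{x(0,q)}\Vert=1$, again contradicting Lemma~\ref{le:boundarypath1}. Thus $Q(s_\mathcal{T})^G\notin J_{\overline{\mathcal{E}}}$ and $Q(s_\mathcal{E})^G\neq 0$.

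The theorem itself is therefore a short packaging argument; essentially all the difficulty has been absorbed into the supporting lemmas. The genuinely hard part is Lemma~\ref{le:boundarypath1}, which relies on the explicit spanning description of $J_\mathcal{E}=J_{\overline{\mathcal{E}}}$ from Lemma~\ref{le:idealstructure2} and on the satiation being stable under the operations $\Sigma_1,\dots,\Sigma_4$; this is what lets one estimate $\pi_T^\mathcal{T}(c)\xi_{x(0,q)}$ uniformly over spanning elements $c$. The matching lower bound, Lemma~\ref{le:boundarypath3}, depends on the nonemptiness of $r(F)\partial(\Lambda;\mathcal{E})\setminus F\partial(\Lambda;\mathcal{E})$ for $F\notin\overline{\mathcal{E}}$. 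Once these are available the contradiction arguments above close cleanly, and the fact that $\pi_T^\mathcal{T}$ need not be faithful on all of $\mathcal{T}C^*(\Lambda,c)$ is irrelevant, since we use only that it is a norm-decreasing homomorphism.
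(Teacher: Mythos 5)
Your proposal is correct and follows essentially the same route as the paper's own proof: both reduce the two statements to non-membership of $s_\mathcal{T}(v)$ and $Q(s_\mathcal{T})^G$ in $J_\mathcal{E}=J_{\overline{\mathcal{E}}}$ (Lemma~\ref{le:satiation1}), and then rule out membership by pushing forward under $\pi_T^\mathcal{T}$ and playing Lemma~\ref{le:boundarypath1} against the boundary-path facts (Lemma~\ref{le:boundarypath3} and the nonemptiness of $v\partial(\Lambda;\mathcal{E})$ from Lemma~4.7 of \cite{S20061}). The only difference is cosmetic: you phrase the key steps as arguments by contradiction where the paper argues directly via the contrapositive.
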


\begin{proof}
For (1), fix $v\in \Lambda^0$. By (\cite{S20061}, Lemma 4.7) there exists $x\in v\partial(\Lambda;\mathcal{E})$. Let $\lbrace T_\lambda : \lambda \in \Lambda \rbrace$ be the Toeplitz-Cuntz-Krieger $(\Lambda,c)$-family of Proposition~\ref{toeplitzrepresentation}. For any $q\leq d(x)$ we have $T_v\xi_{x(0,q)}=\xi_{x(0,q)}$. Lemma~\ref{le:boundarypath1} implies that $T_v \notin \pi_T^\mathcal{T}(J_{\overline{\mathcal{E}}})$. Since $J_{\overline{\mathcal{E}}}=J_\mathcal{E}$ by Lemma~\ref{le:satiation1}, we have $s_\mathcal{T}(v) \notin J_{\overline{\mathcal{E}}}=J_\mathcal{E}$ and thus $s_\mathcal{E}(v)=s_\mathcal{T}(v)+J_{\mathcal{E}} \neq 0$.
For (2), suppose that $G\in \FE(\Lambda)\setminus \overline{ \mathcal{E}}$. By Lemma~\ref{le:boundarypath3} there is $x\in \partial(\Lambda;\mathcal{E})$ such that $\lim_{p\to d(x)} \left \Vert Q(T)^G \xi_{x(0,q)}\right \Vert =1$. Lemma~\ref{le:boundarypath1} implies that $Q(T)^G \notin \pi_T^{\mathcal{T}}(J_{\overline{\mathcal{E}}})$. Hence $Q(s_\mathcal{T})^G \notin J_{\overline{\mathcal{E}}}=J_\mathcal{E}$ and thus $Q(s_\mathcal{E})^G=Q(s_\mathcal{T})^G+J_{\mathcal{E}} \neq 0$. If $G\in \overline{\mathcal{E}}$ then $Q(s_\mathcal{T})^G\in J_{\overline{\mathcal{E}}}=J_\mathcal{E}$. So $Q(s_\mathcal{E})^G=0$.
\end{proof}

\chapter{Analysis of the core}
\label{chptr:Analysis_of_the_core}
In this chapter we establish the existence of a strongly continuous group action $\gamma$ of $\mathbb{T}^k$ on $C^*(\Lambda,c;\mathcal{E})$. We call $\gamma$ the gauge action. Averaging over $\gamma$ gives a faithful conditional expectation of $C^*(\Lambda,c;\mathcal{E})$ onto the fixed point algebra for $\gamma$, which we call the core. We show that the core $C^*(\Lambda,c;\mathcal{E})^\gamma$ is the closed linear span of elements of the form $s_\mathcal{E}(\lambda)s_\mathcal{E}(\mu)^*$ where $d(\lambda)=d(\mu)$. We show that $C^*(\Lambda,c;\mathcal{E})^\gamma$ is AF. If $\left\{ t_\lambda :\lambda \in \Lambda \right \}$ is a relative Cuntz-Krieger $(\Lambda,b;\mathcal{E})$-family, we provide conditions under which
$$C^*(\Lambda,c;\mathcal{E})^\gamma \cong \overline{\linspan}\left\{ t_\lambda t_\mu^* : \lambda ,\mu \in \Lambda : d(\lambda)=d(\mu) \right \}.$$
If $b\equiv c$ and there is an action $\beta : \mathbb{T}^k \to \Aut\left(C^*\left(\left\{ t_\lambda : \lambda \in \Lambda \right \} \right)\right)$ such that $\pi_t^\mathcal{E}$ is equivariant for $\gamma$ and $\beta$, we show that $\pi_t^\mathcal{E}$ is injective. This generalisation of an Huef and Raeburn's gauge-invariant uniqueness theorem is the main result of the chapter.
\section{Group actions and faithful conditional expectations}
\label{chptr:groups_actions}
This section presents standard results associated to groups actions and faithful conditional expectations. These results may be recovered, somewhat nontrivially, from \cite{EKQR2006}. Many can also be found in \cite{W2007}. For the most part, we do not give proofs, but just recall the results we will need later.

\begin{definition}
Let $A$ be a $C^*$-algebra and let $G$ a compact abelian group. A \emph{group action} is a homomorphism
\begin{align*}
\alpha : G &\to \Aut(A), \text{ written } g \mapsto \alpha_g,
\end{align*}
that is strongly continuous; that is, whenever $g_n \to g$ as $n\to \infty$ in $G$ we have $\alpha_{g_n}(a) \to \alpha_g(a)$ as $n\to \infty$ in $A$ for each $a\in A$.
\end{definition}

\begin{definition}
Let $A$ be a $C^*$-algebra and let $G$ a compact abelian group. We write $\widehat G$ for the dual group of $G$; that is, the group of homomorphisms $\phi: G \to \mathbb{T}$ with pointwise multiplication. We define $$A^\alpha := \left \{ a\in A : \alpha_g(a) = a \text{ for all } g\in G \right \},$$ and for $\phi \in \widehat G$ we define $$A_\phi :=\left \{ a\in A : \alpha_g(a)=\phi(g)a \text{ for all } g\in G \right \}.$$
Note that $A^\alpha=A_{ \hat 1}$ where $\hat 1\in \widehat G$ is given by $\hat 1(g)=1$ for all $g\in G$.
\end{definition}

\begin{lemma}\label{le:spectralsubspaces}
Let $A$ be a $C^*$-algebra, let $G$ a compact abelian group and let $\alpha:G \to \Aut(A)$ be a group action. Then:
\begin{enumerate}
\item[(1)] $A_\phi$ is a closed subspace for each $\phi \in \widehat G$;
\item[(2)]$A^\alpha $ is a $C^*$-subalgebra of $A$;
\item[(3)] $A=\overline{ \linspan } \left \{ A_\phi : \phi\in \widehat G \right \}$; and
\item[(4)] $A_\phi \cdot A_\psi=A_{\phi \psi}$ for all $\phi,\psi \in \widehat G$.
\end{enumerate}
\end{lemma}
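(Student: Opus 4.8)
The unifying device for all four parts is the family of spectral projections. The plan is to introduce, for each $\phi\in\widehat G$, the map $\Phi_\phi:A\to A$ given by the $A$-valued (Bochner) integral
$$\Phi_\phi(a)=\int_G \overline{\phi(g)}\,\alpha_g(a)\,dg$$
against normalised Haar measure; this makes sense because $g\mapsto\alpha_g(a)$ is norm-continuous by strong continuity and $G$ is compact. Each $\Phi_\phi$ is linear and contractive since every $\alpha_g$ is isometric. Using left-invariance of Haar measure and the homomorphism property of $\phi$ one checks that $\alpha_h(\Phi_\phi(a))=\phi(h)\Phi_\phi(a)$, so $\Phi_\phi(A)\subseteq A_\phi$; and the orthogonality relations for the characters of the compact abelian group $G$ give $\Phi_\phi(a)=\bigl(\int_G\overline{\phi(g)}\psi(g)\,dg\bigr)a=\delta_{\phi,\psi}\,a$ for $a\in A_\psi$. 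Thus $\Phi_\phi$ is a bounded idempotent with range exactly $A_\phi$, acting as the identity on $A_\phi$ and annihilating $A_\psi$ for $\psi\neq\phi$.

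Granting this, parts (1) and (2) are immediate. For (1), $A_\phi$ is visibly a linear subspace, and it is closed because $A_\phi=\bigcap_{g\in G}\ker\bigl(\alpha_g-\phi(g)\,\id\bigr)$ is an intersection of kernels of bounded operators (equivalently, it is the range of the bounded idempotent $\Phi_\phi$). For (2), $A^\alpha=A_{\hat 1}$ is closed by (1); since each $\alpha_g$ is a $*$-automorphism, $a,b\in A^\alpha$ give $\alpha_g(ab)=\alpha_g(a)\alpha_g(b)=ab$ and $\alpha_g(a^*)=\alpha_g(a)^*=a^*$, so $A^\alpha$ is a $*$-subalgebra, hence a $C^*$-subalgebra.

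For (3) I would work with the smeared operators $\alpha_f(a)=\int_G f(g)\,\alpha_g(a)\,dg$ for $f\in L^1(G)$, which satisfy $\Vert\alpha_f(a)\Vert\le\Vert f\Vert_1\Vert a\Vert$. When $f$ is a trigonometric polynomial, that is, a finite linear combination of characters, linearity expresses $\alpha_f(a)$ as a finite combination of the spectral components $\Phi_\psi(a)$, so $\alpha_f(a)\in\linspan\{A_\psi:\psi\in\widehat G\}$. The plan is then: given $a$ and $\epsilon>0$, pick a nonnegative continuous $u$ on $G$ with $\int_G u(g)\,dg=1$ supported so near the identity that $\Vert\alpha_u(a)-a\Vert<\epsilon/2$ (possible by strong continuity), and then use Stone--Weierstrass to approximate $u$ uniformly, hence in $\Vert\cdot\Vert_1$ since $G$ is compact, by a trigonometric polynomial $f$ with $\Vert\alpha_u(a)-\alpha_f(a)\Vert<\epsilon/2$. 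Then $\alpha_f(a)\in\linspan\{A_\psi\}$ lies within $\epsilon$ of $a$, giving the required density.

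Part (4) separates into a formal inclusion and a substantive one. The inclusion $\overline{\linspan}\,A_\phi A_\psi\subseteq A_{\phi\psi}$ is immediate: for $a\in A_\phi$ and $b\in A_\psi$ we have $\alpha_g(ab)=\phi(g)\psi(g)\,ab=(\phi\psi)(g)\,ab$, so $ab\in A_{\phi\psi}$, and $A_{\phi\psi}$ is closed by (1). The reverse inclusion is the step I expect to be the main obstacle: it is the \emph{saturation} of the grading and is not a formal consequence of the spectral-projection calculus. To get it one must, given $c\in A_{\phi\psi}$, factor it approximately as sums of products drawn from $A_\phi$ and $A_\psi$, which needs a nondegeneracy input such as an approximate identity for $A$ lying in the fixed-point algebra $A^\alpha$ together with a Cohen--Hewitt-type factorisation argument. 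This is precisely the delicate content I would import from \cite{EKQR2006} and \cite{W2007} rather than reprove, and it is the only part of the lemma that requires more than elementary averaging.
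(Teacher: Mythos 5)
The paper itself offers no proof of this lemma: Section~\ref{chptr:groups_actions} says explicitly that these results are recalled from \cite{EKQR2006} and \cite{W2007} without proof, so there is no in-paper argument to compare yours against. Your treatment of parts (1)--(3) is the standard one and is correct: the spectral projections $\Phi_\phi$ are well defined by norm-continuity of $g\mapsto\alpha_g(a)$ and compactness of $G$; character orthogonality gives $\Phi_\phi|_{A_\psi}=\delta_{\phi,\psi}\id$; part (1) follows from $A_\phi=\bigcap_{g\in G}\ker\left(\alpha_g-\phi(g)\id\right)$; part (2) is elementary; and your Fej\'er-type argument for (3) (replace $a$ by $\alpha_u(a)$ for a bump function $u$ concentrated at the identity, then approximate $u$ uniformly by a trigonometric polynomial via Stone--Weierstrass) is exactly the standard proof.

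The genuine problem is part (4), and it is worse than the ``delicate step to be imported from the literature'' that you describe: the reverse inclusion $A_{\phi\psi}\subseteq\overline{\linspan}\left(A_\phi\cdot A_\psi\right)$ is false in general, so no Cohen--Hewitt factorisation argument can close the gap. Take $A=\mathbb{C}$ (or any unital $C^*$-algebra) with the trivial action of $G=\mathbb{T}$. Then $A^\alpha=A$, while $A_\phi=\{0\}$ for every $\phi\neq\hat 1$; choosing $\phi\neq\hat 1$ and $\psi=\overline{\phi}$ gives $A_\phi\cdot A_\psi=\{0\}$ but $A_{\phi\psi}=A_{\hat 1}=A\neq\{0\}$. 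Note that this example satisfies your proposed nondegeneracy hypothesis in the strongest possible form --- the unit of $A$ lies in $A^\alpha$ --- and the conclusion still fails. Equality in (4) is an additional property of the action (saturation, i.e.\ fullness of the spectral subspaces), not a consequence of the hypotheses of the lemma; only the inclusion $A_\phi\cdot A_\psi\subseteq A_{\phi\psi}$, which you did prove, holds in general. So the statement in the paper is too strong as written; since the lemma is stated only as background and part (4) is never invoked later, this does no harm downstream, but your instinct that the reverse inclusion ``is not a formal consequence of the spectral-projection calculus'' should have been pushed one step further: it is not a consequence of anything, because it is false.
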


\begin{definition}
Let $A$ be a $C^*$-algebra and let $B$ be a $C^*$-subalgebra of $A$. A map $\Phi: A \to B$ is called a \emph{faithful conditional expectation} if:
\begin{enumerate}
\item[(1)] $\Phi$ is linear;
\item[(2)] $\Phi$ is bounded with $\Vert \Phi \Vert =1$;
\item[(3)] $\Phi^2=\Phi$; and
\item[(4)] $a\neq 0 \Longrightarrow \Phi(a^* a)>0$.
\end{enumerate}
\end{definition}

\begin{lemma}\label{le:condexpdiagram}
Let $A$ and $B$ be $C^*$-algebras and let $\pi : A \to B$ be a homomorphism. Let $A_0$ and $B_0$ be $C^*$-subalgebras of $A$ and $B$ respectively. Suppose there are a faithful conditional expectation $\Phi : A \to A_0$ and a linear map $\Psi : B \to B_0$ such that the diagram
\begin{large}
\begin{center}
\begin{tikzpicture}
\matrix(m)[matrix of math nodes,
row sep=4em, column sep=4em,
text height=1.5ex, text depth=0.25ex]
{A&B\\
A_0&B_0\\};
\path[->]
(m-1-1) edge node[auto]{$\pi$} (m-1-2)
(m-2-1) edge node[below]{$\pi|_{A_0}$}  (m-2-2);
\path[->]
(m-1-1) edge node[left]{$\Phi$}(m-2-1)
(m-1-2) edge node[auto]{$\Psi$} (m-2-2);
\end{tikzpicture}
\end{center}
\end{large}
commutes. Then $\pi$ is injective if and only if $\pi_0$ is injective.
\end{lemma}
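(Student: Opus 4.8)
The plan is to prove the two implications separately: the forward direction is immediate, and the reverse direction is a short contradiction argument whose only essential ingredient is the faithfulness of $\Phi$. Here $\pi_0$ denotes the map $\pi|_{A_0}: A_0 \to B_0$ appearing in the diagram.

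For the ``only if'' direction, suppose $\pi$ is injective. Since $\pi_0 = \pi|_{A_0}$ is merely the restriction of $\pi$ to the $C^*$-subalgebra $A_0$, and any restriction of an injective map is injective, $\pi_0$ is injective. This direction uses neither the conditional expectation nor the commuting square.

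For the ``if'' direction, I would assume $\pi_0$ is injective and argue by contradiction. Suppose $\pi$ is not injective, so there is a nonzero $a \in A$ with $\pi(a) = 0$. Since $a \neq 0$, faithfulness of $\Phi$ (condition (4) of the definition) gives $\Phi(a^*a) > 0$, and in particular $\Phi(a^*a) \neq 0$. On the other hand, since $\pi$ is a homomorphism we have $\pi(a^*a) = \pi(a)^*\pi(a) = 0$, so $\Psi\bigl(\pi(a^*a)\bigr) = 0$. The commutativity of the diagram says precisely that $\pi_0 \circ \Phi = \Psi \circ \pi$, whence
$$\pi_0\bigl(\Phi(a^*a)\bigr) = \Psi\bigl(\pi(a^*a)\bigr) = 0.$$
Thus $\Phi(a^*a)$ is a nonzero element of $A_0$ lying in $\ker \pi_0$, contradicting the injectivity of $\pi_0$. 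Hence $\pi$ is injective.

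I do not expect any substantial obstacle: the lemma is a clean abstraction of the mechanism underlying gauge-invariant uniqueness theorems, isolating the role played by a faithful conditional expectation onto the core. The single point requiring care is the step ``$a \neq 0 \Rightarrow \Phi(a^*a) \neq 0$,'' which is exactly the faithfulness hypothesis and is the crux of the whole argument; everything else is a formal diagram chase through the commuting square.
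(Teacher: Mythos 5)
Your proof is correct and follows essentially the same route as the paper: the forward direction is the trivial restriction observation, and the reverse direction is exactly the paper's chain ($\pi(a)=0 \Rightarrow \pi(a^*a)=0 \Rightarrow \Psi(\pi(a^*a))=0 \Rightarrow \pi_0(\Phi(a^*a))=0 \Rightarrow \Phi(a^*a)=0 \Rightarrow a=0$), merely recast as a contradiction instead of a direct implication chain. The key ingredients — the homomorphism property, linearity of $\Psi$, commutativity of the square, injectivity of $\pi_0$, and faithfulness of $\Phi$ — are used identically in both arguments.
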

\begin{proof}
Let $\pi_0:= \pi|_{A_0}$. If $\pi$ is injective then so is $\pi_0$. Suppose $\pi_0$ is injective. Then
\begin{align*}
\pi(a)=0 &\Longrightarrow \pi(a)^* \pi(a)=0\\
&\Longrightarrow \pi(a^* a)=0\qquad \text{ since } \pi \text{ is a homomorphism}\\
&\Longrightarrow \Psi \left( \pi(a^*a) \right)=0 \qquad\text{ since } \Psi \text{ is linear }\\
&\Longrightarrow \pi_0 \left( \Phi(a^* a) \right)=0 \qquad\text{ since the above diagram commutes}\\
&\Longrightarrow \Phi(a^* a)=0 \quad \text{ since } \pi_0 \text{ is injective }\\
&\Longrightarrow a=0 \text{ since } \Phi \text{ is a faithful conditional expectation.}
\end{align*}
As $\pi$ is linear it follows that $\pi$ is injective.
\end{proof}

\begin{lemma}\label{le:conexpexist}
Let $A$ be a $C^*$-algebra, let $G$ a compact abelian group and let $\alpha:G \to \Aut(A)$ be a group action. Let $\mu$ denote the normalised Haar measure on $G$. Then $\Phi^\alpha : A \to A$ given by
$$\Phi^\alpha(a)=\int_G \alpha_g (a) d\mu(g)$$
is a faithful conditional expectation onto $A^\alpha$. Moreover,
$$
\Phi^\alpha (a) = \begin{cases} a &\text{if } a\in A^\alpha \\
0 &\text{ if } a\in A_\phi \text{ for some } \phi \in \widehat G \setminus \left\{ \hat 1 \right \}.
\end{cases}$$
\end{lemma}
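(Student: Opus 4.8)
The plan is to verify each clause in the definition of a faithful conditional expectation in turn, handling faithfulness last as the only substantial point. First I would check that $\Phi^\alpha$ is well defined. Since $\alpha$ is a group action it is strongly continuous, so for fixed $a$ the map $g \mapsto \alpha_g(a)$ is a norm-continuous function from the compact space $G$ into $A$; such a function takes values in a separable bounded set and is Bochner integrable, so $\Phi^\alpha(a) = \int_G \alpha_g(a)\, d\mu(g)$ is a genuine element of $A$. Linearity of $\Phi^\alpha$ is immediate from linearity of the integral. Because each $\alpha_g$ is an automorphism of a $C^*$-algebra it is isometric, so $\|\Phi^\alpha(a)\| \le \int_G \|\alpha_g(a)\|\, d\mu(g) = \int_G \|a\|\, d\mu(g) = \|a\|$, using that $\mu$ is normalised; hence $\Phi^\alpha$ is bounded with $\|\Phi^\alpha\| \le 1$.

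Next I would establish the ``moreover'' formula and the fact that $\Phi^\alpha$ lands in $A^\alpha$, since together these yield idempotency and the exact norm. For $h \in G$ I would pull the bounded operator $\alpha_h$ through the integral and apply left-invariance of Haar measure to obtain
$$\alpha_h\bigl(\Phi^\alpha(a)\bigr) = \int_G \alpha_{hg}(a)\, d\mu(g) = \int_G \alpha_g(a)\, d\mu(g) = \Phi^\alpha(a),$$
so $\Phi^\alpha(a) \in A^\alpha$ for every $a$; thus $\Phi^\alpha$ maps $A$ into the $C^*$-subalgebra $A^\alpha$ of Lemma~\ref{le:spectralsubspaces}(2). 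If $a \in A^\alpha$ the integrand is constant and $\Phi^\alpha(a) = a$. If $a \in A_\phi$ with $\phi \ne \hat 1$, then $\alpha_g(a) = \phi(g)a$ gives $\Phi^\alpha(a) = \bigl(\int_G \phi(g)\, d\mu(g)\bigr)a$; choosing $h$ with $\phi(h) \ne 1$ and using invariance shows $\int_G \phi\, d\mu = \phi(h)\int_G \phi\, d\mu$, which forces $\int_G \phi\, d\mu = 0$ and hence $\Phi^\alpha(a) = 0$. This is exactly the displayed formula. Since $\Phi^\alpha$ therefore restricts to the identity on its range $A^\alpha$, it is idempotent; and because $A^\alpha \ne \{0\}$ (which follows from the faithfulness proved below, giving $\Phi^\alpha \ne 0$), we get $\|\Phi^\alpha\| \ge 1$, so $\|\Phi^\alpha\| = 1$.

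Finally I would treat faithfulness, which I expect to be the main obstacle. Writing $\Phi^\alpha(a^*a) = \int_G \alpha_g(a)^*\alpha_g(a)\, d\mu(g)$, the integrand is positive for each $g$, so applying an arbitrary state and using that states detect positivity shows $\Phi^\alpha(a^*a) \ge 0$. For strict positivity when $a \ne 0$, I would choose a state $\rho_0$ on $A$ with $\rho_0(a^*a) = \|a^*a\| = \|a\|^2 > 0$ (such a state exists since $a^*a$ is positive). Then
$$\rho_0\bigl(\Phi^\alpha(a^*a)\bigr) = \int_G \rho_0\bigl(\alpha_g(a)^*\alpha_g(a)\bigr)\, d\mu(g),$$
where the integrand $g \mapsto \rho_0(\alpha_g(a)^*\alpha_g(a))$ is continuous (by strong continuity of $\alpha$ and continuity of $\rho_0$), nonnegative, and equals $\|a\|^2 > 0$ at $g = e$. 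By continuity it remains positive on an open neighbourhood of $e$, which has strictly positive Haar measure, so the integral is strictly positive. Hence $\rho_0(\Phi^\alpha(a^*a)) > 0$, so $\Phi^\alpha(a^*a) \ne 0$; being positive, $\Phi^\alpha(a^*a) > 0$. The only genuine inputs in this last step are the strict positivity of Haar measure on nonempty open subsets of $G$ and the existence of a norm-attaining state on the positive element $a^*a$; the remaining parts are routine bookkeeping.
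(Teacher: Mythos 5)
Your proof is correct, but there is nothing in the paper to compare it against: the paper states Lemma~\ref{le:conexpexist} without proof, remarking at the start of Section~\ref{chptr:groups_actions} that these are standard facts recoverable from \cite{EKQR2006} and \cite{W2007}. Your argument is the standard averaging proof and it fills this gap correctly and completely with respect to the paper's definition of a faithful conditional expectation: Bochner integrability of $g\mapsto\alpha_g(a)$ via strong continuity and compactness of $G$; contractivity from isometry of automorphisms; range contained in $A^\alpha$ via translation invariance of Haar measure; the two-case formula via constancy of the integrand on $A^\alpha$ and orthogonality of characters on $A_\phi$ (your translation trick forcing $\int_G\phi\,d\mu=0$ is exactly right); idempotency and $\Vert\Phi^\alpha\Vert=1$ from the range computation; and faithfulness from a norm-attaining state together with strict positivity of Haar measure on the neighbourhood of the identity where $g\mapsto\rho_0(\alpha_g(a^*a))$ stays positive. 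Two cosmetic points: positivity of $\Phi^\alpha(a^*a)$ is slightly cleaner by observing that the Bochner integral of a continuous function valued in the closed positive cone lies in that cone (your state argument also needs the remark that $\Phi^\alpha$ is $*$-preserving, so the integral is self-adjoint); and your appeal to faithfulness to get $A^\alpha\neq\{0\}$, hence $\Vert\Phi^\alpha\Vert\geq 1$, creates a forward reference that you correctly flag but could avoid by simply noting $\Phi^\alpha(1)=1$ in the unital case or using an approximate identity in general.
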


\section{The gauge action}
This section establishes the existence of the gauge action. We show that the gauge action is strongly continuous and study its fixed point algebra. We study properties of the faithful conditional expectation obtained by averaging over the gauge action.

Let $(\Lambda,d)$ be a finitely aligned $k$-graph. For $\lambda \in \Lambda$ and $z\in \mathbb{T}^k$, define  $$z^{d(\lambda)}:=\prod_{i=1}^k z_i^{d(\lambda)_i}\in \mathbb{T}.$$

\label{chptr:gauge_action}
\begin{lemma}\label{le:gaugeauto}
Let $(\Lambda,d)$ be a finitely aligned $k$-graph, let $c\in \underline{Z}^2(\Lambda,\mathbb{T})$ and let $\mathcal{E}$ be a subset of $\FE(\Lambda)$. Fix $z\in \mathbb{T}^k$. There is an automorphism $\gamma_z$ of $C^*(\Lambda,c,\mathcal{E})$ such that $\gamma_z \left( s_\mathcal{E}(\lambda) \right)=z^{d(\lambda)}s_\mathcal{E} (\lambda)$ for all $\lambda \in \Lambda.$
\end{lemma}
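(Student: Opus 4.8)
The plan is to apply the universal property of $C^*(\Lambda,c;\mathcal{E})$ established in Theorem~\ref{th:relativecuntzkriegeralgebra}. The strategy is standard for universal $C^*$-algebras: produce a new relative Cuntz-Krieger $(\Lambda,c;\mathcal{E})$-family by scaling the generators, obtain a homomorphism from the universal property, and then check it is an automorphism by exhibiting an inverse.

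First I would fix $z\in \mathbb{T}^k$ and set $t_\lambda:=z^{d(\lambda)}s_\mathcal{E}(\lambda)$ for each $\lambda\in\Lambda$. The main step is to verify that $\{t_\lambda:\lambda\in\Lambda\}$ is a relative Cuntz-Krieger $(\Lambda,c;\mathcal{E})$-family. For (TCK1), note that $d(v)=0$ for $v\in\Lambda^0$, so $z^{d(v)}=1$ and hence $t_v=s_\mathcal{E}(v)$; thus the $t_v$ remain mutually orthogonal projections. For (TCK2), using $d(\mu\nu)=d(\mu)+d(\nu)$ and the homomorphism property $z^{d(\mu)+d(\nu)}=z^{d(\mu)}z^{d(\nu)}$, I compute $t_\mu t_\nu=z^{d(\mu)}z^{d(\nu)}s_\mathcal{E}(\mu)s_\mathcal{E}(\nu)=z^{d(\mu)+d(\nu)}c(\mu,\nu)s_\mathcal{E}(\mu\nu)=c(\mu,\nu)t_{\mu\nu}$, as required; the crucial point is that the scalar $z^{d(\lambda)}$ is multiplicative in $d$. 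For (TCK3), $t_\lambda^*t_\lambda=\overline{z^{d(\lambda)}}z^{d(\lambda)}s_\mathcal{E}(\lambda)^*s_\mathcal{E}(\lambda)=s_\mathcal{E}(s(\lambda))=t_{s(\lambda)}$ since $|z^{d(\lambda)}|=1$. For (TCK4), the phases cancel in each term $t_\lambda t_\lambda^*=s_\mathcal{E}(\lambda)s_\mathcal{E}(\lambda)^*$, so $t_\mu t_\mu^* t_\nu t_\nu^*=s_\mathcal{E}(\mu)s_\mathcal{E}(\mu)^*s_\mathcal{E}(\nu)s_\mathcal{E}(\nu)^*=\sum_{\sigma\in\MCE(\mu,\nu)}s_\mathcal{E}(\sigma)s_\mathcal{E}(\sigma)^*=\sum_{\sigma\in\MCE(\mu,\nu)}t_\sigma t_\sigma^*$. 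The relation (CK) holds for the same reason: each range projection $t_\lambda t_\lambda^*$ equals $s_\mathcal{E}(\lambda)s_\mathcal{E}(\lambda)^*$, so $Q(t)^E=Q(s_\mathcal{E})^E=0$ for all $E\in\mathcal{E}$.

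Having verified the family, the universal property of Theorem~\ref{th:relativecuntzkriegeralgebra} yields a homomorphism $\gamma_z:C^*(\Lambda,c;\mathcal{E})\to C^*(\Lambda,c;\mathcal{E})$ with $\gamma_z(s_\mathcal{E}(\lambda))=z^{d(\lambda)}s_\mathcal{E}(\lambda)$. To see that $\gamma_z$ is an automorphism I would apply the same construction to $\overline z\in\mathbb{T}^k$ (where $\overline z=(\overline{z_1},\dots,\overline{z_k})$) to obtain $\gamma_{\overline z}$, and then observe that $\gamma_{\overline z}\circ\gamma_z$ and $\gamma_z\circ\gamma_{\overline z}$ each fix every generator $s_\mathcal{E}(\lambda)$, since $\overline z^{d(\lambda)}z^{d(\lambda)}=1$. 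Because the $s_\mathcal{E}(\lambda)$ generate $C^*(\Lambda,c;\mathcal{E})$, both composites agree with the identity homomorphism on a generating set and hence equal the identity; thus $\gamma_{\overline z}$ is a two-sided inverse for $\gamma_z$, making $\gamma_z$ an automorphism.

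I do not expect any genuine obstacle here: the proof is entirely routine once the multiplicativity of $z\mapsto z^{d(\lambda)}$ is used in (TCK2). The only point requiring a little care is ensuring the phases cancel correctly in (TCK4) and (CK) — that the gap projections and range projections are literally unchanged by the twist — which follows because $t_\lambda t_\lambda^*=|z^{d(\lambda)}|^2 s_\mathcal{E}(\lambda)s_\mathcal{E}(\lambda)^*=s_\mathcal{E}(\lambda)s_\mathcal{E}(\lambda)^*$.
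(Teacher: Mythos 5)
Your proposal is correct and follows essentially the same route as the paper's own proof: verify that $\{z^{d(\lambda)}s_\mathcal{E}(\lambda)\}$ satisfies (TCK1)--(TCK4) and (CK) (with the scalars cancelling in the range and gap projections and multiplicativity of $z\mapsto z^{d(\lambda)}$ handling (TCK2)), invoke the universal property of Theorem~\ref{th:relativecuntzkriegeralgebra}, and exhibit $\gamma_{\overline z}$ as a two-sided inverse by checking on generators. No gaps.
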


\begin{proof}
We show that \begin{equation}\lbrace z^{d(\lambda)}s_\mathcal{E}(\lambda) : \lambda \in \Lambda \rbrace\label{eq:buildinggaugefamily}\end{equation} is a Cuntz-Krieger $(\Lambda,c;\mathcal{E})$-family in $C^*(\Lambda,c;\mathcal{E})$. Theorem~\ref{th:relativecuntzkriegeralgebra} will then imply that there is a homomorphism $\gamma_z : C^*(\Lambda,c;\mathcal{E}) \to C^*(\Lambda,c;\mathcal{E})$ such that $\gamma_z \left( s_\mathcal{E}(\lambda) \right) = z^{d(\lambda)}s_\mathcal{E}(\lambda)$ for all $\lambda \in \Lambda$. Then we show that $\gamma_{\overline z} \circ \gamma_z = \gamma_z \circ \gamma_{\overline z} =\id$ so that $\gamma_z$ is an automorphism.

First we show (TCK1). Fix $v\in \Lambda^0$. Since $z^{d(v)}=1$, we have $z^{d(v)}s_\mathcal{E}(v)=s_\mathcal{E}(v)$. So (TCK1) for $\left\{ z^{d(v)}s_\mathcal{E}(v): v\in \Lambda^0 \right \}$ follows from (TCK1) for $\left\{s_\mathcal{E}(v): v\in \Lambda^0 \right \}$.

For each $\lambda \in \Lambda$, $\left(z^{d(\lambda)}s_\mathcal{E}(\lambda) \right)\left(z^{d(\lambda)}s_\mathcal{E}(\lambda) \right)^*=s_\mathcal{E}(\lambda)s_\mathcal{E}(\lambda)^*$ and
$\left(z^{d(\lambda)}s_\mathcal{E}(\lambda) \right)^*\left(z^{d(\lambda)}s_\mathcal{E}(\lambda) \right)=s_\mathcal{E}(\lambda)^*s_\mathcal{E}(\lambda)$. So (TCK3), (TCK4) and (CK) for $\left\{ z^{d(\lambda)}s_\mathcal{E}(\lambda): \lambda \in \Lambda \right \}$ follows from (TCK3), (TCK4) and (CK) for $\left\{ s_\mathcal{E}(\lambda): \lambda \in \Lambda \right \}$. So it remains to show (TCK2). Fix $\mu,\nu \in \Lambda$ such that $s(\mu)=r(\nu)$. Then
\begin{align*}\left( z^{d(\mu)}s_\mathcal{E}(\mu)\right) \left( z^{d(\nu)}s_\mathcal{E}(\nu)\right)&= z^{d(\mu)}z^{d(\nu)} s_\mathcal{E}(\mu)s_\mathcal{E}(\nu)\\
&=z^{d(\mu)+d(\nu)} c(\mu,\nu)s_\mathcal{E}(\mu \nu)\\
&=c(\mu,\nu) \left( z^{d(\mu\nu)} s_\mathcal{E}(\mu \nu)\right),
\end{align*}
which establishes (TCK2). So $\lbrace z^{d(\lambda)}s_\mathcal{E}(\lambda) : \lambda \in \Lambda \rbrace$ is a Cuntz-Krieger $(\Lambda,c;\mathcal{E})$-family in $C^*(\Lambda,c;\mathcal{E})$.

By Theorem~\ref{th:relativecuntzkriegeralgebra} there is a homomorphism $\gamma_z : C^*(\Lambda,c;\mathcal{E}) \to C^*(\Lambda,c;\mathcal{E})$ such that $\gamma_z \left( s_\mathcal{E}(\lambda) \right) = z^{d(\lambda)}s_\mathcal{E}(\lambda)$ for all $\lambda \in \Lambda$. Fix $\mu \in \Lambda$. We have
\begin{align*}
(\gamma_{\overline{z}} \circ \gamma_{z}) (s_\mathcal{E}(\mu))\\
&=\gamma_{\overline{z}}( z^{d(\mu)} s_\mathcal{E}(\mu)\\
&=z^{d(\mu)}{\overline{z}}^{d(\mu)} s_\mathcal{E}(\mu)\\
&=s_\mathcal{E}(\mu).
\end{align*}
By Theorem~\ref{th:relativecuntzkriegeralgebra} the collection $\lbrace s_\mathcal{E}(\lambda) : \lambda \in \Lambda \rbrace$ generates $C^*(\Lambda,c;\mathcal{E})$. Since $\gamma_{\overline{z}}\circ \gamma_z$ is a homomorphism, we have $\gamma_{\overline z} \circ \gamma_z =\id$. The same calculation but with $z$ replaced with $\overline z$ gives $\gamma_{\overline z} \circ \gamma_z =\gamma_z \circ  \gamma_{\overline z} =\id$. So $\gamma_z$ is an automorphism.
\end{proof}

\begin{lemma}\label{le:stronglycontinuous}
Let $(\Lambda,d)$ be a finitely aligned $k$-graph, let $c\in \underline{Z}^2(\Lambda,\mathbb{T})$ and let $\mathcal{E}$ be a subset of $\FE(\Lambda)$. Then $z \mapsto \gamma_z$ is a homomorphism from $\mathbb{T}^k$ to $\Aut(C^*(\Lambda,c;\mathcal{E}))$. This homomorphism is strongly continuous in the sense that if $z_n \to z$ as $n \to \infty$ in $\mathbb{T}^k$ then $\gamma_{z_n}(a) \to \gamma_z(a)$ as $n \to \infty$ in $C^*(\Lambda,c;\mathcal{E})$ for every $a\in C^*(\Lambda,c;\mathcal{E})$. The map $\gamma:z\mapsto \gamma_z$ is called the gauge action.
\end{lemma}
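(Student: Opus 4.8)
Lemma~\ref{le:gaugeauto} already supplies, for each $z\in\mathbb{T}^k$, an automorphism $\gamma_z$ of $C^*(\Lambda,c;\mathcal{E})$ determined on generators by $\gamma_z(s_\mathcal{E}(\lambda))=z^{d(\lambda)}s_\mathcal{E}(\lambda)$, so two things remain: that $z\mapsto\gamma_z$ respects the group structure of $\mathbb{T}^k$, and that it is strongly continuous. For the first, the plan is to verify $\gamma_z\circ\gamma_w=\gamma_{zw}$ by evaluating both sides on the generators. Since $d$ is a functor into the monoid $\mathbb{N}^k$ we have
\begin{equation*}
(\gamma_z\circ\gamma_w)(s_\mathcal{E}(\lambda))=\gamma_z\bigl(w^{d(\lambda)}s_\mathcal{E}(\lambda)\bigr)=z^{d(\lambda)}w^{d(\lambda)}s_\mathcal{E}(\lambda)=(zw)^{d(\lambda)}s_\mathcal{E}(\lambda)=\gamma_{zw}(s_\mathcal{E}(\lambda)).
\end{equation*}
Because $\gamma_z\circ\gamma_w$ and $\gamma_{zw}$ are homomorphisms agreeing on the family $\{s_\mathcal{E}(\lambda):\lambda\in\Lambda\}$, which generates $C^*(\Lambda,c;\mathcal{E})$ by Theorem~\ref{th:relativecuntzkriegeralgebra}, they coincide; hence $z\mapsto\gamma_z$ is a homomorphism into $\Aut(C^*(\Lambda,c;\mathcal{E}))$.

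For strong continuity I would run the standard $\epsilon/3$ argument, exploiting that automorphisms of a $C^*$-algebra are isometric. The key reduction is Lemma~\ref{handylemma}(5), which gives $C^*(\Lambda,c;\mathcal{E})=\overline{\linspan}\{s_\mathcal{E}(\mu)s_\mathcal{E}(\nu)^*:s(\mu)=s(\nu)\}$. Using (TCK2) and its adjoint one computes the action of $\gamma_z$ on a spanning monomial explicitly, namely
\begin{equation*}
\gamma_z\bigl(s_\mathcal{E}(\mu)s_\mathcal{E}(\nu)^*\bigr)=z^{d(\mu)}\overline{z}^{\,d(\nu)}\,s_\mathcal{E}(\mu)s_\mathcal{E}(\nu)^*.
\end{equation*}
Since the maps $z\mapsto z^{d(\mu)}$ and $z\mapsto\overline{z}^{\,d(\nu)}$ are continuous on $\mathbb{T}^k$, it follows immediately that $z\mapsto\gamma_z(b)$ is continuous for any finite linear combination $b$ of such monomials.

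To pass from this dense subalgebra to all of $C^*(\Lambda,c;\mathcal{E})$, fix $a$ and $\epsilon>0$, choose $b$ in the span with $\Vert a-b\Vert<\epsilon/3$, and suppose $z_n\to z$. Continuity on $b$ gives $N$ with $\Vert\gamma_{z_n}(b)-\gamma_z(b)\Vert<\epsilon/3$ for $n\ge N$. Then, using $\Vert\gamma_{z_n}(a-b)\Vert=\Vert a-b\Vert$ and $\Vert\gamma_z(b-a)\Vert=\Vert b-a\Vert$ (isometry of automorphisms), the triangle inequality yields $\Vert\gamma_{z_n}(a)-\gamma_z(a)\Vert<\epsilon$ for $n\ge N$, establishing strong continuity. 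I do not expect a genuine obstacle here: the homomorphism property is a one-line check on generators, and the continuity is the routine three-epsilon argument, the only subtlety being to remember that each $\gamma_z$ is isometric so that the approximation error is not amplified under the action.
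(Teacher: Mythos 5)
Your proposal is correct and follows essentially the same route as the paper: verify the homomorphism property on the generators and invoke that two homomorphisms agreeing on a generating family coincide, then prove strong continuity by the $\epsilon/3$ argument over the dense span $\overline{\linspan}\{s_\mathcal{E}(\mu)s_\mathcal{E}(\nu)^*: s(\mu)=s(\nu)\}$, using continuity of $z\mapsto z^{d(\mu)-d(\nu)}$ on $\mathbb{T}^k$ and the fact that $\gamma_z$ does not increase norms (the paper phrases this as homomorphisms being norm-decreasing, you as automorphisms being isometric; both suffice). No gaps.
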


\begin{proof}
First we show that $z \mapsto \gamma_z$ is a homomorphism from $\mathbb{T}^k$ to $\Aut(C^*(\Lambda,c;\mathcal{E}))$. For each $z\in \mathbb{T}$, Lemma~\ref{le:gaugeauto} implies that $\gamma_z$ is an automorphism. So it suffices to show that $z\mapsto \gamma_z$ is a homomorphism. To see this, fix $z,\omega \in \mathbb{T}^k$ and $\mu \in \Lambda$. Since $\gamma_z$ and $\gamma_\omega$ are homomorphisms we have
\begin{align*}
\gamma_{z \omega} \left( s_\mathcal{E} (\mu) \right) &= \left( z \omega\right)^{d(\mu)} s_\mathcal{E}(\mu)\\
&=  z^{d(\mu)} \omega^{d(\mu)} s_\mathcal{E}(\mu)\\
&=   \omega^{d(\mu)} \gamma_z\left( s_\mathcal{E}(\mu) \right) \\
&=   \gamma_z \left( \omega^{d(\mu)}  s_\mathcal{E}(\mu)\right) \\
&=   \gamma_z \left(\gamma_\omega \left( s_\mathcal{E}(\mu) \right)\right)\\
&= \left( \gamma_z \circ \gamma_{\omega}\right)( s_\mathcal{E} (\mu)).
\end{align*}
Since $\gamma_{z\omega}$ and $\gamma_z \circ \gamma_\omega$ are homomorphisms, we deduce that $z \mapsto \gamma_z$ is a homomorphism.

We will now show that this homomorphism is strongly continuous. Fix $\epsilon>0$, $a\in C^*(\Lambda,c;\mathcal{E})$, $z\in \mathbb{T}^k$ and a sequence $(z_n)_{n=1}^\infty$ in $\mathbb{T}$ such that $z_n \to z$ as $n \to \infty$. The case where $a=0$ is trivial since $\gamma_{z_n}(0)=0$ for all $n$, so we assume that $a\neq 0$. Since $$C^*(\Lambda,c;\mathcal{E}) = \overline{\linspan} \lbrace s_\mathcal{E}(\mu)s_\mathcal{E}(\nu)^* :\mu,\nu\in \Lambda, s(\mu)=s(\nu) \rbrace,$$ there is nonempty and finite $F\subset \Lambda *_s \Lambda$ and $b=\sum_{(\mu,\nu)\in F} b_{\mu,\nu} s_\mathcal{E}(\mu) s_\mathcal{E}(\nu)^*$ with $\Vert a-b \Vert < \frac{\epsilon}{3}$ where $b_{\mu,\nu}\neq 0$ for all $(\mu,\nu)\in F$. Now
\begin{align*}
\left \Vert \gamma_{z_n}(b)-\gamma_z (b) \right \Vert &= \left \Vert \sum_{(\mu,\nu)\in F} b_{\mu,\nu} \gamma_{z_n} \left( s_\mathcal{E}(\mu) s_\mathcal{E}(\nu)^* \right) - \sum_{(\mu,\nu)\in F} b_{\mu,\nu} \gamma_z \left( s_\mathcal{E} (\mu) s_\mathcal{E} (\nu)^* \right) \right \Vert\\
&= \left \Vert \sum_{(\mu,\nu)\in F} b_{\mu,\nu} z_n^{d(\mu)} \overline{z_n^{d(\nu)}} s_\mathcal{E}(\mu) s_\mathcal{E}(\nu)^*- \sum_{(\mu,\nu)\in F} b_{\mu,\nu}  z^{d(\mu)}\overline{z^{d(\nu)}}s_\mathcal{E} (\mu) s_\mathcal{E} (\nu)^*  \right \Vert\\
&= \left \Vert \sum_{(\mu,\nu)\in F} b_{\mu,\nu} z_n^{d(\mu)-d(\nu)} s_\mathcal{E}(\mu) s_\mathcal{E}(\nu)^*- \sum_{(\mu,\nu)\in F} b_{\mu,\nu}  z^{d(\mu)-d(\nu)}s_\mathcal{E} (\mu) s_\mathcal{E} (\nu)^*  \right \Vert\\
&= \left \Vert \sum_{(\mu,\nu)\in F} b_{\mu,\nu} \left( z_n^{d(\mu)-d(\nu)} -z^{d(\mu)-d(\nu)} \right) s_\mathcal{E}(\mu) s_\mathcal{E}(\nu)^*  \right \Vert\\
&\leq \sum_{(\mu,\nu)\in F}|b_{\mu,\nu}|\left|z_n^{d(\mu)-d(\nu)} -z^{d(\mu)-d(\nu)} \right|\Vert s_\mathcal{E}(\mu)\Vert \Vert s_\mathcal{E}(\nu)\Vert\\
&\leq \sum_{(\mu,\nu)\in F}|b_{\mu,\nu}|\left|z_n^{d(\mu)-d(\nu)} -z^{d(\mu)-d(\nu)} \right| \qquad \text{ since } \Vert s_\mathcal{E}(\mu) \Vert , \Vert s_\mathcal{E}(\nu) \Vert \leq 1.
\end{align*}
For $\omega \in \mathbb{T}^k$, the map $\omega \mapsto \omega^{d(\mu)-d(\nu)}$  is continuous for each $(\mu,\nu)\in F$. Hence $z_n^{d(\mu)-d(\nu)}\to z^{d(\mu)-d(\nu)}$ as $n \to \infty$ in $\mathbb{T}^k$. So for each $(\mu,\nu)\in F$, there is $N_{\mu,\nu} \in \mathbb{N}$ such that if $n \geq N_{\mu,\nu}$ then $\left|z_n^{d(\mu)-d(\nu)} -z^{d(\mu)-d(\nu)} \right|< \frac{\epsilon}{3|b_{\mu,\nu}||F|}$. So whenever $n \geq N := \max \left \{ N_{\mu,\nu}: (\mu,\nu)\in F \right \}$, the above calculation shows that
\begin{align*}
\left \Vert \gamma_{z_n}(b)-\gamma_z (b) \right \Vert &\leq \sum_{(\mu,\nu)\in F}|b_{\mu,\nu}|\left|z_n^{d(\mu)-d(\nu)} -z^{d(\mu)-d(\nu)} \right|\\
&< \sum_{(\mu,\nu)\in F}|b_{\mu,\nu}|\frac{\epsilon}{3|b_{\mu,\nu}||F|}\\
&=\frac{\epsilon}{3}.
\end{align*}
Since homomorphisms of $C^*$-algebras are norm decreasing, when we have $n\geq N$
\begin{align*}
\Vert \gamma_{z}(a)-\gamma_{z_n}(a) \Vert &\leq \Vert \gamma_{z}(a)-\gamma_{z}(b) \Vert +\Vert \gamma_{z}(b)-\gamma_{z_n}(b) \Vert +\Vert \gamma_{z_n}(b)-\gamma_{z_n}(a) \Vert \\
&\leq \Vert a-b\Vert +\Vert \gamma_{z}(b)-\gamma_{z_n}(b) \Vert +\Vert b-a \Vert \\
&<\frac{\epsilon}{3} +\frac{\epsilon}{3} +\frac{\epsilon}{3} \\
&=\epsilon.
\end{align*}
So $\gamma_{z_n}(a) \to \gamma_z(a)$ as $n \to \infty$ in $C^*(\Lambda,c;\mathcal{E})$ for every $a\in C^*(\Lambda,c;\mathcal{E})$.
\end{proof}

\begin{lemma}\label{le:corestructure}
Let $(\Lambda,d)$ be a finitely aligned $k$-graph, let $c\in \underline{Z}^2(\Lambda,\mathbb{T})$ and let $\mathcal{E}$ be a subset of $\FE(\Lambda)$. Let $\left\{ t_\lambda : \lambda \in \Lambda \right\}$ be a relative Cuntz-Krieger $(\Lambda,c;\mathcal{E})$-family generating a $C^*$-algebra $B$. Suppose that there is an action $\beta: \mathbb{T}^k \to \Aut(B)$ such that $\beta_z(t_\lambda)=z^{d(\lambda)}t_\lambda$ for all $\lambda \in \Lambda$. We have
 $$B^\beta= \overline { \linspan }\left \{ t_\mu t_\nu^*: \mu,\nu \in \Lambda \text { and }d(\mu)=d(\nu)\right \}.$$
\end{lemma}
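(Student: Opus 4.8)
The plan is to exploit the dual group $\widehat{\mathbb{T}^k}=\mathbb{Z}^k$ together with the faithful conditional expectation obtained by averaging over $\beta$. The computational starting point is the identity
\[
\beta_z(t_\mu t_\nu^*)=z^{d(\mu)}\overline{z^{d(\nu)}}\,t_\mu t_\nu^*=z^{d(\mu)-d(\nu)}\,t_\mu t_\nu^*
\]
for all $\mu,\nu$ with $s(\mu)=s(\nu)$. This shows that each generator $t_\mu t_\nu^*$ sits inside a single spectral subspace $B_\phi$, namely the one for the character $\phi(z)=z^{d(\mu)-d(\nu)}$, and that this character is trivial exactly when $d(\mu)=d(\nu)$. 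In particular $t_\mu t_\nu^*\in B^\beta=B_{\hat 1}$ whenever $d(\mu)=d(\nu)$.

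The inclusion $\overline{\linspan}\{t_\mu t_\nu^*:d(\mu)=d(\nu)\}\subseteq B^\beta$ is then immediate: each such $t_\mu t_\nu^*$ is fixed by every $\beta_z$ by the displayed identity, and $B^\beta$ is a closed $C^*$-subalgebra of $B$ by Lemma~\ref{le:spectralsubspaces}(2), so it contains the closed linear span. For the reverse inclusion I would invoke Lemma~\ref{le:conexpexist}: since $\beta$ is an action of the compact abelian group $\mathbb{T}^k$, averaging gives a faithful conditional expectation $\Phi^\beta:B\to B^\beta$ with $\Phi^\beta(a)=a$ for $a\in B^\beta$ and $\Phi^\beta(a)=0$ for $a\in B_\phi$ with $\phi\neq\hat 1$. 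Now fix $a\in B^\beta$ and $\epsilon>0$. Because $\{t_\lambda:\lambda\in\Lambda\}$ generates $B$, Lemma~\ref{handylemma}(5) supplies a finite set $F\subset\Lambda*_s\Lambda$ and scalars $b_{\mu,\nu}$ with $\bigl\Vert a-\sum_{(\mu,\nu)\in F}b_{\mu,\nu}t_\mu t_\nu^*\bigr\Vert<\epsilon$. Writing $b$ for this finite sum and using that $\Phi^\beta$ is norm-decreasing together with $\Phi^\beta(a)=a$, I get $\Vert a-\Phi^\beta(b)\Vert=\Vert\Phi^\beta(a-b)\Vert\le\Vert a-b\Vert<\epsilon$. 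By linearity of $\Phi^\beta$ and the moreover clause of Lemma~\ref{le:conexpexist},
\[
\Phi^\beta(b)=\sum_{(\mu,\nu)\in F}b_{\mu,\nu}\,\Phi^\beta(t_\mu t_\nu^*)=\sum_{(\mu,\nu)\in F,\ d(\mu)=d(\nu)}b_{\mu,\nu}\,t_\mu t_\nu^*,
\]
since the terms with $d(\mu)\neq d(\nu)$ lie in a nontrivial spectral subspace and are killed, while those with $d(\mu)=d(\nu)$ are fixed. Thus $\Phi^\beta(b)\in\linspan\{t_\mu t_\nu^*:d(\mu)=d(\nu)\}$ and $a$ lies within $\epsilon$ of it; letting $\epsilon\to 0$ gives $a\in\overline{\linspan}\{t_\mu t_\nu^*:d(\mu)=d(\nu)\}$.

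I do not expect a serious obstacle here, as the argument is the standard spectral-subspace-and-averaging technique; the only point requiring care is the bookkeeping that justifies treating $\Phi^\beta$ term by term, which rests on the observation that each $t_\mu t_\nu^*$ lies in a single homogeneous component $B_\phi$ rather than being a mixture. Once that is established from the displayed identity, the rest reduces to the approximation estimate above.
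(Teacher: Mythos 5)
Your proposal is correct and follows essentially the same route as the paper's proof: both identify each generator $t_\mu t_\nu^*$ as lying in the spectral subspace $B_{\omega_{d(\mu)-d(\nu)}}$ and then apply the averaging expectation $\Phi^\beta$ of Lemma~\ref{le:conexpexist} to the spanning family from Lemma~\ref{handylemma}(5). Your explicit $\epsilon$-approximation argument is just an unpacking of the paper's appeal to linearity and continuity of $\Phi^\beta$, so there is no substantive difference.
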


\begin{proof}
We claim that
\begin{equation} \Phi^\beta(t_\mu t_\nu^*)=\delta_{d(\mu),d(\nu)} t_\mu t_\nu^* \label{eq:faithcondontothecorsubalge}\end{equation}
for all $\mu,\nu \in \Lambda$, where $\delta$ denotes the Kronecker delta function. Recall that each $m\in \mathbb{Z}^k$ determines a character $\omega_m \in \widehat{\mathbb{T}}^k$ by $\omega_m(z)=z^m$ for each $z\in \mathbb{T}^k$: in fact, $m\mapsto \omega_m$ is isomorphism, although we do not need this. Fix $\mu,\nu\in \Lambda$. If $d(\mu)=d(\nu)$, then $\omega_{d(\mu)-d(\nu)}=\hat 1$. Now suppose $d(\mu)\neq d(\nu)$. There exists $i$ such that $d(\mu)_i \neq d(\nu)_i$. Fix $\zeta \in \mathbb{T}$ such that $\zeta^{d(\mu)_i-d(\nu)_i}\neq 1$, and define $z\in \mathbb{T}^k$ by $z_i=\zeta$ and $z_j=1$ for $j\neq i$. Then $\omega_{d(\mu)-d(\nu)}(z)_i=\zeta^{d(\mu)-d(\nu)}\neq 1$, demonstrating that $\omega_{d(\mu)-d(\nu)}\neq \hat 1$. To show~\eqref{eq:faithcondontothecorsubalge}, it now suffices by Lemma~\ref{le:conexpexist} to show that $t_\mu t_\nu^*\in B_{\omega_{d(\mu)-d(\nu)}}$ for each $\mu,\nu \in \Lambda$. For this, we calculate
\begin{align*}
\gamma_z \left (t_\mu t_\nu^* \right) &= \gamma_z \left( t_\mu \right) \gamma_z \left( t_\nu \right)^*\\
&=\left( z^{d(\mu)} t_\mu \right) \left( z^{d(\nu)}t_\nu \right)^* \\
&=z^{d(\mu)} \overline{z^{d(\nu)}} t_\mu t_\nu^*\\
&=z^{d(\mu)-d(\nu)}t_\mu t_\nu^*\\
&=\omega_{d(\mu)-d(\nu)}(z)t_\mu t_\nu^*,
\end{align*}
establishing~\eqref{eq:faithcondontothecorsubalge}.
Lemma~\ref{le:conexpexist} implies that $\Phi^\beta\left(B\right)=B^\beta$. This, together with~\eqref{eq:faithcondontothecorsubalge}, gives
\begin{flalign*}
&&B^\beta &= \Phi^\beta(B )&\\
&&&=\overline{ \linspan} \left \{ \Phi^\beta \left( t_\mu t_\nu^* \right) : \mu,\nu \in \Lambda \right \} \qquad \text{ since } \Phi^\beta \text{ is linear and continuous }&\\
&&&=\overline { \linspan }\left \{ t_\mu t_\nu^* : \mu,\nu \in \Lambda \text { and }d(\mu)=d(\nu)\right \}.&\qedhere
\end{flalign*}
\end{proof}

\begin{definition}
Let $(\Lambda,d)$ be a finitely aligned $k$-graph, let $c\in \underline{Z}^2(\Lambda,\mathbb{T})$ and let $\mathcal{E}\subset\FE(\Lambda)$. Applying Lemma~\ref{le:corestructure} to the gauge-action $\gamma$ on $C^*(\Lambda,c;\mathcal{E})$ we obtain
$$C^*(\Lambda,c;\mathcal{E})^\gamma=\overline{\linspan}\left\{ s_\mathcal{E}(\mu)s_\mathcal{E}(\nu)^*: \mu,\nu \in \Lambda \text{ and } d(\mu)=d(\nu) \right \},$$
we call this the \emph{core} of $C^*(\Lambda,c;\mathcal{E}).$
\end{definition}

\begin{proposition}\label{prop:actiontheninjectiveoncore}
Let $(\Lambda,d)$ be a finitely aligned $k$-graph, let $c\in \underline{Z}^2(\Lambda,\mathbb{T})$ and let $\mathcal{E}\subset\FE(\Lambda)$. Let $\gamma : \mathbb{T}^k \to \Aut(C^*(\Lambda,c;\mathcal{E}))$ be the gauge action. Suppose that $\left \{ t_\lambda : \lambda \in \Lambda \right \}$ is a relative Cuntz-Krieger $(\Lambda,c;\mathcal{E})$-family in a $C^*$-algebra $B$ and that there exists a group action $\beta: \mathbb{T}^k \to \Aut(B)$ such that $\beta_z (t_\lambda) =z^{d(\lambda)}t_\lambda$ for all $\lambda \in \Lambda$ and $z\in \mathbb{T}^k$. Write $\pi:=\pi_{t}^{\mathcal{E}}$ for the homomorphism $C^*(\Lambda,c;\mathcal{E}) \to B$ given by the universal property. If $\pi |_{C^*(\Lambda,c;\mathcal E)^\gamma}$ is injective then $\pi$ is injective.
\end{proposition}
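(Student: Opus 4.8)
The plan is to apply Lemma~\ref{le:condexpdiagram} with $A = C^*(\Lambda,c;\mathcal{E})$, with $A_0 = C^*(\Lambda,c;\mathcal{E})^\gamma$ the core, with $B_0 = B^\beta$, with the faithful conditional expectation $\Phi := \Phi^\gamma : A \to A_0$ obtained from the gauge action via Lemma~\ref{le:conexpexist}, and with the linear map $\Psi := \Phi^\beta : B \to B^\beta$ obtained from $\beta$ by the same lemma. Lemma~\ref{le:spectralsubspaces}(2) guarantees that $B^\beta$ is a $C^*$-subalgebra, so the objects in the diagram are of the required type. It then remains to check that $\pi$ restricts to a map $A_0 \to B_0$ and that the square commutes; granting these, Lemma~\ref{le:condexpdiagram} gives that $\pi$ is injective precisely when $\pi|_{A_0}$ is, which is exactly the assertion.

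The central computation is that $\pi$ intertwines the two actions, that is, $\pi \circ \gamma_z = \beta_z \circ \pi$ for every $z \in \mathbb{T}^k$. First I would verify this on generators: for $\lambda \in \Lambda$,
\[
\pi(\gamma_z(s_\mathcal{E}(\lambda))) = \pi(z^{d(\lambda)} s_\mathcal{E}(\lambda)) = z^{d(\lambda)} t_\lambda = \beta_z(t_\lambda) = \beta_z(\pi(s_\mathcal{E}(\lambda))).
\]
Since $\gamma_z$ and $\beta_z$ are automorphisms and $\pi$ is a homomorphism, both $\pi \circ \gamma_z$ and $\beta_z \circ \pi$ are homomorphisms $A \to B$ that agree on the generating set $\left\{ s_\mathcal{E}(\lambda) : \lambda \in \Lambda \right\}$ of $A$ (Theorem~\ref{th:relativecuntzkriegeralgebra}), and hence agree everywhere. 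This intertwining immediately yields $\pi(A_0) \subseteq B_0$: if $a \in A^\gamma$ then $\beta_z(\pi(a)) = \pi(\gamma_z(a)) = \pi(a)$ for all $z$, so $\pi(a) \in B^\beta$.

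To obtain commutativity of the diagram I would use that $\pi$, being a $C^*$-homomorphism, is bounded and therefore commutes with integration against the normalised Haar measure on $\mathbb{T}^k$. Thus, for $a \in A$,
\[
\Phi^\beta(\pi(a)) = \int_{\mathbb{T}^k} \beta_z(\pi(a)) \, dz = \int_{\mathbb{T}^k} \pi(\gamma_z(a)) \, dz = \pi\!\left( \int_{\mathbb{T}^k} \gamma_z(a)\, dz \right) = \pi(\Phi^\gamma(a)),
\]
and since $\Phi^\gamma(a) \in A_0$ this equals $(\pi|_{A_0})(\Phi^\gamma(a))$; that is, $\Psi \circ \pi = (\pi|_{A_0}) \circ \Phi$, which is the commutativity required by Lemma~\ref{le:condexpdiagram}.

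I expect the only genuinely delicate point to be the justification that $\pi$ passes through the Haar integral defining the conditional expectations. This is routine once one recalls that $\Phi^\gamma(a)$ and $\Phi^\beta(\pi(a))$ are the (Bochner) integrals of the strongly continuous maps $z \mapsto \gamma_z(a)$ and $z \mapsto \beta_z(\pi(a))$, and that a bounded linear map commutes with such integrals; strong continuity of $\gamma$ is exactly Lemma~\ref{le:stronglycontinuous}. Everything else is bookkeeping: the existence and faithfulness of $\Phi^\gamma$ and the existence of $\Phi^\beta$ come directly from Lemma~\ref{le:conexpexist}, and the final deduction is a single application of Lemma~\ref{le:condexpdiagram} using the hypothesis that $\pi|_{C^*(\Lambda,c;\mathcal{E})^\gamma}$ is injective.
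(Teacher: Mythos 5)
Your proposal is correct, and it follows the paper's overall strategy: both arguments reduce the proposition to Lemma~\ref{le:condexpdiagram}, using the faithful conditional expectations $\Phi^\gamma$ and $\Phi^\beta$ supplied by Lemma~\ref{le:conexpexist}. Where you diverge is in how the commuting square is verified. The paper never proves the global equivariance $\pi \circ \gamma_z = \beta_z \circ \pi$; instead it invokes the explicit formulas $\Phi^\gamma(s_\mathcal{E}(\mu)s_\mathcal{E}(\nu)^*) = \delta_{d(\mu),d(\nu)}\, s_\mathcal{E}(\mu)s_\mathcal{E}(\nu)^*$ and $\Phi^\beta(t_\mu t_\nu^*) = \delta_{d(\mu),d(\nu)}\, t_\mu t_\nu^*$ (established in the proof of Lemma~\ref{le:corestructure}), checks that $\pi$ maps the spanning elements of the core into $B^\beta$ and that the square commutes on those spanning elements, and then extends by linearity and continuity. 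You instead prove equivariance on the generators, upgrade it to all of $C^*(\Lambda,c;\mathcal{E})$ since both $\pi\circ\gamma_z$ and $\beta_z\circ\pi$ are homomorphisms agreeing on a generating family (Theorem~\ref{th:relativecuntzkriegeralgebra}), deduce $\pi\left(C^*(\Lambda,c;\mathcal{E})^\gamma\right)\subseteq B^\beta$ immediately, and obtain commutativity by passing $\pi$ through the Haar (Bochner) integral defining the expectations, using strong continuity (Lemma~\ref{le:stronglycontinuous}) and boundedness of $\pi$. Your route is more conceptual: it needs no description of the fixed-point algebras as closed spans, and the intertwining identity is reusable elsewhere. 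The paper's route stays entirely within the algebraic spanning-element toolkit it has already built and never has to justify interchanging a bounded linear map with a vector-valued integral, which is the one analytic point your argument must (and does, correctly) address.
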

\begin{proof}
By Lemma~\ref{le:conexpexist} there are faithful conditional expectations $\Phi^\gamma: C^*(\Lambda,c;\mathcal{E}) \to C^*(\Lambda,c;\mathcal{E})^\gamma$ and $\Phi^\beta : B \to B^\beta$. By Lemma~\ref{le:corestructure}, $\Phi^\gamma$ and $\Phi^\beta$ satisfy
$$\Phi^\gamma(s_\mathcal{E}(\mu)s_\mathcal{E}(\nu)^*)= \begin{cases} s_\mathcal{E}(\mu)s_\mathcal{E}(\nu)^* &\text{ if } d(\mu)=d(\nu) \\
0 &\text{ otherwise,} \end{cases}$$
and
$$\Phi^\beta(t_\mu t_\nu^*)= \begin{cases} t_\mu t_\nu^* &\text{if } d(\mu)=d(\nu) \\
0 &\text{otherwise.} \end{cases}$$

If $\mu,\nu \in \Lambda$ satisfy $d(\mu)=d(\nu)$, then
\begin{align*}
\Phi^\beta \left (\pi(s_\mathcal{E}(\mu)s_\mathcal{E}(\nu)^*) \right)=\Phi^\beta(t_\mu t_\nu^*)=t_\mu t_\nu^*.
\end{align*}
So $\pi$ restricts to a homomorphism from  $C^*(\Lambda,c;\mathcal{E})^\gamma$ to $B^\beta$ by linearity and continuity.
We have
\begin{align*}\left(\pi |_{C^*(\Lambda,c;\mathcal{E})^\gamma}\circ  \Phi^\gamma \right)( s_\mathcal{E}(\mu)s_\mathcal{E}(\nu)^*)&= \begin{cases} t_\mu t_\nu^* &\text{ if } d(\mu)=d(\nu) \\
0 &\text{ otherwise} \end{cases}\\
&=\left(  \Phi^\beta \circ \pi \right) \left( s_\mathcal{E}(\mu) s_\mathcal{E}(\nu)^* \right).
\end{align*}
So $\Phi^\gamma \circ \pi|_{C^*(\Lambda,c;\mathcal{E})^\gamma)} =\pi \circ \Phi^\beta$ on the spanning elements of $C^*(\Lambda,c;\mathcal{E})$ and hence on all of $C^*(\Lambda,c;\mathcal{E})$. Thus, the diagram
\begin{center}
\begin{tikzpicture}
\matrix(m)[matrix of math nodes,
row sep=6em, column sep=6em,
text height=1.5ex, text depth=0.25ex]
{C^*(\Lambda,c;\mathcal{E})&B\\
C^*(\Lambda,c;\mathcal{E})^\gamma&B^\beta\\};
\path[->]
(m-1-1) edge node[auto]{$\pi$} (m-1-2)
(m-2-1) edge node[below]{$\pi |_{C^*(\Lambda,c;\mathcal{E})^\gamma}$}  (m-2-2);
\path[->]
(m-1-1) edge node[left]{$\Phi^\gamma$}(m-2-1)
(m-1-2) edge node[auto]{$\Phi^\beta$} (m-2-2);
\end{tikzpicture}\label{dia:commute2}
\end{center}

commutes. Lemma~\ref{le:condexpdiagram} implies that if $\pi |_{C^*(\Lambda,c;\mathcal E)^\gamma}$ is injective then $\pi$ is injective.
\end{proof}

\section{Orthogonalising range projections}
\label{chptr:orthogonalising_range}
We want to establish conditions under which the canonical homomorphism $\pi_t^\mathcal{E}$ determined by a relative Cuntz-Krieger $(\Lambda,c;\mathcal{E})$-family $\left\{ t_\lambda :\lambda \in \Lambda \right \}$ restricts to an isomorphism of the core. Our strategy is to show that $\overline{\linspan}\left\{ t_\lambda t_\mu^* : \lambda,\mu \in \Lambda , d(\lambda)=d(\mu)\right\}$ is an increasing union of finite-dimensional subalgebras. To find matrix units for these finite dimensional subalgebras, we first ``orthogonalise" the generators of $\overline{\linspan}\left\{ t_\lambda t_\mu^* : \lambda,\mu \in \Lambda , d(\lambda)=d(\mu)\right\}$. We begin by concentrating on the ``diagonal" subalgebra $\overline{\linspan}\left\{ t_\mu t_\mu^* : \mu \in \Lambda \right \}$.

\begin{definition}
Let $(\Lambda,d)$ be a finitely aligned $k$-graph, let $c\in \underline{Z}^2(\Lambda,\mathbb{T})$ and let $\left \{ t_\lambda : \lambda \in \Lambda \right \}$ be a Toeplitz-Cuntz-Krieger $(\Lambda,c)$-family. Suppose that $E$ is a finite subset of $\Lambda$. For $\lambda \in \Lambda$, we define
$$Q(t)_\lambda^E:= t_\lambda t_\lambda^* \prod_{\lambda \alpha \in E \atop d(\alpha)>0} \left( t_\lambda t_\lambda^* - t_{\lambda \alpha}t_{\lambda \alpha}^* \right).$$
\end{definition}

\begin{remark}\label{re:proj1}
\begin{enumerate}
 \item[(1)] Lemma~\ref{handylemma}(2) shows that for a finite set $E\subset \Lambda$, the product $Q(t)_\lambda^{E}$ is both well-defined and a projection for each $\lambda \in E$.
\item[(2)] By convention, when a formal product in a $C^*$-algebra $A$ is indexed by the empty set, it is taken to be equal to the unit of the multiplier algebra of $A$. In particular, if $\lambda \in E$ is such that there exists no $\nu \in \Lambda \setminus \Lambda^0$ with $\lambda \nu \in E$, then $Q(t)_{\lambda}^E=t_\lambda t_\lambda^* \cdot 1_{\mathcal{M}(C^*(\left \{ t_\lambda : \lambda \in \Lambda \right \}))}=t_\lambda t_\lambda^*.$
\end{enumerate}
\end{remark}

\begin{definition}
Let $(\Lambda,d)$ be a finitely aligned $k$-graph. A subset $E$ of $\Lambda$ is said to be \emph{closed under minimal common extensions} if
$$ \lambda ,\mu \in E \Longrightarrow \MCE(\lambda,\mu) \subset E.$$
\end{definition}

The main result of this section is the following Proposition.

\begin{proposition}[cf. \cite{RSY2004}, Proposition 3.5]\label{prop:orth1}
Let $(\Lambda,d)$ be a finitely aligned $k$-graph, let $c\in \underline{Z}^2(\Lambda,\mathbb{T})$ and let $\left \{ t_\lambda : \lambda \in \Lambda \right \}$ be a Toeplitz-Cuntz-Krieger $(\Lambda,c)$-family. Let $E$ be a finite subset of $\Lambda$ that is closed under minimal common extensions. Then $\left \{ Q(t)_\lambda^E : \lambda \in E \right \}$ is a collection of mutually orthogonal (possibly zero) projections such that
\begin{equation} t_v \prod_{\lambda \in v E} \left ( t_v - t_\lambda t_\lambda^* \right)+\sum_{\lambda \in vE}Q(t)_\lambda^E =t_v\label{eq:orth1}\end{equation}
for each $v\in r(E).$
\end{proposition}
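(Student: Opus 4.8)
The plan is to reduce the entire statement to a combinatorial fact about commuting projections, which becomes transparent in a commutative $C^*$-algebra. By Lemma~\ref{handylemma}(2) the range projections $\{t_\lambda t_\lambda^*:\lambda\in\Lambda\}$ pairwise commute, so they generate a commutative $C^*$-algebra $D$; since $t_v=t_vt_v^*$ is itself a range projection, the corner $t_vDt_v$ is a \emph{unital} commutative $C^*$-algebra with unit $t_v$, and every projection occurring in~\eqref{eq:orth1} lies in it. Identifying $t_vDt_v\cong C(Y)$ for a compact Hausdorff space $Y$, each such projection is the characteristic function of a clopen set; write $U_\lambda$ for the set of $t_\lambda t_\lambda^*$ (for $\lambda\in v\Lambda$, compressing by $t_v$). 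The relations I will use are $U_{\lambda\alpha}\subseteq U_\lambda$ (from $t_{\lambda\alpha}t_{\lambda\alpha}^*\le t_\lambda t_\lambda^*$) and $U_\lambda\cap U_\mu=\bigsqcup_{\tau\in\MCE(\lambda,\mu)}U_\tau$, the latter from (TCK4) together with the mutual orthogonality of equal-degree range projections, Lemma~\ref{handylemma}(6). In this picture $Q(t)_\lambda^E$ is the set $V_\lambda:=U_\lambda\setminus\bigcup_{\lambda\alpha\in E,\,d(\alpha)>0}U_{\lambda\alpha}$, the gap projection $t_v\prod_{\lambda\in vE}(t_v-t_\lambda t_\lambda^*)$ is $Y\setminus\bigcup_{\lambda\in vE}U_\lambda$, and~\eqref{eq:orth1} is exactly the assertion that $\{V_\lambda:\lambda\in vE\}$ together with this complement partitions $Y$.

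First I would establish mutual orthogonality of the $Q(t)_\lambda^E$. For distinct $\lambda,\mu\in vE$ I have $V_\lambda\cap V_\mu\subseteq U_\lambda\cap U_\mu=\bigsqcup_{\tau\in\MCE(\lambda,\mu)}U_\tau$. For each such $\tau=\lambda\alpha=\mu\beta$ the factorisation property forces at least one of $d(\alpha),d(\beta)$ to be nonzero, for otherwise $\lambda=\tau=\mu$. If $d(\alpha)>0$, then closure of $E$ under minimal common extensions gives $\tau=\lambda\alpha\in E$, so $U_\tau$ is one of the sets deleted in forming $V_\lambda$ and hence $V_\lambda\cap U_\tau=\emptyset$; symmetrically if $d(\beta)>0$. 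Either way $V_\lambda\cap V_\mu\cap U_\tau=\emptyset$, and summing over $\tau$ yields $V_\lambda\cap V_\mu=\emptyset$. This is precisely the step that consumes the closure hypothesis.

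Next I would prove the covering $\bigcup_{\lambda\in vE}V_\lambda=\bigcup_{\lambda\in vE}U_\lambda$, the inclusion $\subseteq$ being clear. For the reverse, fix $x\in\bigcup_{\lambda\in vE}U_\lambda$ and set $S_x=\{\lambda\in vE:x\in U_\lambda\}$, a finite nonempty set. Choose $\lambda\in S_x$ maximal for the prefix order ($\mu\preceq\nu$ iff $\nu=\mu\zeta$ for some $\zeta\in\Lambda$). Any extension $\lambda\alpha\in E$ with $d(\alpha)>0$ has $r(\lambda\alpha)=r(\lambda)=v$, hence lies in $vE$; were $x\in U_{\lambda\alpha}$, then $\lambda\alpha\in S_x$ would properly extend $\lambda$, contradicting maximality. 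Thus $x\notin U_{\lambda\alpha}$ for all such $\alpha$, i.e.\ $x\in V_\lambda$. Since each $V_\lambda\subseteq U_\lambda$ is disjoint from the complement $Y\setminus\bigcup_{\lambda\in vE}U_\lambda$, combining the two steps gives the clopen partition $Y=\big(Y\setminus\bigcup_{\lambda\in vE}U_\lambda\big)\sqcup\bigsqcup_{\lambda\in vE}V_\lambda$, which is~\eqref{eq:orth1} after translating back through $t_vDt_v\cong C(Y)$.

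I expect the main obstacle to be the covering step: the orthogonality is a mechanical consequence of (TCK4) and closure under minimal common extensions, whereas exhaustion of $t_v$ by the refined projections rests on the maximal-element observation, which is the genuine combinatorial input. A secondary technical nuisance is the bookkeeping around the empty-product convention of Remark~\ref{re:proj1}(2) and the leading factor $t_v$ when $vE=\emptyset$; both are handled cleanly by passing to the corner $t_vDt_v$, whose unit is $t_v$, which also legitimises the identification with $C(Y)$ even though $D$ itself may be non-unital. Note finally that the $2$-cocycle $c$ never enters, since every quantity is built from the genuine projections $t_\lambda t_\lambda^*$; accordingly the argument is identical to that of (\cite{RSY2004}, Proposition~3.5).
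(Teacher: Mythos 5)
Your proof is correct, and it takes a genuinely different route from the paper's. The paper reduces to the special case $v\in E\subset v\Lambda$ (Lemma~\ref{le:orth2}) and proves that case by induction on $|E|$, using the $\vee E$ construction and the maximal-subpath machinery imported from \cite{RS2005} (Lemmas~\ref{le:orth3}--\ref{le:orth5}): one removes a path $\lambda$ from $E$, compares $Q(t)_\mu^{\vee E}$ with $Q(t)_{\xi_\mu}^{\vee G}$ for $G=E\setminus\{\lambda\}$, and telescopes the resulting sums. You instead pass to the unital commutative corner $t_vDt_v\cong C(Y)$ generated by the commuting range projections and turn \eqref{eq:orth1} into a clopen partition of $Y$: disjointness of the $V_\lambda$ is exactly where MCE-closure enters (via (TCK4) and Lemma~\ref{handylemma}(6)), and the covering follows by picking a prefix-maximal element of $S_x$ --- an observation that plays the role of the paper's $\xi_\mu$ but pointwise, with no induction and no need for the auxiliary set $\vee E$ at all (since your $E$ is already MCE-closed, $\vee E=E$ anyway in the relevant case). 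What your approach buys is brevity and self-containment, and it isolates precisely where the closure hypothesis is consumed; what the paper's approach buys is reusable infrastructure --- the $\vee E$ construction and Lemma~\ref{le:orth3} reappear in Lemma~\ref{le:idealstohereditarysets}, and the maximal-prefix idea behind $\xi_\mu$ recurs as $\iota^E_\lambda$ in Section~\ref{chpt:isoofthecore} --- so the heavier proof amortises over the rest of the thesis. Two minor remarks: your orthogonality argument is stated only for distinct $\lambda,\mu\in vE$ with a common range; for $\lambda,\mu\in E$ with $r(\lambda)\neq r(\mu)$ the same display applies with $\MCE(\lambda,\mu)=\emptyset$, so nothing is missing, but the Proposition's claim is over all of $E$ and deserves that one sentence. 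And your closing observation that the $2$-cocycle never enters is exactly the remark with which the paper opens its own proof.
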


In order to prove Proposition~\ref{prop:orth1} we reduce to the case where $E\subset v \Lambda$ for some $v\in \Lambda^0$, and $E$ contains $v$.

\begin{lemma}\label{le:orth2}
Let $(\Lambda,d)$ be a finitely aligned $k$-graph, let $c\in \underline{Z}^2(\Lambda,\mathbb{T})$ and let $\left \{ t_\lambda : \lambda \in \Lambda \right \}$ be a Toeplitz-Cuntz-Krieger $(\Lambda,c)$-family. Let $v\in \Lambda^0$. Suppose that $E$ is a finite subset of $v\Lambda$, that $v\in E$ and that $E$ is closed under minimal common extensions. Then $\left \{ Q(t)_\lambda^E : \lambda \in E \right \}$ is a collection of mutually orthogonal (possibly zero) projections such that
\begin{equation} \sum_{\lambda \in E} Q(t)_\lambda^E=t_v \label{eq:orth2}. \end{equation}
\end{lemma}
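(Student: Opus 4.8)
The plan is to follow the strategy of \cite{RSY2004}, Proposition~3.5, observing that the $2$-cocycle plays no essential role: by (TCK2) we have $t_{\lambda\alpha}t_{\lambda\alpha}^*=\overline{c(\lambda,\alpha)}c(\lambda,\alpha)\,t_\lambda t_\alpha t_\alpha^* t_\lambda^*=t_\lambda(t_\alpha t_\alpha^*)t_\lambda^*$, so the cocycle cancels in every range projection and the entire argument takes place among the commuting projections $P_\lambda:=t_\lambda t_\lambda^*$ (Lemma~\ref{handylemma}(2)). In particular each $Q(t)_\lambda^E$ is a product of commuting projections, hence a projection (Remark~\ref{re:proj1}(1)); and, noting $P_{\lambda\alpha}\le P_\lambda$ for $\alpha\in s(\lambda)\Lambda$, I would first record the \emph{absorption} identity $(P_\lambda-P_{\lambda\alpha})(P_\lambda-P_{\lambda\beta})=P_\lambda-P_{\lambda\alpha}$ whenever $\beta\in\alpha\Lambda$. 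This lets me replace the product over all proper extensions of $\lambda$ in $E$ defining $Q(t)_\lambda^E$ by the product over the prefix-minimal ones.

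For mutual orthogonality, fix distinct $\lambda,\mu\in E$. The factorisation property forces $d(\lambda)\neq d(\mu)$ (otherwise $\lambda=\sigma(0,d(\lambda))=\mu$ for any $\sigma\in\MCE(\lambda,\mu)$). Using (TCK4) to write $P_\lambda P_\mu=\sum_{\sigma\in\MCE(\lambda,\mu)}P_\sigma$, together with $Q(t)_\lambda^E\le P_\lambda$ and $Q(t)_\mu^E\le P_\mu$, I obtain
\[
Q(t)_\lambda^E Q(t)_\mu^E=\sum_{\sigma\in\MCE(\lambda,\mu)}Q(t)_\lambda^E\,P_\sigma\,Q(t)_\mu^E .
\]
For each such $\sigma$ we have $d(\sigma)=d(\lambda)\vee d(\mu)$, which differs from at least one of $d(\lambda),d(\mu)$ (else $d(\lambda)=d(\mu)$); say $d(\sigma)\neq d(\lambda)$. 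Then $\sigma$ is a proper extension of $\lambda$ lying in $E$—here closure under minimal common extensions enters—so $(P_\lambda-P_\sigma)$ is one of the commuting factors of $Q(t)_\lambda^E$ and $Q(t)_\lambda^E P_\sigma=0$. Hence every summand vanishes.

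The sum formula~\eqref{eq:orth2} I would prove by induction on $|E|$, with base case $E=\{v\}$ giving $Q(t)_v^{\{v\}}=t_v$. For the inductive step choose $\rho\in E\setminus\{v\}$ of minimal degree. Its only prefixes in $E$ are $v$ and $\rho$: a proper prefix $\lambda\neq v$ would lie in $E\setminus\{v\}$ with $d(\lambda)<d(\rho)$, contradicting minimality. Consequently $E':=E\setminus\{\rho\}$ is again closed under minimal common extensions (any $\sigma\in\MCE(\lambda,\mu)$ equal to $\rho$, with $\lambda,\mu\in E'$, would force $\lambda,\mu$ to be prefixes of $\rho$, hence $\lambda=\mu=v$, impossible as $\rho\notin\MCE(v,v)$), and $\rho$ is a proper extension of no element of $E$ other than $v$. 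Therefore $Q(t)_\lambda^E=Q(t)_\lambda^{E'}$ for all $\lambda\in E\setminus\{v,\rho\}$, while $Q(t)_v^E=Q(t)_v^{E'}(t_v-P_\rho)$. Summing and invoking the inductive hypothesis $\sum_{\lambda\in E'}Q(t)_\lambda^{E'}=t_v$ collapses everything to the single identity
\[
Q(t)_\rho^E=Q(t)_v^{E'}\,P_\rho .
\]

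The crux is this last identity, and it is where I expect the real work to lie. My plan is to recognise $Q(t)_v^{E'}=t_v\prod_{\mu\in E\setminus\{v,\rho\}}(t_v-P_\mu)$ as the gap projection $Q(t)^{E\setminus\{v,\rho\}}$, so that Lemma~\ref{le:EtoExtE} yields $Q(t)_v^{E'}P_\rho=t_\rho\,Q(t)^{\Ext(\rho;\,E\setminus\{v,\rho\})}\,t_\rho^*$, whereas peeling a $t_\rho$ off each factor of $Q(t)_\rho^E$ via (TCK2) and (TCK3) gives $Q(t)_\rho^E=t_\rho\,Q(t)^{G}\,t_\rho^*$ with $G=\{\alpha\in s(\rho)\Lambda\setminus\{s(\rho)\}:\rho\alpha\in E\}$. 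It then remains to show the two gap projections coincide. Using the bijection $\Lambda^{\text{min}}(\rho,\mu)\leftrightarrow\MCE(\rho,\mu)$ of Proposition~\ref{prop:MCE}, one checks that $\Ext(\rho;E\setminus\{v,\rho\})\subseteq G$ and that every prefix-minimal element of $G$ already occurs in $\Ext(\rho;E\setminus\{v,\rho\})$ (take $\mu=\rho\alpha$, for which $\MCE(\rho,\rho\alpha)=\{\rho\alpha\}$); by the absorption identity both gap projections reduce to the product over the prefix-minimal elements of $G$ and so are equal. This is exactly the point at which closure under minimal common extensions is indispensable: without it the leftover terms $P_\sigma$ for $\sigma\notin E$ survive, as the computation $\sum_{\lambda}Q(t)_\lambda=t_v+P_\lambda P_\mu$ for the non-closed set $\{v,\lambda,\mu\}$ illustrates.
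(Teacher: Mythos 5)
Your proof is correct, but it takes a genuinely different route from the paper's. Both arguments dispose of mutual orthogonality the same way ((TCK4) plus closure under minimal common extensions) and then induct on $|E|$; the difference is in how the inductive step is organised. The paper follows \cite{RS2005}: it removes an \emph{arbitrary} $\lambda\in E\setminus\{v\}$, and since $E\setminus\{\lambda\}$ need not be closed under minimal common extensions it must pass to $\vee G$ and invoke the maximal-common-prefix machinery $\xi_\mu$ (Lemmas~\ref{le:orth3}, \ref{le:orth4} and~\ref{le:orth5}, imported from \cite{RS2005}) to redistribute the projections $Q(t)_\tau^{\vee E}$, $\tau\in\vee E\setminus\vee G$, among the $Q(t)_\mu^{\vee G}$ before the inductive hypothesis applies. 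You instead remove a \emph{degree-minimal} $\rho\in E\setminus\{v\}$, which keeps $E':=E\setminus\{\rho\}$ closed under minimal common extensions and collapses all the bookkeeping to the single identity $Q(t)_\rho^E=Q(t)_v^{E'}\,t_\rho t_\rho^*$; in the paper's language this is exactly Lemma~\ref{le:orth5} in the degenerate case $\vee E\setminus\vee G=\{\rho\}$, $\xi_\rho=v$, but you prove it by a different mechanism, namely conjugation by $t_\rho$ via Lemma~\ref{le:EtoExtE} together with the set identity $\Ext(\rho;E\setminus\{v,\rho\})=\{\alpha\in s(\rho)\Lambda\setminus\{s(\rho)\}:\rho\alpha\in E\}$. (In fact these two sets are literally equal: every $\alpha$ in your $G$ arises from $\Lambda^{\text{min}}(\rho,\rho\alpha)=\{(\alpha,s(\rho\alpha))\}$, and conversely closure under minimal common extensions together with the prefix argument puts $\Ext(\rho;E\setminus\{v,\rho\})$ inside $G$; so your final appeal to prefix-minimal elements and the absorption identity is not even needed.) What your route buys is self-containedness: it replaces the three structural lemmas quoted from \cite{RS2005} by Lemma~\ref{le:EtoExtE}, which the thesis has already proved for the ideal analysis and which depends only on (TCK2)--(TCK4), Lemma~\ref{handylemma} and Proposition~\ref{prop:MCE}, so there is no circularity. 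What the paper's route buys is alignment with \cite{RS2005,RSY2004}, whose $\vee E$/$\xi_\mu$ apparatus works for an arbitrary removed element and is reused there. One small slip in your prose: ``the factorisation property forces $d(\lambda)\neq d(\mu)$'' is false as stated, since distinct paths can certainly share a degree; what your parenthetical actually establishes, and what the argument uses, is that $\MCE(\lambda,\mu)=\emptyset$ whenever $d(\lambda)=d(\mu)$ and $\lambda\neq\mu$, so that the (TCK4) sum is empty in that case.
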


Using Lemma~\ref{le:orth2} we prove Proposition~\ref{prop:orth1}.

\begin{proof}[Proof of  Proposition~\ref{prop:orth1}]
The following argument is taken from the work of Raeburn, Sims and Yeend in \cite{RSY2004}. The difference here being that we must account for the 2-cocycle, but it turns out that this plays little r\^ole.

Remark~\ref{re:proj1} shows that $Q(t)_\lambda^E$ is projection for each $\lambda \in E$. Then since each $t_v$ is a projection for each $v\in r(E)$, equation \eqref{eq:orth2} shows that they are mutually orthogonal. So we are left to prove \eqref{eq:orth1}.

First, suppose that $F\subset v \Lambda$ is closed under minimal common extensions. The case where $v\in F$ is exactly Lemma~\ref{le:orth2}, so suppose that $v\notin F$. We must show that $F$ satisfies \eqref{eq:orth1}. Let $E:= F \cup \left \{ v \right \}$. Since $\MCE(v,\lambda)=\left \{ \lambda \right \}$ for each $\lambda\in F$, the set $E$ is closed under minimal common extensions, contains $v\in E$ and is a subset of $v\Lambda$. Hence Lemma~\ref{le:orth2} gives
\begin{align*} t_v = \sum_{\lambda \in E} Q(t)_\lambda^E=Q(t)_v^E+\sum_{\lambda \in E \setminus \left \{ v \right \}}Q(t)_\lambda^E&=t_v\prod_{\lambda \in F} \left( t_v -t_\lambda t_\lambda^* \right) + \sum_{\lambda \in F} Q(t)_\lambda^F\\
&=t_v\prod_{\lambda \in vF} \left( t_v -t_\lambda t_\lambda^* \right) + \sum_{\lambda \in vF} Q(t)_\lambda^F.
\end{align*}

Now suppose $E\subset\Lambda$ is finite and closed under minimal common extensions. Fix $v\in r(E)$. We will show that $E$ and $v$ satisfy \eqref{eq:orth1}. Notice that for all $\lambda \in E$ with $r(\lambda)=v$,
\begin{equation}
Q(t)_\lambda^E=t_\lambda t_\lambda^* \prod_{\lambda \nu \in E \atop d(\nu)>0} \left( t_\lambda t_\lambda^*-t_{\lambda \nu} t_{\lambda \nu}^* \right)=t_\lambda t_\lambda^* \prod_{\lambda \nu \in vE \atop d(\nu)>0} \left( t_\lambda t_\lambda^*-t_{\lambda \nu} t_{\lambda \nu}^* \right)=Q(t)_\lambda^{v E} \label{eq:orth3}.
\end{equation}
Let $F:=vE$. Then $F=vE \subset v \Lambda$ is finite and if $\lambda,\mu \in F$ then $\MCE(\lambda,\mu) \subset v E=F$; that is, $F$ is closed under minimal common extensions. It follows from \eqref{eq:orth3} and the previously considered special case that
\begin{flalign*}&&t_v&= t_v \prod_{\lambda \in v F} \left ( t_v - t_\lambda t_\lambda^* \right)+\sum_{\lambda \in vF}Q(t)_\lambda^F&\\
&&&= t_v \prod_{\lambda \in v E} \left ( t_v - t_\lambda t_\lambda^* \right)+\sum_{\lambda \in vE}Q(t)_\lambda^{E}.&\qedhere
\end{flalign*}
\end{proof}

The rest of this section is devoted to establishing Lemma~\ref{le:orth2}. We therefore fix, for the duration of this section, a finitely aligned $k$-graph $(\Lambda,d)$; a 2-cocycle $c\in \underline{Z}^2(\Lambda,\mathbb{T})$; a Toeplitz-Cuntz-Krieger $(\Lambda,c)$-family $\left \{ t_\lambda : \lambda \in \Lambda \right \}$; $v\in \Lambda^0$; and a finite subset $E$ of $v\Lambda$.

\begin{definition}\label{def:closedundercommonextensions}
Given $F\subset \Lambda$, we define
\begin{equation} \MCE(F) := \left \{ \lambda \in \Lambda : d(\lambda)= \bigvee_{\alpha \in F}d(\alpha) \text{ and } \lambda(0,d(\alpha))=\alpha \text{ for all } \alpha \in F \right \},\nonumber \end{equation}
and $\bigvee F:= \bigcup_{G\subset F}\MCE(G)$.
\end{definition}
The next two Lemmas are from \cite{RS2005}.
\begin{lemma}[\cite{RS2005}, Lemma 8.4]\label{le:orth3}If $v\in E$ then:
\begin{enumerate}
\item[(1)] $E \subset \vee E$;
\item[(2)] $\vee E$ is finite;
\item[(3)] $G\subset \vee E$ implies that $\MCE(G) \subset \vee E$; and
\item[(4)] $\lambda \in \vee E$ implies that $d(\lambda) \leq \vee_{\mu\in E} d(\mu)$.
\end{enumerate}
\end{lemma}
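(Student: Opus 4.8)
The four assertions follow fairly directly from Definition~\ref{def:closedundercommonextensions}, so the plan is to treat them in turn, leaving the genuine content to part~(3). For part~(1), I would observe that for each $\alpha \in E$ the singleton $G = \{\alpha\}$ has $\MCE(\{\alpha\}) = \{\alpha\}$: indeed any $\lambda$ with $d(\lambda) = d(\alpha)$ and $\lambda(0,d(\alpha)) = \alpha$ satisfies $\lambda = \lambda(0,d(\lambda)) = \alpha$. Hence $\alpha \in \MCE(\{\alpha\}) \subset \vee E$, giving $E \subset \vee E$. Part~(4) is then immediate from the definition: if $\lambda \in \vee E$ then $\lambda \in \MCE(G)$ for some $G \subset E$, so $d(\lambda) = \bigvee_{\alpha \in G} d(\alpha) \leq \bigvee_{\mu \in E} d(\mu)$ because $G \subset E$.

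For part~(2), since $E$ is finite there are only finitely many subsets $G \subset E$, so it suffices to show that each $\MCE(G)$ is finite (here I would read $G$ as ranging over the nonempty subsets, since $\MCE(\emptyset)$ would be all of $\Lambda^0$). I would argue by induction on $|G|$: for $|G|=1$ the set is a singleton by the computation above; for $|G|=2$ we have $\MCE(\{\alpha,\beta\}) = \MCE(\alpha,\beta)$, which is finite by finite alignment (Proposition~\ref{prop:MCE}); and for the inductive step any $\lambda \in \MCE(G)$ restricts to $\lambda(0, \bigvee_{\alpha \in G'} d(\alpha)) \in \MCE(G')$ for $G' = G \setminus \{\beta\}$ and is then a common extension of this restriction and $\beta$, so that $\MCE(G) \subset \bigcup_{\sigma \in \MCE(G')} \MCE(\sigma, \beta)$, a finite union of finite sets. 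Thus $\vee E$ is a finite union of finite sets and hence finite.

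The substantive part, and the main obstacle, is part~(3): that $\vee E$ is closed under taking minimal common extensions of its own subsets. Given $G \subset \vee E$, I would choose for each $\sigma \in G$ a subset $G_\sigma \subset E$ with $\sigma \in \MCE(G_\sigma)$, and set $H := \bigcup_{\sigma \in G} G_\sigma \subset E$; the claim is then that $\MCE(G) \subset \MCE(H) \subset \vee E$. Fix $\lambda \in \MCE(G)$. For the degree condition I would compute $d(\lambda) = \bigvee_{\sigma \in G} d(\sigma) = \bigvee_{\sigma \in G} \bigvee_{\alpha \in G_\sigma} d(\alpha) = \bigvee_{\alpha \in H} d(\alpha)$, using that each $d(\sigma) = \bigvee_{\alpha \in G_\sigma} d(\alpha)$ together with the associativity of the coordinatewise maximum. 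The delicate point is the restriction condition: for $\alpha \in H$, say $\alpha \in G_\sigma$, I must show $\lambda(0, d(\alpha)) = \alpha$. Here I would use $d(\alpha) \leq d(\sigma) \leq d(\lambda)$ together with the factorisation property, which gives $\lambda(0, d(\alpha)) = (\lambda(0,d(\sigma)))(0, d(\alpha)) = \sigma(0, d(\alpha)) = \alpha$, the middle equality holding because $\lambda(0,d(\sigma)) = \sigma$ (as $\lambda \in \MCE(G)$ and $\sigma \in G$) and the last because $\sigma \in \MCE(G_\sigma)$. Verifying that these nested restrictions compose correctly --- that $\lambda(0,d(\sigma))(0,d(\alpha))$ really equals $\lambda(0,d(\alpha))$ --- is exactly the place where the uniqueness clause of the factorisation property must be invoked, and is where I would take the most care.
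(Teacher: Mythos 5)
Your proof is correct. Note, though, that the paper never proves this lemma: it is imported from \cite{RS2005} (Lemma 8.4) with no internal argument, so your proposal is a self-contained substitute rather than a variant of a proof in the text. Three points are worth recording. First, you are right that the definition of $\vee E$ must be read as a union over \emph{nonempty} $G\subset E$: read literally, Definition~\ref{def:closedundercommonextensions} gives $\MCE(\emptyset)=\Lambda^0$, so $\vee E\supset\Lambda^0$ and part (2) would fail whenever $\Lambda^0$ is infinite; the same convention is needed for the set $G$ in part (3), since $\MCE(\emptyset)\not\subset\vee E$ in general. Second, your argument never invokes the standing hypothesis $v\in E$, and indeed none of (1)--(4) needs it; its role is in the surrounding results (Lemma~\ref{le:orth2}, Proposition~\ref{prop:orth1}), where $v\in E$ guarantees $v\in\vee E$ and hence that the gap-projection term $Q(t)^{E}_v$ appears in the decomposition of $t_v$. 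Third, the two substantive steps are sound: the induction for (2), via $\MCE(G'\cup\{\beta\})\subset\bigcup_{\sigma\in\MCE(G')}\MCE(\sigma,\beta)$ together with Proposition~\ref{prop:MCE}, and the verification for (3) that $\MCE(G)\subset\MCE\bigl(\bigcup_{\sigma\in G}G_\sigma\bigr)$, which rests on the nested-restriction identity $\lambda(0,m)(0,n)=\lambda(0,n)$ for $n\le m\le d(\lambda)$ --- a direct consequence of the uniqueness clause of the factorisation property, exactly as you say.
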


\begin{lemma}[\cite{RS2005}, Lemma 8.7] \label{le:orth4}
Suppose that $v\in E$, $\lambda \in E \setminus \left \{ v \right \}$ and $G:=E \setminus \left \{ \lambda \right \}$. Then for each $\mu \in \vee E \setminus \vee G$, there exists a unique $\xi_\mu \in \vee G$ such that
\begin{enumerate}
\item[(1)] $d(\mu) \geq d(\xi_\mu)$ and $\mu(0,d(\xi_\mu))=\xi_\mu$; and
\item[(2)] $\xi \in \vee G$ and $\mu(0,d(\xi))=\xi$ imply $d(\xi)\leq d(\xi_\mu)$.
\end{enumerate}
Furthermore, $\mu\in \MCE(\xi_\mu,\lambda)$ for each $\mu \in \vee E$.
\end{lemma}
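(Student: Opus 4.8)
The plan is to identify $\xi_\mu$ concretely as the longest initial segment of $\mu$ lying in $\vee G$, and to show this coincides with the minimal common extension of those elements of $G$ that are initial segments of $\mu$. The one preliminary fact I would record is that, for \emph{any} $\mu\in \vee E$, the set
$$F_\mu:=\left\{ \alpha\in E : d(\alpha)\leq d(\mu)\text{ and } \mu(0,d(\alpha))=\alpha \right\}$$
of elements of $E$ prefixing $\mu$ satisfies $\mu\in \MCE(F_\mu)$. Indeed, $\mu\in \MCE(F)$ for some $F\subset E$ by definition of $\vee E$; since $F\subset F_\mu$ and every $\alpha\in F_\mu$ has $d(\alpha)\leq d(\mu)$, squeezing gives $\bigvee_{\alpha\in F_\mu}d(\alpha)=d(\mu)$, while $\mu(0,d(\alpha))=\alpha$ for all $\alpha\in F_\mu$ holds by construction, so the defining conditions of $\MCE(F_\mu)$ are met.

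Now fix $\mu\in \vee E\setminus \vee G$. First I would argue $\lambda\in F_\mu$: otherwise $F_\mu\subset G$, whence $\mu\in \MCE(F_\mu)\subset \vee G$, contradicting the choice of $\mu$. Set $H:=F_\mu\setminus\{\lambda\}=F_\mu\cap G\subset G$ and $n:=\bigvee_{\alpha\in H}d(\alpha)$ (with $n=0$ and $\xi_\mu=r(\mu)$ when $H=\emptyset$), and define $\xi_\mu:=\mu(0,n)$. For each $\alpha\in H$ we have $d(\alpha)\leq n\leq d(\mu)$, so two applications of the factorisation property give $\xi_\mu(0,d(\alpha))=\mu(0,d(\alpha))=\alpha$; hence $\xi_\mu\in \MCE(H)\subset \vee G$. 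This yields (1), since $d(\xi_\mu)=n\leq d(\mu)$ and $\mu(0,d(\xi_\mu))=\xi_\mu$ by construction. For the maximality clause (2), take any $\xi\in \vee G$ with $\mu(0,d(\xi))=\xi$ and write $\xi\in \MCE(H')$ for some $H'\subset G$; each $\alpha\in H'$ prefixes $\xi$ and hence $\mu$, so $\alpha\in F_\mu\cap G=H$, giving $d(\xi)=\bigvee_{\alpha\in H'}d(\alpha)\leq \bigvee_{\alpha\in H}d(\alpha)=d(\xi_\mu)$. Uniqueness is then immediate: if $\xi_\mu'$ also satisfies (1) and (2), applying (2) in both directions forces $d(\xi_\mu)=d(\xi_\mu')$, and since both are prefixes of $\mu$ of equal degree, the uniqueness clause of the factorisation property gives $\xi_\mu=\xi_\mu'$.

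For the ``furthermore" clause I would verify the three defining conditions of $\MCE(\xi_\mu,\lambda)$ directly. Both $\mu(0,d(\xi_\mu))=\xi_\mu$ and $\mu(0,d(\lambda))=\lambda$ hold because $\xi_\mu$ and $\lambda\in F_\mu$ are prefixes of $\mu$, and the degree condition follows from
$$d(\mu)=\bigvee_{\alpha\in F_\mu}d(\alpha)=\left(\bigvee_{\alpha\in H}d(\alpha)\right)\vee d(\lambda)=d(\xi_\mu)\vee d(\lambda).$$
Hence $\mu\in \MCE(\xi_\mu,\lambda)$.

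I do not anticipate a serious obstacle, as the statement is purely combinatorial and the $2$-cocycle plays no role whatsoever. The only points requiring care are the bookkeeping in the maximality argument (2) — where one must track that every element of $G$ prefixing a prefix of $\mu$ already lies in $H$ — and the repeated, careful use of the uniqueness half of the factorisation property to pin down $\xi_\mu$ from its degree.
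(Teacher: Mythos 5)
Your proof is correct, but there is in fact no proof in the paper to compare it against: Lemma~\ref{le:orth4} is quoted without proof from \cite{RS2005} (Lemma 8.7) --- of the two imported lemmas, only the subsequent Lemma~\ref{le:orth5}, where the family $\left\{ t_\lambda : \lambda \in \Lambda\right\}$ actually appears, is reproved in the text. Your argument is the natural one and every step checks out: setting $F_\mu$ to be the set of elements of $E$ prefixing $\mu$, observing that $\mu \in \MCE(F_\mu)$, that $\lambda \in F_\mu$ precisely because $\mu \notin \vee G$, and defining $\xi_\mu := \mu\bigl(0, \bigvee_{\alpha \in H} d(\alpha)\bigr)$ with $H = F_\mu \cap G$ yields (1), (2) and uniqueness via the factorisation property, while $d(\mu) = \bigvee_{\alpha\in F_\mu} d(\alpha) = d(\xi_\mu) \vee d(\lambda)$ together with the two prefix conditions is exactly membership in $\MCE(\xi_\mu,\lambda)$. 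Two minor remarks. First, since $v \in G$ and $r(\mu)=v$ is a prefix of every $\mu \in \vee E \setminus \vee G$, the set $H$ always contains $v$, so the $H=\emptyset$ caveat in your construction never actually arises (though it is harmless). Second, your implicit reading of the ``furthermore'' clause as applying to $\mu \in \vee E \setminus \vee G$, where $\xi_\mu$ is defined, is the only sensible one; the statement's ``for each $\mu \in \vee E$'' is a slip, and your reading is consistent with how the clause is invoked in the proof of Lemma~\ref{le:orth5}.
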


\begin{lemma}[\cite{RS2005}, Lemma 8.8] \label{le:orth5}
Suppose that $v\in E$, $\lambda \in E \setminus \left \{ v \right \}$ and $G:=E \setminus \left \{ \lambda \right \}$. Let $\mu \in \vee E \setminus \vee G$ and $\xi_\mu\in \vee G$ be the maximal subpath of $\mu$ provided by Lemma~\ref{le:orth4}. Then
\begin{equation}
Q(t)_\mu^{\vee E} = Q(t)_{\xi_\mu}^{\vee G}t_\mu t_\mu^*.
\end{equation}
\end{lemma}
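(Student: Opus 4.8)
The whole point is that the cocycle is invisible to gap projections, so this will reduce to the untwisted identity of \cite{RS2005}. By (TCK2) one has $t_\eta t_\zeta t_\zeta^* t_\eta^* = c(\eta,\zeta)\overline{c(\eta,\zeta)}\,t_{\eta\zeta}t_{\eta\zeta}^* = t_{\eta\zeta}t_{\eta\zeta}^*$ whenever $s(\eta)=r(\zeta)$, so no factor of $c$ survives in any range projection $t_\nu t_\nu^*$, and hence none survives in $Q(t)_\mu^{\vee E}$ or $Q(t)_{\xi_\mu}^{\vee G}$. The plan is therefore to run the computation of (\cite{RS2005}, Lemma 8.8), carrying the cocycle along only to confirm that it cancels at each step.

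First I would record that $t_\mu t_\mu^* \le t_{\xi_\mu}t_{\xi_\mu}^*$. By Lemma~\ref{le:orth4}(1) we have $\mu(0,d(\xi_\mu))=\xi_\mu$, so with $\mu':=\mu(d(\xi_\mu),d(\mu))$ we get $\mu=\xi_\mu\mu'$ and, by the cancellation above, $t_\mu t_\mu^* = t_{\xi_\mu}\,t_{\mu'}t_{\mu'}^*\,t_{\xi_\mu}^* \le t_{\xi_\mu}t_{\xi_\mu}^*$. In particular $t_{\xi_\mu}t_{\xi_\mu}^*\,t_\mu t_\mu^* = t_\mu t_\mu^*$, so the leading factor of $Q(t)_{\xi_\mu}^{\vee G}$ is absorbed upon multiplication by $t_\mu t_\mu^*$.

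Next, since all range projections commute (Lemma~\ref{handylemma}(2)) and a product $\prod_i(P-R_i)$ of commuting subprojections $R_i\le P$ equals $P-\bigvee_i R_i$, I would rewrite
\[
Q(t)_\mu^{\vee E}= t_\mu t_\mu^* - \bigvee_{\mu\nu\in\vee E,\ d(\nu)>0} t_{\mu\nu}t_{\mu\nu}^*, \qquad Q(t)_{\xi_\mu}^{\vee G}\,t_\mu t_\mu^* = t_\mu t_\mu^* - \Big(\bigvee_{\xi_\mu\beta\in\vee G,\ d(\beta)>0} t_{\xi_\mu\beta}t_{\xi_\mu\beta}^*\Big)t_\mu t_\mu^*.
\]
Applying (TCK4) gives $t_{\xi_\mu\beta}t_{\xi_\mu\beta}^*\,t_\mu t_\mu^* = \sum_{\sigma\in\MCE(\xi_\mu\beta,\mu)} t_\sigma t_\sigma^*$, so the identity reduces to the purely graph-theoretic assertion that the two families of range projections have the same join; for this it suffices to show that the sets of paths $\{\mu\nu\in\vee E: d(\nu)>0\}$ and $\bigcup_{\xi_\mu\beta\in\vee G,\ d(\beta)>0}\MCE(\xi_\mu\beta,\mu)$ coincide (or at least agree up to extension).

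Finally I would verify this set equality. The inclusion $\supseteq$ is the routine direction: if $\sigma\in\MCE(\xi_\mu\beta,\mu)$ with $\xi_\mu\beta\in\vee G$ and $d(\beta)>0$, then $\sigma=\mu\nu$ for some $\nu$, $\sigma\in\vee E$ by the closure property Lemma~\ref{le:orth3}(3), and the maximality clause Lemma~\ref{le:orth4}(2) forces $d(\nu)>0$. The reverse inclusion $\subseteq$, taking an arbitrary proper extension $\mu\nu\in\vee E$ and producing the witnessing $\beta$ from the maximality of $\xi_\mu$, is the main obstacle; this is exactly the combinatorial core of (\cite{RS2005}, Lemmas 8.4 and 8.7), and I would invoke those properties of $\vee E$ and of $\xi_\mu$ to finish. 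Once the set equality is in hand, the two displayed expressions agree and the lemma follows.
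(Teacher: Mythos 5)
Your reduction is valid, and it is essentially the paper's own proof repackaged: for commuting subprojections the identity $\prod_i(P-R_i)=P-\bigvee_i R_i$ turns the two join comparisons into exactly the paper's two steps, namely that $Q(t)_\mu^{\vee E}\le Q(t)_{\xi_\mu}^{\vee G}$ (obtained by expanding each factor $t_\mu t_\mu^*\bigl(t_{\xi_\mu}t_{\xi_\mu}^*-t_{\xi_\mu\beta}t_{\xi_\mu\beta}^*\bigr)$ with (TCK4) into factors of $Q(t)_\mu^{\vee E}$), and that $Q(t)_{\xi_\mu}^{\vee G}\,t_{\mu\nu}t_{\mu\nu}^*=0$ for every $\mu\nu\in\vee E$ with $d(\nu)>0$. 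Your ``routine'' inclusion is proved correctly and corresponds to the paper's first step. The problem is that you stop precisely at the crux: the inclusion you label the main obstacle \emph{is} the lemma, and deferring it to ``the combinatorial core of (\cite{RS2005}, Lemmas 8.4 and 8.7)'' is not a proof. Neither Lemma~\ref{le:orth3} nor Lemma~\ref{le:orth4} asserts that a proper extension $\mu\nu\in\vee E$ extends some $\xi_\mu\beta\in\vee G$ with $d(\beta)>0$; moreover, your stated plan of ``producing the witnessing $\beta$ from the maximality of $\xi_\mu$'' cannot work as described, because maximality of $\xi_\mu$ constrains initial segments of $\mu$, whereas here you must control initial segments of the strictly longer path $\mu\nu$.

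The missing argument — which is what the paper's proof actually supplies — is this: given $\mu\nu\in\vee E$ with $d(\nu)>0$, apply Lemma~\ref{le:orth4} to the path $\mu\nu$ itself to obtain $\xi_{\mu\nu}\in\vee G$ (this requires $\mu\nu\in\vee E\setminus\vee G$; the case $\mu\nu\in\vee G$ must be treated separately, and is easy since then $\mu\nu$ already has the form $\xi_\mu\beta$ with $d(\beta)>0$ and $\MCE(\xi_\mu\beta,\mu)=\{\mu\nu\}$). Then invoke the final clause of Lemma~\ref{le:orth4}, which gives $\mu\in\MCE(\xi_\mu,\lambda)$ and $\mu\nu\in\MCE(\xi_{\mu\nu},\lambda)$, hence $d(\mu)=d(\xi_\mu)\vee d(\lambda)$ and $d(\mu\nu)=d(\xi_{\mu\nu})\vee d(\lambda)$; since $d(\nu)>0$ this forces $\xi_{\mu\nu}\neq\xi_\mu$, and Lemma~\ref{le:orth4}(2) applied to $\mu\nu$ with $\xi=\xi_\mu$ gives $d(\xi_{\mu\nu})\ge d(\xi_\mu)$, so $\xi_{\mu\nu}=\xi_\mu\tau$ with $d(\tau)>0$. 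Since $\mu\nu$ extends $\xi_\mu\tau\in\vee G$, the projection $t_{\mu\nu}t_{\mu\nu}^*$ is dominated by $t_{\xi_\mu\tau}t_{\xi_\mu\tau}^*$, which completes the join comparison (indeed one can check $d(\mu\nu)=d(\xi_\mu\tau)\vee d(\mu)$, so $\mu\nu\in\MCE(\xi_\mu\tau,\mu)$ and your set equality holds literally). Without this application of Lemma~\ref{le:orth4} to $\mu\nu$ and the accompanying degree computation, your proposal has a genuine gap at its decisive step.
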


\begin{proof}
First we show that $Q(t)_\mu^{\vee E} \leq Q(t)_{\xi_\mu}^{\vee G}.$ Since $\mu(0,d(\xi_\mu))=\xi_\mu$, we have $t_\mu t_\mu^* \leq t_{\xi_\mu}t_{\xi_\mu}^*$. Hence
$$Q(t)_{\xi_\mu}^{\vee G}Q(t)_\mu^{\vee E}=t_\mu t_\mu^* \left( \prod_{\xi_\mu \nu \in \vee G \atop d(\nu)>0} \left(t_{\xi_\mu}t_{\xi_\mu}^*-t_{\xi_\mu \nu} t_{\xi_\mu \nu}^* \right) \right) Q(t)_\mu^{\vee E}.$$
We will show that $t_\mu t_\mu^* \left(t_{\xi_\mu}t_{\xi_\mu}^*-t_{\xi_\mu \nu} t_{\xi_\mu \nu}^* \right) Q(t)_{\mu}^{\vee E}=Q(t)_\mu^{\vee E}$ whenever $\xi_\mu \in \vee G$ with $d(\nu)>0$. Fix $\xi_\mu \in \vee G$ with $d(\nu)>0$. Then
\begin{align}
t_\mu t_\mu^* \left(t_{\xi_\mu}t_{\xi_\mu}^*-t_{\xi_\mu \nu} t_{\xi_\mu \nu}^* \right)&=t_\mu t_\mu^* t_{\xi_\mu}t_{\xi_\mu}^*-t_\mu t_\mu^*t_{\xi_\mu \nu} t_{\xi_\mu \nu}^* \nonumber\\
&=t_\mu t_\mu^* -\sum_{\sigma\in \MCE(\mu,\xi_\mu \nu)} t_\sigma t_\sigma^* \qquad \text{since } t_\mu t_\mu^* \leq t_{\xi_\mu}t_{\xi_\mu}^* \nonumber\\
&=t_\mu t_\mu^* -\sum_{(\alpha,\beta) \in \Lambda^{\text{min}}(\mu,\xi_\mu \nu)} t_{\mu \alpha} t_{\mu \alpha}^* \qquad \text{ by Proposition }~\ref{prop:MCE}\nonumber\\
&=\prod_{(\alpha,\beta)\in \Lambda^{\text{min}}(\mu, \xi_\mu \nu)} \left( t_\mu t_\mu^* - t_{\mu \alpha} t_{\mu \alpha}^* \right)\qquad\text{ by Lemma }  ~\ref{handylemma}(6). \label{eq:orth4}
\end{align}

Now suppose that $(\alpha, \beta) \in \Lambda^{\text{min}}(\mu, \xi_\mu \nu)$. We claim $\mu \alpha \in \vee E$ and $d(\alpha)>0$. We have $$(\mu \alpha)(0,d(\xi_\mu \nu))=\xi_\mu \nu \in \vee G,$$ and so Lemma~\ref{le:orth4}(2) shows that $d(\xi_{\mu \alpha}) \geq d(\xi_\mu \nu ) > d(\xi_\mu).$ Hence $\mu \alpha \neq \mu$, so $d(\alpha)>0$. We have $\xi_\mu \nu \in \vee G \subset \vee E$ and $\mu \in \vee E$ by assumption, so $\mu \alpha \in \MCE(\mu , \xi_\mu \nu) \subset \vee E$ by Lemma~\ref{le:orth3}(4). Establishing our claim. Thus, each term in \eqref{eq:orth4} is a factor in $Q(t)_\mu^{\vee E}$, so
$$t_\mu t_\mu^* \left(t_{\xi_\mu}t_{\xi_\mu}^*-t_{\xi_\mu \nu} t_{\xi_\mu \nu}^* \right) Q(t)_\mu^{\vee E}=Q(t)_\mu^{\vee E}.$$

Next we show that if $\mu\nu \in \vee E$ with $d(\nu)>0$, then $Q(t)_{\xi_\mu}^{\vee G}t_{\mu \nu}t_{\mu \nu}^*=0.$ Fix $\mu \nu \in \vee E$ with $d(\nu)>0$. The last statement in Lemma~\ref{le:orth4} implies that $d(\mu \nu)=d(\xi_{\mu \nu}) \vee d(\lambda)$ and $d(\mu)=d(\xi_\mu)\vee d(\lambda)$. Since $d(\nu)>0$ we have $d(\xi_\mu)\neq d(\xi_{\mu \nu})$, and so $\xi_\mu\neq \xi_{\mu \nu}$. Since $(\mu \nu)(0,d(\xi_\mu))=\mu(0,d(\xi_\mu))=\xi_\mu \in \vee G$, Lemma~\ref{le:orth4}(2) implies that $d(\xi_{\mu \nu})>d(\xi_\mu)$. Hence $\xi_{\mu \nu}=\xi_\mu \tau$ for some $\tau$ with $d(\tau)>0$. Since $\xi_{\mu \nu} \in \vee G$, we have
\begin{align*}
Q(t)_{\xi_\mu}^{\vee G} t_{\mu \nu} t_{\mu \nu}^*&=t_{\xi_\mu} t_{\xi_\mu}^* \prod_{\xi_\mu \rho \in \vee G \atop d(\rho)>0}\left( t_{\xi_\mu}t_{\xi_\mu}^*-t_{\xi_\mu \rho}t_{\xi_\mu \rho}^* \right) t_{\mu \nu}t_{\mu \nu}^*\\
&=t_{\xi_\mu} t_{\xi_\mu}^* \prod_{\xi_\mu \rho \in \vee G \atop d(\rho)>0}\left( t_{\xi_\mu}t_{\xi_\mu}^*-t_{\xi_\mu \rho}t_{\xi_\mu \rho}^* \right)\left(t_{\xi_\mu}t_{\xi_\mu}^*-t_{\xi_\mu \tau}t_{\xi_\mu \tau}^*\right) t_{\mu \nu}t_{\mu \nu}^*\\
&=t_{\xi_\mu} t_{\xi_\mu}^* \prod_{\xi_\mu \rho \in \vee G \atop d(\rho)>0}\left( t_{\xi_\mu}t_{\xi_\mu}^*-t_{\xi_\mu \rho}t_{\xi_\mu \rho}^* \right)\left(t_{\xi_\mu}t_{\xi_\mu}^*-t_{\xi_{\mu \nu}}t_{\xi_{\mu \nu}}^*\right) t_{\mu \nu}t_{\mu \nu}^*
\end{align*}
which vanishes since $t_{\xi_\mu}t_{\xi_\mu}^*, t_{\xi_{\mu \nu}}t_{\xi_{\mu\nu}}^*\geq t_{\mu \nu} t_{\mu \nu}^*$.

Since $Q(t)_\mu^{\vee E}=Q(t)_{\xi_\mu}^{\vee G} Q(t)_\mu^{\vee E} $ and $\mu \nu \in \vee E$ with $d(\nu)>0$ implies that $Q(t)_{\xi_\mu}^{\vee G}t_{\mu \nu}t_{\mu \nu}^*=0$, we have
\begin{flalign*}&&Q(t)_\mu^{\vee E}&=Q(t)_{\xi_\mu}^{\vee G}t_\mu t_\mu^* \prod_{\mu \nu \in \vee E \atop d(\nu)>0} \left( t_\mu t_\mu^* - t_{\mu \nu}t_{\mu \nu}^*\right)&\\
&&&=Q(t)_{\xi_\mu}^{\vee G} \prod_{\mu \nu \in \vee E \atop d(\nu)>0} \left( t_\mu t_\mu^* - t_{\mu \nu}t_{\mu \nu}^*\right)&\\
&&&=Q(t)_{\xi_\mu}^{\vee G} t_\mu t_\mu^*.&\qedhere
\end{flalign*}
\end{proof}

\begin{proof}[Proof of Lemma~\ref{le:orth2}]
Fix $v\in \Lambda^0$ and finite $E\subset v\Lambda$ that is closed under minimal common extensions. Suppose that $v\in E$. As $E$ is closed under minimal common extensions, induction on $|G|$ shows that $G\subset E$ implies that $\MCE(G)\subset E$. Hence $\vee E \subset E$. Lemma~\ref{le:orth3}(1) implies that $E=\vee E$. Therefore it suffices to show that $Q(t)_\lambda^{\vee E} Q(t)_\mu^{\vee E} =\delta_{\lambda,\mu}Q(t)_\lambda^{\vee E}$ for all $\lambda,\mu \in \vee E$, and  that $t_v=\sum_{\lambda \in \vee E} Q(t)_\lambda^{\vee E}$.

Fix $\lambda,\mu \in \vee E$ with $\lambda \neq \mu$. Suppose that $d(\lambda)=d(\mu)$. We $Q(t)_\lambda^{\vee E}\leq t_\lambda t_\lambda^*$ and $Q(t)_\mu^{\vee E}\leq t_\mu t_\mu^*$. Lemma~\ref{handylemma}(2) implies that $Q(t)_\lambda^{\vee E}Q(t)_\mu^{\vee E}\leq t_\lambda t_\lambda^*t_\mu t_\mu^*$. Lemma~\ref{handylemma}(6) implies that $t_\lambda t_\lambda^*t_\mu t_\mu^*=0$, giving $Q(t)_\lambda^{\vee E}Q(t)_\mu^{\vee E}=0$.

Now suppose that $d(\lambda)\neq d(\mu)$. Then $d(\lambda)\vee d(\mu)$ is strictly larger than one of $d(\lambda)$ or $d(\mu)$; say $d(\lambda)\vee d(\mu) > d(\lambda)$. Hence $(\alpha,\beta)\in \Lambda^{\text{min}}(\lambda,\mu)$ implies that $d(\alpha)>0$ and $\lambda \alpha \in \vee E$. Then
\begin{align*}
Q(t)_\lambda^{\vee E}& Q(t)_\mu^{\vee E}\\
&=t_\lambda t_\lambda^* t_\mu t_\mu^*Q(t)_\lambda^{\vee E} Q(t)_\mu^{\vee E}\\
&=\left( \sum_{(\alpha,\beta)\in \Lambda^{\text{min}}(\lambda,\mu)} t_{\lambda \alpha} t_{\lambda\alpha}^* \right) \left( \prod_{\lambda \nu \in \vee E \atop d(\nu)>0} \left( t_\lambda t_\lambda^*-t_{\lambda \nu}t_{\lambda \nu}^* \right ) \right) Q(t)_\mu^{\vee E}\\
&=\sum_{(\alpha,\beta)\in \Lambda^{\text{min}}(\lambda,\mu)}\left( t_{\lambda \alpha} t_{\lambda \alpha}^*\left(\left(t_\lambda t_\lambda^*-t_{\lambda \alpha}t_{\lambda \alpha}^* \right) \prod_{\lambda \nu \in \vee E \atop d(\nu)>0} \left( t_\lambda t_\lambda^*-t_{\lambda \nu}t_{\lambda \nu}^* \right) \right) \right) Q(t)_\mu^{\vee E}\\
&=0.
\end{align*}

It remains to show that $\sum_{\lambda \in \vee E} Q(t)_\lambda^{\vee E} = t_v$. We proceed by induction on $|E|$. Suppose that $|E|=1$. Since $v\in E$, we have $\vee E=E=\left\{ v \right \}$. We have $\sum_{\lambda \in \vee E} Q(t)_\lambda^{\vee E} = Q(t)_v^{\left\{ v \right \}}=t_v$, giving the base case.

Now suppose that $|E|=n\geq 2$ and that the result holds whenever $|E|\leq n-1.$ Since $|E|>1$, there exists $\lambda \in E \setminus \left\{ v \right \}$. Set $G:= E\setminus \left\{ \lambda \right \}$. Fix $\mu \in \vee G$. We have
\begin{align}
Q(t)_\mu^{\vee E} &= t_\mu t_\mu^* \left( \prod_{\mu \nu \in \vee E \atop d(\nu)>0} \left( t_\mu t_\mu^*-t_{\mu \nu }t_{\mu \nu}^* \right) \right)\nonumber\\
&=t_\mu t_\mu^* \left( \prod_{\mu \nu \in \vee G \atop d(\nu)>0} \left( t_\mu t_\mu^* -t_{\mu\nu}t_{\mu \nu}^* \right)\right) \left( \prod_{\mu \sigma\in \vee E \setminus \vee G}\left( t_\mu t_\mu^*-t_{\mu \sigma } t_{\mu \sigma}^* \right)\right)\label{eq:ximusigma}.
\end{align}
Suppose that $\mu \sigma \in \vee E \setminus \vee G$ and $\xi_{\mu \sigma}\neq \mu$. Lemma~\ref{le:orth4}(2) ensures that $\xi_{\mu \sigma}=\mu \alpha$ for some $\alpha$ with $d(\alpha)>0$. Since $\mu \alpha =\xi_{\mu \sigma}\in \vee G$, the projection $(t_\mu t_\mu^*-t_{\xi_{\mu \sigma}} t_{\xi_{\mu \sigma}}^*)$ is a factor in $Q(t)_\mu^{\vee G}$. Since $t_\mu t_\mu^*-t_{\xi_{\mu \sigma}}t_{\xi_{\mu\sigma}}^*\leq t_\mu t_\mu^*-t_{\mu \sigma}t_{\mu \sigma}^*$, we have $(t_\mu t_\mu^*-t_{\mu \sigma}t_{\mu \sigma}^*)Q(t)_\mu^{\vee G}=Q(t)_\mu^{\vee G}.$ Equation~\eqref{eq:ximusigma} then implies that
\begin{equation}
Q(t)_\mu^{\vee E}=Q(t)_\mu^{\vee G} \left( \prod_{\mu \sigma\in \vee E \setminus \vee G \atop \xi_{\mu\sigma}=\mu}\left( t_\mu t_\mu^*-t_{\mu \sigma } t_{\mu \sigma}^* \right)\right).
\label{eq:veeEveeGrelation}\end{equation}
Lemma~\ref{le:orth4} ensures that if $\mu \sigma \in \vee E \setminus \vee G$, then $\mu \sigma \in \MCE(\xi_{\mu \sigma},\lambda)$; in particular, when $\xi_{\mu\sigma}=\mu$, we have $d(\mu\sigma)=d(\mu) \vee d(\sigma)$. Hence if $\mu \sigma ,\mu \sigma' \in \vee E \setminus \vee G$ satisfy $\xi_{\mu \sigma}=\mu =\xi_{\mu \sigma'}$, then $d(\mu \sigma)=d(\mu \sigma')$. Lemma~\ref{handylemma}(6) implies that $t_{\mu \sigma}t_{\mu \sigma}^* t_{\mu\sigma'}t_{\mu \sigma'}^*=\delta_{\sigma, \sigma'} t_{\mu \sigma} t_{\mu \sigma}^*$. So
$$\prod_{\mu \sigma \in \vee E \setminus \vee G \atop \xi_{\mu \sigma}=\mu}\left( t_\mu t_\mu^*-t_{\mu \sigma} t_{\mu\sigma}^* \right)=t_\mu t_\mu^*-\sum_{\mu \sigma \in \vee E \setminus \vee G \atop \xi_{\mu \sigma}=\mu } t_{\mu \sigma} t_{\mu \sigma}^*.$$
Equation~\eqref{eq:veeEveeGrelation} implies that for all $\mu \in \vee G$, we have
\begin{equation}
Q(t)_\mu^{\vee E}=Q(t)_\mu^{\vee G} \left( t_\mu t_\mu^* - \sum_{\mu \sigma \in \vee E \setminus \vee G \atop \xi_{\mu \sigma}=\mu} t_{\mu \sigma} t_{\mu \sigma}^* \right). \label{eq:veeEveeGrelation2}
\end{equation}
Substituting~\eqref{eq:veeEveeGrelation2} for those terms in $\sum_{\sigma\in \vee E} Q(t)_{\sigma}^{\vee E}$ for which $\sigma $ belongs to $\vee G$, we obtain
\begin{align}
\sum_{\sigma\in \vee E} Q(t)_{\sigma}^{\vee E}&=\sum_{\mu\in \vee G} Q(t)_{\mu}^{\vee E}+\sum_{\tau\in \vee E\setminus \vee G} Q(t)_{\tau}^{\vee E}\nonumber\\
&=\sum_{\mu \in \vee G}\left( Q(t)_\mu^{\vee G}\left( t_\mu t_\mu^*-\sum_{\mu \sigma\in \vee E \setminus \vee G \atop \xi_{\mu \sigma}=\mu} t_{\mu \sigma}t_{\mu \sigma}^* \right) \right)+\sum_{\tau\in \vee E\setminus \vee G} Q(t)_{\tau}^{\vee E} \label{eq:veeEveeGrelation3}.
\end{align}
Lemma~\ref{le:orth4} ensures that for $\tau \in \vee E \setminus \vee G$, the path $\xi_\tau\in \vee G$ is uniquely determined by $\tau$. Hence we can rewrite ~\eqref{eq:veeEveeGrelation3} as
$$\sum_{\sigma\in \vee E} Q(t)_{\sigma}^{\vee E}=\sum_{\mu \in \vee G}\left( Q(t)_\mu^{\vee G}\left( t_\mu t_\mu^*-\sum_{\mu \sigma\in \vee E \setminus \vee G \atop \xi_{\mu \sigma}=\mu} t_{\mu \sigma}t_{\mu \sigma}^* \right)+\sum_{\tau\in \vee E\setminus \vee G \atop \xi_\tau=\mu} Q(t)_{\tau}^{\vee E} \right).$$
Lemma~\ref{le:orth5} allows us to replace each $Q(t)_\tau^{\vee E}$ with $Q(t)_{\xi_\tau}^{\vee G}t_\tau t_\tau^*$, yielding
\begin{align*}
\sum_{\sigma\in \vee E} Q(t)_{\sigma}^{\vee E}&=\sum_{\mu \in \vee G}\left( Q(t)_\mu^{\vee G}\left( t_\mu t_\mu^*-\sum_{\mu \sigma\in \vee E \setminus \vee G \atop \xi_{\mu \sigma}=\mu} t_{\mu \sigma}t_{\mu \sigma}^* \right)+\sum_{\tau\in \vee E\setminus \vee G \atop \xi_\tau=\mu} Q(t)_{\xi_{\tau}}^{\vee E} t_\tau t_\tau^*\right)\\
&=\sum_{\mu \in \vee G}\left( Q(t)_\mu^{\vee G}\left( t_\mu t_\mu^*-\sum_{\mu \sigma\in \vee E \setminus \vee G \atop \xi_{\mu \sigma}=\mu} t_{\mu \sigma}t_{\mu \sigma}^* \right)+\sum_{\tau\in \vee E\setminus \vee G \atop \xi_\tau=\mu} Q(t)_{\mu}^{\vee G} t_\tau t_\tau^*\right)\\
&=\sum_{\mu \in \vee G} Q(t)_\mu^{\vee G} \left(\left( t_\mu t_\mu^*-\sum_{\mu \sigma\in \vee E \setminus \vee G \atop \xi_{\mu \sigma}=\mu} t_{\mu \sigma}t_{\mu \sigma}^* \right)+\sum_{\tau\in \vee E\setminus \vee G \atop \xi_\tau=\mu} t_\tau t_\tau^*\right)\\
&=\sum_{\mu \in \vee G} Q(t)_\mu^{\vee G}.
\end{align*}
By the inductive hypothesis, we have $\sum_{\mu \in \vee G} Q(t)_\mu^{\vee G}=t_v$.
\end{proof}

\begin{corollary}[\cite{RSY2004}, Corollary 3.7] \label{co:orth5}
If $E$ is closed under minimal common extensions then
\begin{equation}
t_\mu t_\mu^* = \sum_{\mu \nu \in E} Q(t)_{\mu \nu}^E \nonumber
\end{equation}
\end{corollary}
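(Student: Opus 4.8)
The plan is to reduce the identity to Lemma~\ref{le:orth2} by conjugating by $t_\mu$. Throughout I take $\mu\in E$ (the hypothesis under which the corollary is used in the sequel, and which is exactly what makes the vertex $s(\mu)$ lie in the auxiliary set $F$ introduced below); write $w=s(\mu)$ and $A=C^*(\lbrace t_\lambda : \lambda\in\Lambda\rbrace)$.

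First I would introduce the set $F:=\lbrace \nu\in w\Lambda : \mu\nu\in E\rbrace$ and check that it inherits the relevant structure from $E$. Since $\mu=\mu\,s(\mu)\in E$ we have $w\in F$, and $F$ is finite because $E$ is. To see that $F$ is closed under minimal common extensions, I would take $\nu_1,\nu_2\in F$ and $\sigma\in\MCE(\nu_1,\nu_2)$: a short degree computation using the factorisation property shows that $\mu\sigma\in\MCE(\mu\nu_1,\mu\nu_2)$, which lies in $E$ by hypothesis, whence $\sigma\in F$.

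The key step will be to observe that $\phi(x):=t_\mu x t_\mu^*$ is a $*$-homomorphism on the corner $t_w A\, t_w$ (this uses only $t_\mu^*t_\mu=t_w$, i.e.\ (TCK3)), and that it carries the range projections building $Q(t)_\nu^F$ to those building $Q(t)_{\mu\nu}^E$. Concretely, for any $\rho\in w\Lambda$ one has $t_\rho t_\rho^*\le t_w$, and (TCK2) together with the cancellation $c(\mu,\rho)\overline{c(\mu,\rho)}=1$ gives $\phi(t_\rho t_\rho^*)=t_{\mu\rho}t_{\mu\rho}^*$; in particular $\phi(t_w)=t_\mu t_\mu^*$. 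Because $Q(t)_\nu^F$ is a finite product of the commuting projections $t_\nu t_\nu^*$ and $t_\nu t_\nu^*-t_{\nu\alpha}t_{\nu\alpha}^*$, applying the homomorphism $\phi$ factor by factor yields
\[
\phi\bigl(Q(t)_\nu^F\bigr)=t_{\mu\nu}t_{\mu\nu}^*\prod_{\mu\nu\alpha\in E,\ d(\alpha)>0}\bigl(t_{\mu\nu}t_{\mu\nu}^*-t_{\mu\nu\alpha}t_{\mu\nu\alpha}^*\bigr)=Q(t)_{\mu\nu}^E,
\]
where I have used that $\nu\alpha\in F$ is equivalent to $\mu\nu\alpha\in E$, so that the two index sets coincide.

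Finally I would apply Lemma~\ref{le:orth2} to $F\subset w\Lambda$ (legitimate since $w\in F$ and $F$ is closed under minimal common extensions), obtaining $\sum_{\nu\in F}Q(t)_\nu^F=t_w$. Applying $\phi$ to both sides and using the bijection $\nu\mapsto\mu\nu$ between $F$ and $\lbrace \mu\nu:\mu\nu\in E\rbrace$, I get
\[
\sum_{\mu\nu\in E}Q(t)_{\mu\nu}^E=\phi\Bigl(\sum_{\nu\in F}Q(t)_\nu^F\Bigr)=\phi(t_w)=t_\mu t_\mu^*,
\]
as required. I expect the main obstacle to be the bookkeeping in the third step: verifying that $\phi$ genuinely restricts to a $*$-homomorphism of the corner and that the cocycle factors cancel this cleanly under conjugation, and confirming that the index set $\lbrace \alpha:\nu\alpha\in F,\ d(\alpha)>0\rbrace$ defining $Q(t)_\nu^F$ transforms exactly into the one defining $Q(t)_{\mu\nu}^E$. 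Closure of $F$ under minimal common extensions is the other point requiring care, though it follows directly from the factorisation property.
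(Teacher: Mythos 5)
Your proof is correct, but it takes a genuinely different route from the paper's. The paper proves the corollary ``at the range'': it invokes Proposition~\ref{prop:orth1} with $v=r(\mu)$ to write $t_v = t_v\prod_{\lambda\in vE}(t_v-t_\lambda t_\lambda^*)+\sum_{\lambda\in vE}Q(t)_\lambda^E$, multiplies through by $t_\mu t_\mu^*$, and then kills the unwanted terms: the gap term dies because $\mu\in vE$ gives a factor $t_\mu t_\mu^*(t_v-t_\mu t_\mu^*)=0$, and each $t_\mu t_\mu^*Q(t)_\sigma^E$ with $\sigma\in E\setminus\mu\Lambda$ dies by expanding $t_\mu t_\mu^* t_\sigma t_\sigma^*$ via (TCK4) and observing that closure under minimal common extensions puts each $\sigma\beta$ (with $d(\beta)>0$) into $E$, so that a factor of $Q(t)_\sigma^E$ annihilates $t_{\sigma\beta}t_{\sigma\beta}^*$. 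You instead work ``at the source'': you pull $E$ back to $F=\left\{\nu\in s(\mu)\Lambda:\mu\nu\in E\right\}$, verify that $F$ contains $s(\mu)$ and inherits closure under minimal common extensions (where the identity $(m+a)\vee(m+b)=m+(a\vee b)$ in $\mathbb{N}^k$ does the work), apply Lemma~\ref{le:orth2} directly to $F$, and transport the resolution $\sum_{\nu\in F}Q(t)_\nu^F=t_{s(\mu)}$ through the corner $*$-homomorphism $x\mapsto t_\mu x t_\mu^*$, under which the cocycle factors cancel and $Q(t)_\nu^F\mapsto Q(t)_{\mu\nu}^E$. Your route bypasses Proposition~\ref{prop:orth1} and all $\Lambda^{\text{min}}$/(TCK4) computations, cleanly separating the combinatorics (closure of $F$) from the algebra (the corner homomorphism); the paper's route avoids introducing an auxiliary set and homomorphism, and gets mutual orthogonality of the summands as a by-product of Proposition~\ref{prop:orth1}. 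Both arguments ultimately rest on Lemma~\ref{le:orth2}, and both need the hypothesis $\mu\in E$, which the corollary's statement leaves tacit --- you make it explicit, the paper uses it silently in the step ``Since $\mu\in vE$''.
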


\begin{proof}
Let $v:=r(\mu)$. By Proposition~\ref{prop:orth1}, we have
$$t_\mu t_\mu^*=t_\mu t_\mu^*\left( \prod_{\lambda \in v E} (t_v-t_\lambda t_\lambda^*)+\sum_{\lambda \in v E} Q(t)_\lambda^E \right).$$
Since $\mu \nu \in E$ implies that $t_\mu t_\mu^* \geq Q(t)_{\mu \nu}^E$, we need only show that:
\begin{enumerate}
\item[(1)] $t_\mu t_\mu^* \prod_{\lambda \in v E} (t_v-t_\lambda t_\lambda^*)=0$; and
\item[(2)] $t_\mu t_\mu^* Q(t)_\lambda^E=0$ for all $ \lambda \in E \setminus \mu \Lambda$.
\end{enumerate}
Since $\mu \in v E$, we have
$$t_\mu t_\mu^* \prod_{\lambda \in v E} (t_v-t_\lambda t_\lambda^*)=t_\mu t_\mu^* (t_v -t_\mu t_\mu^*)\prod_{\lambda \in v E} (t_v-t_\lambda t_\lambda^*)=0.$$
This establishes (1).

For (2), fix $\sigma \in E \setminus \mu E$. If $\MCE(\mu,\sigma)=\emptyset$, then $t_\mu t_\mu^*Q(t)_\sigma^E \leq t_\mu t_\mu^* t_\sigma t_\sigma^*=0$, by (TCK4). If $\MCE(\mu,\sigma)\neq \emptyset$, then $r(\sigma)=r(\mu)=v$, and
\begin{align}
t_\mu t_\mu^* Q(t)_\sigma^{E}&=t_\mu t_\mu^* t_\sigma t_\sigma^*Q(t)_\sigma^{E}\nonumber\\
&=\sum_{(\alpha,\beta)\in \Lambda^{\text{min}}}\left( t_{\sigma \beta} t_{\sigma \beta}^* \prod_{\sigma \nu \in E \atop d(\nu)>0} ( t_\sigma t_\sigma^*-t_{\sigma \nu } t_{\sigma \nu}^*) \right).\label{eq:coroeq1}
\end{align}
Fix $(\alpha,\beta)\in \Lambda^{\text{min}}(\mu,\sigma)$. Since $E$ is closed under minimal common extensions, we have $\sigma \beta \in E$ and, since $\sigma \notin \mu \Lambda$, $d(\beta)>0$. Hence
$$t_{\sigma \beta} t_{\sigma \beta}^* \prod_{\sigma \nu \in E \atop d(\nu)>0} ( t_\sigma t_\sigma^*-t_{\sigma \nu } t_{\sigma \nu}^*) =t_{\sigma \beta} t_{\sigma \beta}^* (t_\sigma t_\sigma^* - t_{\sigma \beta } t_{\sigma \beta}^*)\prod_{\sigma \nu \in E \atop d(\nu)>0} ( t_\sigma t_\sigma^*-t_{\sigma \nu } t_{\sigma \nu}^*)=0.$$
Applying this to each term in~\eqref{eq:coroeq1} gives (2).
\end{proof}

\section{Matrix units for the core}
Let $\left\{ t_\lambda :\lambda \in \Lambda \right \}$ be a relative Cuntz-Krieger $(\Lambda,c;\mathcal{E})$-family. For each finite $E\subset \Lambda$, we identify a finite-dimensional subalgebra $M^t_{\Pi E}$ of $C^*\left( \left\{ t_\lambda t_\mu^* : \lambda ,\mu \in \Lambda, d(\lambda)=d(\mu) \right \} \right).$ We describe matrix units for $M^t_{\Pi E}$.
\label{chptr:matrix_units}

Lemma~\ref{le:matrixalgebra} and Corollary~\ref{co:matrixalgebrauni} can be deduced from Appendix A of \cite{R2005}.

\begin{lemma}\label{le:matrixalgebra}
Let $A$ be a $C^*$-algebra. Let $I$ a finite set. Suppose that $E:=\left \{ e_{i,j} : i,j\in I \right \}\subset A$ satisfies:
\begin{enumerate}
\item[(M1)] $e_{i,j}^*=e_{j,i}$ for all $i,j \in I $; and
\item[(M2)] $e_{i,j}e_{k,l}=\delta_{j,k}e_{i,l}$ for all $i,j,k,l\in I$.
\end{enumerate}
Suppose that $e_{i,j}\neq 0$ for each $i,j\in I$. Then $\linspan{E}$ is a $C^*$-subalgebra of $A$ that is universal in the sense that if $B$ is a $C^*$-algebra and $F=\left \{ f_{i,j} : i,j\in I \right \}\subset B$ satisfies $(M1)$ and $(M2)$ then there exists a homomorphism $\phi: \linspan E \rightarrow B$ such that $\phi(e_{i,j})=f_{i,j}$.
\end{lemma}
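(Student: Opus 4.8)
The plan is to prove this in two stages: first that $\linspan E$ is a $C^*$-subalgebra of $A$, and then that it enjoys the stated universal property.

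For the first stage, I would observe that $\linspan E$ is a finite-dimensional subspace of $A$ and hence automatically norm-closed. Relation (M1) shows that it is closed under the adjoint operation, since $(\sum_{i,j} a_{i,j} e_{i,j})^* = \sum_{i,j} \overline{a_{i,j}} e_{j,i}$, which again lies in $\linspan E$; and relation (M2), together with bilinearity of multiplication, shows that it is closed under products. Hence $\linspan E$ is a $C^*$-subalgebra of $A$.

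The key preliminary step towards the universal property is to show that the $e_{i,j}$ are linearly independent, so that a linear map out of $\linspan E$ may be prescribed freely on them. For this I would use the standard compression argument: if $\sum_{i,j} a_{i,j} e_{i,j} = 0$, then multiplying on the left by $e_{k,k}$ and on the right by $e_{l,l}$ and applying (M2) twice collapses the sum to $a_{k,l} e_{k,l} = 0$; since $e_{k,l} \neq 0$ by hypothesis, we get $a_{k,l} = 0$ for every pair $(k,l)$. This is the only place where the nonvanishing hypothesis is used, and it is really the crux of the whole argument.

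Given linear independence, I would define $\phi : \linspan E \to B$ by $\phi(\sum_{i,j} a_{i,j} e_{i,j}) = \sum_{i,j} a_{i,j} f_{i,j}$; this is well-defined and linear precisely because of the previous step. Because $F$ satisfies (M1) and (M2), the identities $\phi(e_{i,j} e_{k,l}) = \delta_{j,k} f_{i,l} = f_{i,j} f_{k,l}$ and $\phi(e_{i,j}^*) = f_{j,i} = f_{i,j}^*$ hold on the spanning elements, and bilinearity of multiplication together with conjugate-linearity of the involution extend these to all of $\linspan E$, so that $\phi$ is a $*$-homomorphism. Continuity is automatic, since the domain is finite-dimensional (equivalently, every $*$-homomorphism between $C^*$-algebras is contractive), so no extension step is needed. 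I do not anticipate any genuine obstacle; the only point demanding care is the well-definedness of $\phi$, which the linear-independence step secures.
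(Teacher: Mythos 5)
Your proof is correct. The paper does not actually prove this lemma itself --- it merely records that it ``can be deduced from Appendix A of \cite{R2005}'' --- and your argument (compression of a vanishing linear combination by $e_{k,k}$ on the left and $e_{l,l}$ on the right, using $e_{k,l}\neq 0$ to get linear independence, then defining $\phi$ on the basis and checking the $*$-homomorphism identities on spanning elements, with continuity free by finite-dimensionality) is precisely the standard argument given in that reference, so your proposal correctly supplies the details the paper outsources.
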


If $\left \{ e_{i,j} : i,j\in I \right \}$ satisfy (M1) and (M2), we call the $e_{i,j}$ a matrix units.

\begin{corollary}\label{co:matrixalgebrauni}
Let $A$ and $B$ be $C^*$-algebras. Let $I$ be a finite set. Suppose that $E:=\left \{ e_{i,j}: i,j \in I \right \}\subset A$ and $ F:=\left \{ f_{i,j}: i,j \in I \right \}\subset B$ both satisfy $(M1)$ and $(M2)$. If $e_{i,j}\neq 0$ and $f_{i,j}\neq 0$ for all $i,j\in I$, then there is an isomorphism $$\linspan\left \{ e_{i,j} :i,j\in I\right \} \rightarrow \linspan \left \{ f_{i,j} : i,j\in I\right \}$$ taking $e_{i,j}$ to $f_{i,j}$ for each $i,j\in I$.
\end{corollary}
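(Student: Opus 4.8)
The plan is to deduce the result directly from the universal property established in Lemma~\ref{le:matrixalgebra}, by producing mutually inverse homomorphisms in exactly the same spirit as the uniqueness argument for the twisted Toeplitz algebra in Theorem~\ref{toeplitzalgebra}. The key observation is that both $E$ and $F$ are systems of matrix units satisfying (M1) and (M2), and both consist of nonzero elements, so Lemma~\ref{le:matrixalgebra} is applicable in \emph{each} direction.

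First I would apply Lemma~\ref{le:matrixalgebra} with $E$ as the given system of matrix units: since $e_{i,j}\neq 0$ for all $i,j\in I$ and since $F\subset B$ satisfies (M1) and (M2), the lemma provides a homomorphism $\phi:\linspan E \rightarrow B$ with $\phi(e_{i,j})=f_{i,j}$ for all $i,j\in I$. Because $\phi$ is linear and the $e_{i,j}$ span $\linspan E$, the image of $\phi$ is precisely $\linspan F$, so $\phi$ may be regarded as a homomorphism $\linspan E \rightarrow \linspan F$. By symmetry, applying Lemma~\ref{le:matrixalgebra} this time with $F$ as the system of matrix units (using $f_{i,j}\neq 0$ for all $i,j$ and that $E\subset A$ satisfies (M1) and (M2)) yields a homomorphism $\psi:\linspan F \rightarrow \linspan E$ with $\psi(f_{i,j})=e_{i,j}$ for all $i,j\in I$.

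It then remains to check that $\phi$ and $\psi$ are mutually inverse. For each $i,j\in I$ we have $(\psi\circ\phi)(e_{i,j})=\psi(f_{i,j})=e_{i,j}$ and $(\phi\circ\psi)(f_{i,j})=\phi(e_{i,j})=f_{i,j}$; since the compositions $\psi\circ\phi$ and $\phi\circ\psi$ agree with the respective identity maps on the spanning sets $\{e_{i,j}\}$ and $\{f_{i,j}\}$, linearity forces $\psi\circ\phi=\id_{\linspan E}$ and $\phi\circ\psi=\id_{\linspan F}$. Hence $\phi$ is a bijective homomorphism taking $e_{i,j}$ to $f_{i,j}$, which is the desired isomorphism.

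I do not expect any genuine obstacle here: the content is entirely carried by Lemma~\ref{le:matrixalgebra}, and this corollary is just the standard ``uniqueness'' companion to a universal-property statement. The only point requiring a moment's care is that the nonvanishing hypothesis is needed on \emph{both} families, $E$ and $F$, rather than just one: the nonvanishing of $E$ is what licenses the construction of $\phi$, while the nonvanishing of $F$ is what licenses the construction of the inverse homomorphism $\psi$. Without both, Lemma~\ref{le:matrixalgebra} could not be invoked in both directions and the argument would break down.
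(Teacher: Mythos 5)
Your proof is correct: applying Lemma~\ref{le:matrixalgebra} in both directions (which is legitimate precisely because both families consist of nonzero matrix units) and checking that the two resulting homomorphisms are mutually inverse on spanning sets is exactly the standard uniqueness-from-universality argument. The paper does not write out a proof of Corollary~\ref{co:matrixalgebrauni} at all---it defers to Appendix A of \cite{R2005}---so your argument is a valid, self-contained derivation of the corollary from the lemma, in the spirit clearly intended by the paper's arrangement of the two statements.
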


Let $I$ be a finite set. Let $M_I(\mathbb{C}):= \left \{ f: I \times I \rightarrow \mathbb{C} \right \}$. For $f,g\in M_I(\mathbb{C})$, define $f^*,fg\in M_I(\mathbb{C})$ by $f^*(i,j)=\overline{f(j,i)}$ and $(fg)(i,j) =\sum_{k\in I}f(i,k)g(k,j)$. These operations mimic the usual adjoint and multiplication of matrices. Each $f\in M_I(\mathbb{C})$ determines a linear map $T_f : \mathbb{C}^I \rightarrow \mathbb{C}^I$ by
$$(T_fv)(i) =\sum_{j\in I} f(i,j)v(j).$$
Define $\Vert f \Vert$ to be the operator norm of $T_f$ with respect to the $\ell^2$ norm on $\mathbb{C}^I$. With this norm $M_I(\mathbb{C})$ becomes a $C^*$-algebra. For each $i,j\in I$, define $\xi_{i,j} \in M_I(\mathbb{C})$ by $\xi_{i,j}(k,l)=\delta_{i,k}\delta_{j,l}$ for all $k,l\in I$, where $\delta$ denotes the Kronecker delta. Then $\left\{ \xi_{i,j} : i,j\in I \right \}$ is a collection of nonzero matrix units and $M_I(\mathbb{C})=\linspan\left\{ \xi_{i,j}:i,j\in I \right \}$. Corollary~\ref{co:matrixalgebrauni} implies that if $\left \{ e_{i,j}: i,j \in I \right \}\subset A$ is a collection of nonzero matrix units in a $C^*$-algebra $A$, then $M_I(\mathbb{C}) \cong \linspan \left\{ e_{i,j}: i,j\in I \right \}$.
%

\begin{definition}
Let $(\Lambda,d)$ be a finitely aligned $k$-graph. For a subset $E$ of $\Lambda$ we define
$$E\times_{d,s} E:= \left \{ (\lambda,\mu)\in E \times E : d(\lambda)=d(\mu) \text{ and } s(\lambda)= s(\mu) \right \}.$$
\end{definition}

Let $(\Lambda,d)$ be a finitely aligned $k$-graph, let $c\in \underline{Z}^2(\Lambda,\mathbb{T})$ and let $\left\{ t_\lambda: \lambda \in \Lambda \right \}$ be a Toeplitz-Cuntz-Krieger $(\Lambda,c)$-family. Given a finite $E\subset \Lambda$, we show there is a finite $\Pi E \subset \Lambda$ such that
$$\linspan\left\{ t_\lambda t_\mu^* : (\lambda, \mu) \in \Pi E\times_{d,s} \Pi E\right \}$$
is closed under multiplication, and hence is a finite-dimensional $C^*$-subalgebra of $$C^*\left( \left\{ t_\lambda : \lambda \in \Lambda \right \} \right).$$

\begin{lemma}[\cite{RSY2004}, Lemma 3.2]\label{le:piEsets} Let $(\Lambda,d)$ be a finitely aligned $k$-graph, let $c\in \underline{Z}^2(\Lambda,\mathbb{T})$ and let $\left\{ t_\lambda: \lambda \in \Lambda \right \}$ be a Toeplitz-Cuntz-Krieger $(\Lambda,c)$-family. Let $E \subset \Lambda$ be finite.
There exists a finite set $F\subset \Lambda$ that contains $E$ and satisfies \begin{equation} (\lambda,\mu),(\sigma,\tau) \in F \times_{d,s} F \Longrightarrow \left \{ \lambda\alpha,\tau\beta: (\alpha, \beta)\in \Lambda^{\text{min}}(\mu,\sigma) \right \} \subset F.\label{eq:piE}\end{equation}
Moreover, for any finite $F\subset \Lambda$ that contains $E$ and satisfies \eqref{eq:piE},
$$M_F^t := \linspan \left \{ t_\lambda t_\mu^* : (\lambda,\mu) \in F\times_{d,s} F \right \}$$
is a finite-dimensional $C^*$-subalgebra of $C^* \left ( \left \{ t_\lambda t_\mu^* : \lambda,\mu \in \Lambda \right \} \right)$.
\end{lemma}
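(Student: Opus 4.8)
The statement has two essentially independent halves, and I would treat them separately. The substantive content is the existence of the finite set $F$; once $F$ is in hand, showing that $M_F^t$ is a finite-dimensional $C^*$-subalgebra is a short verification built on Lemma~\ref{handylemma}(4). Throughout, I would emphasise that the $2$-cocycle is combinatorially inert here: it enters only as unit-modulus scalar coefficients in the product formula of Lemma~\ref{handylemma}(4) and never affects which paths arise, so the construction of $F$ is identical to that of Raeburn, Sims and Yeend in \cite{RSY2004}.

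For the existence of $F$, the plan is to build it as an iterated closure. Set $F_0=E$ and let $F_{n+1}$ be $F_n$ together with all paths $\lambda\alpha$ and $\tau\beta$ arising from pairs $(\lambda,\mu),(\sigma,\tau)\in F_n\times_{d,s}F_n$ and $(\alpha,\beta)\in\Lambda^{\text{min}}(\mu,\sigma)$, and put $F=\bigcup_{n\geq0}F_n$. Two facts are immediate. First, each $F_n$ is finite: if $F_n$ is finite then $F_n\times_{d,s}F_n$ is finite, and finite alignment makes each $\Lambda^{\text{min}}(\mu,\sigma)$ finite, so only finitely many paths are adjoined. Second, writing $N:=\bigvee_{\eta\in E}d(\eta)$, every element of $F$ has degree $\leq N$: since $d(\lambda)=d(\mu)$ and $d(\tau)=d(\sigma)$ we get $d(\lambda\alpha)=d(\mu)+d(\alpha)=d(\mu\alpha)=d(\mu)\vee d(\sigma)\leq N$, and similarly $d(\tau\beta)=d(\sigma\beta)=d(\mu\alpha)\leq N$. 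By construction $F$ then satisfies the closure property \eqref{eq:piE}, and $E\subset F$.

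The main obstacle is to prove that $F$ is actually finite; this is exactly where all the combinatorial work lies. The degree bound alone is not enough, because a finitely aligned $k$-graph may have infinitely many paths of a given bounded degree. My plan is a well-founded induction on the degree $m$ in the finite poset $\{m\in\mathbb{N}^k:m\leq N\}$, showing that $F^m:=\{x\in F:d(x)=m\}$ is finite for each $m$. The degree-$0$ part is just the vertices of $E$, since no new vertex is ever adjoined. For the inductive step I would classify a newly adjoined $x=\lambda\alpha$ of degree $m$ according to $d(\sigma)$: if $d(\sigma)\lneq m$ then $\lambda,\mu,\sigma$ all have degree $\lneq m$, so by the inductive hypothesis there are finitely many choices for them and, by finite alignment, finitely many $\alpha$; if instead $d(\sigma)=m$ then $d(\beta)=0$ forces $\sigma=\mu\alpha$, so $x=\lambda\cdot\sigma(d(\mu),m)$ is obtained from the degree-$m$ path $\sigma$ by replacing its initial segment $\mu$ with a parallel path $\lambda$ of strictly smaller degree. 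The remaining task is to bound this ``head-rerouting'' closure; I would control it using the finiteness of minimal-common-extension closures already recorded in Lemma~\ref{le:orth3}(2), which is precisely the mechanism by which \cite{RSY2004} forces termination. This step is the crux and will require the most care.

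Given a finite $F$ with property \eqref{eq:piE}, the second half is routine. Since $F\times_{d,s}F$ is finite, $M_F^t=\linspan\{t_\lambda t_\mu^*:(\lambda,\mu)\in F\times_{d,s}F\}$ is the span of finitely many elements, hence a finite-dimensional, and therefore closed, subspace of $C^*(\{t_\lambda:\lambda\in\Lambda\})$. It is self-adjoint because $(t_\lambda t_\mu^*)^*=t_\mu t_\lambda^*$ and $(\mu,\lambda)\in F\times_{d,s}F$ whenever $(\lambda,\mu)$ is. For closure under multiplication I would apply Lemma~\ref{handylemma}(4) to expand $(t_\lambda t_\mu^*)(t_\sigma t_\tau^*)$ as a finite sum $\sum_{(\alpha,\beta)\in\Lambda^{\text{min}}(\mu,\sigma)}(\text{unit scalar})\,t_{\lambda\alpha}t_{\tau\beta}^*$; the degree computation $d(\lambda\alpha)=d(\mu\alpha)=d(\sigma\beta)=d(\tau\beta)$ together with the matching of sources $s(\lambda\alpha)=s(\alpha)=s(\beta)=s(\tau\beta)$ shows each $(\lambda\alpha,\tau\beta)$ lies in $F\times_{d,s}F$, and property \eqref{eq:piE} puts $\lambda\alpha,\tau\beta$ in $F$, so the product stays in $M_F^t$. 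A finite-dimensional subspace closed under multiplication and adjoints is a $C^*$-subalgebra, completing the argument.
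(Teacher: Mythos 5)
Your proof of the second statement coincides with the paper's own: $M_F^t$ is finite-dimensional and hence closed, it is self-adjoint because $(\lambda,\mu)\in F\times_{d,s}F$ implies $(\mu,\lambda)\in F\times_{d,s}F$, and it is closed under multiplication by Lemma~\ref{handylemma}(4) together with \eqref{eq:piE}, the cocycle entering only as unimodular coefficients. Be aware, though, that for the first statement the paper gives no proof at all: it simply quotes the first statement of Lemma 3.2 of \cite{RSY2004}, which is why the lemma carries that attribution.

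The genuine gap is in your first half. Building $F$ as the increasing union of one-step closures and reducing finiteness to finiteness of each graded piece $F^m:=\{x\in F:d(x)=m\}$, by induction over the finite poset $\{m\in\mathbb{N}^k:m\leq N\}$, is a reasonable plan, but the step you explicitly defer --- termination of the head-rerouting closure at a fixed degree $m$ --- \emph{is} the mathematical content of the lemma, and you do not prove it. Concretely, when $d(\sigma)=m$ the new path is $x=\lambda\,\sigma(d(\mu),m)$ with $(\lambda,\mu)\in F\times_{d,s}F$ and $\mu=\sigma(0,d(\mu))$; such an $x$ can later serve as the $\sigma$ of a further rerouting whenever one of its initial segments happens to lie in $F$, and iterating reroutings at degrees that are neither nested nor equal produces heads that are concatenations of rectangle-segments of previously constructed paths, whose number and shape are not obviously controlled. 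Your proposed tool, Lemma~\ref{le:orth3}(2), cannot close this: that lemma (which moreover assumes $v\in E$) asserts finiteness of $\vee E$, the set of minimal common extensions of subsets of $E$, whereas a rerouted path $\lambda\,\sigma(d(\mu),m)$ extends $\lambda$ but in general extends no initial segment of $\sigma$ other than its range vertex --- rerouting closures are not MCE closures, and no reduction of one to the other is available without further argument. As it stands, then, your proposal establishes the routine half and leaves the substantive half as a programme whose key step is missing and whose announced method would not work as stated. The legitimate options are to do as the paper does and cite Lemma 3.2 of \cite{RSY2004} for the existence of a finite $F\supset E$ satisfying \eqref{eq:piE} (finiteness of $\Pi E$ then follows since $\Pi E\subset F$), or to reproduce that combinatorial argument in full.
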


\begin{proof}
The first statement is the first statement of Lemma 3.2 in \cite{RSY2004}. For the second statement, suppose that $F\subset \Lambda$ is finite contains $E$ and satisfies \eqref{eq:piE}. Then $M_F^t$ is a finite-dimensional, and hence norm-closed, subspace of $C^*\left( \left\{ t_\lambda : \lambda \in \Lambda \right \} \right)$, which is closed under adjoints. It remains to show that $M_F^t$ is closed under multiplication. Let $t_\lambda t_\mu^*$ and $t_\sigma t_\tau^*$ be generators of $M_F^t$. Then $d(\lambda)=d(\mu),  d(\sigma)=d(\tau),  s(\lambda)=s(\mu)$ and $s(\sigma)=s(\tau)$. By Lemma~\ref{handylemma} part (4) we have
\begin{equation} t_\lambda t_\mu^* t_\sigma t_\tau^* = \sum_{(\alpha,\beta)\in \Lambda^{\text{min}}(\mu,\sigma)}c(\sigma,\beta)c(\lambda,\alpha)\overline{c(\mu,\alpha)c(\tau,\beta)}t_{\lambda \alpha}t_{\tau \beta}^*. \nonumber \end{equation}

 As $F$ satisfies \eqref{eq:piE} we have $\lambda \alpha, \tau \beta \in F$ for each $(\alpha,\beta)\in \Lambda^{\text{min}}(\mu,\sigma) $ and hence $ t_\lambda t_\mu^* t_\sigma t_\tau^* \in M_F^t$. The result then follows since multiplication is bilinear.
\end{proof}

The intersection of all sets containing $E$ and satisfying \eqref{eq:piE} also contains $E$ and satisfies \eqref{eq:piE}, and so we make the following definition.

\begin{definition}
Let $(\Lambda,d)$ be a finitely aligned $k$-graph. Let $E \subset \Lambda$ be finite. Define
\begin{equation}
\Pi E := \bigcap \left \{ F \subset \Lambda : E \subset F \text{ and } F \text{ satisfies } \eqref{eq:piE} \right \} \nonumber.
\end{equation}
\end{definition}

The following properties of $\Pi E$ will be important.
\begin{lemma}[\cite{RSY2004}, Remark 3.4]\label{re:matrixunits0}
Let $(\Lambda,d)$ be a finitely aligned $k$-graph. Let $E\subset \Lambda$ be finite. Then
\begin{enumerate}
\item[(1)] $\Pi E$ is finite;
\item[(2)] for $\rho,\zeta \in \Pi E$ with $d(\rho)=d(\zeta)$ and $s(\rho)=s(\zeta)$, and for all $\nu \in s(\rho) \Lambda$,
$$\rho \nu \in \Pi E \text{ if and only if } \zeta \nu \in \Pi E; \text{ and }$$
\item[(3)] $\Pi E$ is closed under minimal common extensions.
\end{enumerate}
\end{lemma}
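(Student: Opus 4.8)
The three assertions concern only the combinatorial object $\Pi E$, which is defined purely in terms of the $k$-graph $\Lambda$ and the finite set $E$; the $2$-cocycle $c$ plays no r\^ole. Consequently the argument is exactly that of (\cite{RSY2004}, Remark 3.4), and the plan is to extract each statement directly from the defining closure property \eqref{eq:piE}. The starting point is the observation recorded just before the definition of $\Pi E$, namely that $\Pi E$ itself contains $E$ and satisfies \eqref{eq:piE}, being an intersection of sets with these properties. Statement~(1) is then immediate: Lemma~\ref{le:piEsets} furnishes at least one \emph{finite} set $F$ with $E \subset F$ and $F$ satisfying \eqref{eq:piE}, and since $\Pi E$ is the intersection of all such sets we have $\Pi E \subset F$, whence $\Pi E$ is finite.

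For statement~(3), I would fix $\lambda,\mu \in \Pi E$ and $\rho \in \MCE(\lambda,\mu)$. By Proposition~\ref{prop:MCE} there is $(\alpha,\beta)\in \Lambda^{\text{min}}(\lambda,\mu)$ with $\rho = \lambda\alpha$. The idea is to apply \eqref{eq:piE} to the two diagonal pairs $(\lambda,\lambda)$ and $(\mu,\mu)$, both of which lie in $\Pi E \times_{d,s} \Pi E$. Reading \eqref{eq:piE} with first pair $(\lambda,\lambda)$ and second pair $(\mu,\mu)$, the index set becomes $\Lambda^{\text{min}}(\lambda,\mu)$ and the conclusion places $\lambda\alpha = \rho$ in $\Pi E$. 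As $\rho \in \MCE(\lambda,\mu)$ is arbitrary, this gives $\MCE(\lambda,\mu)\subset \Pi E$.

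Statement~(2) follows by the same mechanism. Suppose $\rho,\zeta \in \Pi E$ with $d(\rho)=d(\zeta)$ and $s(\rho)=s(\zeta)$, let $\nu \in s(\rho)\Lambda$, and assume $\rho\nu \in \Pi E$; by symmetry in $\rho$ and $\zeta$ it suffices to produce $\zeta\nu \in \Pi E$. A short factorisation computation shows $\Lambda^{\text{min}}(\rho,\rho\nu)=\{(\nu,s(\nu))\}$, since any common extension has degree $d(\rho)\vee d(\rho\nu)=d(\rho\nu)$ and the factorisation property then forces the first coordinate to be $\nu$ and the second to be trivial. Applying \eqref{eq:piE} to the pair $(\zeta,\rho)$, which lies in $\Pi E \times_{d,s} \Pi E$ by the hypotheses on $\rho,\zeta$, together with the diagonal pair $(\rho\nu,\rho\nu)$, the index set is $\Lambda^{\text{min}}(\rho,\rho\nu)=\{(\nu,s(\nu))\}$ and the conclusion places $\zeta\nu$ in $\Pi E$, as required.

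The only substantive input is the existence of a finite set closed under \eqref{eq:piE}, that is, Lemma~\ref{le:piEsets} (equivalently (\cite{RSY2004}, Lemma 3.2)); this is where the genuine combinatorial work lies, and it is already available. Granting it, I expect statements (1)--(3) to reduce to single applications of the closure property \eqref{eq:piE} together with Proposition~\ref{prop:MCE}, with no further obstacle.
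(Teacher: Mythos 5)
Your proposal is correct and follows essentially the same route as the paper: each statement is obtained by a single application of the closure property \eqref{eq:piE} with suitably chosen (often diagonal) pairs, with finiteness coming from the finite set provided by Lemma~\ref{le:piEsets}. Your choices of pairs in (2) and (3) differ from the paper's only in bookkeeping (and you invoke symmetry where the paper writes out both directions), so there is no substantive difference.
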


\begin{proof}
For (1), the set $\Pi E$ is an intersection of finite sets and hence is finite itself. For (2), the ``if" direction follows from \eqref{eq:piE} with $\lambda=\rho$, $\mu=\zeta$, and $\sigma=\tau=\zeta\nu$. The ``only if" directions follows from \eqref{eq:piE} with $\lambda=\mu=\rho \nu$, $\sigma=\rho$, and $\tau=\zeta$. For (3), suppose that $\rho,\zeta\in \Pi E$ and that $(\alpha,\beta)\in \Lambda^{\text{min}}(\rho,\zeta)$. Then \eqref{eq:piE} with $\lambda=\mu=\rho$ and $\sigma=\tau=\zeta$ gives $\rho \alpha=\zeta \beta\in \Pi E$; that is, $\Pi E$ is closed under minimal common extensions.
\end{proof}

\begin{definition}
Let $(\Lambda,d)$ be a finitely aligned $k$-graph, let $c\in \underline{Z}^2(\Lambda,\mathbb{T})$ and let $\left\{ t_\lambda : \lambda  \in \Lambda \right \}$ be a Toeplitz-Cuntz-Krieger $(\Lambda,c)$-family. Let $E$ be a finite subset of $\Lambda$. For $(\lambda,\mu) \in \Pi E \times_{d,s} \Pi E$, define $$\Theta(t)_{\lambda,\mu}^{\Pi E}:= Q(t)_{\lambda}^{\Pi E} t_\lambda t_\mu^*.$$
\end{definition}

\begin{proposition}[cf. \cite{RSY2004}, Proposition 3.9]\label{prop:matrixunits2}
Let $(\Lambda,d)$ be a finitely aligned $k$-graph, let $c\in \underline{Z}^2(\Lambda,\mathbb{T})$ and let $\left\{ t_\lambda : \lambda  \in \Lambda \right \}$ be a Toeplitz-Cuntz-Krieger $(\Lambda,c)$-family. Let $E$ be a finite subset of $\Lambda$. The set
$$\left \{ \Theta(t)_{\lambda,\mu}^{\Pi E}: (\lambda , \mu) \in \Pi E \times_{d,s} \Pi E \right \}$$
is a collection of matrix units for $M_{\Pi E}^t$.
\end{proposition}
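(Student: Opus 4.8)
The plan is to first rewrite $\Theta(t)_{\lambda,\mu}^{\Pi E}$ in a symmetric form that absorbs both the cocycle and the orthogonalising projection into a single ``corner'' projection. Using (TCK2) in the form $t_{\lambda\alpha}=\overline{c(\lambda,\alpha)}\,t_\lambda t_\alpha$ together with $t_\lambda^*t_\lambda=t_{s(\lambda)}$, I would check that each factor of $Q(t)_\lambda^{\Pi E}$ satisfies $t_\lambda t_\lambda^*-t_{\lambda\alpha}t_{\lambda\alpha}^*=t_\lambda(t_{s(\lambda)}-t_\alpha t_\alpha^*)t_\lambda^*$. Since these factors commute and lie below $t_\lambda t_\lambda^*$, the products telescope through the $t_\lambda(\cdot)t_\lambda^*$ conjugation to give
\[
Q(t)_\lambda^{\Pi E}=t_\lambda P_\lambda t_\lambda^*,\qquad P_\lambda:=\prod_{\lambda\alpha\in\Pi E,\ d(\alpha)>0}\bigl(t_{s(\lambda)}-t_\alpha t_\alpha^*\bigr),
\]
and hence $\Theta(t)_{\lambda,\mu}^{\Pi E}=Q(t)_\lambda^{\Pi E}t_\lambda t_\mu^*=t_\lambda P_\lambda t_\mu^*$, where $P_\lambda$ is a projection in the corner at $s(\lambda)$ (the empty-product convention of Remark~\ref{re:proj1} makes $P_\lambda=t_{s(\lambda)}$ when there is no such $\alpha$).

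The crucial structural input is Lemma~\ref{re:matrixunits0}(2): when $(\lambda,\mu)\in\Pi E\times_{d,s}\Pi E$, the index sets $\{\alpha:\lambda\alpha\in\Pi E,\ d(\alpha)>0\}$ and $\{\alpha:\mu\alpha\in\Pi E,\ d(\alpha)>0\}$ coincide, so $P_\lambda=P_\mu$; call this common projection $P$. Relation (M1) is then immediate, since $P$ is a self-adjoint projection: $\bigl(\Theta(t)_{\lambda,\mu}^{\Pi E}\bigr)^*=(t_\lambda P t_\mu^*)^*=t_\mu P t_\lambda^*=\Theta(t)_{\mu,\lambda}^{\Pi E}$, using $P_\mu=P_\lambda=P$ for the pair $(\mu,\lambda)$ as well.

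For (M2) I would compute $\Theta(t)_{\lambda,\mu}^{\Pi E}\Theta(t)_{\rho,\sigma}^{\Pi E}=t_\lambda P_\lambda t_\mu^* t_\rho P_\rho t_\sigma^*$ and expand $t_\mu^* t_\rho$ by Lemma~\ref{handylemma}(1) as a sum over $(\alpha,\beta)\in\Lambda^{\text{min}}(\mu,\rho)$ of scalar multiples of $t_\alpha t_\beta^*$. For each such pair $\mu\alpha=\rho\beta$ lies in $\MCE(\mu,\rho)\subset\Pi E$ by Lemma~\ref{re:matrixunits0}(3). The key observation is that if $d(\alpha)>0$ then $(t_{s(\mu)}-t_\alpha t_\alpha^*)$ is one of the commuting factors of $P_\mu=P_\lambda$, whence $P_\lambda\le(t_{s(\mu)}-t_\alpha t_\alpha^*)$ and $P_\lambda t_\alpha=P_\lambda(t_{s(\mu)}-t_\alpha t_\alpha^*)t_\alpha=0$; symmetrically, $d(\beta)>0$ forces $t_\beta^*P_\rho=(P_\rho t_\beta)^*=0$. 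When $\mu\neq\rho$ every pair has $d(\alpha)>0$ or $d(\beta)>0$ (the alternative $d(\alpha)=d(\beta)=0$ would force $\mu=\rho$), so every term vanishes and the product is $0$. When $\mu=\rho$ we have $P_\mu=P_\rho$, and $t_\mu^*t_\mu=t_{s(\mu)}$ collapses the middle to give $t_\lambda P_\mu t_\sigma^*=\Theta(t)_{\lambda,\sigma}^{\Pi E}$, the equality $P_\mu=P_{\lambda}$ for the pair $(\lambda,\sigma)$ again coming from Lemma~\ref{re:matrixunits0}(2). This establishes (M2) with the Kronecker delta.

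Finally I would verify that these elements span $M_{\Pi E}^t$. The inclusion $\linspan\{\Theta(t)_{\lambda,\mu}^{\Pi E}\}\subseteq M_{\Pi E}^t$ follows by expanding $t_\lambda P_\lambda t_\mu^*$ and reducing each monomial via (TCK2), (TCK4), and Lemma~\ref{re:matrixunits0}(2),(3). For the reverse, given $(\lambda,\mu)\in\Pi E\times_{d,s}\Pi E$, Corollary~\ref{co:orth5} (applicable since $\Pi E$ is closed under minimal common extensions) gives $t_\lambda t_\lambda^*=\sum_{\lambda\nu\in\Pi E}Q(t)_{\lambda\nu}^{\Pi E}$; multiplying by $t_\lambda t_\mu^*$ and simplifying with $t_{\lambda\nu}^*t_\lambda=c(\lambda,\nu)t_\nu^*$ and $t_\nu^*t_\mu^*=\overline{c(\mu,\nu)}\,t_{\mu\nu}^*$ yields $t_\lambda t_\mu^*=\sum_{\lambda\nu\in\Pi E}c(\lambda,\nu)\overline{c(\mu,\nu)}\,\Theta(t)_{\lambda\nu,\mu\nu}^{\Pi E}$, using $\mu\nu\in\Pi E$ and $P_{\lambda\nu}=P_{\mu\nu}$ from Lemma~\ref{re:matrixunits0}(2). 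I expect the main obstacle to be the bookkeeping in (M2): correctly matching, for each term of the $\Lambda^{\text{min}}(\mu,\rho)$ expansion, the annihilating factor inside $P_\lambda$ or $P_\rho$, and confirming in the $\mu=\rho$ case that the surviving projection is exactly the one attached to the pair $(\lambda,\sigma)$. Once the reduction $\Theta(t)_{\lambda,\mu}^{\Pi E}=t_\lambda P t_\mu^*$ is in place and Lemma~\ref{re:matrixunits0}(2) is used to identify projections across the class of paths with common degree and source, all three parts go through cleanly, with the $2$-cocycle playing only a passive bookkeeping role.
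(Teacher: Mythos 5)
Your proposal is correct, and its skeleton coincides with the paper's: the corner form $\Theta(t)_{\lambda,\mu}^{\Pi E}=t_\lambda P_\lambda t_\mu^*$ with $P_\lambda=\prod_{\lambda\alpha\in\Pi E,\,d(\alpha)>0}(t_{s(\lambda)}-t_\alpha t_\alpha^*)$ is exactly the first equality of Lemma~\ref{le:matrixunits3}, the identification $P_\lambda=P_\mu$ via Lemma~\ref{re:matrixunits0}(2) is the same key input, (M1) is handled identically, and your spanning identity $t_\lambda t_\mu^*=\sum_{\lambda\nu\in\Pi E}c(\lambda,\nu)\overline{c(\mu,\nu)}\Theta(t)_{\lambda\nu,\mu\nu}^{\Pi E}$ is equation~\eqref{eq:matrixunits2}, obtained from Corollary~\ref{co:orth5} just as in the paper (you insert $t_\lambda t_\lambda^*=\sum_{\lambda\nu\in\Pi E}Q(t)_{\lambda\nu}^{\Pi E}$ on the left where the paper inserts $t_\mu t_\mu^*$ on the right). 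The one genuine divergence is (M2): the paper writes $\Theta(t)_{\lambda,\mu}^{\Pi E}\Theta(t)_{\sigma,\tau}^{\Pi E}=t_\lambda t_\mu^*\,Q(t)_\mu^{\Pi E}Q(t)_\sigma^{\Pi E}\,t_\sigma t_\tau^*$ and cites the mutual orthogonality from Proposition~\ref{prop:orth1} to produce $\delta_{\mu,\sigma}$, whereas you expand $t_\mu^*t_\rho$ by Lemma~\ref{handylemma}(1) and kill each $\Lambda^{\text{min}}(\mu,\rho)$ term with a factor of $P_\lambda$ or $P_\rho$, using closure of $\Pi E$ under minimal common extensions (Lemma~\ref{re:matrixunits0}(3)) to know that $(t_{s(\mu)}-t_\alpha t_\alpha^*)$, respectively $(t_{s(\rho)}-t_\beta t_\beta^*)$, really is a factor. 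This inline argument is sound (when $\mu\neq\rho$ no pair can have $d(\alpha)=d(\beta)=0$, and when $\mu=\rho$ the middle collapses to $t_{s(\mu)}$, with Lemma~\ref{re:matrixunits0}(2) guaranteeing the surviving projection is the one attached to $(\lambda,\sigma)$), and it has the virtue of exposing the cancellation mechanism that actually drives the orthogonality, rather than quoting the proposition whose proof runs through the inductive $\vee E$ machinery of Lemma~\ref{le:orth2}. What it does not buy is independence from that machinery: your spanning step still rests on Corollary~\ref{co:orth5}, which is itself a consequence of Proposition~\ref{prop:orth1}, so the overall dependency structure is unchanged; the paper's citation is simply the shorter route once that proposition is in hand.
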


Before we prove Proposition~\ref{prop:matrixunits2} we need the following Lemma.

\begin{lemma}[cf. \cite{RSY2004}, Lemma 3.10 and 3.11]\label{le:matrixunits3}
Let $(\Lambda,d)$ be a finitely aligned $k$-graph, let $c\in \underline{Z}^2(\Lambda,\mathbb{T})$ and let $\left\{ t_\lambda : \lambda  \in \Lambda \right \}$ be a Toeplitz-Cuntz-Krieger $(\Lambda,c)$-family. Let $E$ be a finite subset of $\Lambda$. If $(\lambda,\mu)\in \Pi E \times_{d,s} \Pi E$ then:
\begin{align}
\Theta(t)_{\lambda,\mu}^{\Pi E} &=t_\lambda \left( \prod_{\lambda \nu \in \Pi E \atop d(\nu)>0} \left ( t_{s(\lambda)}-t_\nu t_\nu^* \right)\right) t_\mu^* = t_\lambda t_\mu^* Q(t)_{\mu}^{\Pi E}; \text{ and }\label{eq:matrixunits1} \\
t_\lambda t_\mu^* &= \sum_{\lambda \nu \in \Pi E} c(\lambda,\nu) \overline{c(\mu,\nu)}\Theta(t)_{\lambda \nu, \mu \nu}^{\Pi E}\label{eq:matrixunits2} .
\end{align}
\end{lemma}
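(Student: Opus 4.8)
The plan is to derive both displayed identities from a single preliminary \emph{folding} of the diagonal projection $Q(t)_\lambda^{\Pi E}$. First I would record the basic cocycle cancellation: by (TCK2) and (C2), $t_{\lambda\nu}=\overline{c(\lambda,\nu)}t_\lambda t_\nu$, so $t_{\lambda\nu}t_{\lambda\nu}^*=t_\lambda t_\nu t_\nu^* t_\lambda^*$ whenever $r(\nu)=s(\lambda)$. Combining this with $t_\lambda t_\lambda^*=t_\lambda t_{s(\lambda)}t_\lambda^*$, with $t_\lambda^* t_\lambda=t_{s(\lambda)}$ (TCK3), and with the observation that $t_{s(\lambda)}t_\nu=t_\nu$ by (C2) (so each $t_{s(\lambda)}-t_\nu t_\nu^*$ lives in the corner $t_{s(\lambda)}(\cdot)t_{s(\lambda)}$), I can pull the outer partial isometries through the defining product and obtain
\[
Q(t)_\lambda^{\Pi E}=t_\lambda\Big(\prod_{\lambda\nu\in\Pi E\atop d(\nu)>0}(t_{s(\lambda)}-t_\nu t_\nu^*)\Big)t_\lambda^*.
\]

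With this in hand, the first equality in \eqref{eq:matrixunits1} is immediate: $\Theta(t)_{\lambda,\mu}^{\Pi E}=Q(t)_\lambda^{\Pi E}t_\lambda t_\mu^*$, and cancelling $t_\lambda^* t_\lambda=t_{s(\lambda)}$ (which acts as the identity on the corner) leaves $t_\lambda\big(\prod(\cdots)\big)t_\mu^*$. For the second equality I would fold $Q(t)_\mu^{\Pi E}$ in the same way and then invoke Lemma~\ref{re:matrixunits0}(2): since $d(\lambda)=d(\mu)$ and $s(\lambda)=s(\mu)$, the index sets $\{\nu:\lambda\nu\in\Pi E,\ d(\nu)>0\}$ and $\{\nu:\mu\nu\in\Pi E,\ d(\nu)>0\}$ coincide, so the two products are literally equal; computing $t_\lambda t_\mu^* Q(t)_\mu^{\Pi E}$ using $t_\mu^* t_\mu=t_{s(\mu)}=t_{s(\lambda)}$ then reproduces $t_\lambda\big(\prod(\cdots)\big)t_\mu^*$.

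For \eqref{eq:matrixunits2} I would first note the sum is well posed: $\lambda\nu\in\Pi E$ forces $\mu\nu\in\Pi E$ (Lemma~\ref{re:matrixunits0}(2)) and $(\lambda\nu,\mu\nu)\in\Pi E\times_{d,s}\Pi E$. Rewriting $t_{\lambda\nu}$ and $t_{\mu\nu}$ by (TCK2) inside $\Theta(t)_{\lambda\nu,\mu\nu}^{\Pi E}=Q(t)_{\lambda\nu}^{\Pi E}t_{\lambda\nu}t_{\mu\nu}^*$ shows the prefactors $c(\lambda,\nu)\overline{c(\mu,\nu)}$ cancel the cocycle scalars exactly, so each summand reduces to $Q(t)_{\lambda\nu}^{\Pi E}\,t_\lambda t_\nu t_\nu^* t_\mu^*$. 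Since $Q(t)_{\lambda\nu}^{\Pi E}\le t_{\lambda\nu}t_{\lambda\nu}^*=t_\lambda t_\nu t_\nu^* t_\lambda^*$, I can absorb $t_\lambda t_\nu t_\nu^*$ into the projection: $Q(t)_{\lambda\nu}^{\Pi E}t_\lambda=Q(t)_{\lambda\nu}^{\Pi E}t_\lambda t_\nu t_\nu^* t_{s(\lambda)}=Q(t)_{\lambda\nu}^{\Pi E}t_\lambda t_\nu t_\nu^*$, using $t_\nu t_\nu^* t_{s(\lambda)}=t_\nu t_\nu^*$. Thus each summand is $Q(t)_{\lambda\nu}^{\Pi E}t_\lambda t_\mu^*$. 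Finally, $\Pi E$ is closed under minimal common extensions (Lemma~\ref{re:matrixunits0}(3)), so Corollary~\ref{co:orth5} gives $\sum_{\lambda\nu\in\Pi E}Q(t)_{\lambda\nu}^{\Pi E}=t_\lambda t_\lambda^*$, and the sum collapses to $t_\lambda t_\lambda^* t_\lambda t_\mu^*=t_\lambda t_\mu^*$.

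The genuinely routine part is the cocycle bookkeeping, which cancels cleanly at every stage. The steps needing care are the two appeals to Lemma~\ref{re:matrixunits0}(2) to match the $\lambda$- and $\mu$-indexed products, and the corner/order-independence manipulations that justify the folding and the absorption $Q(t)_{\lambda\nu}^{\Pi E}t_\lambda=Q(t)_{\lambda\nu}^{\Pi E}t_\lambda t_\nu t_\nu^*$; this is precisely where the hypotheses $d(\lambda)=d(\mu)$, $s(\lambda)=s(\mu)$, and closure under minimal common extensions are used.
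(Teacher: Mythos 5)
Your proposal is correct and follows essentially the same route as the paper: the same ``folding'' of $Q(t)_\lambda^{\Pi E}$ into $t_\lambda\bigl(\prod(t_{s(\lambda)}-t_\nu t_\nu^*)\bigr)t_\lambda^*$ via (TCK2)/(TCK3)/(C2), the same appeal to Lemma~\ref{re:matrixunits0}(2) to exchange the $\lambda$- and $\mu$-indexed products, and the same use of Corollary~\ref{co:orth5} (with closure under minimal common extensions) to resolve a range projection into the diagonal projections $Q(t)_{\cdot}^{\Pi E}$. The only deviations are cosmetic: for the second equality in \eqref{eq:matrixunits1} you compute directly where the paper takes adjoints, and for \eqref{eq:matrixunits2} you simplify the right-hand side down to $t_\lambda t_\mu^*$ (decomposing $t_\lambda t_\lambda^*$) rather than expanding $t_\mu t_\mu^*$ on the left as the paper does.
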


\begin{proof}
For equation \eqref{eq:matrixunits1} we calculate
\begin{align*}
\Theta(t)_{\lambda,\mu}^{\Pi E}&=Q(t)_{\lambda}^{\Pi E} t_\lambda t_\mu^*\\
&=t_\lambda t_\lambda^* \left( \prod_{\lambda \nu \in \Pi E \atop d(\nu)>0 } \left( t_\lambda t_\lambda^* -t_{\lambda \nu } t_{\lambda \nu}^* \right) \right) t_\lambda t_\mu^*\\
&=t_\lambda t_\lambda^* \left( \prod_{\lambda \nu \in \Pi E \atop d(\nu)>0 } \left( c(\lambda,s(\lambda)) t_\lambda t_{s(\lambda)} t_\lambda^* -c(\lambda,\nu)\overline{c(\lambda,\nu)} t_{\lambda}t_{\nu }  t_{\nu }^*t_{\lambda}^* \right) \right) t_\lambda t_\mu^*\\
&=t_\lambda t_\lambda^* \left( \prod_{\lambda \nu \in \Pi E \atop d(\nu)>0 } \left(  t_\lambda \left(t_{s(\lambda)}-t_{\nu }  t_{\nu }^* \right)t_{\lambda}^* \right) \right) t_\lambda t_\mu^*\\
&=t_\lambda \left( \prod_{\lambda \nu \in \Pi E \atop d(\nu)>0} \left ( t_{s(\lambda)}-t_\nu t_\nu^* \right)\right) t_\mu^* \qquad \text{since }t_\lambda^* t_\lambda = t_{s(\lambda)}.
\end{align*}
This establishes the first equality. For the second equality in equation \eqref{eq:matrixunits1}, we continue to calculate
\begin{align*}
\Theta(t)_{\lambda,\mu}^{\Pi E}&=t_\lambda \left( \prod_{\mu \nu \in \Pi E \atop d(\nu)>0} \left ( t_{s(\lambda)}-t_\nu t_\nu^* \right)\right) t_\mu^* \qquad \text{by Lemma }\ref{re:matrixunits0}(2)\\
&=\left(t_\mu \left( \prod_{\mu \nu \in \Pi E \atop d(\nu)>0} \left ( t_{s(\mu)}-t_\nu t_\nu^* \right)\right) t_\lambda^*\right)^*\\
&=\left(Q(t)_\mu^{\Pi E} t_\mu t_\lambda^* \right)^* \qquad \text{ by the first equality}\\
&= t_\lambda t_\mu^* Q(t)_{\mu}^{\Pi E}.
\end{align*}
For equation \eqref{eq:matrixunits2} we calculate
\begin{align*}
t_\lambda t_\mu^*&=t_\lambda t_\mu^*t_\mu t_\mu^*\\
&=t_\lambda t_\mu^*\left( \sum_{\mu \nu \in \Pi E} Q(t)_{\mu \nu}^{\Pi E} \right) \qquad \text{by Corollary~\ref{co:orth5} and Lemma~\ref{re:matrixunits0}(4)}.
\end{align*}
For each $\zeta \in \Pi E$ we have $\Theta(t)_{\zeta, \zeta}^{\Pi E}=Q(t)_{\zeta}^{\Pi E}t_{\zeta}t_{\zeta}^*=Q(t)_{\zeta}^{\Pi E}$, since $Q(t)_{\zeta}^{\Pi E} \leq t_{\zeta}t_{\zeta}^*$. So
\begin{flalign*}
&&t_\lambda t_\mu^*&=\sum_{\mu \nu \in \Pi E} t_\lambda t_\mu^* \Theta(t)_{\mu \nu,\mu \nu }^{\Pi E} &\\
&&&=\sum_{\mu \nu \in \Pi E} t_\lambda t_\mu^* t_{\mu \nu} \left( \prod_{\mu \nu \alpha \in \Pi E \atop d(\alpha)>0}(t_{s(\mu \nu)}-t_{\alpha}t_{\alpha}^*) \right) t_{\mu \nu}^*\quad \text{by } \eqref{eq:matrixunits1}& \\
&&&=\sum_{\mu \nu \in \Pi E} \overline{c(\mu,\nu)} t_\lambda t_{s(\mu)} t_{\nu} \left( \prod_{\mu \nu \alpha \in \Pi E \atop d(\alpha)>0}(t_{s(\mu \nu)}-t_{\alpha}t_{\alpha}^*) \right) t_{\mu \nu}^*&\\
&&&=\sum_{\mu \nu \in \Pi E} c(\lambda,s(\mu))\overline{c(\mu,\nu)}t_{\lambda s(\mu)} t_{\nu} \left( \prod_{\mu \nu \alpha \in \Pi E \atop d(\alpha)>0}(t_{s(\mu \nu)}-t_{\alpha}t_{\alpha}^*) \right) t_{\mu \nu}^*&\\
&&&=\sum_{\mu \nu \in \Pi E}c(\lambda,\nu) \overline{c(\mu,\nu)}t_{\lambda \nu} \left( \prod_{\mu \nu \alpha \in \Pi E \atop d(\alpha)>0}(t_{s(\mu \nu)}-t_{\alpha}t_{\alpha}^*) \right) t_{\mu \nu}^*&\\
&&&=\sum_{\mu \nu \in \Pi E}c(\lambda,\nu) \overline{c(\mu,\nu)}\Theta(t)_{\lambda \nu, \mu \nu}^{\Pi E} \quad \text{by } \eqref{eq:matrixunits1}.&\qedhere
\end{flalign*}
\end{proof}

\begin{proof}[Proof of Proposition~\ref{prop:matrixunits2}]
By equation \eqref{eq:matrixunits2} the $\Theta(t)_{\lambda,\mu}^{\Pi E}$ span $M_{\Pi E}^t$. Fix $(\lambda, \mu), (\sigma, \tau) \in \Pi E \times_{d,s} \Pi E$. We need to show that
$$\left( \Theta(t)_{\lambda,\mu}^{\Pi E} \right )^* =\Theta(t)_{\mu,\lambda}^{\Pi E} $$ and
$$\Theta(t)_{\lambda,\mu}^{\Pi E} \Theta(t)_{\sigma,\tau}^{\Pi E}=\delta_{\mu,\sigma}\Theta(t)_{\lambda,\tau}^{\Pi E}.$$
Since $Q(t)_{\lambda}^{\Pi E}$ is a projection we have
\begin{align*}
\left( \Theta(t)_{\lambda,\mu}^{\Pi E} \right )^*=\left(  Q(t)_{\lambda}^{\Pi E} t_\lambda t_\mu^* \right)^*=t_\mu t_\lambda^* Q(t)_{\lambda}^{\Pi E}=\Theta(t)_{\mu,\lambda}^{\Pi E} \qquad \text{ by } \eqref{eq:matrixunits1}.
\end{align*}
\begin{flalign*}
&&\Theta(t)_{\lambda,\mu}^{\Pi E} \Theta(t)_{\sigma,\tau}^{\Pi E}&=t_\lambda t_\mu^* Q(t)_\mu^{\Pi E} Q(t)_\sigma^{\Pi E} t_\sigma t_\tau^* \qquad \text{by } \eqref{eq:matrixunits1}&\\
&&&=\delta_{\mu,\sigma} t_\lambda t_\mu^* Q(t)_\mu^{\Pi E} t_\sigma t_\tau^* \qquad \text{by Proposition }~\ref{prop:orth1}&\\
&&&=\delta_{\mu,\sigma} Q(t)_{\lambda}^{\Pi E} t_\lambda t_\mu^* t_\mu t_\tau^* \qquad \text{by } \eqref{eq:matrixunits1}&\\
&&&=c(\lambda,s(\mu))\delta_{\mu,\sigma} Q(t)_{\lambda}^{\Pi E} t_{\lambda s(\mu)} t_\tau^* &\\
&&&=\delta_{\mu,\sigma} Q(t)_{\lambda}^{\Pi E} t_{\lambda} t_\tau^*&\\
&&&=\delta_{\mu,\sigma}\Theta(t)_{\lambda,\tau}^{\Pi E}.&\qedhere
\end{flalign*}
\end{proof}

\begin{proposition}[cf. \cite{RSY2004}, Proposition 3.13]\label{prop:matrixunits4}
Let $(\Lambda,d)$ be a finitely aligned $k$-graph, let $c\in \underline{Z}^2(\Lambda,\mathbb{T})$ and let $\left\{ t_\lambda : \lambda  \in \Lambda \right \}$ be a Toeplitz-Cuntz-Krieger $(\Lambda,c)$-family. Let $E\subset \Lambda$ be finite. Suppose that $t_v \neq 0$ for every $v\in \Lambda^0$ and suppose that $(\lambda,\mu) \in \Pi E \times_{d,s} \Pi E$. If $T(\lambda):=\left \{ v \in s(\lambda) \Lambda \setminus \Lambda^0 : \lambda \nu \in \Pi E \right \}$ is not exhaustive, then $\Theta(t)_{\lambda,\mu}^{\Pi E} \neq 0$. Otherwise, if $T(\lambda)$ is exhaustive, then $\Theta(t)_{\lambda,\mu}^{\Pi E}=0$ if and only if
$$\prod_{\lambda \nu \in \Pi E, d(\nu)>0 } \left( t_{s(\lambda)}-t_\nu t_\nu^* \right ) =0.$$
\end{proposition}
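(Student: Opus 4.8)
The plan is to reduce the nonvanishing of $\Theta(t)_{\lambda,\mu}^{\Pi E}$ to that of the single ``diagonal'' projection $Q(t)_\lambda^{\Pi E}$, and then to identify $Q(t)_\lambda^{\Pi E}$ with the gap projection of $T(\lambda)$. Write $Q_\lambda:=Q(t)_\lambda^{\Pi E}$ and $\Theta_{\lambda,\mu}:=\Theta(t)_{\lambda,\mu}^{\Pi E}$ throughout. First I would show that $\Theta_{\lambda,\mu}=0$ if and only if $Q_\lambda=0$. Since $\Theta_{\lambda,\mu}=Q_\lambda t_\lambda t_\mu^*$ by definition, the vanishing of $Q_\lambda$ forces $\Theta_{\lambda,\mu}=0$. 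For the converse I would use that, by Proposition~\ref{prop:matrixunits2}, the family $\{\Theta_{\lambda,\mu}\}$ obeys the matrix-unit relations (M1) and (M2); as $(\mu,\lambda)\in\Pi E\times_{d,s}\Pi E$ as well, these give $\Theta_{\lambda,\mu}\Theta_{\lambda,\mu}^*=\Theta_{\lambda,\mu}\Theta_{\mu,\lambda}=\Theta_{\lambda,\lambda}=Q_\lambda t_\lambda t_\lambda^*=Q_\lambda$, where the final equality holds because $Q_\lambda\leq t_\lambda t_\lambda^*$. Hence $\Theta_{\lambda,\mu}=0$ implies $Q_\lambda=0$, proving the claim.

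Next I would pass from $Q_\lambda$ to the product $P:=\prod_{\lambda\nu\in\Pi E,\,d(\nu)>0}(t_{s(\lambda)}-t_\nu t_\nu^*)$. By the first equality in \eqref{eq:matrixunits1} (taken with $\mu=\lambda$) we have $Q_\lambda=t_\lambda P t_\lambda^*$. Conjugating on the left by $t_\lambda^*$ and on the right by $t_\lambda$, and using $t_\lambda^*t_\lambda=t_{s(\lambda)}$ together with $P\leq t_{s(\lambda)}$, yields $t_\lambda^*Q_\lambda t_\lambda=P$. Consequently $Q_\lambda=0$ if and only if $P=0$, and so, combined with the previous paragraph, $\Theta_{\lambda,\mu}=0$ if and only if $P=0$.

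It then remains to interpret $P$ in terms of $T(\lambda)$ and exhaustiveness. The index set $\{\nu:\lambda\nu\in\Pi E,\ d(\nu)>0\}$ is exactly $T(\lambda)$, and each such $\nu$ lies in $s(\lambda)\Lambda\setminus\{s(\lambda)\}$ with $r(\nu)=s(\lambda)$, so whenever $T(\lambda)\neq\emptyset$ the product $P$ is precisely the gap projection $Q(t)^{T(\lambda)}$ of Definition~\ref{def:gapprojections}. If $T(\lambda)$ is not exhaustive I would split into two cases. If $T(\lambda)=\emptyset$ then $Q_\lambda=t_\lambda t_\lambda^*$, which is nonzero by Remark~\ref{re:handyre} since $t_{s(\lambda)}\neq0$, so $\Theta_{\lambda,\mu}\neq0$. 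If $T(\lambda)\neq\emptyset$ then $P=Q(t)^{T(\lambda)}$ is the gap projection of a finite nonexhaustive subset of $s(\lambda)\Lambda\setminus\{s(\lambda)\}$, whence $P\neq0$ by Lemma~\ref{le:nonzerogapfornonextsets} (applicable because $t_w\neq0$ for all $w\in\Lambda^0$), and again $\Theta_{\lambda,\mu}\neq0$. If instead $T(\lambda)$ is exhaustive then $T(\lambda)\neq\emptyset$, so $P$ equals the displayed product, and the equivalence $\Theta_{\lambda,\mu}=0$ if and only if $P=0$ is exactly the stated assertion.

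The only genuinely delicate point is the bookkeeping around the empty-product convention of Remark~\ref{re:proj1}(2): when $T(\lambda)=\emptyset$ the product $P$ degenerates to the multiplier unit rather than to an honest gap projection, so Lemma~\ref{le:nonzerogapfornonextsets} cannot be invoked, and this case must be dispatched separately by the direct observation $Q_\lambda=t_\lambda t_\lambda^*\neq0$. Everything else is a short $C^*$-computation resting on the matrix-unit relations already established in Proposition~\ref{prop:matrixunits2} and on the identities in \eqref{eq:matrixunits1}.
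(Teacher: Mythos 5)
Your proof is correct, but it takes a genuinely different route from the paper's, most visibly in the non-exhaustive case. The paper never invokes the matrix-unit relations here: when $T(\lambda)$ is not exhaustive it picks a witness $\xi\in s(\lambda)\Lambda$ with $\Lambda^{\text{min}}(\xi,\nu)=\emptyset$ for every $\nu\in T(\lambda)$ and compresses $\Theta(t)_{\lambda,\mu}^{\Pi E}$ by $t_{\lambda\xi}t_{\lambda\xi}^*$, computing directly (2-cocycle factors included) that $t_{\lambda\xi}t_{\lambda\xi}^*\,\Theta(t)_{\lambda,\mu}^{\Pi E}=c(\lambda,\xi)\overline{c(\mu,\xi)}\,t_{\lambda\xi}t_{\mu\xi}^*\neq 0$; when $T(\lambda)$ is exhaustive it conjugates $\Theta(t)_{\lambda,\mu}^{\Pi E}$ itself, getting $P=t_\lambda^*\Theta(t)_{\lambda,\mu}^{\Pi E}t_\mu$ from \eqref{eq:matrixunits1}. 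You instead establish the uniform equivalence $\Theta(t)_{\lambda,\mu}^{\Pi E}=0\iff Q(t)_\lambda^{\Pi E}=0\iff P=0$ --- the first equivalence via (M1)/(M2) from Proposition~\ref{prop:matrixunits2} (no circularity, since that proposition is proved independently of this one), the second by conjugation with $t_\lambda$, valid when $T(\lambda)\neq\emptyset$ --- then identify $P$ with the gap projection $Q(t)^{T(\lambda)}$ and quote Lemma~\ref{le:nonzerogapfornonextsets} in the nonempty non-exhaustive case. What your route buys: the cocycle never appears (it is absorbed into results already proved), both cases flow through a single equivalence, and the witness-path compression is not redone --- it is exactly the content of Lemma~\ref{le:nonzerogapfornonextsets}, whose proof is the same idea the paper repeats inline at the level of $\Theta$ rather than $P$. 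What the paper's route buys: it needs neither the matrix-unit relations nor any case distinction on $T(\lambda)=\emptyset$, since the $\xi$-compression argument works verbatim for an empty $T(\lambda)$, whereas you correctly had to flag the empty-product convention of Remark~\ref{re:proj1}(2) (under which $P$ is the multiplier unit, $P\leq t_{s(\lambda)}$ fails, and the case must be settled separately by $Q(t)_\lambda^{\Pi E}=t_\lambda t_\lambda^*\neq 0$). Your handling of that trap is the one genuinely delicate point, and you got it right.
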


\begin{proof}
First suppose that $t_v \neq 0$ for every $v\in \Lambda^0$, $(\lambda,\mu) \in \Pi E \times_{d,s} \Pi E$ and that $T(\lambda)$ is not exhaustive. There exists $\xi\in s(\lambda) \Lambda$ such that $\Lambda^{\text{min}}(\xi,\nu)= \emptyset$ for all $\nu \in T(\lambda)$. So for each $\nu \in T(\lambda)$ we have,
\begin{align*}
t_{\lambda \xi}t_{\lambda \xi}^* \left( t_\lambda t_\lambda^*-t_{\lambda \nu }t_{\lambda \nu}^* \right)&=t_{\lambda \xi}t_{\lambda \xi}^*  t_\lambda t_\lambda^*-t_{\lambda \xi}t_{\lambda \xi}^* t_{\lambda \nu }t_{\lambda \nu}^*\\
&=c(\lambda, \xi) t_{\lambda \xi}t_{\xi}^*t_\lambda^*  t_\lambda t_\lambda^*-\sum_{\sigma\in \MCE(\lambda \xi , \lambda \nu)} t_\sigma t_\sigma^* \\
&=c(\lambda, \xi) t_{\lambda \xi}t_{\xi}^*t_{s(\lambda)}^* t_\lambda^*\qquad \text{  since } \Lambda^{\text{min}}(\lambda  \xi, \lambda \nu)=\Lambda^{\text{min}}(  \xi,  \nu)=\emptyset \\
&=c(\lambda, \xi)\overline{c(s(\lambda),\xi)} t_{\lambda \xi}t_{s(\lambda)\xi}^* t_\lambda^* \\
&= t_{\lambda \xi}t_{\lambda \xi}^* .
\end{align*}

Hence
\begin{align*}
t_{\lambda \xi} t_{\lambda \xi}^* \Theta(t)_{\lambda,\mu}^{\Pi E} &=t_{\lambda \xi}t_{\lambda \xi}^* \left ( \prod_{\lambda \nu \in \Pi E\atop d(\nu)>0} \left( t_\lambda t_\lambda^*-t_{\lambda \nu } t_{\lambda \nu}^* \right) \right) t_\lambda t_\mu^*\\
&=t_{\lambda \xi} t_{\lambda \xi}^* t_\lambda t_\mu^*\\
&=c(\lambda,\xi)t_{\lambda \xi} t_{\xi}^* t_\lambda^* t_\lambda t_\mu^*\\
&=c(\lambda,\xi)t_{\lambda \xi} t_{\xi}^* t_{s(\lambda)}^* t_\mu^*\\
&=c(\lambda,\xi)\overline{c(s(\lambda),\xi)}t_{\lambda \xi} t_{s(\lambda)\xi}^*  t_\mu^*\\
&=c(\lambda,\xi)\overline{c(\nu,\xi)}t_{\lambda \xi}t_{\mu \xi}^*.
\end{align*}
Since $t_{\lambda \xi} t_{\mu \xi}^* t_{\mu \xi}=t_{\lambda \xi} \neq 0$ by Lemma~\ref{handylemma}(7), we have $t_{\lambda \xi}t_{\mu \xi}^*\neq 0$. Since the 2-cocycle takes values in the unit circle, we have $\Theta(t)_{\lambda,\mu}^{\Pi E}\neq 0$.

Now suppose that $T(\lambda)$ is exhaustive. By Lemma~\ref{le:matrixunits3} we have
$$\Theta(t)_{\lambda,\mu}^{\Pi E} =t_\lambda \left( \prod_{\lambda \nu \in \Pi E \atop d(\nu)>0} \left ( t_{s(\lambda)}-t_\nu t_\nu^* \right)\right) t_\mu^*,$$
and
$$\prod_{\lambda \nu \in \Pi E \atop d(\nu)>0} \left ( t_{s(\lambda)}-t_\nu t_\nu^* \right)=t_\lambda^* \Theta(t)_{\lambda,\mu}^{\Pi E} t_\mu=0,$$
and the final assertion follows.
\end{proof}

\section{Isomorphisms of the core}
In this section we show how each finite-dimensional subalgebra $M^t_{\Pi E}$ decomposes into a direct sum of matrix algebras. For $E\subset G$, we write down the inclusion map $M^t_{\Pi E} \hookrightarrow M^t_{\Pi G}$. If $\left \{ t_\lambda : \lambda \in \Lambda \right \}$ is a relative Cuntz-Krieger $(\Lambda,b;\mathcal{E})$-family and $ \left \{ t_\lambda' : \lambda \in \Lambda \right \}$ is a relative Cuntz-Krieger $(\Lambda,c;\mathcal{E})$-family, we provide conditions under which
$$C^*\left( \left\{ t_\lambda t_\mu^* : \lambda ,\mu\in \Lambda, d(\lambda)=d(\mu) \right \} \right ) \cong C^*\left( \left\{ t_\lambda' t_\mu^{\prime*}: \lambda,\mu \in \Lambda, d(\lambda)=d(\mu) \right \} \right ).$$

\label{chpt:isoofthecore}
\begin{definition}[\cite{SPhD}, Definition 3.6.1]
Let $(\Lambda,d)$ be a finitely aligned $k$-graph, let $c\in \underline{Z}^2(\Lambda,\mathbb{T})$ and let $\left\{ t_\lambda : \lambda  \in \Lambda \right \}$ be a Toeplitz-Cuntz-Krieger $(\Lambda,c)$-family. Let $E\subset \Lambda$ be finite and suppose that $n\in \mathbb{N}^k$ and that $v\in \Lambda^0$ are such that $(\Pi E)v \cap \Lambda^n = \emptyset$. Define $M_{\Pi E}^t (n,v)$ to be the subalgebra
$$M_{\Pi E}^t (n,v):= \linspan \left \{ \Theta(t)_{\lambda,\mu}^{\Pi E} : \lambda, \mu \in (\Pi E ) v \cap \Lambda^n \right \}$$
of $M_{\Pi E}^t$. Define $T^{\Pi E}(n,v)$ to be the set
$$T^{\Pi E}(n,v):=\left \{ \nu\in v \Lambda \setminus \Lambda^0 : \lambda \nu \in \Pi E \text{ for } \lambda \in (\Pi E)v\cap \Lambda^n \right \}$$
of nontrivial tails which extend paths in $(\Pi E )v \cap \Lambda^n $ to larger elements of $\Pi E$.
\end{definition}

\begin{lemma}[\cite{SPhD}, Lemma 3.6.2]\label{le:coredirectsum}
Let $(\Lambda,d)$ be a finitely aligned $k$-graph, let $c\in \underline{Z}^2(\Lambda,\mathbb{T})$ and let $\left\{ t_\lambda : \lambda  \in \Lambda \right \}$ be a Toeplitz-Cuntz-Krieger $(\Lambda,c)$-family. Let $E\subset \Lambda$ be finite. Then
$$M_{\Pi E}^t = \bigoplus_{v\in s( \Pi E) \atop n\in d((\Pi E)v)} M_{\Pi E}^t(n,v).$$
For fixed $v\in s(\Pi E)$ and $n\in d( ( \Pi E)v)$,
\begin{equation}
M_{\Pi E}^t(n,v) \cong \begin{cases} \left \{ 0 \right \} & \text{if } \prod_{v\in T^{\Pi E}(n,v) }\left( t_v-t_\nu t_\nu^* \right)=0\\ M_{(\Pi E)v \cap \Lambda^n} ( \mathbb{C}) &\text{otherwise.} \end{cases} \label{eq:matrixinclu1}
\end{equation}
\end{lemma}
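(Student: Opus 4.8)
The plan is to exploit the matrix-unit structure already established. By Proposition~\ref{prop:matrixunits2} the family $\{\Theta(t)_{\lambda,\mu}^{\Pi E} : (\lambda,\mu)\in \Pi E\times_{d,s}\Pi E\}$ is a collection of matrix units, and by \eqref{eq:matrixunits2} these span $M_{\Pi E}^t$. The crucial observation is that a pair $(\lambda,\mu)\in \Pi E\times_{d,s}\Pi E$ carries the data $(n,v):=(d(\lambda),s(\lambda))=(d(\mu),s(\mu))$, so the matrix units are naturally partitioned into the blocks indexed by $v\in s(\Pi E)$ and $n\in d((\Pi E)v)$, with the block $(n,v)$ spanning $M_{\Pi E}^t(n,v)$. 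First I would record that each $M_{\Pi E}^t(n,v)$ is a $*$-subalgebra: it is self-adjoint by (M1) and closed under multiplication by (M2), since both operations keep a matrix unit inside its own block.

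Next I would establish the direct-sum decomposition. The relation (M2), $\Theta_{\lambda,\mu}^{\Pi E}\Theta_{\sigma,\tau}^{\Pi E}=\delta_{\mu,\sigma}\Theta_{\lambda,\tau}^{\Pi E}$, forces $M_{\Pi E}^t(n,v)\cdot M_{\Pi E}^t(n',v')=0$ whenever $(n,v)\neq(n',v')$, because $\mu\in(\Pi E)v\cap\Lambda^n$ and $\sigma\in(\Pi E)v'\cap\Lambda^{n'}$ cannot coincide. Setting $p_{n,v}:=\sum_{\lambda\in(\Pi E)v\cap\Lambda^n}\Theta_{\lambda,\lambda}^{\Pi E}$, (M2) shows the $p_{n,v}$ are mutually orthogonal projections, each central in $M_{\Pi E}^t$ and acting as the identity on its own block; hence $M_{\Pi E}^t=\bigoplus_{v,n}p_{n,v}M_{\Pi E}^t p_{n,v}=\bigoplus_{v,n}M_{\Pi E}^t(n,v)$, the sum being direct because multiplying any relation $\sum a_{n,v}=0$ by $p_{n,v}$ isolates the summand.

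To identify each block I would use \eqref{eq:matrixunits1} to write $\Theta_{\lambda,\mu}^{\Pi E}=t_\lambda P_{n,v}t_\mu^*$, where $P_{n,v}:=\prod_{\nu\in T^{\Pi E}(n,v)}(t_v-t_\nu t_\nu^*)$; the point is that the indexing set $\{\nu:\lambda\nu\in\Pi E,\ d(\nu)>0\}$ equals $T^{\Pi E}(n,v)$ for \emph{every} $\lambda\in(\Pi E)v\cap\Lambda^n$, by Lemma~\ref{re:matrixunits0}(2), so $P_{n,v}$ does not depend on $\lambda$. Since $t_\lambda^*t_\lambda=t_v$ and $t_vP_{n,v}=P_{n,v}$ (from (TCK2) and (C2)), a single $C^*$-identity computation gives $\|t_\lambda P_{n,v}\|^2=\|P_{n,v}t_vP_{n,v}\|=\|P_{n,v}\|$, and as $P_{n,v}$ is a projection this is $0$ exactly when $P_{n,v}=0$. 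Consequently $\Theta_{\lambda,\lambda}^{\Pi E}=(t_\lambda P_{n,v})(t_\lambda P_{n,v})^*$ vanishes iff $P_{n,v}=0$; and since $\Theta_{\lambda,\mu}^{\Pi E}\Theta_{\mu,\lambda}^{\Pi E}=\Theta_{\lambda,\lambda}^{\Pi E}$, the matrix units of the block are either all zero (when $P_{n,v}=0$, giving $M_{\Pi E}^t(n,v)=\{0\}$) or all nonzero. In the latter case $\{\Theta_{\lambda,\mu}^{\Pi E}:\lambda,\mu\in(\Pi E)v\cap\Lambda^n\}$ is a full system of nonzero matrix units indexed by $(\Pi E)v\cap\Lambda^n$, so Corollary~\ref{co:matrixalgebrauni} yields $M_{\Pi E}^t(n,v)\cong M_{(\Pi E)v\cap\Lambda^n}(\mathbb{C})$.

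I expect the main obstacle to be the vanishing dichotomy in the last paragraph, namely pinning the nullity of an entire block to the single gap projection $P_{n,v}$. The subtlety is that Lemma~\ref{le:coredirectsum} makes no assumption that $t_v\neq 0$, so I cannot simply invoke Proposition~\ref{prop:matrixunits4}; the norm computation $\|t_\lambda P_{n,v}\|^2=\|P_{n,v}\|$ is what lets me avoid that hypothesis and obtain the clean ``all or nothing'' behaviour. The $\lambda$-independence of $P_{n,v}$ supplied by Lemma~\ref{re:matrixunits0}(2) is the other ingredient that makes the block a \emph{single} matrix algebra rather than a more complicated object.
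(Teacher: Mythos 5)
Your proof is correct and follows the same skeleton as the paper's: partition the matrix units $\Theta(t)^{\Pi E}_{\lambda,\mu}$ of Proposition~\ref{prop:matrixunits2} into blocks indexed by $(n,v)=(d(\lambda),s(\lambda))$, observe that distinct blocks annihilate each other under (M2), and identify each nonzero block with $M_{(\Pi E)v\cap\Lambda^n}(\mathbb{C})$ via Lemma~\ref{le:matrixalgebra}/Corollary~\ref{co:matrixalgebrauni}. (Your central projections $p_{n,v}$ are a mild elaboration of the paper's directness argument, which instead gets linear independence from mutual annihilation of the $*$-closed blocks.) The genuine difference is how you settle the vanishing dichotomy for a block. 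The paper cites Proposition~\ref{prop:matrixunits4}, which carries the standing hypothesis $t_v\neq 0$ for every $v\in\Lambda^0$ --- a hypothesis Lemma~\ref{le:coredirectsum} does not assume, so the paper's citation is strictly a hypothesis mismatch. You instead argue directly from \eqref{eq:matrixunits1}: writing $\Theta(t)^{\Pi E}_{\lambda,\mu}=t_\lambda P_{n,v}t_\mu^*$ with $P_{n,v}$ independent of $\lambda$ by Lemma~\ref{re:matrixunits0}(2), and using the $C^*$-identity (equivalently, the identity $P_{n,v}=t_\lambda^*\,\Theta(t)^{\Pi E}_{\lambda,\mu}\,t_\mu$), you get ``$\Theta(t)^{\Pi E}_{\lambda,\mu}=0$ if and only if $P_{n,v}=0$'' with no nondegeneracy assumption. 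This is exactly the repair the paper's proof needs, and your correctly identifying it is the real content of your write-up.

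One caveat, which afflicts the paper's statement as much as your argument: the identity $t_vP_{n,v}=P_{n,v}$, and hence your computation $\Vert t_\lambda P_{n,v}\Vert^2=\Vert P_{n,v}t_vP_{n,v}\Vert=\Vert P_{n,v}\Vert$, requires $T^{\Pi E}(n,v)\neq\emptyset$. If $T^{\Pi E}(n,v)=\emptyset$, the convention of Remark~\ref{re:proj1}(2) makes $P_{n,v}$ the multiplier unit, so $t_vP_{n,v}=t_v$ and $\Theta(t)^{\Pi E}_{\lambda,\mu}=t_\lambda t_\mu^*$; the block then vanishes exactly when $t_v=0$ (Remark~\ref{re:handyre}), not when $P_{n,v}=0$. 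You should treat this case separately (or note, as the paper implicitly does by leaning on Proposition~\ref{prop:matrixunits4}, that the lemma is applied only where vertex projections are nonzero). Finally, a trivial slip: $t_\lambda^*t_\lambda=t_{s(\lambda)}$ is (TCK3), not (TCK2).
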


\begin{proof}
Suppose that $(\lambda,\mu) \in \Pi E \times_{d,s} \Pi E$. Then $d(\lambda)=d(\mu)$ and $s(\lambda)=s(\mu)$ and so $\Theta(t)_{\lambda,\mu}^{\Pi E} \in M_{\Pi E}^t(d(\lambda),s(\lambda))$. Also $\Theta(t)_{\lambda,\mu}^{\Pi E} \notin M_{\Pi E}^t(m,w)$ whenever $d(\lambda) \neq m$ or $s(\lambda) \neq w$. Hence each of the spanning elements $\Theta(t)_{\lambda,\mu}^{\Pi E}$ of $M_{\Pi E}^t$ belong to exactly one of the matrix algebras $M_{\Pi E}^t(n,v)$. Suppose that $(\lambda,\mu),(\nu,\tau) \in \Pi E \times_{d,s} \Pi E$, say $s(\lambda)=v$, $d(\lambda)=n$ and $s(\nu)=w$, $d(\nu)=m$. If
$$\Theta(t)_{\lambda,\mu}^{\Pi E} \Theta(t)_{\nu,\tau}^{\Pi E}\neq 0,$$
then $\mu=\nu$ by Proposition~\ref{prop:matrixunits2}, and hence $v=w$ and $n=m$. So if $n\neq m$ or $v\neq w$ then $M_{\Pi E}^t(n,v) M_{\Pi E}^t(m,w)=\left \{0 \right \}.$ Hence $$M_{\Pi E}^t = \bigoplus_{v\in s( \Pi E) \atop n\in d((\Pi E)v)} M_{\Pi E}^t(n,v).$$

We will now prove \eqref{eq:matrixinclu1}. Fix $v\in s(\Pi E)$ and $n\in d( ( \Pi E)v)$. Suppose that $$\prod_{\nu \in T^{\Pi E}(n,v)} \left( t_v -t_\nu t_\nu^* \right)=0.$$ Then Proposition~\ref{prop:matrixunits4} implies that $\Theta(t)_{\lambda,\mu}^{\Pi E}=0$ for all $\lambda,\mu \in (\Pi E) v \cap \Lambda^n$. Hence $M_{\Pi E}^t(n,v)=\left\{ 0 \right \}$. Now suppose that $$\prod_{\nu \in T^{\Pi E}(n,v)} \left( t_v -t_\nu t_\nu^* \right)\neq0.$$ Then Proposition~\ref{prop:matrixunits2} and Proposition~\ref{prop:matrixunits4} imply that $$\left \{ \Theta(t)_{\lambda,\mu}^{\Pi E}: \lambda,\mu \in (\Pi E)v \cap \Lambda^n \right \}$$ is family of nonzero matrix units which span $M_{\Pi E}^t(n,v)$. Lemma~\ref{le:matrixalgebra} therefore implies that $M_{\Pi E}^t(n,v)\cong M_{(\Pi E)v \cap \Lambda^n}(\mathbb{C})$.
\end{proof}

\begin{lemma}[cf. Lemma 3.6.3, \cite{SPhD}] \label{le:iota1} Let $(\Lambda,d)$ be a finitely aligned $k$-graph, and let $E\subset \Lambda$ be finite. Suppose that $\lambda \in \Lambda \setminus \Pi E$ and that there exists $n \leq d(\lambda)$ such that $\lambda(0,n)\in \Pi E$. Then there is a unique path $\iota_\lambda^E \in \Pi E$ such that:
\begin{enumerate}
\item[(1)] $\lambda(0,d(\iota^E_\lambda))=\iota^E_\lambda$; and
\item[(2)] if $\mu \in \Pi E$ and $\lambda(0,d(\mu))=\mu$, then $d(\mu)\leq d(\iota^E_\lambda)$.
\end{enumerate}

\end{lemma}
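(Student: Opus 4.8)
The plan is to consider the collection of all initial segments of $\lambda$ that lie in $\Pi E$, to show that the set of their degrees is closed under the coordinate maximum $\vee$, and then to take $\iota_\lambda^E$ to be the initial segment of largest degree. Concretely, I would set
$$S := \left\{ d(\mu) : \mu \in \Pi E \text{ and } \lambda(0,d(\mu)) = \mu \right\}.$$
By hypothesis $\lambda(0,n) \in \Pi E$, so $n \in S$ and $S$ is nonempty; and since $\Pi E$ is finite by Lemma~\ref{re:matrixunits0}(1), the set $S$ is a finite nonempty subset of $\mathbb{N}^k$.

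The key step is to show that $S$ is closed under $\vee$. Suppose $\mu_1,\mu_2 \in \Pi E$ are initial segments of $\lambda$, so that $d(\mu_1),d(\mu_2) \in S$. Put $\sigma := \lambda\bigl(0,\,d(\mu_1)\vee d(\mu_2)\bigr)$. I claim $\sigma \in \MCE(\mu_1,\mu_2)$. Indeed $d(\sigma) = d(\mu_1)\vee d(\mu_2)$ by construction, and since $d(\mu_i) \leq d(\sigma)$, the factorisation property gives $\sigma(0,d(\mu_i)) = \lambda(0,d(\mu_i)) = \mu_i$ for $i=1,2$; thus $\sigma$ is a common extension of $\mu_1$ and $\mu_2$ of the required degree $d(\mu_1)\vee d(\mu_2)$, so $\sigma \in \MCE(\mu_1,\mu_2)$. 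As $\Pi E$ is closed under minimal common extensions by Lemma~\ref{re:matrixunits0}(3), we get $\sigma \in \Pi E$; and $\sigma$ is by construction an initial segment of $\lambda$, so $d(\mu_1)\vee d(\mu_2) = d(\sigma) \in S$, proving the claim.

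Since $S$ is finite, nonempty, and closed under $\vee$, it has a largest element $N := \bigvee_{p \in S} p \in S$. I would then define $\iota_\lambda^E := \lambda(0,N)$. Because $N \in S$, we have $\iota_\lambda^E \in \Pi E$ and $\lambda(0,d(\iota_\lambda^E)) = \lambda(0,N) = \iota_\lambda^E$, which is (1); and for any $\mu \in \Pi E$ with $\lambda(0,d(\mu)) = \mu$ we have $d(\mu) \in S$, hence $d(\mu) \leq N = d(\iota_\lambda^E)$, which is (2). For uniqueness, if $\iota$ and $\iota'$ both satisfy (1) and (2), then applying (2) for $\iota$ to $\mu = \iota'$ and (2) for $\iota'$ to $\mu = \iota$ yields $d(\iota') \leq d(\iota)$ and $d(\iota) \leq d(\iota')$, so $d(\iota) = d(\iota')$; then (1) for both gives $\iota = \lambda(0,d(\iota)) = \lambda(0,d(\iota')) = \iota'$.

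The only delicate point is the verification that $\sigma \in \MCE(\mu_1,\mu_2)$, which hinges on correctly applying the factorisation property to recover $\mu_1$ and $\mu_2$ as the appropriate initial segments of $\sigma$; once this is in hand, the closure of $\Pi E$ under minimal common extensions does all the work and the remainder is bookkeeping. I therefore expect no serious obstacle.
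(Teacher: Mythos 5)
Your proposal is correct and follows essentially the same route as the paper: define $N$ as the coordinate-wise maximum of the degrees of initial segments of $\lambda$ lying in $\Pi E$, set $\iota_\lambda^E = \lambda(0,N)$, and invoke closure of $\Pi E$ under minimal common extensions. In fact your argument is more complete than the paper's, which asserts $\lambda(0,N)\in \Pi E$ without spelling out the verification (via $\sigma\in\MCE(\mu_1,\mu_2)$) that you supply, and which leaves uniqueness to the reader.
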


\begin{proof}
Let $N:=\vee \left\{ n \leq d(\lambda):\lambda(0,n)\in \Pi E\right \}$ and let $\iota_\lambda^E=\lambda(0,N)$. Then $\iota_\lambda^E\in \Pi E$ because $\Pi E$ is closed under minimal common extensions. We have $\lambda(0,d(\iota_\lambda^E))=\iota_\lambda^E$ by definition. If $\mu \in \Pi E$ and $\lambda(0,d(\mu))=\mu$, then $d(\iota_\lambda^E)=N\geq d(\mu)$ by definition.
\end{proof}

\begin{definition} [cf. \cite{SPhD}, Definition 3.6.4]\label{def:iotaandkappa}
Let $(\Lambda,d)$ be a finitely aligned $k$-graph, and let $E$ be a finite subset of $\Lambda$. For those $\lambda \in \Lambda \setminus \Pi E$ such there exists $n \leq d(\lambda)$ for which $\lambda(0,n)\in \Pi E$, the path $\iota^E_\lambda$ is defined by Lemma~\ref{le:iota1}. For all other $\lambda \in \Lambda$, we define $\iota^E_\lambda:=\lambda$. For $\lambda\in \Lambda$, we define $\kappa^E_\lambda=\lambda(d(\iota^E_\lambda),d(\lambda)).$
\end{definition}

Lemma~\ref{le:iota1} and Definition~\ref{def:iotaandkappa} are similar to Lemma 3.6.3 and Definition 3.6.4 in \cite{SPhD}. The difference is that here $\iota_\lambda^E$ is defined for all $\lambda \in \Lambda$, whereas in \cite{SPhD} it is only defined for $\lambda \in G$ where $G$ is finite and $E\subset G$. That $\iota_\lambda^E$ is defined for all $\lambda \in \Lambda$ is crucial in the statement of Proposition~\ref{co:twistedcoreiso3} and in the proof of Lemma~\ref{le:theladderlemma}.

\begin{lemma}[cf. \cite{SPhD}, Lemma 3.6.5] \label{le:iota2}  Let $(\Lambda,d)$ be a finitely aligned $k$-graph, and let $E,G\subset \Lambda$ be finite with $E\subset G$. Suppose that $(\lambda,\mu) \in \Pi E \times_{d,s} \Pi E$, that $\lambda \nu \in \Pi G$ and that $\iota^E_{\lambda \nu}=\lambda$. Then $\mu \nu \in \Pi G$ and $\iota^E_{\mu \nu }=\mu$.
\end{lemma}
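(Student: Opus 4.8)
The plan is to treat the two conclusions separately, deriving both from the degree-matching property recorded in Lemma~\ref{re:matrixunits0}(2) together with the maximality built into the definition of $\iota$. First I would observe that $\Pi E \subset \Pi G$: since $E \subset G \subset \Pi G$ and $\Pi G$ satisfies \eqref{eq:piE}, the set $\Pi G$ is one of the sets over which the intersection defining $\Pi E$ is taken, so $\Pi E \subset \Pi G$. In particular $(\lambda,\mu) \in \Pi G \times_{d,s} \Pi G$. The membership $\mu\nu \in \Pi G$ is then immediate from Lemma~\ref{re:matrixunits0}(2) applied with the finite set $G$: taking $\rho = \lambda$ and $\zeta = \mu$ (legitimate since $d(\lambda)=d(\mu)$ and $s(\lambda)=s(\mu)$), the hypothesis $\lambda\nu \in \Pi G$ gives $\mu\nu \in \Pi G$.

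For $\iota^E_{\mu\nu} = \mu$, I would use the characterisation from Lemma~\ref{le:iota1}: writing $N'$ for the degree of $\iota^E_{\mu\nu}$, we have $N' = \bigvee\{n \le d(\mu\nu) : (\mu\nu)(0,n) \in \Pi E\}$. Since $(\mu\nu)(0,d(\mu)) = \mu \in \Pi E$, we get $N' \ge d(\mu)$, so I may factorise $\iota^E_{\mu\nu} = (\mu\nu)(0,N') = \mu\tau$, where $\tau := (\mu\nu)(d(\mu),N') = \nu(0, N' - d(\mu))$ is an initial segment of $\nu$. It then remains only to show $N' \le d(\mu)$. (When $d(\nu)=0$ everything is trivial, as then $\nu$ is a vertex and $\mu\nu = \mu \in \Pi E$, so I would assume $d(\nu)>0$.)

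The key step transfers this segment from $\mu\nu$ back to $\lambda\nu$. Since $\mu\tau = \iota^E_{\mu\nu} \in \Pi E$ and $(\mu,\lambda) \in \Pi E \times_{d,s} \Pi E$, Lemma~\ref{re:matrixunits0}(2) (now applied with the set $E$, and with $\tau$ in the role of the extending path) yields $\lambda\tau \in \Pi E$. Because $d(\lambda) = d(\mu)$ and $\tau = \nu(0, N'-d(\mu))$, this path is exactly $\lambda\tau = (\lambda\nu)(0,N')$, an initial segment of $\lambda\nu$ lying in $\Pi E$ and of degree $N'$. The maximality clause Lemma~\ref{le:iota1}(2) for $\iota^E_{\lambda\nu} = \lambda$ then forces $N' = d(\lambda\tau) \le d(\iota^E_{\lambda\nu}) = d(\lambda) = d(\mu)$. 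Hence $N' = d(\mu)$ and $\iota^E_{\mu\nu} = (\mu\nu)(0,d(\mu)) = \mu$, as required.

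I expect the only real obstacle to be the bookkeeping with the $\mathbb{N}^k$-degree and the factorisation property: one must check carefully that $N' \ge d(\mu)$ so that the decomposition $\iota^E_{\mu\nu} = \mu\tau$ is legitimate, and that the transferred path $\lambda\tau$ genuinely coincides with the initial segment $(\lambda\nu)(0,N')$, so that the maximality of $\iota^E_{\lambda\nu}$ can be invoked. The conceptual content—that Lemma~\ref{re:matrixunits0}(2) sets up a degree-preserving correspondence between those initial segments of $\lambda\nu$ and of $\mu\nu$ that lie in $\Pi E$—is precisely what forces the two values of $\iota$ to agree in degree, and hence to be $\lambda$ and $\mu$ respectively.
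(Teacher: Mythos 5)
Your proposal is correct and follows essentially the same route as the paper's proof: both arguments write $\iota^E_{\mu\nu}$ as $\mu$ followed by a tail, transfer that extended path from $\mu$ to $\lambda$ using Lemma~\ref{re:matrixunits0}(2), recognise it as an initial segment of $\lambda\nu$ lying in $\Pi E$, and invoke the maximality clause of Lemma~\ref{le:iota1}(2) for $\iota^E_{\lambda\nu}=\lambda$; the only difference is that you run this as a direct degree squeeze ($d(\mu)\leq N'\leq d(\lambda)$) while the paper phrases it as a contradiction with a nontrivial tail $\mu'$. Your explicit justification that $\Pi E\subset\Pi G$ is a minor but welcome addition that the paper leaves implicit.
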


\begin{proof}
Since $\Pi E \subset \Pi G$, we have $\mu \nu \in \Pi G$ by Lemma~\ref{re:matrixunits0}(2). Suppose for contradiction that $\iota_{\mu \nu}\neq \mu$. Since $(\mu\nu)(0,d(\mu))=\mu$ and $\mu \in \Pi E$, Lemma~\ref{le:iota1}(2) ensures that $\iota_{\mu \nu}=\mu \mu'$ for some $\mu' \in \Lambda \setminus \Lambda^0$. Lemma~\ref{re:matrixunits0}(2) implies that $\lambda \mu' \in \Pi E$. Since $d(\lambda)=d(\mu)$, we have $d(\lambda \mu')>d(\lambda)=d(\iota_{\lambda \nu}).$ Since $\iota_{\mu \nu}=\mu \mu'$, we have $(\mu \nu)(0,d(\mu \mu'))=\mu \mu'$, and the factorisation property implies that $\nu =\mu'\nu'$ for some $\nu'\in \Lambda$. Another application of the factorisation property gives $(\lambda \nu)(0,d(\lambda \mu'))=\lambda \mu'$. We have shown that $\lambda \mu' \in \Pi E$, that $d(\lambda \mu')>d(\iota_{\lambda \nu})$, and that $(\lambda \nu)(0,d(\lambda \mu'))=\lambda \mu'$. This contradicts Lemma~\ref{le:iota1}(2).
\end{proof}

\begin{lemma}[cf. \cite{SPhD}, Lemma 3.6.6]\label{le:inclusion1}
Let $(\Lambda,d)$ be a finitely aligned $k$-graph, let $c\in \underline{Z}^2(\Lambda,\mathbb{T})$ and let $\left\{ t_\lambda : \lambda  \in \Lambda \right \}$ be a Toeplitz-Cuntz-Krieger $(\Lambda,c)$-family. Let $E,G\subset \Lambda$ be finite with $E\subset G$. Then for each $(\lambda,\mu)\in \Pi E \times_{d,s} \Pi E$,
$$\Theta(t)_{\lambda,\mu}^{\Pi E}=\sum_{\lambda \nu \in \Pi G \atop \iota^E_{\lambda\nu}=\lambda}c(\lambda,\nu)\overline{c(\mu,\nu)} \Theta(t)_{\lambda \nu,\mu \nu}^{\Pi G}.$$
\end{lemma}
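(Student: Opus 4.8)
The plan is to fix $(\lambda,\mu)\in \Pi E \times_{d,s} \Pi E$ and compute $\Theta(t)_{\lambda,\mu}^{\Pi E}$ by inserting a resolution of the identity coming from $\Pi G$. The natural starting point is the second equality in \eqref{eq:matrixunits1}, which gives $\Theta(t)_{\lambda,\mu}^{\Pi E}=t_\lambda t_\mu^* Q(t)_\mu^{\Pi E}$; alternatively one may expand $\Theta(t)_{\lambda,\mu}^{\Pi E}=Q(t)_\lambda^{\Pi E}t_\lambda t_\mu^*$ and use the first equality. Since $E\subset G$ we have $\Pi E \subset \Pi G$, so the finer partition of range projections associated to $\Pi G$ refines the coarser one associated to $\Pi E$. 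The key algebraic input is Corollary~\ref{co:orth5}: applied at the level of $\Pi G$ it expresses $t_\lambda t_\lambda^*=\sum_{\lambda \nu \in \Pi G}Q(t)_{\lambda \nu}^{\Pi G}$. The idea is to use this to break the single matrix unit $\Theta(t)_{\lambda,\mu}^{\Pi E}$ into a sum of finer matrix units $\Theta(t)_{\lambda \nu,\mu \nu}^{\Pi G}$.

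Concretely, first I would rewrite $\Theta(t)_{\lambda,\mu}^{\Pi E}=t_\lambda\bigl(\prod_{\lambda\nu\in\Pi E,\,d(\nu)>0}(t_{s(\lambda)}-t_\nu t_\nu^*)\bigr)t_\mu^*$ via \eqref{eq:matrixunits1}, and then apply \eqref{eq:matrixunits2} for the family $\{t_\lambda\}$ relative to the larger set $G$: the expansion $t_\lambda t_\mu^*=\sum_{\lambda\nu\in\Pi G}c(\lambda,\nu)\overline{c(\mu,\nu)}\Theta(t)_{\lambda\nu,\mu\nu}^{\Pi G}$ already produces a sum over $\lambda\nu\in\Pi G$ with exactly the cocycle prefactor appearing in the claim. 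The remaining task is then to show that the terms with $\iota^E_{\lambda\nu}\neq\lambda$ vanish, i.e. that the sum over all $\lambda\nu\in\Pi G$ collapses to the sum over those $\nu$ with $\iota^E_{\lambda\nu}=\lambda$. So the precise statement to prove is
\begin{equation}
\Theta(t)_{\lambda,\mu}^{\Pi E}=t_\lambda t_\mu^*\,Q(t)_\mu^{\Pi E}=\sum_{\lambda\nu\in\Pi G}c(\lambda,\nu)\overline{c(\mu,\nu)}\Theta(t)_{\lambda\nu,\mu\nu}^{\Pi G},
\end{equation}
and then to discard the unwanted terms.

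For the vanishing, I would argue using orthogonality of the finer projections. If $\lambda\nu\in\Pi G$ but $\iota^E_{\lambda\nu}\neq\lambda$, then by Lemma~\ref{le:iota1}(2) there is $\nu'$ with $d(\nu')>0$ and $\lambda\nu'=\iota^E_{\lambda\nu}\in\Pi E$ a proper extension of $\lambda$ lying below $\lambda\nu$; thus $(t_{s(\lambda)}-t_{\nu'}t_{\nu'}^*)$ is one of the factors defining $\Theta(t)_{\lambda,\mu}^{\Pi E}$ through \eqref{eq:matrixunits1}, while $t_{\lambda\nu}t_{\lambda\nu}^*\leq t_{\lambda\nu'}t_{\lambda\nu'}^*$. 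The product $Q(t)_\mu^{\Pi E}$ (equivalently $Q(t)_\lambda^{\Pi E}$) therefore annihilates $\Theta(t)_{\lambda\nu,\mu\nu}^{\Pi G}$, since $Q(t)_\lambda^{\Pi E}\leq t_\lambda t_\lambda^*-t_{\lambda\nu'}t_{\lambda\nu'}^*$ is orthogonal to $t_{\lambda\nu}t_{\lambda\nu}^*$. Combining with $\Theta(t)_{\lambda\nu,\mu\nu}^{\Pi G}=Q(t)_{\lambda\nu}^{\Pi G}t_{\lambda\nu}t_{\mu\nu}^*\leq t_{\lambda\nu}t_{\lambda\nu}^*$ forces the term to be zero. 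The main obstacle is bookkeeping: I must check that Lemma~\ref{le:iota2} guarantees the sum is correctly indexed (so that $\lambda\nu\in\Pi G$ with $\iota^E_{\lambda\nu}=\lambda$ corresponds bijectively to $\mu\nu\in\Pi G$ with $\iota^E_{\mu\nu}=\mu$, legitimizing the symmetric index set on the right), and that the cocycle factors $c(\lambda,\nu)\overline{c(\mu,\nu)}$ survive the manipulations unchanged—this is where the 2-cocycle identity (C1) and the calculations in Lemma~\ref{le:matrixunits3} must be invoked carefully rather than the cocycle being absorbed or altered.
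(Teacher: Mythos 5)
Your overall architecture is the same as the paper's: insert the $\Pi G$-level decomposition of $t_\lambda t_\mu^*$, use maximality of $\iota^E$ to kill the cross-terms, and use Lemma~\ref{le:iota2} to see that the index set is symmetric in $\lambda$ and $\mu$. Your vanishing argument for the terms with $\iota^E_{\lambda\nu}\neq\lambda$ is correct, and getting the cocycle prefactors for free from \eqref{eq:matrixunits2} is in fact slightly cleaner than the paper, which produces them by a separate computation of $Q(t)_{\lambda\nu}^{\Pi G}t_\lambda t_\mu^*$. However, there is a genuine gap: you have only done half of what is needed. Starting from $\Theta(t)_{\lambda,\mu}^{\Pi E}=t_\lambda t_\mu^*\,Q(t)_\mu^{\Pi E}$ and expanding $t_\lambda t_\mu^*$ via \eqref{eq:matrixunits2} at the level of $\Pi G$, each term in the resulting sum is $c(\lambda,\nu)\overline{c(\mu,\nu)}\,\Theta(t)_{\lambda\nu,\mu\nu}^{\Pi G}\,Q(t)_\mu^{\Pi E}$; after discarding the vanishing terms, the surviving terms \emph{still carry the factor} $Q(t)_\mu^{\Pi E}$. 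Your displayed ``precise statement to prove'' silently drops this factor, and as literally written it is false: its unrestricted right-hand side equals $t_\lambda t_\mu^*$ by \eqref{eq:matrixunits2}, not $\Theta(t)_{\lambda,\mu}^{\Pi E}$. What is missing is the absorption statement: if $\iota^E_{\mu\nu}=\mu$ then $Q(t)_{\mu\nu}^{\Pi G}\,Q(t)_\mu^{\Pi E}=Q(t)_{\mu\nu}^{\Pi G}$. This is exactly the nontrivial half of the paper's key claim \eqref{eq:inclusion1ourclaim}, and it does not follow from the order and orthogonality observations you use for the vanishing direction.

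To fill the gap, fix $\mu\alpha\in\Pi E$ with $d(\alpha)>0$ and show $t_{\mu\alpha}t_{\mu\alpha}^*\,Q(t)_{\mu\nu}^{\Pi G}=0$; then each factor $(t_\mu t_\mu^*-t_{\mu\alpha}t_{\mu\alpha}^*)$ of $Q(t)_\mu^{\Pi E}$ acts as the identity on $Q(t)_{\mu\nu}^{\Pi G}\leq t_{\mu\nu}t_{\mu\nu}^*\leq t_\mu t_\mu^*$, giving absorption. For this, maximality of $\iota^E_{\mu\nu}=\mu$ (Lemma~\ref{le:iota1}(2)) forces $\mu\nu\notin\mu\alpha\Lambda$, so every $\sigma\in\MCE(\mu\alpha,\mu\nu)$ has the form $\mu\nu y$ with $d(y)>0$; since $\mu\alpha,\mu\nu\in\Pi G$ and $\Pi G$ is closed under minimal common extensions (Lemma~\ref{re:matrixunits0}(3)), each such $\sigma$ lies in $\Pi G$, so $(t_{\mu\nu}t_{\mu\nu}^*-t_\sigma t_\sigma^*)$ is a factor of $Q(t)_{\mu\nu}^{\Pi G}$ and hence $t_\sigma t_\sigma^*\,Q(t)_{\mu\nu}^{\Pi G}=0$. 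Now (TCK4) gives $t_{\mu\alpha}t_{\mu\alpha}^*\,Q(t)_{\mu\nu}^{\Pi G}=\sum_{\sigma\in\MCE(\mu\alpha,\mu\nu)}t_\sigma t_\sigma^*\,Q(t)_{\mu\nu}^{\Pi G}=0$, as required. This is precisely the second half of the paper's proof of \eqref{eq:inclusion1ourclaim}; with it inserted, your argument closes and is essentially the paper's proof.
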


\begin{proof}
We first show that
\begin{equation}\lambda \in \Pi G \text{ and } \mu \in \Pi E \text{ implies that } Q(t)_{\mu}^{\Pi E} Q(t)_{\lambda}^{\Pi G} = \delta_{\iota_\lambda,\mu}Q(t)_\lambda^{\Pi G}.\label{eq:inclusion1ourclaim}\end{equation}
First suppose that $\iota_\lambda \neq \mu$. We consider two cases:

Case 1: suppose that $\lambda \notin \mu \Lambda$. Then $(\alpha,\beta)\in \Lambda^{\text{min}}(\lambda,\mu)$ implies that $d(\alpha)>0$. For each $(\alpha,\beta)\in \Lambda^{\text{min}}(\lambda,\mu)$, $(t_\lambda t_\lambda^*-t_{\lambda \alpha} t_{\lambda\alpha}^*)$ is a factor in $Q(t)_\lambda^{\Pi G}$. Hence
\begin{align*}
Q(t)_\mu^{\Pi E} Q(t)_\lambda^{\Pi G} &\leq t_\mu t_\mu^*Q(t)_\lambda^{\Pi G} \\
&=t_\mu t_\mu^* t_\lambda t_\lambda^* Q(t)_\lambda^{\Pi G}\\
&\leq \sum_{(\alpha,\beta)\in \Lambda^{\text{min}}(\lambda,\mu)} t_{\lambda \alpha}t_{\lambda \alpha}^* Q(t)_\lambda^{\Pi G}\\
&=0.\end{align*}

Case 2: suppose that $\lambda \in \mu \Lambda$. Since $\iota_\lambda \neq \mu$, Lemma~\ref{le:iota1}(2) implies that $\iota_\lambda = \mu \alpha$ where $d(\alpha)>0$. In particular $\mu \alpha \in \Pi E$, and so $$ Q(t)_\mu^{\Pi E} Q(t)_\lambda^{\Pi G} \leq (t_\mu t_\mu^*-t_{\mu \alpha}t_{\mu \alpha}^*)t_\lambda t_\lambda^*.$$ As $\mu\alpha$ is an initial segment of $\lambda$, we have $\MCE(\lambda,\mu \alpha)=\left\{ \lambda \right \}= \MCE(\lambda,\mu)$. So (TCK4) gives $$Q(t)_\mu^{\Pi E} Q(t)_\lambda^{\Pi G}\leq t_\lambda t_\lambda^* -t_\lambda t_\lambda^*=0.$$
We have established that $\iota_\lambda \neq \mu $ implies that $Q(t)_\mu^{\Pi E} Q(t)_\lambda^{\Pi G}=0$. Now suppose that $\iota_\lambda =\mu$. Then
\begin{equation}Q(t)_\mu^{\Pi E} Q(t)_\lambda^{\Pi G} =t_\mu t_\mu^* \prod_{\mu \nu \in \Pi E \atop d(\nu)>0}\left( t_\mu t_\mu^*-t_{\mu\nu}t_{\mu \nu}^*\right) Q(t)_\lambda^{\Pi G}.\label{eq:inclusion1def}\end{equation}

Suppose that $\mu \nu \in \Pi E$ with $d(\nu)>0$. Since $\iota_\lambda =\mu$, the maximality of $\iota_\lambda$ implies that $\lambda \notin \mu \nu \Lambda$. So $(\alpha,\beta)\in \Lambda^\text{min}(\lambda,\mu\nu)$ implies that $d(\alpha)>0$. Fix $(\alpha,\beta)\in \Lambda^\text{min}(\lambda,\mu \nu)$. Then $(t_\lambda t_\lambda^*-t_{\lambda \alpha}t_{\lambda \alpha}^*)$ is a factor $Q(t)_\lambda^{\Pi G}$. Hence $t_{\lambda \alpha}t_{\lambda \alpha}^* Q(t)^{\Pi G}_\lambda=0$. Since $t_\lambda t_\lambda^* \geq Q(t)_\lambda^{\Pi E}$, we have
\begin{align*}
(t_\mu t_\mu^*&-t_{\mu \nu}t_{\mu \nu}^*) Q(t)_\lambda^{\Pi E}\\
&=t_\mu t_\mu^*t_\lambda t_\lambda^* Q(t)_\lambda^{\Pi E} -t_{\mu \nu}t_{\mu \nu}^* t_\lambda t_\lambda^* Q(t)_\lambda^{\Pi E}\\
&=t_\mu t_\mu^*t_\lambda t_\lambda^* Q(t)_\lambda^{\Pi E}-\sum_{(\alpha,\beta)\in\Lambda^\text{min}(\lambda,\mu \nu)} t_{\lambda \alpha} t_{\lambda \alpha}^* Q(t)_\lambda^{\Pi E} \quad \text{ by (TCK4)}\\
&=t_\mu t_\mu^* t_\lambda t_\lambda^* Q(t)_\lambda^{\Pi E}.
\end{align*}
Since $t_\lambda t_\lambda^* \leq t_\mu t_\mu^*$, we conclude that $(t_\mu t_\mu^*-t_{\mu\nu}t_{\mu\nu}^*)Q(t)_\lambda^{\Pi E}=Q(t)_\lambda^{\Pi E}$. Applying this to each factor on the right-hand side of \eqref{eq:inclusion1def} gives $Q(t)_\mu^{\Pi E} Q(t)_\lambda^{\Pi G}=Q(t)_\lambda^{\Pi G}$. We have established~\eqref{eq:inclusion1ourclaim}.

Fix $(\lambda,\mu)\in \Pi E \times_{d,s}\Pi E$. We have $\Theta(t)_{\lambda,\mu}^{\Pi E}=t_\lambda t_\lambda^* Q(t)_\lambda^{\Pi E} t_\lambda t_\mu^*$. An application of Corollary~\ref{co:orth5} gives
$$\Theta(t)_{\lambda,\mu}^{\Pi E}=\sum_{\lambda \nu \in\Pi G} Q(t)_{\lambda \nu}^{\Pi G} Q(t)_\lambda^{\Pi E} t_\lambda t_\mu^*.$$ Applying \eqref{eq:inclusion1ourclaim} to each term in this sum gives
\begin{equation} \Theta(t)_{\lambda,\mu}^{\Pi E} = \sum_{\lambda \nu \in \Pi G \atop \iota_{\lambda \nu } =\lambda}Q(t)_{\lambda \nu}^{\Pi G} t_\lambda t_\mu^*.\label{eq:matunitinclusion}\end{equation}
Fix $\lambda \nu \in \Pi G$ with $\iota_{\lambda \nu }=\lambda$. Since $Q(t)_{\lambda \nu}^{\Pi G}\leq t_{\lambda \nu}t_{\lambda \nu}^*$, \begin{flalign*}&&Q(t)_{\lambda \nu}^{\Pi G}t_\lambda t_\mu^*&=Q(t)_{\lambda \nu}^{\Pi G} t_{\lambda \nu} t_{\lambda \nu}^* t_\lambda t_\mu^*&\\&&&=c(\lambda,\nu)Q(t)_{\lambda \nu}^{\Pi G} t_{\lambda \nu}t_\nu^* t_\lambda^*t_\lambda t_\mu^*&\\&&&=c(\lambda,\nu)\overline{c(\mu,\nu)}Q(t)_{\lambda \nu}^{\Pi G} t_{\lambda \nu}t_{\mu \nu}^*&\\
&&&=c(\lambda,\nu)\overline{c(\mu,\nu)}\Theta(t)_{\lambda \nu,\mu \nu}^{\Pi G}.&\qedhere\end{flalign*}
\end{proof}

\begin{definition}[\cite{SPhD}, Definition 3.6.7]
Let $(\Lambda,d)$ be a finitely aligned $k$-graph, let $c\in \underline{Z}^2(\Lambda,\mathbb{T})$ and let $\left\{ t_\lambda : \lambda  \in \Lambda \right \}$ be a Toeplitz-Cuntz-Krieger $(\Lambda,c)$-family. Let $E,G\subset \Lambda$ be finite with $E\subset G$. Suppose that $n\in \mathbb{N}^k$ and that $v\in \Lambda^0$ are such that $M_{\Pi E}^t(n,v)$ is nontrivial. We define
$$I^G_E (n,v):= \left \{ \nu \in \Lambda : \lambda \nu \in \Pi G \text{ and } \iota^E_{\lambda \nu}=\lambda \text{ for } \lambda \in (\Pi E) v \cap \Lambda^n \right \}.$$
For $\lambda \in \Pi E$, we write $I_E^G(\lambda)$ for $I_E^G(d(\lambda),s(\lambda))$ for convenience.
\end{definition}

\begin{corollary}[cf. \cite{SPhD}, Corollary 3.6.8]\label{co:inclusion1}
Let $(\Lambda,d)$ be a finitely aligned $k$-graph, let $c\in \underline{Z}^2(\Lambda,\mathbb{T})$ and let $\left\{ t_\lambda : \lambda  \in \Lambda \right \}$ be a Toeplitz-Cuntz-Krieger $(\Lambda,c)$-family. Let $E,G\subset \Lambda$ be finite with $E\subset G$. The inclusion map $M_{\Pi E}^t \hookrightarrow M_{\Pi G}^t$ is given by
$$\Theta(t)_{\lambda,\mu}^{\Pi E} \mapsto \bigoplus_{v\in s(I_E^G(\lambda)) \atop n\in d(I_E^G(\lambda )v)} \sum_{\nu\in I_E^G(\lambda)v \cap \Lambda^n} c(\lambda,\nu)\overline{c(\mu,\nu)}\Theta(t)_{\lambda \nu, \mu \nu}^{\Pi G}.$$
\end{corollary}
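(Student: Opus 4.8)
The plan is to obtain this corollary as a pure reorganisation of Lemma~\ref{le:inclusion1}, with the direct-sum decomposition of Lemma~\ref{le:coredirectsum} supplying the bookkeeping. First I would record that $\Pi E \subset \Pi G$: since $E \subset G \subset \Pi G$ and $\Pi G$ satisfies \eqref{eq:piE}, the set $\Pi G$ is one of the sets whose intersection defines $\Pi E$, so $\Pi E \subset \Pi G$. Consequently $M_{\Pi E}^t = \linspan\{t_\lambda t_\mu^* : (\lambda,\mu)\in \Pi E \times_{d,s}\Pi E\}$ is a $C^*$-subalgebra of $M_{\Pi G}^t$ and the inclusion map is the identity embedding. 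Computing it therefore amounts to expressing each matrix unit $\Theta(t)_{\lambda,\mu}^{\Pi E}$ in terms of the matrix units of $M_{\Pi G}^t$, which is exactly what Lemma~\ref{le:inclusion1} does.

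Next I would match the index set to the definition of $I_E^G$. Lemma~\ref{le:inclusion1} gives
$$\Theta(t)_{\lambda,\mu}^{\Pi E}=\sum_{\lambda \nu \in \Pi G,\ \iota^E_{\lambda\nu}=\lambda}c(\lambda,\nu)\overline{c(\mu,\nu)}\,\Theta(t)_{\lambda \nu,\mu \nu}^{\Pi G},$$
and by definition $\{\nu : \lambda\nu\in\Pi G,\ \iota^E_{\lambda\nu}=\lambda\}$ is precisely $I_E^G(d(\lambda),s(\lambda))=I_E^G(\lambda)$, taking $\lambda$ itself as the representative in $(\Pi E)s(\lambda)\cap\Lambda^{d(\lambda)}$. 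I would then partition this set by the source and degree of the tail, writing $I_E^G(\lambda)=\bigsqcup_{v,n}\bigl(I_E^G(\lambda)v \cap \Lambda^n\bigr)$ with $v$ ranging over $s(I_E^G(\lambda))$ and $n$ over $d(I_E^G(\lambda)v)$. For $\nu \in I_E^G(\lambda)v \cap \Lambda^n$ the term $\Theta(t)_{\lambda\nu,\mu\nu}^{\Pi G}$ has $d(\lambda\nu)=d(\lambda)+n$ and $s(\lambda\nu)=v$, so it lies in the summand $M_{\Pi G}^t(d(\lambda)+n,v)$ of the decomposition from Lemma~\ref{le:coredirectsum}. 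Grouping the sum accordingly yields the stated formula.

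The only point requiring care, and the step I would expect to be the main (though minor) obstacle, is justifying that the regrouped expression is genuinely a \emph{direct} sum rather than merely a sum: this rests on Lemma~\ref{le:coredirectsum}, which says the $M_{\Pi G}^t(m,w)$ are mutually orthogonal complementary pieces of $M_{\Pi G}^t$, together with the observation above that each block $\{\Theta(t)_{\lambda\nu,\mu\nu}^{\Pi G} : \nu \in I_E^G(\lambda)v\cap\Lambda^n\}$ falls into exactly one summand, while distinct pairs $(v,n)$ give distinct summands $M_{\Pi G}^t(d(\lambda)+n,v)$. Along the way I would also note that $I_E^G(n,v)$ is independent of the chosen representative $\lambda\in (\Pi E)v\cap\Lambda^n$, using Lemma~\ref{re:matrixunits0}(2) for the condition $\lambda\nu\in\Pi G$ and Lemma~\ref{le:iota2} for the condition $\iota^E_{\lambda\nu}=\lambda$; beyond these verifications the argument is a direct rewrite of Lemma~\ref{le:inclusion1} with no new $C^*$-algebraic input.
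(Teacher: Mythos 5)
Your proposal is correct and takes essentially the same approach as the paper: the paper's entire proof is the one-line observation that the corollary is a direct consequence of Lemma~\ref{le:coredirectsum} and Lemma~\ref{le:inclusion1}. Your write-up simply makes explicit the bookkeeping the paper leaves implicit (the containment $\Pi E \subset \Pi G$, the identification of the summation index with $I_E^G(\lambda)$ and its independence of the representative via Lemma~\ref{re:matrixunits0}(2) and Lemma~\ref{le:iota2}, and the regrouping of the sum into the direct-sum decomposition), all of which is sound.
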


\begin{proof}
The result is a direct consequence of Lemma~\ref{le:coredirectsum} and Lemma~\ref{le:inclusion1}.
\end{proof}

\begin{theorem}\label{th:coreiso}
Let $(\Lambda,d)$ be a finitely aligned $k$-graph and let $b,c\in\underline{Z}^2(\Lambda,\mathbb{T})$. Let $\mathcal{E}$ be a subset of $\FE(\Lambda)$. Suppose that $\left \{ t_\lambda : \lambda \in \Lambda \right \}$ is a relative Cuntz-Krieger $(\Lambda,b;\mathcal{E})$-family and that $ \left \{ t_\lambda' : \lambda \in \Lambda \right \}$ is a relative Cuntz-Krieger $(\Lambda,c;\mathcal{E})$-family. Suppose that
\begin{enumerate}
\item[(1)] $t_v\neq 0$ and $t_v'\neq 0$ for all $v\in \Lambda^0$, and that
\item[(2)] $\left\{E\in \FE(\Lambda):Q(t)^E=0\right\}=\overline{\mathcal{E}}=\left\{E\in \FE(\Lambda):Q(t')^E= 0\right\}$.
\end{enumerate}
Then there is an isomorphism $\pi_{t,t'}: C^*\left(\left \{ t_\lambda t_\mu^*:d(\lambda)=d(\mu) \right \}\right) \rightarrow C^*\left(\left \{ t_\lambda' t_\mu^{\prime*}:d(\lambda)=d(\mu) \right \}\right) $ satisfying $\pi_{t,t'}(t_\lambda t_\lambda^*)= t_\lambda't_\lambda^{\prime*}$ for all $\lambda \in \Lambda$. If $b\equiv c$, then $\pi_{t,t'}(t_\lambda t_\mu^*)= t_\lambda't_\mu^{\prime*}$ for all $\lambda,\mu \in \Lambda$ with $d(\lambda)=d(\mu)$. We have
$$C^*(\Lambda,b;\mathcal{E})^\gamma\cong C^*(\Lambda,c;\mathcal{E})^\gamma.$$
\end{theorem}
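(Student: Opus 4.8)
The plan is to obtain the final statement as a direct consequence of the isomorphism $\pi_{t,t'}$ constructed in the first part of the theorem, applied to the two universal generating families. Let $\{s^b_\mathcal{E}(\lambda):\lambda\in\Lambda\}$ be the universal relative Cuntz--Krieger $(\Lambda,b;\mathcal{E})$-family generating $C^*(\Lambda,b;\mathcal{E})$, and let $\{s^c_\mathcal{E}(\lambda):\lambda\in\Lambda\}$ be the universal relative Cuntz--Krieger $(\Lambda,c;\mathcal{E})$-family generating $C^*(\Lambda,c;\mathcal{E})$, both furnished by Theorem~\ref{th:relativecuntzkriegeralgebra}. Taking $t_\lambda:=s^b_\mathcal{E}(\lambda)$ and $t'_\lambda:=s^c_\mathcal{E}(\lambda)$, the strategy is to verify that these families satisfy hypotheses (1) and (2), and then to identify the domain and codomain of the resulting $\pi_{t,t'}$ with the two cores.

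First I would check the two hypotheses for the universal families. Hypothesis (1), that $s^b_\mathcal{E}(v)\neq 0$ and $s^c_\mathcal{E}(v)\neq 0$ for all $v\in\Lambda^0$, is exactly Theorem~\ref{th:nonzerogapprojection}(1). Hypothesis (2) follows from Theorem~\ref{th:nonzerogapprojection}(2): for $G\in\FE(\Lambda)$ one has $Q(s^b_\mathcal{E})^G=0$ if and only if $G\in\overline{\mathcal{E}}$, so that $\{E\in\FE(\Lambda):Q(s^b_\mathcal{E})^E=0\}=\overline{\mathcal{E}}$, and the identical statement holds with $c$ in place of $b$. With both hypotheses in hand, the first part of the theorem provides an isomorphism
$$\pi_{t,t'}: C^*\big(\{s^b_\mathcal{E}(\lambda)s^b_\mathcal{E}(\mu)^*: d(\lambda)=d(\mu)\}\big)\longrightarrow C^*\big(\{s^c_\mathcal{E}(\lambda)s^c_\mathcal{E}(\mu)^*: d(\lambda)=d(\mu)\}\big).$$

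It remains to recognise these two $C^*$-algebras as the cores. By the definition of the core following Lemma~\ref{le:corestructure} (namely Lemma~\ref{le:corestructure} applied to the gauge action $\gamma$, whose existence and strong continuity come from Lemmas~\ref{le:gaugeauto} and~\ref{le:stronglycontinuous}), we have $C^*(\Lambda,b;\mathcal{E})^\gamma=\overline{\linspan}\{s^b_\mathcal{E}(\mu)s^b_\mathcal{E}(\nu)^*: d(\mu)=d(\nu)\}$. Since the core is a $C^*$-subalgebra (Lemma~\ref{le:spectralsubspaces}(2)), this closed span coincides with $C^*(\{s^b_\mathcal{E}(\lambda)s^b_\mathcal{E}(\mu)^*: d(\lambda)=d(\mu)\})$, the domain of $\pi_{t,t'}$; the same identification with $c$ in place of $b$ yields the codomain. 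Therefore $\pi_{t,t'}$ is an isomorphism $C^*(\Lambda,b;\mathcal{E})^\gamma\cong C^*(\Lambda,c;\mathcal{E})^\gamma$, which is the desired conclusion.

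Because every ingredient is already available --- the isomorphism from the body of the theorem, the nonvanishing of vertex projections and the gap-projection criterion from Theorem~\ref{th:nonzerogapprojection}, and the span description of the core --- this final deduction is essentially bookkeeping and I anticipate no serious obstacle. The one point that genuinely does the work is hypothesis (2): it is precisely Theorem~\ref{th:nonzerogapprojection}(2) that guarantees the two universal families share the \emph{same} satiation $\overline{\mathcal{E}}$ as their common set of vanishing gap projections, which is what licenses the application of $\pi_{t,t'}$ across the two different cocycles.
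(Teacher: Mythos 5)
Your closing deduction is sound, and it is in fact exactly how the paper finishes: one applies the first assertion of the theorem to the two universal families $\{s^b_\mathcal{E}(\lambda)\}$ and $\{s^c_\mathcal{E}(\lambda)\}$, with Theorem~\ref{th:nonzerogapprojection} supplying hypotheses (1) and (2), and with Lemma~\ref{le:corestructure} (via the gauge action of Lemmas~\ref{le:gaugeauto} and~\ref{le:stronglycontinuous}) identifying $C^*\bigl(\bigl\{ s^b_\mathcal{E}(\lambda)s^b_\mathcal{E}(\mu)^* : d(\lambda)=d(\mu)\bigr\}\bigr)$ with the core $C^*(\Lambda,b;\mathcal{E})^\gamma$, and likewise for $c$. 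The paper compresses all of this into a single sentence; your expanded version of that sentence is accurate.

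The genuine gap is that this is \emph{all} you prove. The isomorphism $\pi_{t,t'}$ whose existence you invoke is the main assertion of the theorem, valid for an arbitrary pair of families satisfying (1) and (2), and it is nowhere constructed in your proposal; deriving the last sentence from the first is the easy part, and assuming the first is circular as a proof of the statement. The paper's construction is where all the work lies: one writes $\Lambda=\bigcup_i E_i$ as an increasing union of finite sets, so that by Lemma~\ref{le:piEsets} the two algebras are the closures of $\bigcup_i M^t_{\Pi E_i}$ and $\bigcup_i M^{t'}_{\Pi E_i}$; one needs the matrix units $\Theta(t)^{\Pi E_i}_{\lambda,\mu}$ of Proposition~\ref{prop:matrixunits2}, together with Lemma~\ref{le:coredirectsum} and Proposition~\ref{prop:matrixunits4}, which is precisely where hypotheses (1) and (2) enter --- they guarantee that the blocks $M^t_{\Pi E_i}(n,v)$ and $M^{t'}_{\Pi E_i}(n,v)$ vanish simultaneously or are simultaneously full matrix algebras; one then needs the cocycle-correcting phases $\omega_i$ satisfying (T1) and (T2) and the ladder of Lemma~\ref{le:theladderlemma}, which shows that the block isomorphisms $\psi_i\colon \Theta(t)^{\Pi E_i}_{\lambda,\mu}\mapsto \omega_i(\lambda,\mu)\Theta(t')^{\Pi E_i}_{\lambda,\mu}$ intertwine the inclusions $M^t_{\Pi E_i}\hookrightarrow M^t_{\Pi E_{i+1}}$ and $M^{t'}_{\Pi E_i}\hookrightarrow M^{t'}_{\Pi E_{i+1}}$, so that they assemble into an isomorphism of the inductive limits. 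The phases are not decoration: by Lemma~\ref{le:inclusion1} the two inclusion maps carry the coefficients $b(\lambda,\nu)\overline{b(\mu,\nu)}$ and $c(\lambda,\nu)\overline{c(\mu,\nu)}$ respectively, so when $b\neq c$ the naive correspondence of matrix units does \emph{not} commute with the inclusions --- this is exactly why the theorem asserts $\pi_{t,t'}(t_\lambda t_\lambda^*)=t'_\lambda t_\lambda^{\prime*}$ only on diagonal elements in general, and the formula $\pi_{t,t'}(t_\lambda t_\mu^*)=t'_\lambda t_\mu^{\prime*}$ only under the extra hypothesis $b\equiv c$. None of this machinery, nor any substitute for it, appears in your proposal, so the heart of the theorem remains unproved.
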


Before proving Theorem~\ref{th:coreiso}, we need to establish several technical results.

\begin{lemma}\label{le:twistedcoreiso2}
Let $I$ be a finite set. Let $\left \{ e_{i,j} : i,j\in I \right \}$ be a set of matrix units in a $C^*$-algebra $A$. Let $z: I \times I \rightarrow \mathbb{T}$ satisfy:
\begin{enumerate}
\item[(T1)] $z(i,j)=\overline{z(j,i)}$ for all $i,j\in I$; and
\item[(T2)] $z(i,j)z(j,k)=z(i,k)$ for all $i,j,k\in I$.
\end{enumerate}
Then $\left \{ z(i,j)e_{i,j} : i,j \in I\right \}$ is a collection of matrix units and $$\linspan \left  \{ e_{i,j} : i,j\in I \right \} = \linspan \left \{ z(i,j)e_{i,j} : i,j \in I\right \}.$$ Moreover, if $w: I \times I \rightarrow \mathbb{T}$ satisfies properties $(T1)$ and $(T2)$, then $zw: I \times I \rightarrow \mathbb{T}$ defined by $(zw)(i,j)=z(i,j)w(i,j)$ for all $i,j\in I$ also satisfies $(T1)$ and $(T2)$.
\end{lemma}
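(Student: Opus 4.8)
The plan is to prove all three assertions by direct verification, using the matrix-unit relations (M1) and (M2) of Lemma~\ref{le:matrixalgebra} for $\{e_{i,j}\}$, the hypotheses (T1) and (T2) on $z$, and the fact that the values of $z$ lie in the unit circle.

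First I would establish that $\{z(i,j)e_{i,j}\}$ is a collection of matrix units. For (M1), I compute $(z(i,j)e_{i,j})^* = \overline{z(i,j)}\,e_{j,i}$ and rewrite $\overline{z(i,j)}=z(j,i)$ using (T1). For (M2), expanding the product $(z(i,j)e_{i,j})(z(k,l)e_{k,l})$ and applying (M2) for the $e$'s produces $\delta_{j,k}\,z(i,j)z(j,l)\,e_{i,l}$; the essential point is that (T2) collapses $z(i,j)z(j,l)$ to $z(i,l)$ precisely when $j=k$, giving $\delta_{j,k}\,z(i,l)e_{i,l}$, as required. The span equality then follows immediately: since each $z(i,j)\in\mathbb{T}$ is a nonzero scalar with $\overline{z(i,j)}z(i,j)=1$, we have $e_{i,j}=\overline{z(i,j)}\,(z(i,j)e_{i,j})$, so each spanning family lies in the linear span of the other, and the two spans coincide.

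For the final claim about $zw$, I would verify (T1) and (T2) for the pointwise product directly: (T1) comes from $\overline{(zw)(j,i)}=\overline{z(j,i)}\,\overline{w(j,i)}=z(i,j)w(i,j)=(zw)(i,j)$, and (T2) from $(zw)(i,j)(zw)(j,k)=\bigl(z(i,j)z(j,k)\bigr)\bigl(w(i,j)w(j,k)\bigr)=z(i,k)w(i,k)=(zw)(i,k)$, where commutativity of $\mathbb{T}$ lets me regroup the factors. Every step is elementary, so there is no substantial obstacle here; the only thing to keep track of is the consistent book-keeping of indices and conjugates, and the repeated use of the modulus-one property that makes multiplication by $z$ a reversible ``gauge'' rescaling preserving both the matrix-unit structure and the span.
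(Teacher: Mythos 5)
Your proof is correct and follows essentially the same route as the paper's: direct verification of (M1) via (T1), of (M2) via (T2) under the Kronecker delta, and pointwise checking of (T1)--(T2) for the product $zw$. Your explicit justification of the span equality, writing $e_{i,j}=\overline{z(i,j)}\,(z(i,j)e_{i,j})$, just spells out what the paper dismisses as clear.
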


\begin{proof}
Fix $i,j,k,l\in I$. We show that $\left( z(i,j)e_{i,j} \right)^*=z(j,i)e_{j,i}$ and $\left( z(i,j)e_{i,j} \right) \left( z(k,l)e_{k,l} \right)=\delta_{j,k}z(i,l)e_{i,l}$. This shows $(M1)$ and $(M2)$. Firstly, (T1) and then (M1) imply that
\begin{align*}
\left( z(i,j)e_{i,j} \right)^*&=\overline{z(i,j)} e_{i,j}^* =z(j,i)e_{i,j}^* =z(j,i)e_{j,i},
\end{align*}
and secondly, (M2) and then (T2) give
\begin{align*}
\left( z(i,j)e_{i,j} \right) \left( z(k,l)e_{k,l} \right)=\delta_{j,k}z(i,j)z(k,l)e_{i,l}
=\delta_{j,k}z(i,j)z(j,l)e_{i,l}
&=\delta_{j,k}z(i,l)e_{i,l}.
\end{align*}
So $\left \{ z(i,j)e_{i,j} : i,j \in I\right \}$ is a collection of matrix units. It is clear that
$$\linspan \left  \{ e_{i,j} : i,j\in I \right \} = \linspan \left \{ z(i,j)e_{i,j} : i,j \in I\right \}$$
since $z$ takes values in the circle. For the final statement, let $w: I \times I \rightarrow \mathbb{T}$ satisfy $(T1)$ and $(T2)$. Then for $i,j,k,\in I$ we have,
\begin{align*}
\overline{(zw)(i,j)}&=\overline{z(i,j)w(i,j)}\\
&=z(j,i)w(j,i) \qquad \text{ by (T1)}\\
&=(zw)(j,i),
\end{align*}
and
\begin{align*}
(zw)(i,j)(zw)(j,k)&=z(i,j)z(j,k)w(i,j)w(j,k)\\
&=z(i,k)w(i,k) \qquad \text{ by (T2)}\\
&=(zw)(i,k).
\end{align*}
So $zw$ satisfies (T1) and (T2).
\end{proof}

\begin{proposition}\label{co:twistedcoreiso3}
Let $(\Lambda,d)$ be a finitely aligned $k$-graph, let $b,c\in\underline{Z}^2(\Lambda,\mathbb{T})$ and let $\left\{ t_\lambda : \lambda  \in \Lambda \right \}$ be a Toeplitz-Cuntz-Krieger $(\Lambda,c)$-family. Let $E, G \subset \Lambda$ be finite with $E\subset G$, $v\in \Lambda^0$ and let $n\in \mathbb{N}^k$. Then $\omega_b,\omega_c : \Lambda \times \Lambda \rightarrow \mathbb{T}$ defined by
$$\omega_b(\lambda,\mu) = b(\iota^E_\mu,\kappa^E_\mu)\overline{b(\iota^E_\lambda,\kappa^E_\lambda)} \text{ and } \omega_c(\lambda,\mu) = c(\iota^E_\lambda,\kappa^E_\lambda) \overline{c(\iota^E_\mu,\kappa^E_\mu)}$$
satisfy $(T1)$ and $(T2)$. If $z: \Lambda \times \Lambda \rightarrow \mathbb{T}$ satisfies $(T1)$ and $(T2)$ then
$$\left \{ z(\lambda,\mu)\omega_b(\lambda,\mu)\omega_c(\lambda,\mu) \Theta(t)_{\lambda,\mu}^{\Pi G}: (\lambda,\mu)\in \Pi G \times_{d,s} \Pi G \right\}$$
is a collection of matrix units which span $M_{\Pi G}^t$.
\end{proposition}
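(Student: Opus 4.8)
The plan is to reduce everything to Lemma~\ref{le:twistedcoreiso2} and Proposition~\ref{prop:matrixunits2}. The key observation is that each of $\omega_b$ and $\omega_c$ has a separable form built from a single $\mathbb{T}$-valued function on $\Lambda$. Writing $g_c(\lambda):=c(\iota^E_\lambda,\kappa^E_\lambda)$ and $g_b(\lambda):=b(\iota^E_\lambda,\kappa^E_\lambda)$ — both defined for every $\lambda\in\Lambda$ by Definition~\ref{def:iotaandkappa}, and both valued in $\mathbb{T}$ — we have $\omega_c(\lambda,\mu)=g_c(\lambda)\overline{g_c(\mu)}$ and $\omega_b(\lambda,\mu)=g_b(\mu)\overline{g_b(\lambda)}$.

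First I would record the elementary fact that any function of the form $(\lambda,\mu)\mapsto g(\lambda)\overline{g(\mu)}$ with $g:\Lambda\to\mathbb{T}$ satisfies (T1) and (T2): property (T1) is immediate from $\overline{g(\mu)\overline{g(\lambda)}}=g(\lambda)\overline{g(\mu)}$, and property (T2) follows from $g(\lambda)\overline{g(\mu)}\cdot g(\mu)\overline{g(\nu)}=g(\lambda)\overline{g(\nu)}$, using $\overline{g(\mu)}g(\mu)=1$. Applying this to $g_c$ shows $\omega_c$ satisfies (T1) and (T2). Since $\omega_b(\lambda,\mu)=\overline{g_b(\lambda)\overline{g_b(\mu)}}$ is the complex conjugate of a separable-form function, and (T1),(T2) are preserved under complex conjugation (a one-line check), $\omega_b$ satisfies (T1),(T2) as well; alternatively one simply redoes the two-line verification directly for $\omega_b$.

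With $z$ assumed to satisfy (T1),(T2), two applications of the ``moreover'' clause of Lemma~\ref{le:twistedcoreiso2} show that the pointwise product $z\,\omega_b\,\omega_c$ satisfies (T1) and (T2). By Proposition~\ref{prop:matrixunits2}, the set $\{\Theta(t)_{\lambda,\mu}^{\Pi G}:(\lambda,\mu)\in\Pi G\times_{d,s}\Pi G\}$ is a collection of matrix units spanning $M_{\Pi G}^t$. I would then conclude by the first part of Lemma~\ref{le:twistedcoreiso2}, which rescales matrix units by a (T1),(T2)-function without destroying the matrix-unit relations and without changing the span (since the scalars lie in $\mathbb{T}$).

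The one point requiring care is that Lemma~\ref{le:twistedcoreiso2} is phrased for matrix units indexed by a single finite set $I$, whereas here the pairs range over $\Pi G\times_{d,s}\Pi G$, which is not of that form. I would handle this by passing through the block decomposition $M_{\Pi G}^t=\bigoplus_{v,n}M_{\Pi G}^t(n,v)$ of Lemma~\ref{le:coredirectsum}: in the block indexed by $(n,v)$ the relevant index set is the finite set $I_{n,v}=(\Pi G)v\cap\Lambda^n$, on which the restriction of $z\,\omega_b\,\omega_c$ still satisfies (T1),(T2), so Lemma~\ref{le:twistedcoreiso2} applies verbatim and yields matrix units spanning $M_{\Pi G}^t(n,v)$; summing over blocks gives the claim. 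Alternatively, since the relations (M1),(M2) for the $\Theta(t)_{\lambda,\mu}^{\Pi G}$ already vanish across distinct blocks, one can verify (M1),(M2) for $\{z\,\omega_b\,\omega_c\,\Theta(t)_{\lambda,\mu}^{\Pi G}\}$ directly from (T1),(T2) and Proposition~\ref{prop:matrixunits2}, reproducing the short computation in the proof of Lemma~\ref{le:twistedcoreiso2}. I expect no serious obstacle: the content is entirely the separable-form observation, and the remainder is bookkeeping.
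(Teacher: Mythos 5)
Your proposal is correct and is essentially the paper's own argument: the paper verifies (T1) and (T2) for $\omega_b$ by the same two-line computation your separable-form observation packages (writing $\omega_c(\lambda,\mu)=g_c(\lambda)\overline{g_c(\mu)}$ with $g_c(\lambda)=c(\iota^E_\lambda,\kappa^E_\lambda)$), notes the analogous computation for $\omega_c$, and then cites Proposition~\ref{prop:matrixunits2} and Lemma~\ref{le:twistedcoreiso2} (including its ``moreover'' clause for the product $z\,\omega_b\,\omega_c$) exactly as you do. Your one extra step --- routing the application of Lemma~\ref{le:twistedcoreiso2} through the block decomposition of Lemma~\ref{le:coredirectsum}, since the $\Theta(t)_{\lambda,\mu}^{\Pi G}$ are indexed by $\Pi G\times_{d,s}\Pi G$ rather than by a full $I\times I$ --- is a point of rigour the paper's proof passes over silently, and is worth keeping.
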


\begin{proof}
For $(\lambda,\mu), (\mu,\nu) \in \Lambda \times \Lambda$, the calculations
\begin{align*}
\omega_b(\lambda,\mu)=b(\iota^E_\mu,\kappa^E_\mu)\overline{b(\iota^E_\lambda,\kappa^E_\lambda)}=\overline{\overline{b(\iota^E_\mu,\kappa^E_\mu)}b(\iota^E_\lambda,\kappa^E_\lambda) }=\overline{b(\iota^E_\lambda,\kappa^E_\lambda) \overline{b(\iota^E_\mu,\kappa^E_\mu)}}=\overline{\omega_b(\mu,\lambda)}
\end{align*}
and
\begin{align*}
\omega_b(\lambda,\mu)\omega_b(\mu,\nu)=b(\iota^E_\mu,\kappa^E_\mu)\overline{b(\iota^E_\lambda,\kappa^E_\lambda)}b(\iota^E_\nu,\kappa^E_\nu)\overline{b(\iota^E_\mu,\kappa^E_\mu)}=b(\iota^E_\nu,\kappa^E_\nu)\overline{b(\iota^E_\lambda,\kappa^E_\lambda)}=\omega_b(\lambda,\nu)
\end{align*}
show that $\omega_b$ satisfies (T1) and (T2). A similar calculation shows that $\omega_c$ also satisfies (T1) and (T2). By Proposition~\ref{prop:matrixunits2}
$$\left \{\Theta(t)_{\lambda,\mu}^{\Pi G}: (\lambda,\mu)\in \Pi G \times_{d,s} \Pi G \right\}$$
is a collection of matrix units which span $M_{\Pi G}^t$. The last statement is then a direct consequence of Lemma~\ref{le:twistedcoreiso2}.
\end{proof}

Since $\Lambda$ is countable, we can write $\Lambda =\left \{ \lambda_0 , \lambda_1, \lambda_2,... \right \}$. Put $E_i:=\left \{ \lambda_j : 0\leq j\leq i \right \}$. Then $E_i \subset E_{i+1}$ for each $i\in \mathbb{N}$ and $\bigcup_{i=0}^\infty E_i= \Lambda$. So there exists a family $\left \{ E_i : i\in \mathbb{N} \right \}$ of finite subsets of $\Lambda$ such that $E_i \subset E_{i+1}$ for each $i\in \mathbb{N}$ and $\bigcup_{i=0}^\infty E_i= \Lambda$.

\begin{lemma}\label{le:theladderlemma}
Let $(\Lambda,d)$ be a finitely aligned $k$-graph and let $b,c\in\underline{Z}^2(\Lambda,\mathbb{T})$. Let $\mathcal{E}$ be a subset of $\FE(\Lambda)$. Let $\left\{t_\lambda : \lambda \in \Lambda \right \}$ be a relative Cuntz-Krieger $(\Lambda,b;\mathcal{E})$-family and let $\left \{ t_\lambda': \lambda \in \Lambda \right \}$ be a relative Cuntz-Krieger $(\Lambda,c;\mathcal{E})$-family such that:
\begin{enumerate}
\item[(1)] $t_v\neq0$ and $t'_v\neq 0$ for all $v\in \Lambda^0$; and
\item[(2)] $\left\{E\in \FE(\Lambda):Q(t)^E=0\right\}=\overline{\mathcal{E}}=\left\{E\in \FE(\Lambda):Q(t')^E= 0\right\}$.
\end{enumerate}Fix a family $\left \{ E_i : i\in \mathbb{N} \right \}$ of finite subsets of $\Lambda$ such that $E_i \subset E_{i+1}$ for each $i\in \mathbb{N}$ and $\bigcup_{i=0}^\infty E_i= \Lambda$. For each $i\in \mathbb{N}$ and each $\lambda \in \Lambda$, let $\iota^{i}_\lambda:=\iota_\lambda^{E_i}$ and let $\kappa^{i}_\lambda:=\kappa^{E_i}_\lambda$ as in Definition~\ref{def:iotaandkappa}. Define $\omega_0: \Lambda \times \Lambda \rightarrow \mathbb{T}$ by \begin{equation} \omega_0(\lambda,\mu)=1\label{eq:interchangingcocylefunction1} \end{equation}
for all $\lambda ,\mu \in \Lambda$. For each $i\in \mathbb{N}$ define $\omega_{i+1}: \Lambda \times \Lambda \rightarrow \mathbb{T}$ by
\begin{equation}\omega_{i+1}(\lambda,\mu)=\omega_i(\iota^i_\lambda,\iota^i_\mu)b(\iota^{i}_\mu,\kappa^{i}_\mu)c(\iota^{i}_\lambda,\kappa^{i}_\lambda)\overline{b(\iota^{i}_\lambda,\kappa^{i}_\lambda)c(\iota^{i}_\mu,\kappa^{i}_\mu)}\label{eq:interchangingcocylefunction2} \end{equation}
for all $\lambda,\mu\in \Lambda$.
Then for each $i\in \mathbb{N}$ there is a unique isomorphism $\psi_i : M_{\Pi E_i}^{t} \rightarrow M_{\Pi E_i}^{t'}$ satisfying
\begin{equation}\psi_i \left( \Theta(t)_{\lambda,\mu}^{\Pi E_i}\right)=\omega_{i}(\lambda,\mu)\Theta(t')_{\lambda,\mu}^{\Pi E_i}  \label{eq:thecoreiso}\end{equation} for each $(\lambda,\mu) \in \Pi E_i \times_{d,s} \Pi E_i$. The diagram
\begin{equation}
\begin{tikzpicture}
\matrix(m)[matrix of math nodes,
row sep=4em, column sep=2.5em,
text height=1.5ex, text depth=0.25ex]
{M_{\Pi E_0}^{t}&M_{\Pi E_1}^{t}&M_{\Pi E_2}^{t}&\cdots\\
M_{\Pi E_0}^{t'}&M_{\Pi E_1}^{t'}&M_{\Pi E_2}^{t'}&\cdots\\};
\path[right hook->]
(m-1-1) edge node[auto]{$\phi_0^b$} (m-1-2)
(m-1-2) edge node[auto]{$\phi_1^b$}(m-1-3)
(m-2-1) edge node[auto]{$\phi_0^c$} (m-2-2)
(m-2-2) edge node[auto]{$\phi_1^c$} (m-2-3)
(m-1-3) edge node[auto]{$\phi_2^b$} (m-1-4)
(m-2-3) edge node[auto]{$\phi_2^c$}(m-2-4);
\path[->]
(m-1-1) edge node[auto]{$\psi_0$}(m-2-1)
(m-1-2) edge node[auto]{$\psi_1$} (m-2-2)
(m-1-3) edge node[auto]{$\psi_2$} (m-2-3);

\end{tikzpicture}
\label{dia:ladderofmatrixalgebras}\end{equation}

commutes, where the inclusion maps $\phi_i^b$ and $\phi_i^c$ for $i\in \mathbb{N}$ are given by Lemma~\ref{co:inclusion1}.
\end{lemma}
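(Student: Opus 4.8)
The plan is to prove the two assertions of the lemma in turn: first the existence and uniqueness of each isomorphism $\psi_i$, and then the commutativity of the ladder \eqref{dia:ladderofmatrixalgebras}. Both rest on the matrix-unit calculus developed earlier, the first on Lemma~\ref{le:twistedcoreiso2} and Corollary~\ref{co:matrixalgebrauni}, and the second on a term-by-term computation driven by the inclusion formula of Corollary~\ref{co:inclusion1}.

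For the existence of $\psi_i$, the key preliminary is to show by induction on $i$ that each $\omega_i$ satisfies properties $(T1)$ and $(T2)$ of Lemma~\ref{le:twistedcoreiso2}. The base case is immediate, since $\omega_0\equiv 1$. For the inductive step I would write $\omega_{i+1}$ as a product of three functions: the pullback $(\lambda,\mu)\mapsto\omega_i(\iota^i_\lambda,\iota^i_\mu)$, and the two functions $\omega_b,\omega_c$ (with $E=E_i$) of Proposition~\ref{co:twistedcoreiso3}. Since $(T1)$ and $(T2)$ are purely formal identities, they are preserved under precomposing the indices with $\lambda\mapsto\iota^i_\lambda$, so the pullback inherits them from the inductive hypothesis; Proposition~\ref{co:twistedcoreiso3} supplies $(T1),(T2)$ for $\omega_b$ and $\omega_c$; and the final clause of Lemma~\ref{le:twistedcoreiso2} shows the property is closed under products. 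Hence $\omega_{i+1}$ satisfies $(T1),(T2)$, and Lemma~\ref{le:twistedcoreiso2} then guarantees that $\{\omega_i(\lambda,\mu)\Theta(t')_{\lambda,\mu}^{\Pi E_i}\}$ is a family of matrix units spanning $M_{\Pi E_i}^{t'}$.

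To produce $\psi_i$ itself I would invoke hypothesis (2). By Proposition~\ref{prop:matrixunits4}, a unit $\Theta(t)_{\lambda,\mu}^{\Pi E_i}$ vanishes precisely when $T(\lambda)$ is exhaustive and $\prod_{\lambda\nu\in\Pi E_i,\,d(\nu)>0}(t_{s(\lambda)}-t_\nu t_\nu^*)=0$; in that case the product is the gap projection $Q(t)^{T(\lambda)}$ for a set $T(\lambda)\in\FE(\Lambda)$ depending only on $\Pi E_i$ and $\lambda$, not on the family. Hypothesis (2) gives $Q(t)^{T(\lambda)}=0\iff T(\lambda)\in\overline{\mathcal{E}}\iff Q(t')^{T(\lambda)}=0$, so $\Theta(t)_{\lambda,\mu}^{\Pi E_i}=0$ if and only if $\Theta(t')_{\lambda,\mu}^{\Pi E_i}=0$. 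Decomposing both algebras into blocks via Lemma~\ref{le:coredirectsum}, the nontrivial blocks of $M_{\Pi E_i}^t$ and $M_{\Pi E_i}^{t'}$ are then indexed by the same pairs $(n,v)$, and on each such block Corollary~\ref{co:matrixalgebrauni} yields an isomorphism carrying $\Theta(t)_{\lambda,\mu}^{\Pi E_i}$ to the nonzero matrix unit $\omega_i(\lambda,\mu)\Theta(t')_{\lambda,\mu}^{\Pi E_i}$. Their direct sum is the desired $\psi_i$ satisfying \eqref{eq:thecoreiso}, and uniqueness is immediate since the $\Theta(t)_{\lambda,\mu}^{\Pi E_i}$ span $M_{\Pi E_i}^t$.

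The commutativity of \eqref{dia:ladderofmatrixalgebras} is where the recursive definition of $\omega_{i+1}$ earns its keep, and I expect this to be the main obstacle. Evaluating both composites on a generator $\Theta(t)_{\lambda,\mu}^{\Pi E_i}$ via Corollary~\ref{co:inclusion1} (equivalently Lemma~\ref{le:inclusion1}) and \eqref{eq:thecoreiso}, the identity $\psi_{i+1}\circ\phi_i^b=\phi_i^c\circ\psi_i$ reduces, term by term over those $\nu$ with $\lambda\nu\in\Pi E_{i+1}$ and $\iota^i_{\lambda\nu}=\lambda$, to the scalar equation $b(\lambda,\nu)\overline{b(\mu,\nu)}\,\omega_{i+1}(\lambda\nu,\mu\nu)=\omega_i(\lambda,\mu)\,c(\lambda,\nu)\overline{c(\mu,\nu)}$. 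To verify it I would note that $\iota^i_{\lambda\nu}=\lambda$ forces $\kappa^i_{\lambda\nu}=\nu$ by Definition~\ref{def:iotaandkappa}, while Lemma~\ref{le:iota2} ensures the partner term satisfies $\mu\nu\in\Pi E_{i+1}$, $\iota^i_{\mu\nu}=\mu$ and hence $\kappa^i_{\mu\nu}=\nu$ (so the two sums range over the same index set). Substituting these into \eqref{eq:interchangingcocylefunction2} yields $\omega_{i+1}(\lambda\nu,\mu\nu)=\omega_i(\lambda,\mu)\,b(\mu,\nu)c(\lambda,\nu)\overline{b(\lambda,\nu)c(\mu,\nu)}$, whereupon the two $b$-factors cancel against $b(\lambda,\nu)\overline{b(\mu,\nu)}$, leaving exactly $\omega_i(\lambda,\mu)c(\lambda,\nu)\overline{c(\mu,\nu)}$. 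The delicate points are entirely bookkeeping: confirming via Lemma~\ref{le:iota2} that the summation sets coincide on the two sides, and tracking the conjugates so that the $b$-dependence disappears while the $c$-dependence is reproduced exactly.
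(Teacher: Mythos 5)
Your proposal is correct and follows essentially the same route as the paper's proof: an induction showing each $\omega_i$ satisfies (T1) and (T2), the block decomposition of Lemma~\ref{le:coredirectsum} together with Corollary~\ref{co:matrixalgebrauni} (using hypotheses (1) and (2) to match up which matrix units vanish) to build each $\psi_i$, and the same cocycle cancellation $b(\lambda,\nu)\overline{b(\mu,\nu)}\,\omega_{i+1}(\lambda\nu,\mu\nu)=\omega_i(\lambda,\mu)\,c(\lambda,\nu)\overline{c(\mu,\nu)}$ via Lemma~\ref{le:iota2} for the commutativity of the ladder. Your two-case analysis of when $\Theta(t)_{\lambda,\mu}^{\Pi E_i}$ vanishes (through Proposition~\ref{prop:matrixunits4}) merely consolidates the paper's three cases, and your explicit observation that (T1)--(T2) are preserved under precomposition with $\lambda\mapsto\iota^i_\lambda$ makes precise a point the paper leaves implicit.
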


\begin{proof}

We claim that $\omega_i$ satisfies (T1) and (T2) for each $i\in \mathbb{N}$. We proceed by induction. The map $w_0$ satisfies (T1) and (T2) because $w_0(\lambda,\mu)=1$ for each $\lambda,\mu \in \Lambda$. Suppose that $\omega_k$ satisfies (T1) and (T2). Then
$$w_{k+1}(\lambda,\mu) =w_k(\iota_\lambda^i,\iota_\mu^i)b(\iota^k_\mu,\kappa^k_\mu)c(\iota^k_\lambda,\kappa^k_\lambda)\overline{b(\iota^k_\lambda,\kappa^k_\lambda)c(\iota^k_\mu,\kappa^k_\mu)}.$$ Proposition~\ref{co:twistedcoreiso3} implies that $\omega_{k+1}$ also satisfies (T1) and (T2).

We will now construct the maps $\psi_i :M_{\Pi E_i}^{t} \rightarrow M_{\Pi E_i}^{t'}$. Fix $i\in \mathbb{N}$. By Lemma~\ref{le:coredirectsum} we have
$$M_{\Pi E_i}^{t} = \bigoplus_{v\in s( \Pi E_i) \atop n\in d((\Pi E_i)v)} M_{\Pi E_i}^{t}(n,v),$$
where each $M_{\Pi E_i}^{t}(n,v)$ is spanned by the elements $$\left \{ \Theta(t)_{\lambda,\mu}^{\Pi E_i} : \lambda,\mu \in (\Pi E_i)v\cap \Lambda^n \right \}.$$ The same holds for the family $\left \{ t'_\lambda:\lambda \in \Lambda \right \}$. Hence there is at most one linear map $\psi_i : M_{\Pi E_i}^t \rightarrow M_{\Pi E_i}^{t'} $ satisfying \eqref{eq:thecoreiso}.

Fix $n\in \mathbb{N}^k$ and $v\in \Lambda^0$. We claim $Q(t)^{T^{\Pi E_i}(n,v)}=0$ if and only if $Q(t')^{T^{\Pi E_i}(n,v)}=0$. We consider the three cases:
\begin{enumerate}
\item[(1)] $T^{\Pi E_i}(n,v)\in\overline{\mathcal{E}}$
\item[(2)] $T^{\Pi E_i}(n,v)\in \FE(\Lambda)\setminus \overline{\mathcal{E}}$; and
\item[(3)] $T^{\Pi E_i}(n,v) \notin \FE(\Lambda)$.
\end{enumerate}
For case (1), suppose that $T^{\Pi E_i}(n,v)\in\overline{\mathcal{E}}$. Then $Q(t)^{T^{\Pi E_i}(n,v)}=0$ and $Q(t')^{T^{\Pi E_i}(n,v)}=0$ by (CK) and Lemma~\ref{le:satiation1}. For (2), suppose that $T^{\Pi E_i}(n,v)\in\FE(\Lambda)\setminus \overline{\mathcal{E}}$. Then $Q(t)^{T^{\Pi E_i}(n,v)}\neq0$ and $Q(t')^{T^{\Pi E_i}(n,v)}\neq0$ by hypothesis. For case (3), suppose that $T^{\Pi E_i}(n,v)\notin \FE(\Lambda)$. Since $T^{\Pi E_i}(n,v)\subset v\Lambda \setminus \Lambda^0$ by definition, and since $t_v,t_v'\neq 0$, Lemma~\ref{le:nonzerogapfornonextsets} implies that both $Q(t)^{T^{\Pi E_i}(n,v)}$ and $Q(t')^{T^{\Pi E_i}(n,v)}$ are nonzero.

Lemma~\ref{le:coredirectsum} implies that $M_{\Pi_i E}^t(n,v)= \left \{ 0 \right \} = M_{\Pi E_i}^{t'}$ if $T^{\Pi E_i}(n,v)\in \overline{\mathcal{E}},$ and
$M^t_{\Pi E_i}(n,v)\cong M_{v(\Pi E_i)\cap \Lambda^n}(\mathbb{C})\cong M^{t'}_{\Pi E_i}(n,v)$ if $T^{\Pi E_i}(n,v) \notin \overline{\mathcal{E}}$.
It follows from Proposition~\ref{co:twistedcoreiso3} that whenever $T^{\Pi E_i}(n,v)\notin \overline{\mathcal{E}}$, $$\left \{ \Theta(t)_{\lambda,\mu}^{\Pi E_i} : \lambda,\mu \in (\Pi E_i)v\cap \Lambda^n \right \}$$ and $$\left \{ \omega_i(\lambda,\mu)\Theta(t')_{\lambda,\mu}^{\Pi E_i} : \lambda,\mu \in (\Pi E_i)v\cap \Lambda^n \right \}$$
are collections of nonzero matrix units which span $M^t_{\Pi E_i}(n,v)$ and $M^{t'}_{\Pi E_i}(n,v)$ respectively. By Corollary~\ref{co:matrixalgebrauni} there is an isomorphism $\psi_i^{n,v}:M_{\Pi E_i}^{t}(n,v) \rightarrow M_{\Pi E_i}^{t'}(n,v)$ taking $\Theta(t)_{\lambda,\mu}^{\Pi E_0} \mapsto w_i(\lambda,\mu)\Theta(t')_{\lambda,\mu}^{\Pi E_i} $ for each $\lambda,\mu \in (\Pi E_i)v \cap \Lambda^n$ such that $T^{\Pi E_i} (n,v)\notin \overline{\mathcal{E}}$. For each $(n,v)$ with $T^{\Pi E_i}(n,v) \in \overline{\mathcal{E}}$, $\psi_i^{n,v}=0:\left \{0 \right \} \rightarrow \left \{ 0 \right \}$ is an isomorphism. Define \begin{equation}\psi_i=\bigoplus_{v\in s( \Pi E_i) \atop n\in d((\Pi E_i)v)}\psi_i^{n,v}.\label{eq:psidef}\end{equation}
We now check that for each $i\in \mathbb{N}$ the diagram
\begin{equation}
\begin{tikzpicture}
\matrix(m)[matrix of math nodes,
row sep=4em, column sep=2.5em,
text height=1.5ex, text depth=0.25ex]
{M_{\Pi E_i}^{t}&M_{\Pi E_{i+1}}^{t}\\
M_{\Pi E_i}^{t'}&M_{\Pi E_{i+1}}^{t'}\\};
\path[right hook->]
(m-1-1) edge node[auto]{$\phi_i^b$} (m-1-2)
(m-2-1) edge node[auto]{$\phi_i^c$} (m-2-2);
\path[->]
(m-1-1) edge node[left]{$\psi_i$}(m-2-1)
(m-1-2) edge node[auto]{$\psi_{i+1}$} (m-2-2);
\end{tikzpicture}
\nonumber\end{equation}
commutes so that the diagram \eqref{dia:ladderofmatrixalgebras} commutes. Fix $i\in \mathbb{N}$ and $(\lambda,\mu) \in \Pi E_i \times_{d,s} \Pi E_i$. Applying the inclusion map given by Lemma~\ref{le:inclusion1} we have
\begin{align*}
(\psi_{i+1} \circ \phi_i^b)\left(\Theta(t)_{\lambda,\mu}^{\Pi E_i} \right)=\psi_{i+1} \Big( \bigoplus_{v\in s(I_{E_i}^{E_{i+1}}(\lambda)) \atop n\in d(I_{E_i}^{E_{i+1}}(\lambda )v)} \sum_{\nu\in I_{E_i}^{E_{i+1}}(\lambda)v \cap \Lambda^n} b(\lambda,\nu)\overline{b(\mu,\nu)}\Theta(t)_{\lambda \nu, \mu \nu}^{\Pi E_{i+1}}\Big).
\end{align*}
By definition of $I_{E_i}^{E_{i+1}}(\lambda)$, we have $\nu\in I_{E_i}^{E_{i+1}}(\lambda)v \cap \Lambda^n$ implies that $\iota^i_{\lambda \nu}=\lambda$ and $\kappa^i_{\lambda \nu}=\nu$. Then Lemma~\ref{le:iota2} implies that $\iota^i_{\mu \nu}=\mu$ and $\kappa^i_{\mu \nu}=\nu$. Using \eqref{eq:psidef}, we have
\begin{align*}
(\psi_{i+1} \circ \phi_i^b)\left(\Theta(t)_{\lambda,\mu}^{\Pi E_i} \right)= \bigoplus_{v\in s(I_{E_i}^{E_{i+1}}(\lambda)) \atop n\in d(I_{E_i}^{E_{i+1}}(\lambda )v)} \sum_{\nu\in I_{E_i}^{E_{i+1}}(\lambda)v \cap \Lambda^n} b(\iota^i_{\lambda \nu},\kappa^i_{\lambda \nu})\overline{b(\iota^i_{\mu \nu},\kappa^i_{\mu \nu})}\psi_{i+1}^{n,v}\left( \Theta(t)_{\lambda \nu, \mu \nu}^{\Pi E_{i+1}}\right).
\end{align*}
Since each $\psi_{i+1}^{n,v}$ takes $\Theta(t)_{\eta,\zeta}^{\Pi E_{i+1}}$ to $w_i(\eta,\zeta)\Theta(t')_{\eta,\zeta}^{\Pi E_{i+1}} $ whenever $\eta,\zeta \in (\Pi E_{i+1})v \cap \Lambda^n$, we have
\begin{align*}
(\psi_{i+1} \circ \phi_i^b)\left(\Theta(t)_{\lambda,\mu}^{\Pi E_i} \right)= \bigoplus_{v\in s(I_{E_i}^{E_{i+1}}(\lambda)) \atop n\in d(I_{E_i}^{E_{i+1}}(\lambda )v)} \sum_{\nu\in I_{E_i}^{E_{i+1}}(\lambda)v \cap \Lambda^n} b(\iota^i_{\lambda \nu},\kappa^i_{\lambda \nu})\overline{b(\iota^i_{\mu \nu},\kappa^i_{\mu \nu})}\omega_{i+1}(\lambda \nu,\mu \nu) \Theta(t')_{\lambda \nu, \mu \nu}^{\Pi E_{i+1}}.
\end{align*}
Applying \eqref{eq:interchangingcocylefunction2} gives
\begin{align*}
(\psi_{i+1} \circ \phi_i^b)\left(\Theta(t)_{\lambda,\mu}^{\Pi E_i} \right)= \bigoplus_{v\in s(I_{E_i}^{E_{i+1}}(\lambda)) \atop n\in d(I_{E_i}^{E_{i+1}}(\lambda )v)} \sum_{\nu\in I_{E_i}^{E_{i+1}}(\lambda)v \cap \Lambda^n}&b(\iota^i_{\lambda \nu},\kappa^i_{\lambda \nu})\overline{b(\iota^i_{\mu \nu},\kappa^i_{\mu \nu})}\omega_i(\iota^i_{\lambda \nu},\kappa^i_{\mu \nu}) b(\iota^i_{\mu \nu},\kappa^i_{\mu \nu}) \\
&\cdot c(\iota^i_{\lambda \nu},\kappa^i_{\lambda \nu})\overline{b(\iota^i_{\lambda \nu},\kappa^i_{\lambda \nu})c(\iota^i_{\mu \nu},\kappa^i_{\mu \nu})}\Theta(t')_{\lambda \nu,\mu \nu}^{\Pi E_{i+1}}.
\end{align*}
We continue,
\begin{flalign*}
&&(\psi_{i+1} \circ \phi_i^b)\left(\Theta(t)_{\lambda,\mu}^{\Pi E_i} \right)&= \bigoplus_{v\in s(I_{E_i}^{E_{i+1}}(\lambda)) \atop n\in d(I_{E_i}^{E_{i+1}}(\lambda )v)} \sum_{\nu\in I_{E_i}^{E_{i+1}}(\lambda)v \cap \Lambda^n} c(\iota^i_{\lambda \nu},\kappa^i_{\lambda \nu})\overline{c(\iota^i_{\mu \nu},\kappa^i_{\mu \nu})} \omega_i(\lambda,\mu)\Theta(t')_{\lambda \nu,\mu \nu}^{\Pi E_{i+1}}&\\
&&&=\phi_i^c\left( \omega_i(\lambda,\mu)\Theta(t')_{\lambda ,\mu }^{\Pi E_i}\right)&\\
&&&=( \phi_i^c \circ \psi_{i})\left( \Theta(t')_{\lambda ,\mu }^{\Pi E_i}\right).&\qedhere
\end{flalign*}
\end{proof}

\begin{proof}[Proof of Theorem~\ref{th:coreiso}]
Pick a family $\left \{ E_i : i\in \mathbb{N} \right \}$ of finite subsets of $\Lambda$ such that $E_i \subset E_{i+1}$ for each $i\in \mathbb{N}$ and $\bigcup_{i=0}^\infty E_i = \Lambda$.
By Lemma~\ref{le:piEsets} we have
$$\overline{\linspan}\left\{ t_\lambda t_\mu^* : \lambda,\mu\in \Lambda, d(\lambda)=d(\mu) \right \}=\overline{\bigcup_{i=0}^\infty M_{\Pi E_i }^t}$$
and
$$\overline{\linspan}\left\{ t_\lambda' t_\mu^{\prime*} : \lambda,\mu\in \Lambda,  d(\lambda)=d(\mu) \right \}=\overline{\bigcup_{i=0}^\infty M_{\Pi E_i }^{t'}}$$
For each $i\in \mathbb{N}$, let $\psi_i : M_{\Pi E_i}^t \rightarrow M_{\Pi E_i}^{t'}$ be the isomorphism given by Lemma~\ref{le:theladderlemma}, which satisfies $\psi_i \left( \Theta(t)_{\lambda,\mu}^{\Pi E_i}\right)=\omega_{i}(\lambda,\mu)\Theta(t')_{\lambda,\mu}^{\Pi E_i}$ for each $(\lambda,\mu) \in \Pi E_i \times_{d,s} \Pi E_i$.  Since the diagram~\eqref{dia:ladderofmatrixalgebras} commutes, there is a well-defined homomorphism $\pi_{t,t'}: \bigcup_{i=0}^\infty M_{\Pi E_i }^t \rightarrow \bigcup_{i=0}^\infty M_{\Pi E_i }^{t'}$ such that $\pi_{t,t'}|_{M_{\Pi E_i}^t}=\psi_i$. As each $\psi_i$ is injective on the $C^*$-algebra $M_{\Pi E_i}^t$, each $\psi_i$ isometric. Hence $\pi_{t,t'}$ is isometric on $\bigcup_{i=0}^\infty M_{\Pi E_i }^t$. By continuity $\pi_{t,t'}$ extends to an isometric homomorphism on $\overline{\bigcup_{i=0}^\infty M_{\Pi E_i }^t}$. Since $\pi_{t,t'}$ is a homomorphism between the $C^*$-algebras $\overline{\bigcup_{i=0}^\infty M_{\Pi E_i }^{t}}$ and $\overline{\bigcup_{i=0}^\infty M_{\Pi E_i }^{t'}}$, the image $\pi_{t,t'}\left(\overline{\bigcup_{i=0}^\infty M_{\Pi E_i }^{t}}\right)$ is closed. Since $\pi_{t,t'}\left(\bigcup_{i=0}^\infty M_{\Pi E_i }^{t}\right)$ is dense in $\overline{\bigcup_{i=0}^\infty M_{\Pi E_i }^{t'}}$, the homomorphism $\pi_{t,t'}$ surjective and hence an isomorphism.

We claim that for all $(\lambda,\mu)\in \Lambda \times_{d,s}\Lambda$ there is $i\in \mathbb{N}$ such that
\begin{equation}
\pi_{t,t'}(t_\lambda t_\mu^*)=\sum_{\lambda \nu \in \Pi E_i} c(\lambda,\nu) \overline{c(\mu,\nu)}\omega_i(\lambda \nu, \mu \nu)\Theta(t')_{\lambda \nu, \mu \nu}^{\Pi E_i}.\label{eq:thecoreiso4}
\end{equation}
To see this, fix $(\lambda,\mu)\in \Lambda \times_{d,s}\Lambda$. Choose $i\in \mathbb{N}$ such that $(\lambda,\mu)\in \Pi E_i \times_{d,s} \Pi E_i$. By~\eqref{eq:matrixunits2} we have
$$t_\lambda t_\mu^* = \sum_{\lambda \nu \in \Pi E_i} c(\lambda,\nu) \overline{c(\mu,\nu)}\Theta(t)_{\lambda \nu, \mu \nu}^{\Pi E_i}.$$
Since $\pi_{t,t'}|_{M^t_{\Pi E_i}}=\psi_i$, we have
\begin{align*}
\pi_{t,t'}(t_\lambda t_\mu^*)&=\psi_i\left(\sum_{\lambda \nu \in \Pi E_i} c(\lambda,\nu) \overline{c(\mu,\nu)}\Theta(t)_{\lambda \nu, \mu \nu}^{\Pi E_i}\right)\\
&=\sum_{\lambda \nu \in \Pi E_i} c(\lambda,\nu) \overline{c(\mu,\nu)}\omega_i(\lambda \nu, \mu \nu)\Theta(t')_{\lambda \nu, \mu \nu}^{\Pi E_i},
\end{align*}
as claimed. Now fix $\lambda \in \Lambda$. Condition (T1) of Lemma~\ref{le:twistedcoreiso2} implies that $w_i(\alpha,\alpha)=1$ for all $\alpha \in \Lambda$. Hence equation~\eqref{eq:thecoreiso4} implies that
\begin{align*}\pi_{t,t'}(t_\lambda t_\lambda^*)&=\sum_{\lambda \nu \in \Pi E_i} c(\lambda,\nu) \overline{c(\lambda,\nu)} \omega_i(\lambda \nu , \lambda \nu)\Theta(t')_{\lambda \nu, \lambda \nu}^{\Pi E_i}\\
&=\sum_{\lambda \nu \in \Pi E_i} \Theta(t')_{\lambda \nu, \lambda \nu}^{\Pi E_i}\\
&=  t_\lambda' t_\lambda^{\prime*}\qquad \text{by } \eqref{eq:matrixunits2}.
\end{align*}
Fix $\lambda,\mu \in \Lambda$ and suppose that $b\equiv c$. Then $\omega_i(\alpha,\beta)=1$ for all $\alpha,\beta \in \Lambda$. Hence~\eqref{eq:thecoreiso4} gives
\begin{align*}\pi_{t,t'}(t_\lambda t_\mu^*)&=\sum_{\lambda \nu \in \Pi E_i} c(\lambda,\nu) \overline{c(\mu,\nu)}\Theta(t')_{\lambda \nu, \mu \nu}^{\Pi E_i}\\
&=\sum_{\lambda \nu \in \Pi E_i} b(\lambda,\nu) \overline{b(\mu,\nu)}\Theta(t')_{\lambda \nu, \mu \nu}^{\Pi E_i}\\
&=t_\lambda' t_{\mu}^{\prime*}\qquad \text{by } \eqref{eq:matrixunits2}.\end{align*}
By Theorem~\ref{th:nonzerogapprojection}, we have $C^*(\Lambda,b;\mathcal{E})^\gamma \cong C^*(\Lambda,c;\mathcal{E})^\gamma.$
\end{proof}

\section{The gauge-invariant uniqueness theorem}
We state the main result of this chapter.
\label{chptr:gaugeinvariantuniqueness}
\begin{theorem}[an Huef and Raeburn's gauge-invariant uniqueness theorem]\label{th:gaugeinvariant}
Let $(\Lambda,d)$ be a finitely aligned $k$-graph, let $c\in \underline{Z}^2(\Lambda,\mathbb{T})$ and let $\mathcal{E}\subset\FE(\Lambda)$. Suppose that $\left \{ t_\lambda : \lambda \in \Lambda \right \}$ is a relative Cuntz-Krieger $(\Lambda,c;\mathcal{E})$-family. The homomorphism $$\pi_t^\mathcal{E} : C^*(\Lambda,c;\mathcal{E}) \rightarrow C^*(\left \{ t_\lambda :\lambda \in \Lambda \right \} )$$ satisfying $\pi_t^\mathcal{E}(s_\mathcal{E}(\lambda))=t_\lambda$ for all $\lambda\in \Lambda$ is injective on $C^*(\Lambda,c;\mathcal{E})^\gamma$ if and only if
\begin{enumerate}
\item[(1)] $t_v \neq 0$ for each $v\in \Lambda^0$; and
\item[(2)] $Q(t)^E\neq 0$ for all $E\in \FE(\Lambda) \setminus \mathcal{\overline{E}}$.
\end{enumerate}
Furthermore, $\pi_t^\mathcal{E}$ is injective on $C^*(\Lambda,c;\mathcal{E})$ if and only if both (1) and (2) hold and additionally,
\begin{enumerate}
\item[(3)] there exists a group action $\theta:\mathbb{T}^k \rightarrow \Aut\left( C^*\left\{t_\lambda:\lambda \in \Lambda \right \} \right)$ satisfying $\theta_z(t_\lambda)=z^{d(\lambda)}t_\lambda$ for all $\lambda \in \Lambda$.
\end{enumerate}
\end{theorem}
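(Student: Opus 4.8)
The plan is to deduce the ``furthermore'' statement from the first half of the theorem --- which already characterises injectivity of $\pi_t^\mathcal{E}$ on the core $C^*(\Lambda,c;\mathcal{E})^\gamma$ in terms of (1) and (2) --- together with Proposition~\ref{prop:actiontheninjectiveoncore}. I would treat the two implications separately, and the substance lies entirely in results already established.

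For the reverse implication I would assume (1), (2) and (3). By the first half of the theorem, conditions (1) and (2) are exactly what is needed to conclude that $\pi_t^\mathcal{E}|_{C^*(\Lambda,c;\mathcal{E})^\gamma}$ is injective. Condition (3) supplies a group action $\theta : \mathbb{T}^k \to \Aut\left(C^*\left(\left\{ t_\lambda : \lambda \in \Lambda \right\}\right)\right)$ with $\theta_z(t_\lambda) = z^{d(\lambda)} t_\lambda$, which is precisely the action $\beta$ demanded by the hypotheses of Proposition~\ref{prop:actiontheninjectiveoncore}. Applying that proposition with $B = C^*\left(\left\{ t_\lambda : \lambda \in \Lambda \right\}\right)$ and $\beta = \theta$ then gives that $\pi_t^\mathcal{E}$ is injective on all of $C^*(\Lambda,c;\mathcal{E})$.

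For the forward implication I would assume $\pi_t^\mathcal{E}$ is injective on $C^*(\Lambda,c;\mathcal{E})$. Its restriction to the $C^*$-subalgebra $C^*(\Lambda,c;\mathcal{E})^\gamma$ is then injective, so the first half of the theorem immediately returns (1) and (2). It remains to produce the action in (3). Since $\pi_t^\mathcal{E}$ is an injective homomorphism onto $C^*\left(\left\{ t_\lambda : \lambda \in \Lambda \right\}\right)$, it is an isomorphism with (isometric) inverse $(\pi_t^\mathcal{E})^{-1}$, so I would define $\theta_z := \pi_t^\mathcal{E} \circ \gamma_z \circ (\pi_t^\mathcal{E})^{-1}$ for each $z\in\mathbb{T}^k$, where $\gamma$ is the gauge action. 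Each $\theta_z$ is an automorphism, and $z \mapsto \theta_z$ is a homomorphism because $z \mapsto \gamma_z$ is. On generators, using $(\pi_t^\mathcal{E})^{-1}(t_\lambda)=s_\mathcal{E}(\lambda)$ together with Lemma~\ref{le:gaugeauto}, one computes $\theta_z(t_\lambda) = \pi_t^\mathcal{E}\left(\gamma_z\left(s_\mathcal{E}(\lambda)\right)\right) = \pi_t^\mathcal{E}\left(z^{d(\lambda)} s_\mathcal{E}(\lambda)\right) = z^{d(\lambda)} t_\lambda$, as required for (3).

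The only point needing care is the strong continuity of $\theta$: for $b \in C^*\left(\left\{ t_\lambda : \lambda \in \Lambda \right\}\right)$ and $z_n \to z$ in $\mathbb{T}^k$, I would write $\theta_{z_n}(b) = \pi_t^\mathcal{E}\left(\gamma_{z_n}\left((\pi_t^\mathcal{E})^{-1}(b)\right)\right)$ and invoke the strong continuity of $\gamma$ (Lemma~\ref{le:stronglycontinuous}) together with the fact that $\pi_t^\mathcal{E}$ is norm-decreasing to conclude $\theta_{z_n}(b) \to \theta_z(b)$. This is routine rather than a genuine obstacle; indeed there is no hard step in the ``furthermore'' statement itself, since it is essentially a repackaging of the core characterisation and Proposition~\ref{prop:actiontheninjectiveoncore}, which already carry all the analytic weight.
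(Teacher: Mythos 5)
Your proposal only addresses the ``furthermore'' clause: you take the first half of the theorem --- the equivalence of injectivity of $\pi_t^\mathcal{E}|_{C^*(\Lambda,c;\mathcal{E})^\gamma}$ with conditions (1) and (2) --- as given and build on it. But that equivalence is part of the statement being proved, so invoking it is circular, and it is precisely where the analytic weight of the theorem sits. The missing argument runs as follows. For the forward direction, note that $s_\mathcal{E}(v)$ and each $Q(s_\mathcal{E})^E$ lie in the core (they are fixed by the gauge action), and Theorem~\ref{th:nonzerogapprojection} gives $s_\mathcal{E}(v)\neq 0$ for all $v\in\Lambda^0$ and $Q(s_\mathcal{E})^E\neq 0$ exactly when $E\in\FE(\Lambda)\setminus\overline{\mathcal{E}}$; injectivity on the core then forces $t_v=\pi_t^\mathcal{E}(s_\mathcal{E}(v))\neq 0$ and $Q(t)^E=\pi_t^\mathcal{E}\bigl(Q(s_\mathcal{E})^E\bigr)\neq 0$ for such $E$, which is (1) and (2). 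For the backward direction, conditions (1), (2) together with Lemma~\ref{le:satiation1}(2) yield
$$\left\{E\in \FE(\Lambda):Q(t)^E=0\right\}=\overline{\mathcal{E}}=\left\{E\in \FE(\Lambda):Q(s_\mathcal{E})^E= 0\right\},$$
the second equality by Theorem~\ref{th:nonzerogapprojection}; hence Theorem~\ref{th:coreiso}, applied with $b\equiv c$ to the two relative Cuntz-Krieger $(\Lambda,c;\mathcal{E})$-families $\left\{s_\mathcal{E}(\lambda):\lambda\in\Lambda\right\}$ and $\left\{t_\lambda:\lambda\in\Lambda\right\}$, produces an isomorphism of the corresponding cores carrying $s_\mathcal{E}(\lambda)s_\mathcal{E}(\mu)^*$ to $t_\lambda t_\mu^*$ whenever $d(\lambda)=d(\mu)$. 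This isomorphism agrees with $\pi_t^\mathcal{E}$ on a spanning set of $C^*(\Lambda,c;\mathcal{E})^\gamma$, so $\pi_t^\mathcal{E}$ is injective on the core. Without some version of this argument your proof is incomplete.

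For the ``furthermore'' clause itself your argument is correct and is essentially the paper's: sufficiency of (1)--(3) follows from the core characterisation plus Proposition~\ref{prop:actiontheninjectiveoncore} applied with $\beta=\theta$, and necessity follows by restricting to the core and transporting the gauge action. Indeed your construction $\theta_z=\pi_t^\mathcal{E}\circ\gamma_z\circ(\pi_t^\mathcal{E})^{-1}$, together with the check of strong continuity, is stated more carefully than the paper's shorthand $\theta:=\pi_t^\mathcal{E}\circ\gamma$, which only makes literal sense after conjugating by the inverse exactly as you do.
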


\begin{proof}
Suppose that $\pi_t^\mathcal{E}$ is injective on $C^*(\Lambda,c;\mathcal{E})^\gamma$. Then (1) and (2) follow from Theorem~\ref{th:nonzerogapprojection}. Now suppose that (1) and (2) hold. Then Lemma~\ref{le:satiation1} shows that for $E\in \FE(\Lambda)$ we have $Q(t)^E=0$ if and only in $E\in \overline{\mathcal{E}}$. So by Theorem~\ref{th:nonzerogapprojection} we have
$$Q(t)^E=0 \text{ if and only if } Q(s_\mathcal{E})^E=0.$$
Theorem~\ref{th:nonzerogapprojection} implies that $s_\mathcal{E}(v)\neq 0$ for each $v\in \Lambda^0$, and so by Theorem~\ref{th:coreiso}, $\pi_t^\mathcal{E}$ is injective on $C^*(\Lambda,c;\mathcal{E})^\gamma$.

Now suppose that $\pi_t^\mathcal{E}$ is injective on $C^*(\Lambda,c;\mathcal{E})$. Then setting $\theta:= \pi_t^\mathcal{E}\circ \gamma$ establishes (3) and Theorem~\ref{th:nonzerogapprojection} implies both (1) and (2). Finally, suppose that (1), (2) and (3) hold. Then $\pi_t^\mathcal{E}$ is injective on $C^*(\Lambda,c;\mathcal{E})^\gamma$ by the preceding paragraph. It then follows from Proposition~\ref{prop:actiontheninjectiveoncore} and (3) that $\pi_\mathcal{E}^t$ is injective on $C^*(\Lambda,c;\mathcal{E})$.
\end{proof}

\begin{corollary}
Let $(\Lambda,d)$ be a finitely aligned $k$-graph. Let $\lbrace T_\lambda : \lambda \in \Lambda \rbrace$ be the Toeplitz-Cuntz-Krieger $(\Lambda,c)$-family of Proposition~\ref{toeplitzrepresentation}. Then $\pi_T^\mathcal{T}$ is injective on $\mathcal{T}C^*(\Lambda,c)^\gamma$.
\end{corollary}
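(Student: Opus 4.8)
The plan is to deduce this directly from the gauge-invariant uniqueness theorem, Theorem~\ref{th:gaugeinvariant}. First I would record the bookkeeping that lets the theorem apply: the twisted Toeplitz algebra is itself a twisted relative Cuntz-Krieger algebra, namely $\mathcal{T}C^*(\Lambda,c)=C^*(\Lambda,c;\emptyset)$, since $J_\emptyset$ is the ideal generated by the empty set of gap projections and hence is $\{0\}$. Under this identification $s_\mathcal{T}(\lambda)=s_\emptyset(\lambda)$, and the family $\{T_\lambda:\lambda\in\Lambda\}$ of Proposition~\ref{toeplitzrepresentation} induces precisely the homomorphism $\pi_T^\mathcal{E}$ of Theorem~\ref{th:gaugeinvariant} in the case $\mathcal{E}=\emptyset$; that is, $\pi_T^\mathcal{T}=\pi_T^\emptyset$. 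Because the empty collection vacuously satisfies (S1)--(S4) of Definition~\ref{de:sat}, it is already satiated, so $\overline{\emptyset}=\emptyset$ and therefore $\FE(\Lambda)\setminus\overline{\emptyset}=\FE(\Lambda)$. Consequently, Theorem~\ref{th:gaugeinvariant} reduces the claim to verifying its two conditions: (1) $T_v\neq 0$ for every $v\in\Lambda^0$, and (2) $Q(T)^E\neq 0$ for every $E\in\FE(\Lambda)$.

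For condition (1), I would read off the value directly from the defining formula of Proposition~\ref{toeplitzrepresentation}: since $s(v)=r(v)=v$ and $c(v,v)=1$ by (C2), we have $T_v\xi_v=c(v,v)\xi_v=\xi_v$, so $T_v\neq 0$. For condition (2), I would fix $E\in\FE(\Lambda)$ and set $v:=r(E)$. By definition of $\FE(\Lambda)$ the set $E$ is a finite subset of $v\Lambda\setminus\{v\}=s(v)\Lambda\setminus\{s(v)\}$. Since $T_v$ is a projection and $T_{v\mu}=T_\mu$ for each $\mu\in v\Lambda$, the gap projection $Q(T)^E=\prod_{\mu\in E}(T_v-T_\mu T_\mu^*)$ coincides with $\prod_{\mu\in E}(T_v T_v^*-T_{v\mu}T_{v\mu}^*)$, which is exactly the product shown to be nonzero in Lemma~\ref{le:allgapprojectionsarenonzero} upon taking $\lambda=v$ and $F=E$ there. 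Hence $Q(T)^E\neq 0$.

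With both (1) and (2) established, Theorem~\ref{th:gaugeinvariant} yields injectivity of $\pi_T^\mathcal{T}$ on $\mathcal{T}C^*(\Lambda,c)^\gamma$, which is the assertion. I do not expect a genuine obstacle here, as the whole argument rests on already-proved facts; the only point requiring care is the identification $\mathcal{T}C^*(\Lambda,c)=C^*(\Lambda,c;\emptyset)$ together with the observation $\overline{\emptyset}=\emptyset$, which forces condition (2) to be checked over all of $\FE(\Lambda)$ rather than over a proper subset, and this is supplied wholesale by Lemma~\ref{le:allgapprojectionsarenonzero}.
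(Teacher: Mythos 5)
Your proposal is correct and follows essentially the same route as the paper's own proof: verify $T_v\xi_v=\xi_v$ so that $T_v\neq 0$, invoke Lemma~\ref{le:allgapprojectionsarenonzero} (with $\lambda=v$) to get $Q(T)^E\neq 0$ for all $E\in\FE(\Lambda)$, and then apply Theorem~\ref{th:gaugeinvariant} with $\mathcal{E}=\emptyset$. The paper's proof leaves the identifications $\mathcal{T}C^*(\Lambda,c)=C^*(\Lambda,c;\emptyset)$ and $\overline{\emptyset}=\emptyset$ implicit, whereas you spell them out; this is a presentational difference only, not a different argument.
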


\begin{proof}
Fix $v\in \Lambda^0$. The calculation $T_v \xi_v=\xi_v$ shows that $T_v \neq 0$. We have $Q(T)^E\neq 0$ for all $E\in \FE(\Lambda)$ by Lemma~\ref{le:allgapprojectionsarenonzero}. The result then follows from Theorem~\ref{th:gaugeinvariant}.
\end{proof}

\chapter{Gauge-invariant ideals} %
\label{chptr:Gauge-invariant_ideals}
In this chapter we provide a graph-theoretic description of the gauge-invariant ideal structure of $C^*(\Lambda,c;\mathcal{E})$.
\section{From ideals to hereditary sets}
For each ideal $I\subset C^*(\Lambda,c;\mathcal{E})$, we construct a hereditary and relatively saturated subset $H_I$ of vertices and a satiated subset $\mathcal{B}_I$ of a subgraph $\Lambda \setminus \Lambda H$.
\label{chptr:idealstohereditarysets}
\begin{definition}
Let $(\Lambda,d)$ be a finitely aligned $k$-graph. Let $\mathcal{E}\subset \FE(\Lambda)$. Define a relation $\leq$ on $\Lambda^0$ by $v \leq w$ if and only if $v \Lambda w \neq \emptyset$.
\begin{enumerate}
\item[(1)] A set $H \subset \Lambda^0$ is \emph{hereditary} if $v\in H$, $w\in \Lambda^0$ and $v \leq w$ imply $w\in H$.
\item[(2)] A set $H\subset \Lambda^0$ is \emph{saturated relative to} $\mathcal{E}$ if $E\in\overline{ \mathcal{E}}$ and $s(E)\subset H$ implies that $r(E)\in H$.
\end{enumerate}
\end{definition}

\begin{lemma}[cf. \cite{S20062}, Lemma 3.3]\label{le:idealstohereditarysets}
Let $(\Lambda,d)$ be a finitely aligned $k$-graph, let $c\in \underline{Z}^2(\Lambda,\mathbb{T})$ and let $\mathcal{E}\subset\FE(\Lambda)$. Suppose that $I\subset C^*(\Lambda,c;\mathcal{E})$ is an ideal. Define $$H_I:=\left \{ v\in \Lambda^0: s_\mathcal{E}(v)\in I \right \}.$$ Then $H_I$ is hereditary and saturated relative to $\mathcal{E}$.
\end{lemma}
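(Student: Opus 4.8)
The plan is to verify the two defining properties of $H_I$ directly from the definitions, using the universal property and the multiplicative structure of the relative Cuntz-Krieger family. Throughout I write $v \leq w$ for $v\Lambda w \neq \emptyset$, and I use freely that $I$ is a two-sided ideal, so that $a s_\mathcal{E}(v) b \in I$ whenever $s_\mathcal{E}(v) \in I$.

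First I would establish that $H_I$ is hereditary. Suppose $v \in H_I$, that is $s_\mathcal{E}(v) \in I$, and suppose $v \leq w$ for some $w \in \Lambda^0$; I must show $w \in H_I$. By definition of $\leq$ there is some $\lambda \in v\Lambda w$, so $r(\lambda)=v$ and $s(\lambda)=w$. Using (TCK3) we have $s_\mathcal{E}(\lambda)^* s_\mathcal{E}(\lambda) = s_\mathcal{E}(s(\lambda)) = s_\mathcal{E}(w)$. The key observation is that $s_\mathcal{E}(\lambda) = s_\mathcal{E}(r(\lambda)\lambda)$, so by (TCK2) (applied with the vertex $r(\lambda)=v$, noting $c(v,\lambda)=1$ by (C2)) we can write $s_\mathcal{E}(\lambda) = s_\mathcal{E}(v) s_\mathcal{E}(\lambda)$. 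Hence
\begin{align*}
s_\mathcal{E}(w) &= s_\mathcal{E}(\lambda)^* s_\mathcal{E}(\lambda)\\
&= s_\mathcal{E}(\lambda)^* s_\mathcal{E}(v) s_\mathcal{E}(\lambda).
\end{align*}
Since $s_\mathcal{E}(v) \in I$ and $I$ is an ideal, the right-hand side lies in $I$, so $s_\mathcal{E}(w) \in I$ and thus $w \in H_I$. This gives heredity.

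Next I would show $H_I$ is saturated relative to $\mathcal{E}$. Suppose $E \in \overline{\mathcal{E}}$ with $s(E) \subset H_I$; I must show $r(E) \in H_I$, i.e.\ $s_\mathcal{E}(r(E)) \in I$. Write $v = r(E)$. By Lemma~\ref{le:satiation1}(2), since $E \in \overline{\mathcal{E}}$ and $\{s_\mathcal{E}(\lambda):\lambda\in\Lambda\}$ is a relative Cuntz-Krieger $(\Lambda,c;\mathcal{E})$-family, the gap projection vanishes: $Q(s_\mathcal{E})^E = \prod_{\lambda \in E}(s_\mathcal{E}(v) - s_\mathcal{E}(\lambda)s_\mathcal{E}(\lambda)^*) = 0$. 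Expanding this product, I would isolate the leading term $s_\mathcal{E}(v)$ and conclude that $s_\mathcal{E}(v)$ equals a sum of terms, each of which contains at least one factor $s_\mathcal{E}(\lambda)s_\mathcal{E}(\lambda)^*$ with $\lambda \in E$. The point is that each such factor satisfies $s_\mathcal{E}(\lambda)s_\mathcal{E}(\lambda)^* = s_\mathcal{E}(\lambda) s_\mathcal{E}(s(\lambda)) s_\mathcal{E}(\lambda)^*$ by (TCK3), and since $s(\lambda) \in s(E) \subset H_I$ we have $s_\mathcal{E}(s(\lambda)) \in I$; as $I$ is an ideal, every such term lies in $I$. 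Because the range projections pairwise commute (Lemma~\ref{handylemma}(2)), the expansion is well-defined and each monomial containing a factor indexed by $E$ lands in $I$. Therefore $s_\mathcal{E}(v) \in I$, giving $v = r(E) \in H_I$.

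The main obstacle is the bookkeeping in the saturation step: I must argue carefully that expanding $0 = \prod_{\lambda\in E}(s_\mathcal{E}(v) - s_\mathcal{E}(\lambda)s_\mathcal{E}(\lambda)^*)$ yields $s_\mathcal{E}(v) = -\sum(\text{cross terms})$ where every cross term contains at least one factor $s_\mathcal{E}(\lambda)s_\mathcal{E}(\lambda)^*$, and that each such factor can be rewritten to expose the vertex projection $s_\mathcal{E}(s(\lambda))\in I$. The cleanest way to present this is to note that $s_\mathcal{E}(v)\cdot s_\mathcal{E}(\lambda)s_\mathcal{E}(\lambda)^* = s_\mathcal{E}(\lambda)s_\mathcal{E}(\lambda)^*$ (since $s_\mathcal{E}(v)$ is the identity on the relevant corner), so that $s_\mathcal{E}(v) = Q(s_\mathcal{E})^E + (\text{sum of products each involving some } s_\mathcal{E}(\lambda)s_\mathcal{E}(\lambda)^*)$; with $Q(s_\mathcal{E})^E = 0$ the conclusion follows. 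Everything else is a routine consequence of the algebraic relations, so once this expansion is handled the proof is complete.
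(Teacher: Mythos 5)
Your proof is correct, and the heredity half is exactly the paper's argument; but for relative saturation you take a genuinely different and more elementary route. The paper never expands $Q(s_\mathcal{E})^E$ directly: it first enlarges $E$ to $\vee E$, which is closed under minimal common extensions and lies in $\overline{\mathcal{E}}$ by (S1), then invokes the orthogonalisation machinery (Proposition~\ref{prop:orth1}) to decompose $s_\mathcal{E}(r(E)) = Q(s_\mathcal{E})^{\vee E}+\sum_{\lambda \in \vee E} Q(s_\mathcal{E})_\lambda^{\vee E}$, and finally shows each $Q(s_\mathcal{E})_\lambda^{\vee E}\in I$ --- a step that genuinely requires the heredity of $H_I$ established in the first half, because a path $\lambda\in\vee E$ is only an extension $\mu\mu'$ of some $\mu\in E$, so $s(\lambda)$ lies in $H_I$ only via $s(\mu)\le s(\lambda)$. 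Your argument bypasses all of this: every $\lambda\in E$ has $s(\lambda)\in s(E)\subset H_I$ by hypothesis, so $s_\mathcal{E}(\lambda)s_\mathcal{E}(\lambda)^*=s_\mathcal{E}(\lambda)s_\mathcal{E}(s(\lambda))s_\mathcal{E}(\lambda)^*\in I$ for every $\lambda\in E$, and expanding the vanishing product $Q(s_\mathcal{E})^E=\prod_{\lambda\in E}\bigl(s_\mathcal{E}(v)-s_\mathcal{E}(\lambda)s_\mathcal{E}(\lambda)^*\bigr)$ --- legitimate because the factors commute by Lemma~\ref{handylemma}(2) and $s_\mathcal{E}(\lambda)s_\mathcal{E}(\lambda)^*\le s_\mathcal{E}(v)$ --- yields $s_\mathcal{E}(v)$ plus monomials each containing a factor in $I$, whence $s_\mathcal{E}(v)\in I$. (Put even more cleanly: modulo $I$ each factor is congruent to $s_\mathcal{E}(v)$, so $0=Q(s_\mathcal{E})^E\equiv s_\mathcal{E}(v)^{|E|}=s_\mathcal{E}(v)$; this is purely algebraic and needs no closedness of $I$.) Both proofs hinge on the same key input, Lemma~\ref{le:satiation1}(2), that membership of $\overline{\mathcal{E}}$ forces the gap projection to vanish --- you apply it to $E$ itself, the paper to $\vee E$. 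What your route buys is brevity and independence from the $\vee E$/$Q(t)_\lambda^{\vee E}$ machinery of Chapter 4 (it also incidentally shows heredity is not needed for the saturation step); what the paper's route buys is parallelism with Sims' original proof and a second outing for the decomposition of $s_\mathcal{E}(v)$ into the orthogonal projections $Q(s_\mathcal{E})_\lambda^{\vee E}$, which the paper relies on elsewhere. The bookkeeping point you flag as the main obstacle is discharged exactly as you suggest, so your proposal stands as a complete proof.
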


\begin{proof}
To see that $H_I$ is hereditary suppose that $v\in H_I$ and $w\in \Lambda^0$ with $v\leq w$. There is $\lambda \in \Lambda$ such that $s(\lambda)=w$ and $r(\lambda)=v$. As $s_\mathcal{E}(v)\in I$, we have $$s_\mathcal{E}(w)=s_\mathcal{E}(\lambda)^*s_\mathcal{E}(\lambda)=s_\mathcal{E}(\lambda)^*s_\mathcal{E}(r(\lambda))s_\mathcal{E}(\lambda)=s_\mathcal{E}(\lambda)^*s_\mathcal{E}(v)s_\mathcal{E}(\lambda)\in I.$$ So $H_I$ is hereditary.

To see that $H_I$ is saturated relative to $\mathcal{E}$, suppose that $E\in \overline{\mathcal{E}}$ satisfies $s(E)\subset H_I$. We will show that $r(E)\in H_I$. The set $\vee E\subset r(E)\Lambda \setminus \Lambda^0$ is finite and satisfies $E\subset \vee E$ by Lemma~\ref{le:orth3}. So $\vee E \in \overline{\mathcal{E}}$ by (S1). By Lemma~\ref{le:satiation1} we have $Q(s_\mathcal{E})^{\vee E}=0$. As $\vee E$ is closed under minimal common extensions, by Definition~\ref{def:closedundercommonextensions}, it follows from Proposition~\ref{prop:orth1} that
\begin{equation} s_\mathcal{E}(r(E))=s_\mathcal{E}(r(\vee E))=Q(s_\mathcal{E})^{\vee E}+\sum_{\lambda \in \vee E} Q(s_\mathcal{E})^{\vee E}_\lambda=\sum_{\lambda \in \vee E} Q(s_\mathcal{E})_\lambda^{\vee E}.\label{eq:saturatedrelative1}\end{equation}
Since $I$ is an ideal, it suffices to show that $\lambda \in \vee E$ implies $Q(s_\mathcal{E})_\lambda^{\vee E} \in I$. Fix $\lambda \in \vee E$. Then $\lambda = \mu \mu'$ for some $\mu \in E$ and hence $s(\mu)\leq s(\lambda)$. Since $H_I$ is hereditary and $s(\mu)\in s(E)\subset H_I$, it follows that $s(\lambda)\in H_I$. So $s_\mathcal{E}(s(\lambda))\in I.$ Then
$$Q(s_\mathcal{E})_\lambda^{\vee E}=s_\mathcal{E}(\lambda) s_\mathcal{E}(s(\lambda))s_\mathcal{E}(\lambda)^* \prod_{\lambda \lambda'\in \vee E \atop d(\lambda')>0} \left( s_\mathcal{E}(\lambda)s_\mathcal{E}(\lambda)^*-s_\mathcal{E}(\lambda\lambda')s_\mathcal{E}(\lambda \lambda')^* \right)$$
belongs to $I$. Now \eqref{eq:saturatedrelative1} implies that $r(E)\in H_I$.
\end{proof}

\begin{lemma}[\cite{S20062}, Lemma 4.1]\label{le:subkgraphs}
Let $(\Lambda,d)$ be a finitely aligned $k$-graph and let $\mathcal{E}\subset\FE(\Lambda)$. Let $H \subset \Lambda^0$ be hereditary. Then $(\Lambda \setminus \Lambda H,d|_{\Lambda \setminus \Lambda H})$ is a finitely aligned $k$-graph.
\end{lemma}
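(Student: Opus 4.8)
The plan is to verify the three defining features of a $k$-graph for $\Lambda\setminus\Lambda H$ in turn: that it is a countable category, that $d$ restricts to a degree functor satisfying the factorisation property, and that it is finitely aligned. Since $H\subset\Lambda^0$, unpacking the notation $\Lambda H=\{\mu\lambda:\mu\in\Lambda,\ \lambda\in H,\ s(\mu)=r(\lambda)\}$ gives $\Lambda H=\{\mu\in\Lambda:s(\mu)\in H\}$, so $\Lambda\setminus\Lambda H=\{\mu\in\Lambda:s(\mu)\notin H\}$. The conceptual heart of the argument, which I would isolate as a preliminary observation, is that every morphism in $\Lambda\setminus\Lambda H$ also has its range outside $H$: if $s(\mu)\notin H$ but $r(\mu)\in H$, then $\mu\in r(\mu)\Lambda s(\mu)$ witnesses $r(\mu)\leq s(\mu)$, and hereditariness of $H$ would force $s(\mu)\in H$, a contradiction. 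This is precisely the contrapositive of the hereditary condition, and it shows that $\Lambda\setminus\Lambda H$ is exactly the full subcategory of $\Lambda$ on the object set $\Lambda^0\setminus H$. Being a full subcategory, it is automatically closed under composition and contains the identities of all its objects, hence is a countable subcategory of $\Lambda$.

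Next I would check the factorisation property. The restriction $d|_{\Lambda\setminus\Lambda H}$ is a functor to $\mathbb{N}^k$ simply because the restriction of a functor to a subcategory is again a functor. Given $\lambda\in\Lambda\setminus\Lambda H$ with $d(\lambda)=m+n$, apply the factorisation property in $\Lambda$ (Definition~\ref{def:KPgraphs}) to obtain the unique $\mu,\nu\in\Lambda$ with $\lambda=\mu\nu$, $d(\mu)=m$ and $d(\nu)=n$. Then $s(\nu)=s(\lambda)\notin H$, so $\nu\in\Lambda\setminus\Lambda H$; the range observation of the first paragraph applied to $\nu$ gives $s(\mu)=r(\nu)\notin H$, so $\mu\in\Lambda\setminus\Lambda H$ as well. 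Thus both factors lie in $\Lambda\setminus\Lambda H$, and uniqueness is inherited for free, since any factorisation inside $\Lambda\setminus\Lambda H$ is in particular a factorisation inside $\Lambda$.

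Finally, finite alignment is immediate from the containment $(\Lambda\setminus\Lambda H)^{\text{min}}(\mu,\nu)\subseteq\Lambda^{\text{min}}(\mu,\nu)$ for all $\mu,\nu\in\Lambda\setminus\Lambda H$: a pair $(\alpha,\beta)$ belonging to the left-hand side satisfies $\mu\alpha=\nu\beta$ and $d(\mu\alpha)=d(\mu)\vee d(\nu)$, which are exactly the conditions defining membership of $\Lambda^{\text{min}}(\mu,\nu)$. Since $\Lambda$ is finitely aligned, $\Lambda^{\text{min}}(\mu,\nu)$ is finite, hence so is its subset. I do not anticipate any genuine obstacle: the whole argument is a routine verification of the axioms, and the single nontrivial point—the one place where hereditariness of $H$ is actually used—is the range observation of the first paragraph, so I would take care to record that carefully and then let the remaining steps follow formally.
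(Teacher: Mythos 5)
Your proposal is correct and follows essentially the same route as the paper's proof: apply the factorisation property in $\Lambda$, use hereditariness of $H$ (via the observation $r(\nu)\leq s(\nu)$) to conclude both factors lie in $\Lambda\setminus\Lambda H$, and deduce finite alignment from the containment $(\Lambda\setminus\Lambda H)^{\text{min}}(\mu,\nu)\subset\Lambda^{\text{min}}(\mu,\nu)$. Your preliminary ``full subcategory'' observation, which also verifies closure under composition and identities, is a point the paper leaves implicit, but it is the same underlying use of hereditariness rather than a different argument.
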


\begin{proof}
First we check the factorisation property for  $(\Lambda \setminus \Lambda H,d|_{\Lambda \setminus \Lambda H})$. Fix $\lambda \in \Lambda \setminus \Lambda H$ and suppose that $d(\lambda)=m+n$. By the factorisation property for $\Lambda$, there are unique $\mu,\nu \in \Lambda$ such that  $d(\mu)=m$, $d(\nu)=n$ and $\lambda =\mu\nu$. Since $s(\nu)=s(\lambda)\notin H$ we have $\nu \in \Lambda \setminus \Lambda H$. As $H$ is hereditary and $r(\nu) \leq s(\nu)\notin H$ we have $r(\nu)\notin H$. But $s(\mu)=r(\nu) \notin H$, which implies that $\mu \in \Lambda \setminus \Lambda H$. Hence $\Lambda \setminus \Lambda H$ has the factorisation property. To see that the $k$-graph $\Lambda \setminus \Lambda H$ is finitely aligned note that $(\Lambda \setminus \Lambda H)^{\text{min}}(\lambda,\mu) \subset \Lambda^{\text{min}}(\lambda,\mu)$ for all $\lambda,\mu \in \Lambda \setminus \Lambda H$.
\end{proof}

\begin{definition}
Let $(\Lambda,d)$ be a finitely aligned $k$-graph, let $c\in \underline{Z}^2(\Lambda,\mathbb{T})$ and let $\mathcal{E}\subset\FE(\Lambda)$. Let $H \subset \Lambda^0$ and $I \subset C^*(\Lambda,c;\mathcal{E})$ be an ideal. Define $$\mathcal{E}_H:=\left \{ E \setminus EH: E\in \overline{\mathcal{E}} \text{ and } r(E)\notin H \right \}$$
and
$$\mathcal{B}_{I}:=\left \{  E\in \FE(\Lambda \setminus \Lambda H_I) : Q(s_\mathcal{E})^E \in I \right \}.$$
\end{definition}

\begin{lemma}\label{le:restrictedsatiatedsetsareFE}
Let $(\Lambda,d)$ be a finitely aligned $k$-graph, let $c\in \underline{Z}^2(\Lambda,\mathbb{T})$ and let $\mathcal{E}\subset\FE(\Lambda)$. Suppose that $H \subset \Lambda^0$ is hereditary and saturated relative to $\mathcal{E}$. Then $\mathcal{E}_H\subset \FE(\Lambda \setminus \Lambda H)$.
\end{lemma}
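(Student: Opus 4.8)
The plan is to fix $F := E \setminus EH \in \mathcal{E}_H$, arising from some $E \in \overline{\mathcal{E}}$ with $w := r(E) \notin H$, and to verify directly the three defining conditions of $\FE(\Lambda \setminus \Lambda H)$. Two are routine: $F \subset E$ is finite, and each $\lambda \in F$ has $r(\lambda) = w \notin H$ and, by construction, $s(\lambda) \notin H$, so $\lambda \in w(\Lambda \setminus \Lambda H)$; moreover $w \notin F$ since $\overline{\mathcal{E}} \subset \FE(\Lambda)$ by Lemma~\ref{le:satiation1}(1), so $w = r(E) \notin E \supset F$. The substance is to show that $F$ is exhaustive in $\Lambda \setminus \Lambda H$: for every $\mu \in w(\Lambda \setminus \Lambda H)$ there is $\lambda \in F$ with $(\Lambda \setminus \Lambda H)^{\text{min}}(\lambda,\mu) \neq \emptyset$. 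Throughout I will use that $(\Lambda \setminus \Lambda H)^{\text{min}}(\lambda,\mu)$ consists of those $(\alpha,\beta) \in \Lambda^{\text{min}}(\lambda,\mu)$ whose minimal common extension $\lambda\alpha$ has source outside $H$ (elaborating the containment noted in the proof of Lemma~\ref{le:subkgraphs}), together with the hereditary propagation fact from that proof: an initial segment of a path with source outside $H$ again has source outside $H$.

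Fixing $\mu \in w(\Lambda \setminus \Lambda H)$, I would split into two cases. If $\mu \in E\Lambda$, write $\mu = \lambda\gamma$ with $\lambda \in E$; since $s(\gamma) = s(\mu) \notin H$ and $H$ is hereditary, the range $s(\lambda) = r(\gamma)$ also lies outside $H$, so $\lambda \in F$, and $(\gamma, s(\mu)) \in \Lambda^{\text{min}}(\lambda,\mu)$ has minimal common extension $\mu \in \Lambda \setminus \Lambda H$; hence $(\gamma, s(\mu)) \in (\Lambda \setminus \Lambda H)^{\text{min}}(\lambda,\mu)$ and $\mu$ is covered. The interesting case is $\mu \in w\Lambda \setminus E\Lambda$: here naively applying exhaustiveness of $E$ in $\Lambda$ and then restricting to the subgraph can fail, because a minimal common extension of $\mu$ with an element of $E$ may run into $H$. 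This is the main obstacle, and it is resolved by combining satiation with saturation.

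Since $\overline{\mathcal{E}}$ is satiated and $\mu \in r(E)\Lambda \setminus E\Lambda$, property (S2) gives $\Ext(\mu;E) \in \overline{\mathcal{E}}$, a finite exhaustive set with range $r(\Ext(\mu;E)) = s(\mu) \notin H$. Now I invoke that $H$ is saturated relative to $\mathcal{E}$: if $s(\Ext(\mu;E)) \subset H$ held, then saturation applied to $\Ext(\mu;E) \in \overline{\mathcal{E}}$ would force $s(\mu) = r(\Ext(\mu;E)) \in H$, contradicting $\mu \in \Lambda \setminus \Lambda H$. Hence some $\alpha \in \Ext(\mu;E)$ has $s(\alpha) \notin H$. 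By definition of $\Ext$, this $\alpha$ satisfies $(\alpha,\beta) \in \Lambda^{\text{min}}(\mu,\lambda)$ for some $\lambda \in E$, so the minimal common extension $\mu\alpha = \lambda\beta$ has source $s(\alpha) \notin H$ and lies in $\Lambda \setminus \Lambda H$; a final application of hereditariness gives $s(\lambda) = r(\beta) \notin H$, so $\lambda \in F$, and $(\beta,\alpha) \in (\Lambda \setminus \Lambda H)^{\text{min}}(\lambda,\mu)$ witnesses that $F$ covers $\mu$. This establishes exhaustiveness; taking $\mu = w$ (which lies in $w\Lambda \setminus E\Lambda$ because $E$ contains no vertices) already produces an element of $F$, so $F \neq \emptyset$ as required. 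The one delicate point is exactly this interplay: exhaustiveness of the naive restriction $E \setminus EH$ is not automatic, and saturation relative to $\mathcal{E}$ is precisely the hypothesis that excludes the extensions of $\mu$ that escape into $H$.
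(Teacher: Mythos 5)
Your proof is correct and takes essentially the same route as the paper's: the paper likewise splits on whether $\mu$ factors through the original set in $\overline{\mathcal{E}}$, and in the nontrivial case applies (S2) to form $\Ext_\Lambda(\mu;F)\in\overline{\mathcal{E}}$ and then saturation relative to $\mathcal{E}$ to produce an extension with source outside $H$, with hereditariness propagating this back to the initial segment. The only cosmetic difference is that the paper packages the case analysis as the set identity $\Ext_\Lambda(\mu;F)\setminus \Lambda H = \Ext_{\Lambda\setminus\Lambda H}(\mu;E)$, whereas you argue directly with minimal common extensions.
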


\begin{proof}
Suppose that $E\in \mathcal{E}_H$. Since $E\in \mathcal{E}_H$, there exists $F\in \overline{\mathcal{E}}$ such that $E=F\setminus F H$ and $r(F)\notin H$. We need to show that $E\in \FE(\Lambda \setminus \Lambda H)$. Since $H$ is saturated relative to $\mathcal{E}$, we have $s(F)\nsubseteq H$, which implies that $E=F\setminus F H \neq \emptyset$. Fix $\mu \in r(E)\Lambda \setminus \Lambda H$. We will show that $\Ext_{\Lambda \setminus \Lambda H}(\mu;E)\neq \emptyset$.

We claim $\Ext_\Lambda(\mu;F)\setminus \Lambda H=\Ext_{\Lambda \setminus \Lambda H}(\mu;E)$. If $\lambda \in \Ext_{\Lambda \setminus \Lambda H}(\mu;E)$, then $\lambda =\mu \mu'=\nu \nu'$ for some $\nu\in E$ where $(\mu',\nu')\in \Lambda^{\text{min}}(\mu,\nu)$ and $s(\lambda) \notin H$. As $H$ is hereditary and $s(\nu)\leq s(\lambda)$, we have $s(\nu)\notin H$. Hence $\nu \in F$, and so $\lambda \in \Ext_\Lambda(\mu;F)\setminus \Lambda H$. For the reverse containment, suppose $\lambda  \in \Ext_\Lambda(\mu;F)\setminus \Lambda H$. Then $\lambda = \mu \mu'=\nu \nu'$ for some $\nu \in F$ where $(\mu',\nu')\in \Lambda^{\text{min}}(\mu,\nu)$ and $s(\lambda) \notin H$. As $H$ is hereditary and $s(\nu)\leq s(\lambda)$, we have $s(\nu) \notin H$. Hence $\nu \in E$, and so $\lambda \in \Ext_{\Lambda \setminus \Lambda H}(\mu;E)$.

So it suffices to show $\Ext_\Lambda(\mu;F)\setminus \Lambda H\neq \emptyset$. If $\mu\in F \Lambda$, then $s(\mu)\in \Ext_\Lambda(\mu;F)\setminus \Lambda H$, and so $\Ext_\Lambda(\mu;F)\setminus \Lambda H \neq \emptyset$. So suppose $\mu \notin F \Lambda$. By (S2), we have $\Ext_\Lambda(\mu;F)\in \overline{\mathcal{E}}$, and $\Ext_\Lambda(\mu;F)\neq \emptyset$. Since $H$ is saturated relative to $\mathcal{E}$ and $r\left( \Ext_\Lambda(\mu;F)\right)=s(\mu) \notin H$, we have $s\left( \Ext_\Lambda(\mu;F)\right) \nsubseteq H$; that is, there is $ \lambda \in \Ext_\Lambda(\mu;F)$ with $s(\lambda)\notin H$. Hence $\Ext_\Lambda(\mu;F) \setminus \Lambda H \neq \emptyset$.
\end{proof}

\begin{lemma}\label{le:idealstosatiatedsets1}
Let $(\Lambda,d)$ be a finitely aligned $k$-graph, let $c\in \underline{Z}^2(\Lambda,\mathbb{T})$ and let $\mathcal{E}\subset\FE(\Lambda)$. Suppose that $I \subset C^*(\Lambda,c;\mathcal{E})$ is an ideal. Then $\mathcal{B}_{I}\subset \FE(\Lambda \setminus \Lambda H_I)$ is satiated.
\end{lemma}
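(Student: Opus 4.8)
The plan is to write $H:=H_I$ and $\Gamma:=\Lambda\setminus\Lambda H_I$, which is a finitely aligned $k$-graph by Lemma~\ref{le:subkgraphs}, and to verify the four conditions (S1)--(S4) of Definition~\ref{de:sat} for $\mathcal{B}_I$ regarded as a subset of $\FE(\Gamma)$. Each condition separates into two independent claims: first, that the set produced by the relevant satiation operation again lies in $\FE(\Gamma)$; and second, that its gap projection lies in $I$. The first claim is purely graph-theoretic and is handled uniformly: since every element of $\mathcal{B}_I$ already lies in $\FE(\Gamma)$ by definition, Lemma~5.3 of \cite{S20061} applied to $\Gamma$ shows that each of the operations $\Sigma_1,\dots,\Sigma_4$ maps subsets of $\FE(\Gamma)$ into $\FE(\Gamma)$, so the constructed sets are automatically finite exhaustive sets in $\Gamma$. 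The second claim is where $I$ enters, and the single fact I will use repeatedly is that $s_\mathcal{E}(\beta)s_\mathcal{E}(\beta)^*\in I$ whenever $s(\beta)\in H$: indeed $s_\mathcal{E}(s(\beta))\in I$ by the definition of $H_I$, and $s_\mathcal{E}(\beta)=s_\mathcal{E}(\beta)s_\mathcal{E}(s(\beta))\in I$ since $I$ is an ideal.

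For (S1) and (S3) no quotient is needed. Given $G\in\mathcal{B}_I$ and a finite $E\subset r(G)\Gamma\setminus\Gamma^0$ with $G\subset E$, Lemma~\ref{le:gapprojectioninsatiatedsets}(1) gives $Q(s_\mathcal{E})^E=Q(s_\mathcal{E})^G\,Q(s_\mathcal{E})^{E\setminus G}$, which lies in $I$ because $Q(s_\mathcal{E})^G\in I$ and $I$ is an ideal; this proves (S1). For (S3), the initial segments $\lambda(0,n_\lambda)$ again lie in $\Gamma$ (if $\lambda(n_\lambda)\in H$ then hereditariness of $H$ would force $s(\lambda)\in H$, contradicting $\lambda\in\Gamma$), and Lemma~\ref{le:gapprojectioninsatiatedsets}(3) gives $0\le Q(s_\mathcal{E})^{E'}\le Q(s_\mathcal{E})^G$ for $E':=\{\lambda(0,n_\lambda):\lambda\in G\}$; since a closed two-sided ideal is hereditary, $Q(s_\mathcal{E})^{E'}\in I$.

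The substantive cases are (S2) and (S4), and here I will work in the quotient $q:C^*(\Lambda,c;\mathcal{E})\to C^*(\Lambda,c;\mathcal{E})/I$. For (S2), with $G\in\mathcal{B}_I$, $r(G)=v$ and $\mu\in v\Gamma\setminus G\Gamma$, one first checks (exactly as in the proof of Lemma~\ref{le:restrictedsatiatedsetsareFE}) that $\mu\in v\Lambda\setminus G\Lambda$ and that $\Ext_\Gamma(\mu;G)=\Ext_\Lambda(\mu;G)\setminus\Lambda H$; Lemma~\ref{le:gapprojectioninsatiatedsets}(2) then gives $Q(s_\mathcal{E})^{\Ext_\Lambda(\mu;G)}=s_\mathcal{E}(\mu)^*Q(s_\mathcal{E})^G s_\mathcal{E}(\mu)\in I$. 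Splitting the product defining $Q(s_\mathcal{E})^{\Ext_\Lambda(\mu;G)}$ over $\Ext_\Gamma(\mu;G)$ and $\Ext_\Lambda(\mu;G)\cap\Lambda H$, and applying $q$, the factors indexed by $\Ext_\Lambda(\mu;G)\cap\Lambda H$ collapse to $q(s_\mathcal{E}(s(\mu)))$ by the basic fact above, so $q(Q(s_\mathcal{E})^{\Ext_\Gamma(\mu;G)})=q(Q(s_\mathcal{E})^{\Ext_\Lambda(\mu;G)})=0$, giving $Q(s_\mathcal{E})^{\Ext_\Gamma(\mu;G)}\in I$. For (S4), I will replay the $i=4$ computation from the proof of Lemma~\ref{le:satiation1} modulo $I$: for each $\lambda\in G'$, Lemma~\ref{le:neededfors4} together with $Q(s_\mathcal{E})^{G'_\lambda}\in I$ gives $q\big(s_\mathcal{E}(r(G))-s_\mathcal{E}(\lambda)s_\mathcal{E}(\lambda)^*\big)=q\big(\prod_{\mu\in G'_\lambda}(s_\mathcal{E}(r(G))-s_\mathcal{E}(\lambda\mu)s_\mathcal{E}(\lambda\mu)^*)\big)$, and multiplying these over $\lambda\in G'$ against the remaining factors indexed by $G\setminus G'$ yields $q(Q(s_\mathcal{E})^G)=q(Q(s_\mathcal{E})^E)$ for $E:=(G\setminus G')\cup\bigcup_{\lambda\in G'}\lambda G'_\lambda$; since $Q(s_\mathcal{E})^G\in I$ we conclude $Q(s_\mathcal{E})^E\in I$.

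The main obstacle is precisely this passage from the untwisted, non-relative argument of \cite{S20061} to the present setting. In Sims's satiation proof the analogous gap projections are genuinely zero, so equalities of gap projections suffice; here they are only known to belong to $I$, and the comparison between extensions taken in $\Gamma$ and in $\Lambda$ introduces extra factors indexed by paths landing in $\Lambda H$. The key device that resolves both difficulties at once is the observation that $s_\mathcal{E}(\beta)s_\mathcal{E}(\beta)^*$ vanishes in $C^*(\Lambda,c;\mathcal{E})/I$ whenever $s(\beta)\in H_I$: this simultaneously trivialises the unwanted $\Lambda H$-indexed factors in (S2) and allows the (S4) identity to run modulo $I$ rather than on the nose. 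Checking that these quotient manipulations are legitimate — in particular that all the projections in sight commute so that the products may be regrouped freely (Lemma~\ref{handylemma}(2)) — is the only place requiring genuine care.
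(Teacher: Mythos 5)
Your proposal is correct and follows essentially the same route as the paper's proof: both verify (S1)--(S4) directly from Lemma~\ref{le:gapprojectioninsatiatedsets} and Lemma~\ref{le:neededfors4}, using that $I$ is an ideal and passing to the quotient $C^*(\Lambda,c;\mathcal{E})/I$ for (S2) and (S4). If anything, your treatment is slightly more careful than the paper's, which applies Lemma~\ref{le:gapprojectioninsatiatedsets}(2) without distinguishing $\Ext_\Lambda(\mu;G)$ from $\Ext_{\Lambda\setminus\Lambda H_I}(\mu;G)$ and without explicitly checking that the constructed sets lie in $\FE(\Lambda\setminus\Lambda H_I)$; your splitting of the gap projection over $\Ext_\Lambda(\mu;G)\cap\Lambda H_I$, together with the appeal to Lemma~5.3 of \cite{S20061} applied to the subgraph, fills exactly those small gaps.
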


\begin{proof}
Let $\Gamma:= \Lambda \setminus \Lambda H_I$. To see that $\mathcal{B}_{I}$ is satiated we will repeatedly apply Lemma~\ref{le:gapprojectioninsatiatedsets} and use that $I$ is an ideal. For (S1), suppose that $G\in \mathcal{B}_I$ and $E\subset r(G)\Gamma \setminus \Gamma^0$ is finite with $G \subset E$. Then as $Q(s_\mathcal{E})^G\in I$, we have $$Q(s_\mathcal{E})^E=Q(s_\mathcal{E})^G Q(s_\mathcal{E})^{E\setminus G}\in I.$$
So $E\in \mathcal{B}_I$.
For (S2), suppose that $G\in \mathcal{B}_I$ with $r(G)=v$ and $\mu \in v \Gamma \setminus G \Gamma$. Lemma~\ref{le:gapprojectioninsatiatedsets} implies that $$Q(s_\mathcal{E})^{\Ext(\mu;G)}=s_\mathcal{E}(\mu)^* Q(s_\mathcal{E})^G s_\mathcal{E}(\mu)\in I.$$
So $\Ext(\mu;G)\in \mathcal{B}_I$. For (S3), suppose that $G\in \mathcal{B}_I$, $0<n_\lambda \leq d(\lambda)$ for each $\lambda \in G$. Set $E= \left \{ \lambda(0,n_\lambda) : \lambda \in G \right \}$. Then Lemma~\ref{le:gapprojectioninsatiatedsets} implies that $Q(s_\mathcal{E})^E \leq Q(s_\mathcal{E})^G$. Hence
$$Q(s_\mathcal{E})^E=Q(s_\mathcal{E})^E Q(s_\mathcal{E})^G \in I.$$
So $E\in \mathcal{B}_I.$ For (S4), suppose that $G\in \mathcal{B}_I$, $G' \subset G$ and $G_\lambda'\in s(\lambda)\mathcal{B}_I$ for each $\lambda \in G'$. Put $E=\left( G \setminus G'\right)\cup \left(\bigcup_{\lambda \in G'} \lambda G_\lambda' \right)$. We need to show that $Q(s_\mathcal{E})^E\in I$. Lemma ~\ref{le:neededfors4} implies that for each $\lambda \in G'$, we have
\begin{equation}s_\mathcal{E}(r(G))-s_\mathcal{E}(\lambda)s_\mathcal{E}(\lambda)^*+s_\mathcal{E}(\lambda)Q(s_\mathcal{E})^{G_\lambda'}s_\mathcal{E}(\lambda)^*=\prod_{\mu \in G_\lambda '} \left( s_\mathcal{E}(r(G))-s_\mathcal{E}(\lambda \mu)s_\mathcal{E}(\lambda \mu)^* \right).\label{eq:glambdaprime}\end{equation}
Let $q_I:C^*(\Lambda,c;\mathcal{E})\rightarrow C^*(\Lambda,c;\mathcal{E})/I$ be the quotient map. For each $\lambda \in G'$ we have $G_\lambda'\in \mathcal{B}_I$ and so $Q(s_\mathcal{E})^{G_\lambda'}\in I$. Applying the quotient map to \eqref{eq:glambdaprime} gives
$$q_I\left(s_\mathcal{E}(r(G))-s_\mathcal{E}(\lambda)s_\mathcal{E}(\lambda)^*\right)=q_I \left(\prod_{\mu \in G_\lambda '} \left( s_\mathcal{E}(r(G))-s_\mathcal{E}(\lambda \mu)s_\mathcal{E}(\lambda \mu)^* \right)\right)$$
for each $\lambda \in G'$. The definition of $E$ implies that
\begin{align*}q_I\left(Q(s_\mathcal{E})^E\right)&=q_I\left( Q(s_\mathcal{E})^{G \setminus G'}\right) \prod_{\lambda \in G'} q_I\left ( \prod_{\mu \in G_\lambda'}\left( s_\mathcal{E}(r(G)-s_\mathcal{E}(\lambda \mu) s_\mathcal{E}(\lambda \mu)^* \right ) \right)\\
&=q_I\left( Q(s_\mathcal{E})^{G \setminus G'}\right) \prod_{\lambda \in G'}q_I \left(s_\mathcal{E}(r(G))-s_\mathcal{E}(\lambda)s_\mathcal{E}(\lambda)^*\right)\\
&=q_I\left( Q(s_\mathcal{E})^G \right)\\
&=0,
\end{align*}
since $Q(s_\mathcal{E})^G\in I$. So $Q(s_\mathcal{E})^E\in I$ and hence $E\in \mathcal{B}_I$.
\end{proof}

\begin{lemma}\label{le:idealstosatiatedsets2}
Let $(\Lambda,d)$ be a finitely aligned $k$-graph, let $c\in \underline{Z}^2(\Lambda,\mathbb{T})$ and let $\mathcal{E}\subset\FE(\Lambda)$. Suppose that $I \subset C^*(\Lambda,c;\mathcal{E})$ is an ideal. Then $\mathcal{E}_{H_I} \subset \mathcal{B}_{I}$.
\end{lemma}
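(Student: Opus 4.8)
The plan is to fix $E \in \mathcal{E}_{H_I}$, unpack the definition to obtain a witness $F \in \overline{\mathcal{E}}$ with $E = F \setminus FH_I$ and $r(F) \notin H_I$, and then show directly that $Q(s_\mathcal{E})^E \in I$. Combined with the fact that $E$ belongs to $\FE(\Lambda \setminus \Lambda H_I)$, this is exactly what membership $E \in \mathcal{B}_I$ requires.

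First I would assemble the two ingredients furnished by earlier results. By Lemma~\ref{le:idealstohereditarysets} the set $H_I$ is hereditary and saturated relative to $\mathcal{E}$, so Lemma~\ref{le:restrictedsatiatedsetsareFE} applies with $H = H_I$ and yields $\mathcal{E}_{H_I} \subset \FE(\Lambda \setminus \Lambda H_I)$; in particular $E$ is a finite exhaustive set of the subgraph and $E \neq \emptyset$, so $r(E) = r(F)$. Since $F \in \overline{\mathcal{E}}$, Lemma~\ref{le:satiation1}(2) gives $Q(s_\mathcal{E})^F = 0$.

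The core computation is to factor this vanishing gap projection. Writing $FH_I = \{\lambda \in F : s(\lambda) \in H_I\}$, the partition $F = E \sqcup FH_I$ together with the commutativity of the range projections (Lemma~\ref{handylemma}(2)) lets me write
\[ 0 = Q(s_\mathcal{E})^F = Q(s_\mathcal{E})^E \prod_{\lambda \in FH_I}\left(s_\mathcal{E}(r(F)) - s_\mathcal{E}(\lambda)s_\mathcal{E}(\lambda)^*\right). \]
For each $\lambda \in FH_I$ we have $s(\lambda)\in H_I$, hence $s_\mathcal{E}(s(\lambda)) \in I$, and therefore $s_\mathcal{E}(\lambda)s_\mathcal{E}(\lambda)^* = s_\mathcal{E}(\lambda)s_\mathcal{E}(s(\lambda))s_\mathcal{E}(\lambda)^* \in I$. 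Expanding the product over $FH_I$ (the factors commute, and $s_\mathcal{E}(r(F))$ absorbs each $s_\mathcal{E}(\lambda)s_\mathcal{E}(\lambda)^*$ since the latter is a subprojection), every summand except the leading one is a product containing some $s_\mathcal{E}(\lambda)s_\mathcal{E}(\lambda)^*$ and so lies in $I$; thus the product equals $s_\mathcal{E}(r(F)) + a$ with $a \in I$. Multiplying by $Q(s_\mathcal{E})^E$ and using $Q(s_\mathcal{E})^E s_\mathcal{E}(r(F)) = Q(s_\mathcal{E})^E$ gives $0 = Q(s_\mathcal{E})^E + Q(s_\mathcal{E})^E a$, whence $Q(s_\mathcal{E})^E = -Q(s_\mathcal{E})^E a \in I$. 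The degenerate case $FH_I = \emptyset$, where $E = F$ and $Q(s_\mathcal{E})^E = Q(s_\mathcal{E})^F = 0 \in I$, is immediate.

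There is no serious obstacle here; the argument is a direct ideal computation once the right decomposition is in hand. The only point demanding care is the bookkeeping in expanding $\prod_{\lambda \in FH_I}(s_\mathcal{E}(r(F)) - s_\mathcal{E}(\lambda)s_\mathcal{E}(\lambda)^*)$, namely verifying that every term other than the leading $s_\mathcal{E}(r(F))$ carries a factor from $I$; this rests on the commutativity of the range projections and the absorption $s_\mathcal{E}(r(F))s_\mathcal{E}(\lambda)s_\mathcal{E}(\lambda)^* = s_\mathcal{E}(\lambda)s_\mathcal{E}(\lambda)^*$. Confirming $r(E) = r(F)$ (so that the $E$- and $F$-gap projections are built from the same vertex projection) is the remaining detail the proof relies on.
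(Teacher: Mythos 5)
Your proof is correct and takes essentially the same route as the paper: the same factorisation $Q(s_\mathcal{E})^F = Q(s_\mathcal{E})^E\, Q(s_\mathcal{E})^{FH_I}$ of the vanishing gap projection, and the same key observation that $s_\mathcal{E}(\lambda)s_\mathcal{E}(\lambda)^* = s_\mathcal{E}(\lambda)s_\mathcal{E}(s(\lambda))s_\mathcal{E}(\lambda)^* \in I$ for each $\lambda \in FH_I$. The only difference is presentational: the paper applies the quotient map $q_I$ so that these terms vanish outright, whereas you expand the product $\prod_{\lambda \in FH_I}\bigl(s_\mathcal{E}(r(F)) - s_\mathcal{E}(\lambda)s_\mathcal{E}(\lambda)^*\bigr)$ and track ideal membership by hand; the two computations are interchangeable.
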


\begin{proof}

Suppose that $E\in \mathcal{E}_{H_I}$. By Lemma~\ref{le:restrictedsatiatedsetsareFE} we have $E\in \FE(\Lambda \setminus \Lambda H_I)$, and so it suffices to show that $Q(s_\mathcal{E})^E\in I$. By definition of $\mathcal{E}_{H_I}$, there is $F\in \overline{\mathcal{E}}$ such that $E=F \setminus F H_I$ and $r(F)\notin H_I$. Since $F\in \overline{\mathcal{E}}$ we have $Q(s_\mathcal{E})^F=0$, and so
\begin{equation}0=Q(s_\mathcal{E})^F=Q(s_\mathcal{E})^E Q(s_\mathcal{E})^{FH_I}=Q(s_\mathcal{E})^E \prod_{\lambda \in F H_I}\left(s_\mathcal{E}\left({r(F H_I)}\right)-s_\mathcal{E}(\lambda) s_\mathcal{E}(\lambda^*) \right).\label{eq:orthgapproj}\end{equation}
Let $q_I:C^*(\Lambda,c;\mathcal{E})\rightarrow C^*(\Lambda,c;\mathcal{E})/I$ be the quotient map. Note that $\lambda \in F H_I$ implies $s(\lambda)\in H_I$, which in turn implies that $s_\mathcal{E}(s(\lambda))\in I$, by definition of $H_I$. Hence for $\lambda \in F H_I$ we have
$$q_I(s_\mathcal{E}(\lambda) s_\mathcal{E}(\lambda)^*)=q_I\left(s_\mathcal{E}(\lambda) s_\mathcal{E}(s(\lambda))s_\mathcal{E}(\lambda)^*\right)=0.$$
Applying the quotient map $q_I$ to \eqref{eq:orthgapproj} we have
\begin{align*}0&=q_I\left( Q(s_\mathcal{E})^E \right) \prod_{\lambda \in F H_I}\left(q_I\left(s_\mathcal{E}\left({r(F H_I)}\right)\right)-q_I\left(s_\mathcal{E}(\lambda) s_\mathcal{E}(\lambda^*) \right)\right)\\
&=q_I\left( Q(s_\mathcal{E})^E \right)q_I\left(s_\mathcal{E}\left({r(F H_I)}\right)\right)\\
&=q_I\left(Q(s_\mathcal{E})^E  s_\mathcal{E}\left({r(F H_I)}\right)\right)\\
&=q_I\left(Q(s_\mathcal{E})^E\right) \qquad \text{since } r(FH_I)=r(E).
\end{align*}
So $Q(s_\mathcal{E})^E \in \ker q_I=I$.
\end{proof}

\section{Quotients of twisted relative Cuntz-Krieger algebras}
Given a hereditary and relatively saturated subset $H$ and a satiated subset $\mathcal{B} \subset \FE(\Lambda \setminus \Lambda H)$, we construct a gauge-invariant ideal $I_{H,\mathcal{B}}$. Using the gauge-invariant uniqueness theorem, we show that $C^*(\Lambda,c;\mathcal{E})/I_{H,\mathcal{B}}$ is a canonically isomorphic to a twisted relative Cuntz-Krieger algebra of a subgraph.
\label{chptr:Quotients}
\begin{definition}
Let $(\Lambda,d)$ be a finitely aligned $k$-graph, let $c\in \underline{Z}^2(\Lambda,\mathbb{T})$ and let $\mathcal{E}\subset\FE(\Lambda)$. Let $H \subset \Lambda^0$ be hereditary and let $\mathcal{B}\subset \FE(\Lambda \setminus \Lambda H)$. We define $I_{H,\mathcal{B}}$ to be the ideal in $C^*(\Lambda,c;\mathcal{E})$ generated by
$$\left \{ s_\mathcal{E}(v):v\in H \right \} \cup \left \{ Q(s_\mathcal{E})^E : E\in \mathcal{B} \right \}.$$
\end{definition}

Observe that if $\Gamma$ is a $k$-subgraph of $\Lambda$ and if $c\in\underline{Z}^2(\Lambda,\mathbb{T})$, then $c|_{\Gamma^{*2}}$ is a 2-cocycle for $\Gamma$, where $\Gamma^{*2}=\left\{ (\lambda,\mu)\in \Gamma \times \Gamma: s(\lambda)=r(\mu) \right\}$; that is, $c|_{\Gamma^{*2}}\in \underline{Z}^2(\Gamma, \mathbb{T})$. For convenience we write $c$ for $c|_{\Gamma^{*2}}$.

\begin{lemma}\label{le:fromgraphtosubgraph}
Let $(\Lambda,d)$ be a finitely aligned $k$-graph, let $c\in \underline{Z}^2(\Lambda,\mathbb{T})$ and let $\mathcal{E}\subset\FE(\Lambda)$. Let $H \subset \Lambda^0$ be hereditary and saturated relative to $\mathcal{E}$. Suppose that $\mathcal{B}\subset \FE(\Lambda \setminus \Lambda H)$ is satiated and satisfies $\mathcal{E}_H \subset \mathcal{B}$. Suppose that $\left \{ t_\lambda : \lambda \in\Lambda \setminus\Lambda  H \right \}$ is a relative Cuntz-Krieger $(\Lambda \setminus \Lambda H,c;\mathcal{B})$-family. For $\lambda \in \Lambda \setminus \Lambda H$, let $s_\lambda =t_\lambda$, and for $\lambda \in \Lambda H$, let $s_\lambda=0$. Then $\left \{ s_\lambda : \lambda \in \Lambda \right \}$ is a relative Cuntz-Krieger $(\Lambda,c;\mathcal{E})$-family. There is a unique homomorphism $\pi:C^*(\Lambda,c;\mathcal{E})\rightarrow C^*( \Lambda \setminus \Lambda H, c;\mathcal{B})$ determined by
\begin{align}
&\pi(s_\mathcal{E}(\lambda))=\begin{cases} s_\mathcal{B}(\lambda) &\text{if } \lambda \in \Lambda \setminus \Lambda H \\ 0 &\text{if } \lambda \in \Lambda H .\end{cases}\label{eq:homotodownstairs}
\end{align}
We then have
\begin{align}
&v\in H  \text{ if and only if } s_\mathcal{E}(v)\in \ker \pi; \text{and }\label{eq:kernelideal1}\\
&E\in \mathcal{B} \text{ if and only if } Q(s_\mathcal{E})^E \in \ker \pi.\label{eq:kernelideal2}
\end{align}

\end{lemma}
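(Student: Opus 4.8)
The plan is to verify the claims in the stated order: first that $\{s_\lambda : \lambda \in \Lambda\}$ is a relative Cuntz-Krieger $(\Lambda,c;\mathcal{E})$-family, then construct $\pi$ via the universal property of Theorem~\ref{th:relativecuntzkriegeralgebra}, and finally establish the two biconditionals \eqref{eq:kernelideal1} and \eqref{eq:kernelideal2}.

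First I would check that $\{s_\lambda\}$ satisfies (TCK1)--(TCK4) and (CK). For (TCK1)--(TCK3) this is routine: since $H$ is hereditary, $\Lambda H$ is closed under the relevant operations (if $s(\mu)\in H$ then any extension or range adjustment keeps us in $\Lambda H$), so the defining relations for $\{t_\lambda\}$ on $\Lambda\setminus\Lambda H$ together with the convention $s_\lambda=0$ on $\Lambda H$ give the relations for $\{s_\lambda\}$. The only genuinely delicate relation is (TCK4), where $\sum_{\sigma \in \MCE(\mu,\nu)} s_\sigma s_\sigma^*$ must match $s_\mu s_\mu^* s_\nu s_\nu^*$; here I must confirm that if $\mu,\nu \in \Lambda\setminus\Lambda H$ then every $\sigma \in \MCE(\mu,\nu)$ also lies in $\Lambda\setminus\Lambda H$ (true since $s(\sigma)=s(\mu')$ extends $\mu$, and heredity forces $s(\sigma)\notin H$), while if either $\mu$ or $\nu$ is in $\Lambda H$ both sides vanish. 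For (CK), I would use the hypothesis $\mathcal{E}_H \subset \mathcal{B}$: given $E\in\mathcal{E}$, I split $E$ into its part landing in $H$ and its part $E\setminus EH$, observe that the $H$-part contributes factors $s_{r(E)}-s_\lambda s_\lambda^*$ where $s_\lambda=0$ wait---more carefully, the factors indexed by $\lambda$ with $s(\lambda)\in H$ become $s_{r(E)}$ (since $s_\lambda s_\lambda^*=0$), so $Q(s)^E = Q(s)^{E\setminus EH}$ up to the vertex projection, and $E\setminus EH \in \mathcal{E}_H \subset \mathcal{B}$, whence $Q(t)^{E\setminus EH}=0$ by the relative Cuntz-Krieger relation (CK) for $\{t_\lambda\}$. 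This is the step I expect to require the most care, since I must handle the case $r(E)\in H$ separately (where $s_{r(E)}=0$ already forces everything to vanish).

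Once $\{s_\lambda\}$ is established as a relative Cuntz-Krieger $(\Lambda,c;\mathcal{E})$-family, Theorem~\ref{th:relativecuntzkriegeralgebra} immediately yields the unique homomorphism $\pi$ with $\pi(s_\mathcal{E}(\lambda))=s_\lambda$, giving \eqref{eq:homotodownstairs}. For \eqref{eq:kernelideal1}, the forward direction is immediate: if $v\in H$ then $s_v=0$ so $s_\mathcal{E}(v)\in\ker\pi$. For the reverse direction I would apply the gauge-invariant uniqueness theorem (Theorem~\ref{th:gaugeinvariant}) to the family $\{t_\lambda\}$ on $\Lambda\setminus\Lambda H$: since $C^*(\Lambda\setminus\Lambda H,c;\mathcal{B})$ is the universal algebra and $\{s_\mathcal{B}(w)\}$ is nonzero for all $w\in(\Lambda\setminus\Lambda H)^0$ by Theorem~\ref{th:nonzerogapprojection}(1), the element $\pi(s_\mathcal{E}(v))=s_\mathcal{B}(v)$ is nonzero whenever $v\notin H$; contrapositively, $s_\mathcal{E}(v)\in\ker\pi$ forces $v\in H$.

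For \eqref{eq:kernelideal2}, the forward direction uses that $E\in\mathcal{B}$ gives $\pi(Q(s_\mathcal{E})^E)=Q(s_\mathcal{B})^E=0$ by the (CK) relation in the downstairs algebra. The reverse direction is the analogue of \eqref{eq:kernelideal1} for gap projections: I would invoke Theorem~\ref{th:nonzerogapprojection}(2) applied to the graph $\Lambda\setminus\Lambda H$ and the satiated set $\mathcal{B}$, which (since $\mathcal{B}$ is already satiated, so $\overline{\mathcal{B}}=\mathcal{B}$) states that $Q(s_\mathcal{B})^E=0$ if and only if $E\in\mathcal{B}$. Thus if $Q(s_\mathcal{E})^E\in\ker\pi$ then $\pi(Q(s_\mathcal{E})^E)=Q(s_\mathcal{B})^E=0$, forcing $E\in\mathcal{B}$. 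The main conceptual subtlety throughout is bookkeeping the vertex projections at ranges lying in $H$ versus outside $H$, and ensuring that the heredity and relative-saturation hypotheses on $H$ together with the satiation and containment hypotheses on $\mathcal{B}$ are exactly what make the family well-defined and the biconditionals sharp.
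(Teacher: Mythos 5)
Your overall strategy is the paper's, and most of it is executed correctly: the verification of (CK) by splitting the factors of $Q(s)^E$ over $EH$ and $E\setminus EH$ and invoking $\mathcal{E}_H\subset\mathcal{B}$ (with the case $r(E)\in H$ handled separately), the construction of $\pi$ from the universal property, and both biconditionals \eqref{eq:kernelideal1} and \eqref{eq:kernelideal2} via Theorem~\ref{th:nonzerogapprojection} applied to $C^*(\Lambda\setminus\Lambda H,c;\mathcal{B})$ (using that $\overline{\mathcal{B}}=\mathcal{B}$ because $\mathcal{B}$ is satiated) all match the paper's proof. However, your justification of (TCK4) contains a genuine error. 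You claim that if $\mu,\nu\in\Lambda\setminus\Lambda H$ then every $\sigma\in\MCE_\Lambda(\mu,\nu)$ lies in $\Lambda\setminus\Lambda H$ because ``heredity forces $s(\sigma)\notin H$''. Heredity in this paper says that $r(\lambda)\in H$ implies $s(\lambda)\in H$; its contrapositive pushes ``not in $H$'' from sources back to ranges, never from ranges forward to sources. Writing $\sigma=\mu\mu'$, you know $r(\mu')=s(\mu)\notin H$, and this gives no control over $s(\sigma)=s(\mu')$. The claim is in fact false: in $\Lambda=\Omega_{2,(1,1)}$ take $H=\left\{(1,1)\right\}$ (hereditary, and saturated relative to $\mathcal{E}=\emptyset$), $\mu=((0,0),(1,0))$ and $\nu=((0,0),(0,1))$; then $s(\mu),s(\nu)\notin H$, but the unique element $((0,0),(1,1))$ of $\MCE_\Lambda(\mu,\nu)$ has source $(1,1)\in H$.

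The correct argument, which is the paper's, is that $\MCE_{\Lambda\setminus\Lambda H}(\mu,\nu)=\MCE_\Lambda(\mu,\nu)\cap(\Lambda\setminus\Lambda H)$ and the discarded terms contribute nothing: (TCK4) for $\left\{t_\lambda\right\}$ in the subgraph gives $s_\mu s_\mu^* s_\nu s_\nu^*=t_\mu t_\mu^* t_\nu t_\nu^*=\sum_{\sigma\in\MCE_{\Lambda\setminus\Lambda H}(\mu,\nu)}t_\sigma t_\sigma^*$, and this equals $\sum_{\sigma\in\MCE_\Lambda(\mu,\nu)}s_\sigma s_\sigma^*$ because every $\sigma\in\MCE_\Lambda(\mu,\nu)$ with $s(\sigma)\in H$ satisfies $s_\sigma=0$. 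Note also that heredity is needed in precisely the case you left unjustified: when, say, $\mu\in\Lambda H$, the left-hand side is trivially zero, but to see that the right-hand side vanishes you must show every $\sigma=\mu\mu'\in\MCE_\Lambda(\mu,\nu)$ has $s(\sigma)\in H$, and that is exactly heredity applied to $\mu'$, since $r(\mu')=s(\mu)\in H$. So you have the two roles of heredity interchanged; the conclusion of (TCK4) is true, but the step as you wrote it fails and needs this local repair. (A minor further remark: for \eqref{eq:kernelideal1} you cite the gauge-invariant uniqueness theorem, but what you actually use, and all that is needed, is Theorem~\ref{th:nonzerogapprojection}(1) for the subgraph, as in the paper.)
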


\begin{proof}
We check axioms (TCK1) to (TCK4) and (CK) for $\left \{ s_\lambda: \lambda \in \Lambda  \right \}$. The set $$\left \{ t_v: v \in (\Lambda\setminus \Lambda H)^0 \right \}$$ is a collection of mutually orthogonal projections and $0$ is a projection satisfying $0t_v=0$ for any $v\in \Lambda^0$. So $\left \{ s_v: v \in \Lambda^0 \right \}$ is a collection of mutually orthogonal projections. This gives~(TCK1).

For (TCK2) we show \begin{equation}s_\lambda s_\mu =c(\lambda,\mu)s_{\lambda \mu} \text{ whenever }s(\lambda)=r(\mu).\label{eq:tck2forslambda}\end{equation} Fix $\lambda,\mu\in \Lambda$ satisfying $s(\lambda)=r(\mu)$. If $s(\mu) \in H$ then both sides of \eqref{eq:tck2forslambda} are zero. So suppose $s(\mu)\notin H$. Since $H$ is hereditary and $s(\lambda) \leq s(\mu)$ we have $s(\lambda) \notin H$. So $$s_\lambda s_\mu=t_\lambda t_\mu=c(\lambda,\mu)t_{\lambda \mu} =c(\lambda, \mu)s_{\lambda \mu}.$$

 For (TCK3), if $\lambda \in \Lambda \setminus \Lambda H$ then $s_\lambda^* s_\lambda =t_\lambda^* t_\lambda =t_{s(\lambda)}=s_{s(\lambda)}$. If $\lambda \in \Lambda H$ then $s(\lambda)\in \Lambda H$. Thus, $s_\lambda^* s_\lambda =0 =s_{s(\lambda)}$.

For (TCK4) we must check that
$s_\lambda s_\lambda^* s_\mu s_\mu^* = \sum_{\sigma\in \MCE_\Lambda(\lambda,\mu)} s_\sigma s_\sigma^*$ for each $\lambda,\mu \in \Lambda$. Suppose that $\lambda \in \Lambda H$ or that $\mu\in \Lambda H$. Then $s_\lambda s_\lambda^* s_\mu s_\mu^*=0$. As $H$ is hereditary, $\sigma \in \MCE_\Lambda(\lambda,\mu)$ implies $\sigma\in \Lambda H$. Hence $\sum_{\sigma\in \MCE(\lambda,\mu)} s_\sigma s_\sigma^*=0$. Now suppose that $\lambda,\mu \in \Lambda \setminus \Lambda H$. Then $$s_\lambda s_\lambda^* s_\mu s_\mu^*=t_\lambda t_\lambda^*t_\mu t_\mu^*=\sum_{\sigma\in \MCE_{\Lambda \setminus \Lambda H}(\lambda,\mu)} t_\sigma t_\sigma^*=\sum_{\sigma\in \MCE_{\Lambda \setminus \Lambda H}(\lambda,\mu)} s_\sigma s_\sigma^*=\sum_{\sigma\in \MCE_{\Lambda }(\lambda,\mu)} s_\sigma s_\sigma^*.$$
This gives (TCK4).

We check (CK). Suppose that $E\in \mathcal{E}$; we must show that $Q(s)^E=0$. If $r(E)\in H$, then $Q(s)^E=s_{r(E)}Q(s)^E.$ So suppose that $r(E)\notin H$. Then
\begin{equation} Q(s)^E=\prod_{\lambda \in E}\left( s_{r(E)}-s_\lambda s_\lambda^*\right)=\prod_{\lambda \in E \setminus EH}\left( t_{r(E)}-t_\lambda t_\lambda^*\right) \prod_{\mu\in EH}\left(t_{r(E)}-0 \right)=Q(t)^{E\setminus EH}\label{eq:CKforslambda}\end{equation}
Since $E\setminus EH\in \mathcal{E}_H \subset \mathcal{B}$, \eqref{eq:CKforslambda} implies that $Q(s)^E=0$.

We now show there is a unique homomorphism $\pi:C^*(\Lambda,c;\mathcal{E})\rightarrow C^*( \Lambda \setminus \Lambda H, c;\mathcal{B})$ satisfying \eqref{eq:homotodownstairs}. Write $\left \{ s_\mathcal{B}(\lambda) : \lambda \in \Lambda \setminus\Lambda H \right \}$ for the universal generating family of $C^*(\Lambda \setminus \Lambda H , c; \mathcal{B})$. For $\lambda \in \Lambda \setminus \Lambda H$, let $s_\lambda=s_\mathcal{B}(\lambda)$, and for $\lambda \in \Lambda H$, let $s_\lambda=0$. The first part of the Lemma implies that $\left\{s_\lambda : \lambda \in \Lambda\right \}$ is a relative Cuntz-Krieger $(\Lambda,c;\mathcal{E})$-family. By the universal property for $C^*(\Lambda,c;\mathcal{E})$ there is a unique homomorphism $$\pi_s^\mathcal{E}: C^*(\Lambda,c;\mathcal{E}) \rightarrow C^*\left( \left\{ s_\lambda : \lambda \in \Lambda \right \} \right)=C^*(\Lambda\setminus \Lambda H , c;\mathcal{B})$$ satisfying $\pi_s^\mathcal{E}(s_\mathcal{E}(\lambda))=s_\lambda$ for each $\lambda \in \Lambda$. So the homomorphism $\pi:=\pi_s^\mathcal{E}$ satisfies \eqref{eq:homotodownstairs}.
We now check~\eqref{eq:kernelideal1}. If $v\in H$ then $\pi(s_\mathcal{E}(v))=0$ by \eqref{eq:homotodownstairs}. Conversely, if $v\notin H$ then Theorem~\ref{th:nonzerogapprojection} implies that $\pi(s_\mathcal{E}(v))=s_\mathcal{B}(v)\neq 0$. We now check \eqref{eq:kernelideal2}. If $E\in \FE(\Lambda \setminus \Lambda H)$ then $$
\pi\left( Q(s_\mathcal{E})^E \right)= Q(s_\mathcal{B})^E .
$$
Theorem~\ref{th:nonzerogapprojection} implies that for $E\in \FE(\Lambda \setminus \Lambda H)$ we have $E\in \mathcal{B}$ if and only if $Q(s_\mathcal{B})^E=0$, and \eqref{eq:kernelideal2} follows.
\end{proof}

\begin{lemma}\label{le:invariantideal}
Let $A$ be a $C^*$-algebra and let $X\subset A$. Let $I_X$ denote the ideal generated by $X$, as in Lemma~\ref{le:ideal}. Suppose that $G$ is a compact abelian group and that $\alpha: G \rightarrow \Aut(A)$ is a group action. If $\alpha_g(x)=x$ for all $x\in X$ and $g\in G$, then $\alpha_g(I_X)=I_X$ for each $g\in G$; that is, $I_X$ is $\alpha$-invariant.
\end{lemma}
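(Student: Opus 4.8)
The plan is to use the explicit description of the ideal $I_X$ provided by Lemma~\ref{le:ideal}, namely
\[
I_X=\overline{\linspan}\lbrace bxc : b,c\in A,\ x\in X \rbrace,
\]
and to exploit that each $\alpha_g$ is a continuous $*$-automorphism. First I would fix $g\in G$ and observe that for any spanning element $bxc$ with $b,c\in A$ and $x\in X$, the multiplicativity of $\alpha_g$ gives $\alpha_g(bxc)=\alpha_g(b)\alpha_g(x)\alpha_g(c)$. By hypothesis $\alpha_g(x)=x$, so $\alpha_g(bxc)=\alpha_g(b)\,x\,\alpha_g(c)$, which is again of the form $b'xc'$ with $b'=\alpha_g(b),c'=\alpha_g(c)\in A$; hence $\alpha_g(bxc)\in I_X$.

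Next I would extend this from spanning elements to all of $I_X$. Since $\alpha_g$ is linear, it maps $\linspan\lbrace bxc : b,c\in A,\ x\in X\rbrace$ into $I_X$; since $\alpha_g$ is continuous (indeed isometric, being a $*$-automorphism of a $C^*$-algebra) and $I_X$ is closed, taking closures yields $\alpha_g(I_X)\subset I_X$. This establishes one containment for every $g\in G$.

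For the reverse containment, the key point is that the group structure is available: $\alpha_{g^{-1}}$ also fixes every $x\in X$ (as $\alpha_{g^{-1}}(x)=\alpha_{g^{-1}}(\alpha_g(x))=x$), so the argument just given applied to $g^{-1}$ shows $\alpha_{g^{-1}}(I_X)\subset I_X$. Applying $\alpha_g$ to this inclusion and using that $\alpha_g\circ\alpha_{g^{-1}}=\id$ gives $I_X\subset\alpha_g(I_X)$. Combining the two containments yields $\alpha_g(I_X)=I_X$, as required.

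There is no genuine obstacle here; the statement is a routine consequence of the concrete form of the generated ideal together with the fact that automorphisms respect the $*$-algebra operations. The only point requiring any care is being explicit that continuity (or isometry) of $\alpha_g$ is what lets one pass from the dense linear span to its closure, and that the reverse inclusion needs the inverse $\alpha_{g^{-1}}$ rather than being immediate from a single-direction estimate.
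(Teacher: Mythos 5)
Your proof is correct and follows essentially the same route as the paper's: both use the description $I_X=\overline{\linspan}\lbrace bxc : b,c\in A,\ x\in X\rbrace$ from Lemma~\ref{le:ideal}, show $\alpha_g(bxc)=\alpha_g(b)x\alpha_g(c)\in I_X$, pass to all of $I_X$ by linearity and continuity, and obtain the reverse containment by writing $I_X=\alpha_g(\alpha_{g^{-1}}(I_X))\subset\alpha_g(I_X)$. Your write-up is slightly more explicit about why $\alpha_{g^{-1}}$ fixes $X$ and about the role of continuity, but the argument is the same.
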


\begin{proof}
By Lemma~\ref{le:ideal} $I_X=\overline{\linspan}\lbrace  bxc : b,c\in A , x \in X \rbrace $. For any $b,c\in A$, $g\in G$ and $x\in X$, we have $\alpha_g(bxc)=\alpha_g(b)x\alpha_g(c)\in I_X$. Since $\alpha_g$ is linear and continuous, we have $\alpha_g(I_X)\subset I_X$ for each $g\in G$. Since $I_X =\alpha_g \left( \alpha_{g^{-1}}(I_X)\right)\subset \alpha_g(I_X)$, we have $I_X=\alpha_g(I_X)$.
\end{proof}

\begin{theorem}\label{th:quoofcuntzalgebra}
Let $(\Lambda,d)$ be a finitely aligned $k$-graph, let $c\in \underline{Z}^2(\Lambda,\mathbb{T})$ and let $\mathcal{E}\subset\FE(\Lambda)$. Let $H \subset \Lambda^0$ be hereditary and saturated relative to $\mathcal{E}$. Suppose that $\mathcal{B}\subset \FE(\Lambda \setminus \Lambda H)$ is satiated and that $\mathcal{E}_H \subset \mathcal{B}$. Then $\left \{ s_\mathcal{E}(\lambda)+I_{H,B}: \lambda \in \Lambda \setminus \Lambda H \right \}$ is a relative Cuntz-Krieger $(\Lambda \setminus \Lambda H, c ; \mathcal{B} )$-family in $C^*(\Lambda,c;\mathcal{E})/I_{H,\mathcal{B}}$. The canonical homomorphism $$\pi_{s_\mathcal{E}+I_{H,\mathcal{B}}}^{\mathcal{B}}: C^*(\Lambda\setminus \Lambda H,c ;\mathcal{B}) \rightarrow C^*(\Lambda,c;\mathcal{E})/I_{H,\mathcal{B}}$$ is an isomorphism.
\end{theorem}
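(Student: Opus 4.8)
The plan is to exhibit the canonical homomorphism $\pi_{s_\mathcal{E}+I_{H,\mathcal{B}}}^{\mathcal{B}}$ together with a two-sided inverse, checking both maps on generators, and to let Lemma~\ref{le:fromgraphtosubgraph} carry almost all of the work. Write $q:C^*(\Lambda,c;\mathcal{E})\to C^*(\Lambda,c;\mathcal{E})/I_{H,\mathcal{B}}$ for the quotient map and abbreviate $\bar s(\lambda):=q(s_\mathcal{E}(\lambda))=s_\mathcal{E}(\lambda)+I_{H,\mathcal{B}}$. Since the hypotheses here are exactly those of Lemma~\ref{le:fromgraphtosubgraph} (with $\left\{ t_\lambda\right\}$ taken to be the universal generators $\left\{ s_\mathcal{B}(\lambda)\right\}$), that lemma supplies a homomorphism $\pi:=\pi_s^\mathcal{E}:C^*(\Lambda,c;\mathcal{E})\to C^*(\Lambda\setminus\Lambda H,c;\mathcal{B})$ with $\pi(s_\mathcal{E}(\lambda))=s_\mathcal{B}(\lambda)$ for $\lambda\in\Lambda\setminus\Lambda H$ and $\pi(s_\mathcal{E}(\lambda))=0$ for $\lambda\in\Lambda H$, satisfying the kernel characterisations \eqref{eq:kernelideal1} and \eqref{eq:kernelideal2}. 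In particular every defining generator $s_\mathcal{E}(v)$ ($v\in H$) and $Q(s_\mathcal{E})^E$ ($E\in\mathcal{B}$) of $I_{H,\mathcal{B}}$ lies in $\ker\pi$, so $I_{H,\mathcal{B}}\subset\ker\pi$ and $\pi$ descends (via Lemma~\ref{le:ideal}) to $\tilde\pi:C^*(\Lambda,c;\mathcal{E})/I_{H,\mathcal{B}}\to C^*(\Lambda\setminus\Lambda H,c;\mathcal{B})$ with $\tilde\pi(\bar s(\lambda))=s_\mathcal{B}(\lambda)$ for $\lambda\in\Lambda\setminus\Lambda H$.

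First I would prove the first assertion, that $\left\{ \bar s(\lambda):\lambda\in\Lambda\setminus\Lambda H\right\}$ is a relative Cuntz-Krieger $(\Lambda\setminus\Lambda H,c;\mathcal{B})$-family, by pushing the relations for $\left\{ s_\mathcal{E}(\lambda):\lambda\in\Lambda\right\}$ through $q$. Relations (TCK1)--(TCK3) are immediate, because $(\Lambda\setminus\Lambda H)^0\subset\Lambda^0$ and because $\lambda,\mu\in\Lambda\setminus\Lambda H$ with $s(\lambda)=r(\mu)$ forces $s(\lambda\mu)=s(\mu)\notin H$, so $\lambda\mu\in\Lambda\setminus\Lambda H$. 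For (TCK4) I apply $q$ to $s_\mathcal{E}(\mu)s_\mathcal{E}(\mu)^*s_\mathcal{E}(\nu)s_\mathcal{E}(\nu)^*=\sum_{\sigma\in\MCE_\Lambda(\mu,\nu)}s_\mathcal{E}(\sigma)s_\mathcal{E}(\sigma)^*$: any $\sigma\in\MCE_\Lambda(\mu,\nu)$ with $s(\sigma)\in H$ lies in $\Lambda H$, so $s_\mathcal{E}(\sigma)s_\mathcal{E}(\sigma)^*=s_\mathcal{E}(\sigma)s_\mathcal{E}(s(\sigma))s_\mathcal{E}(\sigma)^*\in I_{H,\mathcal{B}}$ maps to $0$, whereas the surviving indices are exactly $\MCE_{\Lambda\setminus\Lambda H}(\mu,\nu)$, the minimality degree $d(\mu)\vee d(\nu)$ being the same in the subgraph. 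Finally (CK) holds since for $E\in\mathcal{B}$ the element $Q(s_\mathcal{E})^E$ is a generator of $I_{H,\mathcal{B}}$, so $Q(\bar s)^E=q(Q(s_\mathcal{E})^E)=0$.

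With the first assertion established, the universal property of Theorem~\ref{th:relativecuntzkriegeralgebra} produces the canonical homomorphism $\rho:=\pi_{s_\mathcal{E}+I_{H,\mathcal{B}}}^{\mathcal{B}}:C^*(\Lambda\setminus\Lambda H,c;\mathcal{B})\to C^*(\Lambda,c;\mathcal{E})/I_{H,\mathcal{B}}$ with $\rho(s_\mathcal{B}(\lambda))=\bar s(\lambda)$. To conclude I would check that $\rho$ and $\tilde\pi$ are mutually inverse. On the generators $s_\mathcal{B}(\lambda)$ one has $\tilde\pi(\rho(s_\mathcal{B}(\lambda)))=\tilde\pi(\bar s(\lambda))=s_\mathcal{B}(\lambda)$, so $\tilde\pi\circ\rho=\id$. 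Conversely the quotient is generated by $\left\{ \bar s(\lambda):\lambda\in\Lambda\right\}$; for $\lambda\in\Lambda H$ we have $\bar s(\lambda)=q(s_\mathcal{E}(\lambda)s_\mathcal{E}(s(\lambda)))=0$ (using (C2) and $s_\mathcal{E}(s(\lambda))\in I_{H,\mathcal{B}}$), while for $\lambda\in\Lambda\setminus\Lambda H$ we get $\rho(\tilde\pi(\bar s(\lambda)))=\rho(s_\mathcal{B}(\lambda))=\bar s(\lambda)$. Hence $\rho\circ\tilde\pi$ fixes every generator, so $\rho\circ\tilde\pi=\id$ and $\rho$ is an isomorphism.

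The bulk of the difficulty has already been absorbed into Lemma~\ref{le:fromgraphtosubgraph}, so the remaining argument is essentially a formal diagram chase with universal properties and quotients. The one step demanding genuine care is the (TCK4) verification in the quotient: one must see that the minimal common extensions of $\mu$ and $\nu$ landing in $\Lambda H$ contribute range projections lying in $I_{H,\mathcal{B}}$ (hence vanishing after $q$), and that the extensions that survive coincide exactly with the minimal common extensions computed inside the subgraph $\Lambda\setminus\Lambda H$.
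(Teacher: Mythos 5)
Your proposal is correct, and the first half (verifying (TCK1)--(TCK4) and (CK) for $\left\{ s_\mathcal{E}(\lambda)+I_{H,\mathcal{B}}\right\}$, including the identification of $\MCE_{\Lambda\setminus\Lambda H}(\mu,\nu)$ with the minimal common extensions in $\Lambda$ whose sources avoid $H$) coincides with the paper's argument. Where you genuinely diverge is the isomorphism step. The paper, having shown that $I_{H,\mathcal{B}}$ is gauge-invariant (Lemma~\ref{le:invariantideal}), obtains an action $\theta$ on the quotient, proves the two nonvanishing conditions $s_\mathcal{E}(v)+I_{H,\mathcal{B}}\neq 0$ for $v\notin H$ and $Q(s_\mathcal{E})^E+I_{H,\mathcal{B}}\neq 0$ for $E\in\FE(\Lambda\setminus\Lambda H)\setminus\mathcal{B}$ (using $I_{H,\mathcal{B}}\subset\ker\pi$ together with the full kernel characterisations \eqref{eq:kernelideal1}--\eqref{eq:kernelideal2}), and then invokes the gauge-invariant uniqueness theorem (Theorem~\ref{th:gaugeinvariant}) to get injectivity of $\pi_{s_\mathcal{E}+I_{H,\mathcal{B}}}^{\mathcal{B}}$, surjectivity being automatic. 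You instead build an explicit two-sided inverse: $\pi$ from Lemma~\ref{le:fromgraphtosubgraph} kills every generator of $I_{H,\mathcal{B}}$ (for this you only need the easy directions of \eqref{eq:kernelideal1}--\eqref{eq:kernelideal2}, namely the formula \eqref{eq:homotodownstairs} and (CK) for $s_\mathcal{B}$), so by Lemma~\ref{le:ideal} it descends to $\tilde\pi$ on the quotient, and $\tilde\pi$ and $\rho$ fix each other's generators — where for $\lambda\in\Lambda H$ you correctly note $\bar s(\lambda)=q(s_\mathcal{E}(\lambda)s_\mathcal{E}(s(\lambda)))=0$. Your route is more elementary and more economical: it bypasses the gauge action on the quotient, the nonvanishing conditions, and hence the appeal (within this proof) to Theorem~\ref{th:nonzerogapprojection} and the GIUT machinery; what the paper's route buys is a demonstration of how the gauge-invariant uniqueness theorem is deployed, which is the pattern reused in Theorem~\ref{th:gauageinvariantidealstructure}. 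Both arguments rest on the same load-bearing input, Lemma~\ref{le:fromgraphtosubgraph}.
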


\begin{proof}
First we check axioms (TCK1) to (TCK4) and (CK) for $\left \{ s_\mathcal{E}(\lambda)+I_{H,B}: \lambda \in \Lambda \setminus \Lambda H \right \}$. Relations (TCK1),(TCK2) and (TCK3) follow immediately since the quotient map $q:C^*(\Lambda,c;\mathcal{E})\rightarrow C^*(\Lambda,c;\mathcal{E})/I_{H,\mathcal{B}}$ is a homomorphism. For (TCK4), fix $\lambda, \mu \in \Lambda \setminus \Lambda H$. We have
\begin{align*}q\left( s_\mathcal{E}(\lambda) \right)q\left(s_\mathcal{E}(\lambda) \right)^*q\left(s_\mathcal{E}(\mu) \right)q\left(s_\mathcal{E}(\mu) \right)^*=\sum_{\sigma \in \MCE_\Lambda(\lambda,\mu)}q\left(s_\mathcal{E}(\sigma)\right) q\left(s_\mathcal{E}(\sigma)^*\right).
\end{align*}
If $\sigma \in \MCE_\Lambda(\lambda,\mu)\setminus \MCE_{\Lambda \setminus \Lambda H}(\lambda,\mu)$ then $s(\sigma)\in H$. So $s_\mathcal{E}(s(\sigma))\in I_{H,\mathcal{B}}$, which implies that $s_\mathcal{E}(\sigma)s_\mathcal{E}(\sigma)^*=s_\mathcal{E}(\sigma)s_\mathcal{E}(s(\sigma))s_\mathcal{E}(\sigma)^*\in I_{H,\mathcal{B}}$. Hence
\begin{align*}
q\left( s_\mathcal{E}(\lambda) \right)q\left(s_\mathcal{E}(\lambda) \right)^*q\left(s_\mathcal{E}(\mu) \right)q\left(s_\mathcal{E}(\mu) \right)^*=\sum_{\sigma \in \MCE_{\Lambda\setminus \Lambda H}(\lambda,\mu)}q\left(s_\mathcal{E}(\sigma)\right) q\left(s_\mathcal{E}(\sigma)^*\right).
\end{align*}
It remains to check (CK). If $E\in \mathcal{B}$ then $Q(s_\mathcal{E})^E\in I_{H,\mathcal{B}}$. Hence $Q(s_\mathcal{E})^E+I_{H,\mathcal{B}}=0$. We have shown that $\left \{ s_\mathcal{E}(\lambda)+I_{H,B}: \lambda \in \Lambda \setminus \Lambda H \right \}$ is a relative Cuntz-Krieger $(\Lambda \setminus \Lambda H, c ; \mathcal{B} )$-family in $C^*(\Lambda,c;\mathcal{E})/I_{H,\mathcal{B}}$. We now aim to use the gauge-invariant uniqueness theorem,  Theorem~\ref{th:gaugeinvariant}, to see that $\pi_{s_\mathcal{E}+I_{H,\mathcal{B}}}^\mathcal{B}$ is an isomorphism.

Let $\gamma$ denote the gauge action on $C^*(\Lambda,c;\mathcal{E})$. For $v\in H$ and $E\in \mathcal{B}$, we have $\gamma_z(s_\mathcal{E}(v))=s_\mathcal{E}(v)$ and $\gamma_z(Q(s_\mathcal{E})^E)=Q(s_\mathcal{E})^E$ for each $z\in \mathbb{T}^k$. Lemma~\ref{le:invariantideal} implies that $I_{H,\mathcal{B}}\subset C^*(\Lambda,c;\mathcal{E})$ is invariant under the gauge action. Hence there exists an action $\theta$ of $\mathbb{T}^k$ on $C^*(\Lambda,c;\mathcal{E})/I_{H,\mathcal{B}}$ satisfying $\theta_z\left(s_\mathcal{E}(\lambda)+I_{H,\mathcal{B}}\right)= z^{d(\lambda)}s_\mathcal{E}(\lambda)+I_{H,\mathcal{B}}$ for each $z\in \mathbb{T}^k$. We claim that
\begin{enumerate}\item[(1)] $s_\mathcal{E}(v)+I_{H,\mathcal{B}} \neq 0 \text{ for each } v\in \left(\Lambda \setminus \Lambda H \right )^0=\Lambda^0 \setminus H, \text{ and}$
\item[(2)] $Q(s_\mathcal{E})^E+I_{H,\mathcal{B}}\neq  0 \text{ for each } E\in \FE(\Lambda \setminus \Lambda H) \setminus \mathcal{B}.$
\end{enumerate}
Let $\pi:C^*(\Lambda,c;\mathcal{E}) \rightarrow C^*(\Lambda\setminus \Lambda H , c;\mathcal{B} )$ be the homomorphism from Lemma~\ref{le:fromgraphtosubgraph}. Equations \eqref{eq:kernelideal1} and \eqref{eq:kernelideal2} imply that $$\left\{ s_\mathcal{E}(v): v\in H \right \} \cup \left \{ Q(s_\mathcal{E})^E: E\in \mathcal{B} \right \}\subset \ker \pi.$$ Lemma~\ref{le:ideal} implies $I_{H,\mathcal{B}}\subset \ker \pi$. If $v\notin H$, then \eqref{eq:homotodownstairs} and \eqref{eq:kernelideal1} imply that $s_\mathcal{B}(v)=\pi\left( s_\mathcal{E}(v)\right) \neq0$. If $E\in \FE(\Lambda \setminus \Lambda H) \setminus \mathcal{B}$ then \eqref{eq:homotodownstairs} and \eqref{eq:kernelideal2} imply that $Q(s_\mathcal{B})^E=\pi \left( Q(s_\mathcal{E})^E \right)\neq 0$.

Since $\mathcal{B}$ is satiated, the gauge-invariant uniqueness theorem, Theorem~\ref{th:gaugeinvariant}, implies that $\pi_{s_\mathcal{E}+I_{H,\mathcal{B}}}^\mathcal{B}$ is an isomorphism.
\end{proof}

\section{Gauge-invariant ideals of the twisted relative Cuntz-Krieger algebras}

\label{chapt:result}
\begin{definition}
Let $(\Lambda,d)$ be a finitely aligned $k$-graph and let $\mathcal{E}\subset\FE(\Lambda)$. Let $\SH_\mathcal{E}\times \Sat(\Lambda)$ be set of all pairs $(H,\mathcal{B})$ where $H \subset \Lambda^0$ is hereditary and saturated relative to $\mathcal{E}$, and $\mathcal{B} \subset \FE(\Lambda \setminus \Lambda H)$ is satiated with $\mathcal{E}_H \subset \mathcal{B}$.
\end{definition}

\begin{theorem}\label{th:gauageinvariantidealstructure}
Let $(\Lambda,d)$ be a finitely aligned $k$-graph, let $c\in \underline{Z}^2(\Lambda,\mathbb{T})$ and let $\mathcal{E}\subset\FE(\Lambda)$. The map $(H,\mathcal{B}) \mapsto I_{H,\mathcal{B}}$ is a bijection between $\SH_\mathcal{E}\times \Sat(\Lambda)$ and the gauge-invariant ideals of $C^*(\Lambda,c;\mathcal{E})$.
\end{theorem}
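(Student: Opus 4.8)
The plan is to show the map $(H,\mathcal{B}) \mapsto I_{H,\mathcal{B}}$ is both well-defined into the gauge-invariant ideals, injective, and surjective. First I would check well-definedness: given $(H,\mathcal{B}) \in \SH_\mathcal{E}\times \Sat(\Lambda)$, the ideal $I_{H,\mathcal{B}}$ is generated by $\{s_\mathcal{E}(v):v\in H\}\cup\{Q(s_\mathcal{E})^E:E\in\mathcal{B}\}$, and since each generator is fixed by every $\gamma_z$ (because $z^{d(v)}=1$ for vertices, and $\gamma_z$ fixes range projections $s_\mathcal{E}(\lambda)s_\mathcal{E}(\lambda)^*$ hence fixes each $Q(s_\mathcal{E})^E$), Lemma~\ref{le:invariantideal} immediately gives that $I_{H,\mathcal{B}}$ is gauge-invariant. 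So the map lands in the right target.

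Next I would prove injectivity by recovering $(H,\mathcal{B})$ from $I_{H,\mathcal{B}}$. The natural candidates are the constructions $H_I$ and $\mathcal{B}_I$ from Section~\ref{chptr:idealstohereditarysets}. Setting $I = I_{H,\mathcal{B}}$, I would show $H_{I_{H,\mathcal{B}}}=H$ and $\mathcal{B}_{I_{H,\mathcal{B}}}=\mathcal{B}$. The key tool is Theorem~\ref{th:quoofcuntzalgebra}, which identifies the quotient $C^*(\Lambda,c;\mathcal{E})/I_{H,\mathcal{B}}$ with $C^*(\Lambda\setminus\Lambda H,c;\mathcal{B})$ via the canonical isomorphism $\pi_{s_\mathcal{E}+I_{H,\mathcal{B}}}^{\mathcal{B}}$. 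Composing the quotient map with this isomorphism yields precisely the homomorphism $\pi$ of Lemma~\ref{le:fromgraphtosubgraph}, whose kernel is $I_{H,\mathcal{B}}$. Then \eqref{eq:kernelideal1} gives $s_\mathcal{E}(v)\in I_{H,\mathcal{B}}$ iff $v\in H$, so $H_{I_{H,\mathcal{B}}}=H$; and \eqref{eq:kernelideal2}, together with the observation that $H_{I_{H,\mathcal{B}}}=H$ forces $\FE(\Lambda\setminus\Lambda H_{I_{H,\mathcal{B}}})=\FE(\Lambda\setminus\Lambda H)$, gives $Q(s_\mathcal{E})^E\in I_{H,\mathcal{B}}$ iff $E\in\mathcal{B}$, so $\mathcal{B}_{I_{H,\mathcal{B}}}=\mathcal{B}$. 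This establishes injectivity.

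For surjectivity I would take an arbitrary gauge-invariant ideal $I$ and show $I = I_{H_I,\mathcal{B}_I}$. Lemma~\ref{le:idealstohereditarysets} shows $H_I$ is hereditary and saturated relative to $\mathcal{E}$; Lemma~\ref{le:idealstosatiatedsets1} shows $\mathcal{B}_I\subset\FE(\Lambda\setminus\Lambda H_I)$ is satiated; and Lemma~\ref{le:idealstosatiatedsets2} gives $\mathcal{E}_{H_I}\subset\mathcal{B}_I$, so $(H_I,\mathcal{B}_I)\in\SH_\mathcal{E}\times\Sat(\Lambda)$. The containment $I_{H_I,\mathcal{B}_I}\subset I$ is easy, since every generator of $I_{H_I,\mathcal{B}_I}$ lies in $I$ by the definitions of $H_I$ and $\mathcal{B}_I$. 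The reverse containment $I\subset I_{H_I,\mathcal{B}_I}$ is the genuine obstacle: I expect to pass to the quotient $q:C^*(\Lambda,c;\mathcal{E})\to C^*(\Lambda,c;\mathcal{E})/I_{H_I,\mathcal{B}_I}$, identify the quotient with $C^*(\Lambda\setminus\Lambda H_I,c;\mathcal{B}_I)$ by Theorem~\ref{th:quoofcuntzalgebra}, and argue that the induced map on $I/I_{H_I,\mathcal{B}_I}$ is zero. The cleanest route is to use the gauge-invariance of $I$ to show it suffices to prove injectivity of the quotient-of-a-quotient map on the core, and then invoke the gauge-invariant uniqueness theorem (Theorem~\ref{th:gaugeinvariant}): one checks that no vertex projection $s_\mathcal{E}(v)$ with $v\notin H_I$ and no gap projection $Q(s_\mathcal{E})^E$ with $E\in\FE(\Lambda\setminus\Lambda H_I)\setminus\mathcal{B}_I$ lies in $I$, which by the definitions of $H_I$ and $\mathcal{B}_I$ is exactly the statement that the relevant images are nonzero. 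This is where gauge-invariance of $I$ is essential, as it lets the averaging argument control an arbitrary element of $I$ by its core component. I would carry out this final injectivity argument carefully, as it is the heart of the theorem.
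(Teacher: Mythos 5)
Your proposal is correct and follows essentially the same route as the paper's proof: gauge-invariance of $I_{H,\mathcal{B}}$ via Lemma~\ref{le:invariantideal}, injectivity by recovering $(H,\mathcal{B})$ through the isomorphism of Theorem~\ref{th:quoofcuntzalgebra} (your detour through \eqref{eq:kernelideal1}--\eqref{eq:kernelideal2} of Lemma~\ref{le:fromgraphtosubgraph} is just a repackaging of the paper's direct appeal to Theorem~\ref{th:nonzerogapprojection}), and surjectivity by applying the gauge-invariant uniqueness theorem to the quotient-of-a-quotient map $C^*(\Lambda,c;\mathcal{E})/I_{H_I,\mathcal{B}_I}\to C^*(\Lambda,c;\mathcal{E})/I$, with gauge-invariance of $I$ supplying the required action on the quotient. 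No gaps.
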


\begin{proof}
First we check that $(H,\mathcal{B}) \mapsto I_{H,\mathcal{B}}$ is surjective. Fix a gauge-invariant ideal $I\subset C^*(\Lambda,c;\mathcal{E})$. We claim $I_{H_I,\mathcal{B}_{I}}=I$. We have $I_{H_I,\mathcal{B}_{I}}\subset I$ by definition. Hence there is a well-defined homomorphism $\pi$ from $C^*(\Lambda,c;\mathcal{E})/ I_{H_I,\mathcal{B}_{I}}$ onto $C^*(\Lambda,c;\mathcal{E})/ I$ such that $s_\mathcal{E}(\lambda)+I_{H_I,\mathcal{B}_{I}} \mapsto s_\mathcal{E}(\lambda)+I$ for each $\lambda \in \Lambda$. The set $H_I$ is hereditary and saturated relative to $\mathcal{E}$ by Lemma~\ref{le:idealstohereditarysets}. The set $B_{I}\subset \FE(\Lambda \setminus \Lambda H_I)$ is satiated by Lemma~\ref{le:idealstosatiatedsets1} and satisfies $\mathcal{E}_{H_I}\subset B_{I}$ by Lemma~\ref{le:idealstosatiatedsets2}. Theorem~\ref{th:quoofcuntzalgebra} implies that $\rho:=\pi_{s_\mathcal{E}+I_{H_I,\mathcal{B}_{I}}}^{\mathcal{B}_{I}}: C^*(\Lambda\setminus \Lambda H_I,c ;\mathcal{B}_{I}) \rightarrow C^*(\Lambda,c;\mathcal{E})/I_{H_I,\mathcal{B}_{I}}$ is an isomorphism. Since  \begin{equation}\left(\pi \circ \rho\right)\left( s_{\mathcal{B}_{I}}(\lambda) \right)=s_\mathcal{E}(\lambda)+I\label{eq:universalinducedhomo}\end{equation} for each $\lambda \in \Lambda \setminus \Lambda H_I$, $\left \{ s_\mathcal{E}(\lambda)+I : \lambda \in \Lambda \setminus \Lambda H_I \right \}$ is a relative Cuntz-Krieger $(\Lambda\setminus \Lambda H_I,c;\mathcal{B}_{I})$-family. The uniqueness clause in Theorem~\ref{th:relativecuntzkriegeralgebra} implies that $\pi_{s_\mathcal{E}+I}^{B_I}=\pi \circ \rho.$

Suppose $v\in \Lambda $ satisfies $s_\mathcal{E}(v)+I=0$. Then $s_\mathcal{E}(v)\in I$ and so $v\in H_I$ by definition. Thus, $v\in (\Lambda \setminus \Lambda H_I)^0$ implies $s_\mathcal{E}(v)+I\neq0$. Suppose $E\in \FE(\Lambda \setminus \Lambda H_I)$ satisfies $Q(s_\mathcal{E})^E+I=0$. Then $Q(s_\mathcal{E})^E\in I$ and so $E\in \mathcal{B}_{I}$ by definition. Hence, $E\in \FE(\Lambda \setminus \Lambda H_I) \setminus \mathcal{B}_I$ implies $Q(s_\mathcal{E})+I\neq 0$.

Since $I$ is gauge-invariant, there exists an action $\theta$ on $C^*(\Lambda,c;\mathcal{E})/I$ satisfying $\theta_z:s_\mathcal{E}(\lambda)+I \mapsto z^{d(\lambda)}s_\mathcal{E}(\lambda)+I$ for each $\lambda \in \Lambda \setminus \Lambda H_I$ and $z\in \mathbb{T}^k$. The gauge-invariant uniqueness theorem, Theorem~\ref{th:gaugeinvariant} implies that $\pi_{s_\mathcal{E}+I}^{\mathcal{B}_{I}}=\pi\circ \rho$ is an isomorphism. Hence $\pi:C^*(\Lambda,c;\mathcal{E})/ I_{H_I,\mathcal{B}_{I}}\rightarrow C^*(\Lambda,c;\mathcal{E})/ I$ is an isomorphism and the diagram

\begin{equation}
\begin{tikzpicture}
\matrix(m)[matrix of math nodes,
row sep=4em, column sep=2.5em,
text height=1.5ex, text depth=0.25ex]
{{}&C^*(\Lambda,c;\mathcal{E})&{}\\
C^*(\Lambda,c;\mathcal{E})/I_{H,\mathcal{B}_{I}}&{}&C^*(\Lambda,c;\mathcal{E})/I\\};
\path[->]
(m-1-2) edge node[left, above]{$q_1$}(m-2-1)
(m-1-2) edge node[auto]{$q_2$} (m-2-3)
(m-2-1) edge node[auto]{$\pi$} (m-2-3);
\end{tikzpicture}
\nonumber\end{equation}
commutes, where $q_1$ and $q_2$ denote the quotient maps. If $a\in \ker q_1$ then $q_2(a)=\pi \left( q_1(a) \right)=0$. So $\ker q_1 \subset \ker q_2$. A similar calculation shows the reverse containment. So $I_{H,\mathcal{B}_{I}}=\ker q_1 = \ker q_2 = I$.

Now we check that $(H,\mathcal{B}) \rightarrow I_{H,\mathcal{B}}$ is injective. Fix $(H,\mathcal{B})\in \SH_\mathcal{E}\times \Sat(\Lambda) $. We will show that $H=H_{I_{H,\mathcal{B}}}$ and $\mathcal{B}=\mathcal{B}_{I_{H,\mathcal{B}}}$. The containments $H\subset H_{I_{H,\mathcal{B}}}$ and $\mathcal{B}\subset \mathcal{B}_{I_{H,\mathcal{B}}}$ follow by definition.

If $v\in H_{I_{H,\mathcal{B}}}$ then $s_\mathcal{E}(v) \in I_{H,\mathcal{B}}$. The map $\pi_{s_\mathcal{E}+I_{H,\mathcal{B}}}^{\mathcal{B}}: C^*(\Lambda\setminus \Lambda H,c ;\mathcal{B}) \rightarrow C^*(\Lambda,c;\mathcal{E})/I_{H,\mathcal{B}}$  is an isomorphism by Theorem~\ref{th:quoofcuntzalgebra}. It follows that $s_\mathcal{B}(v)=0$. Since $\mathcal{B}$ is satiated Theorem~\ref{th:nonzerogapprojection} implies $s_\mathcal{B}(v)\neq0$ for all $v\in (\Lambda \setminus \Lambda H)^0$. Hence $v\in H$. So $H=H_{I_{H,\mathcal{B}}}$.

Now suppose that $E\in \mathcal{B}_{I_{H,\mathcal{B}}}\subset \FE(\Lambda \setminus \Lambda H_{I_{H,\mathcal{B}}})= \FE(\Lambda \setminus \Lambda H)$. Then $Q(s_\mathcal{E})^E\in I_{H,\mathcal{B}}$. We have $\pi_{s_\mathcal{E}+I_{H,\mathcal{B}}}^\mathcal{B}\left(Q(s_\mathcal{B})^E\right)=Q(s_\mathcal{E})^E+ I_{H,\mathcal{B}}=0$. As $\pi_{s_\mathcal{E}+I_{H,\mathcal{B}}}$ is injective $Q(s_\mathcal{B})^E=0$. Since $\mathcal{B}$ is satiated, Theorem~\ref{th:nonzerogapprojection} implies that $E\in \mathcal{B}$. So $\mathcal{B}=\mathcal{B}_{I_{H,\mathcal{B}}}$.
\end{proof}


%
%
%
%
%

\appendix

%
%


\begin{thebibliography}{99}

\bibitem{BHRS2002} T. Bates, H. Hong, I. Raeburn and W. Szyma\'{n}ski, \emph{The ideal structure of the $C^*$-algebras of infinite graphs}, Illinois Journal of Mathematics \textbf{46} (2002), 1159--1176.

\bibitem{BHMS2005} P. F. Baurn, P. M. Hajac, R. Matthes and W. Szyma\'{n}ski, \emph{The K-theory of Heegaard-type quantum 3-spheres}, K-theory, \textbf{35} (2005), 159--186.

\bibitem{CK1980} J. Cuntz and W. Krieger, \emph{A class of $C^*$-algebras and topological Markov chains}, Inventiones mathematicae \textbf{56} (1980), 251--268.

\bibitem{EKQR2006} S. Echterhoff, S. Kaliszewski, J. Quigg and I. Raeburn, A Categorical Approach to Imprimitivity Theorems for $C^*$-Dynamical Systems, \emph{Memoirs of the American Mathematical Society} \textbf{180} Amercian Mathematical Society, (2006).

\bibitem{EW1980} M. Enomoto and Y. Watatani, \emph{A graph theory for $C^*$-algebras}, Japanese Journal of Mathematics \textbf{25} (1980), 435--442.

\bibitem{HR1997} A. an Huef and I. Raeburn, \emph{The ideal structure of Cuntz-Krieger algebras}, Ergodic Theory and Dynamical Systems \textbf{17} (1997), 611--624.

\bibitem{KP2000} A. Kumjian and D. Pask, \emph{Higher rank graph $C^*$-algebras}, New York Journal of Mathematics \textbf{6} (2000), 1--20.

\bibitem{KPS2012} A. Kumjian, D. Pask and A. Sims, \emph{Homology for higher-rank graphs and twisted $C^*$-algebras}, Journal of Functional Analysis \textbf{263} (2012), 1539--1574.

\bibitem{KPRR1997} A. Kumjian, D. Pask, I. Raeburn and J. Renault, \emph{Graphs, groupoids, and Cuntz-Krieger algebras}, Journal of Functional Analysis \textbf{144} (1997), 505--541.

\bibitem{KPS2012b} A. Kumjian, D. Pask and A. Sims, \emph{On twisted higher-rank graph $C^*$-algebras}, preprint 2011, [http://arxiv.org/abs/1204.3127].

\bibitem{KPS2012c} A. Kumjian, D. Pask and A. Sims, \emph{On the K-theory of twisted higher-rank graph $C^*$-algebras}, in preparation.

\bibitem{Mac1971} S. MacLane, Categories for the working Mathematician, \emph{Graduate texts in Mathematics} \textbf{5} Springer-Verlag, 1971.

\bibitem{R2005} I. Raeburn, Graph $C^*$-algebras, \emph{CBMS Regional Conference Series in Mathematics} \textbf{103} Amercian Mathematical Society, (2005).

\bibitem{RS2005} I. Raeburn and A. Sims, \emph{Product systems of graphs and the $C*$-algebras of higher-rank graphs}, Journal of Operator Theory \textbf{53} (2005), 399--429.

\bibitem{RSY2003} I. Raeburn, A. Sims and T. Yeend, \emph{Higher-rank graphs and their $C^*$-algebras}, Proceedings of the Edinburgh Mathematical Society \textbf{46} (2003), 99--115.

\bibitem{RSY2004} I. Raeburn, A. Sims and T. Yeend, \emph{The $C^*$-algebras of finitely aligned higher-rank graphs}, Journal of Functional Analysis \textbf{213} (2004), 206--240.

\bibitem{RS1999} G. Robertson and T. Steger, \emph{Affine buildings, tiling systems and higher rank Cuntz-Krieger algebras}, Journal f\"ur die reine und angewandte Mathematik \textbf{513} (1999), 115--144.

\bibitem{S20061} A. Sims, \emph{Relative Cuntz-Krieger algebras of finitely aligned higher-rank graphs}, Indiana University Mathematics Journal \textbf{55} (2006), 849--868.

\bibitem{S20062} A. Sims, \emph{Gauge-invariant ideals in the $C^*$-algebras of finitely aligned higher-rank graphs}, Canadian Journal of Mathematics \textbf{58} (2006), 1268--1290.

\bibitem{SPhD} A. Sims, $C^*$-algebras associated to higher-rank graphs, \emph{PhD Thesis}, University of Newcastle, 2003.

\bibitem{W2007} D. P. Williams, Crossed products of $C^*$-algebras, American Mathematical Society, Providence, RI, 2007, xvi+528.
\end{thebibliography}

%
%
%

\end{document}